\theoremstyle{plain}
\newtheorem{thm}{\protect\theoremname}[section]
\theoremstyle{plain}
\newtheorem{definition}[thm]{\protect\definitionname}
\newtheorem{assumption}{\protect\assumptionname}
\newtheorem{prop}[thm]{\protect\propositionname}
\theoremstyle{plain}
\newtheorem{lem}[thm]{\protect\lemmaname}
\theoremstyle{plain}
\newtheorem{cor}[thm]{\protect\corollaryname}
\newtheorem{example}[thm]{\protect\examplename}
\newtheorem{rem}[thm]{\protect\remarkname}
\definecolor{brightblue}{rgb}{0.75,0.75,0.9}
\definecolor{brightgray}{rgb}{0.9, 0.9, 0.9}
\definecolor{darkblue}{rgb}{0,0.08,0.4}
\definecolor{darkred}{rgb}{0.8,0.2, 0.2}
\definecolor{darkgreen}{rgb}{0, 0.6, 0}
\definecolor{darkorange}{rgb}{0.93, 0.57, 0.13}
\definecolor{green(html/cssgreen)}{rgb}{0.0, 0.5, 0.0}
\numberwithin{equation}{section}
\newcommand{\figdir}{./}
\newcommand*{\id}{I}
\newcommand*{\WCov}{\mathrm{K}}
\newcommand{\rev}[1]{#1}
\newcommand{\jg}[1]{#1}
\definecolor{darkgreen}{RGB}{30,130,80}
\newcommand*{\ykcst}[1]{\relax\ifmmode\text{\textcolor{darkgreen}{\sout{\ensuremath{#1}}}}\else\textcolor{darkgreen}{\sout{#1}}\fi}
\newcommand*{\ecst}[1]{\relax\ifmmode\text{\textcolor{darkorange}{\sout{\ensuremath{#1}}}}\else\textcolor{darkorange}{\sout{#1}}\fi}
\newcommand*{\rscst}[1]{\relax\ifmmode\text{\textcolor{darkblue}{\sout{\ensuremath{#1}}}}\else\textcolor{darkblue}{\sout{#1}}\fi}
\providecommand{\definitionname}{Definition}
\providecommand{\assumptionname}{Assumption}
\providecommand{\corollaryname}{Corollary}
\providecommand{\lemmaname}{Lemma}
\providecommand{\propositionname}{Proposition}
\providecommand{\theoremname}{Theorem}
\providecommand{\examplename}{Example}
\providecommand{\remarkname}{Remark}
\begin{document}
\title{Multigrid Monte Carlo Revisited: Theory and Bayesian Inference}
\author[1]{Yoshihito Kazashi}
\author[2]{Eike~M\"{u}ller}
\author[3,*]{Robert Scheichl}
\affil[1]{Department of Mathematics, University of Manchester, Manchester
M13 9PL, UK}
\affil[2]{Department of Mathematical Sciences, University of Bath, Bath BA2 7AY, UK}
\affil[3]{Institute for Mathematics, Heidelberg University, 69120  Heidelberg, Germany}
\affil[*]{corresponding author}
\maketitle
\begin{abstract}
	\noindent
	Gaussian random fields play an important role in many areas of science and engineering. In practice, they are often simulated by sampling from a high-dimensional multivariate normal distribution, which arises from the discretisation of a suitable precision operator. Existing methods such as Cholesky factorization and Gibbs sampling become prohibitively expensive on fine meshes due to their high computational cost.
	\rev{In this work, we revisit the Multigrid Monte Carlo (MGMC) algorithm developed by Goodman \& Sokal (\emph{Physical Review D} 40.6, 1989) in the quantum physics context. While the authors of this paper conclude that MGMC does not overcome critical slowing down in simulations of field theories near phase transitions, we demonstrate here that it has the potential to significantly accelerate sampling in spatial statistics. The class of Gaussian Random Fields we consider includes those with Mat\'{e}rn covariance, but is more general in that it also allows for non-stationary covariance functions. To show that MGMC can overcome the limitation of existing methods, we establish a grid-size-independent convergence theory based on the link between linear solvers and samplers for multivariate normal distributions, drawing on standard multigrid convergence arguments.}
	We then apply this theory to linear Bayesian inverse problems. This application is achieved by extending the standard multigrid theory to operators with a low-rank perturbation. 
	Moreover, we develop a novel bespoke random smoother which takes care of the low-rank updates that arise in constructing posterior moments. In particular, we prove that Multigrid Monte Carlo is algorithmically optimal in the limit of the grid-size going to zero. Numerical results support our theory, demonstrating that Multigrid Monte Carlo can be significantly more efficient than alternative methods when applied in a Bayesian setting.
\end{abstract}
\textbf{Keywords:} Gaussian random fields, Multigrid Monte Carlo, Bayesian inference\\[1ex]
\textbf{MSC classes:}
60J22, %
60G60, %
62F15, %
65C05, %
65N55  %
\begin{center}
Communicated by Tony Leli\`{e}vre
\end{center}
\section{Introduction}
The approximate simulation of Gaussian random fields plays a pivotal role in a large number of research areas, such as quantum physics \cite{goodman1989multigrid,edwards1992multi}, spatial statistics \cite{LindgrenEtAl.F_2011_ExplicitLinkGaussian}, additive manufacturing \cite{ZhangEtAl.H_2021_StochasticModelingGeometrical}, cosmology \cite{Marinucci.D_2011_book}, natural language processing \cite{inoue-etal-2022-infinite}, public health \cite{MannsethEtAl.J_2021_VariationUseCaesarean}, geosciences \cite{LiuEtAl.Y_2019_AdvancesGaussianRandom}, or uncertainty quantification in engineering applications \cite{Soize.C_2017_UncertaintyQuantificationAccelerated,VasileEtAl.M_2021_AdvancesUncertaintyQuantification}. In theory, it is a well understood and solved problem, but in practice the efficiency and performance of traditional sampling procedures degenerates quickly when the random field is discretised on a grid (lattice) with spatial resolution going to zero. Moreover, most existing algorithms do not scale well on large-scale parallel computers.  Direct approaches based on factorisations of the covariance or precision matrix, e.g., \cite{rue2001fast,rue2005gaussian}, run into memory problems and their cost per sample scales in general like $\mathcal{O}(n^{1+\zeta})$ with respect to the number of grid points $n$ where $\zeta>0$ is a positive constant; the computational complexity can only be reduced to $\mathcal{O}(n\log(n))$ if the matrix is structured and FFT-based approaches are applicable. \rev{While in exact arithmetic the Cholesky factorisation allows the generation of independent samples from the target distribution, this is not the case in finite precision floating point arithmetic: for very large problems the ill-conditioning of the precision matrix can lead to the accumulation of rounding errors and the generated samples might in fact be drawn from a slightly biased distribution.}

Additional complications arise in the case where the Gaussian distribution is conditioned on data, as in (linear) Bayesian inverse problems, or when the spectrum of the covariance operator decays slowly. On the other hand, stationary, iterative approaches, such as random-walk Metropolis-Hastings \cite{metropolis1953,hastings1970}, preconditoned Crank--Nicholson \cite{cotter2013mcmc}, Langevin-based samplers \cite{roberts1996exponential} or Hamiltonian Monte Carlo \cite{duane1987216} become extremely inefficient as the grid size goes to zero. In particular, the convergence rate and the integrated autocorrelation time of the resulting Markov chain degenerate, leading to poor effective sample sizes. A lot of work has gone into this problem, but it is still a topic of  ongoing research \cite{LindgrenEtAl.F_2011_ExplicitLinkGaussian,LiuEtAl.Y_2019_AdvancesGaussianRandom,LangEtAl.A_2011_FastSimulationGaussian,Croci.M_etal_2018_EfficientWhiteNoise,FeischlEtAl.M_2018_FastRandomField, BachmayrEtAl.M_2020_UnifiedAnalysisPeriodizationBased,BolinEtAl.D_2020_RationalSPDEApproach,KressnerEtAl.D_2020_CertifiedFastComputations,LindgrenEtAl.F_2022_SPDEApproachGaussian}.
\paragraph{Connection between samplers and solvers.}
One of the most basic stationary iterative methods is the Gibbs sampler, see e.g. \cite{RobertEtAl.CP_2004_MonteCarloStatistical,GelmanEtAl.A_2013_BayesianDataAnalysis,RubinsteinEtAl.RY_2016_SimulationMonteCarlo}. The problem of this standard method is its slow convergence to the target distribution and the strong correlation of the resulting samples, both of which become substantially worse as the resolution increases. Already in the late 1980s, Goodman, Sokal and their collaborators wrote a series of papers aimed at accelerating the Gibbs sampler using multigrid ideas, leading to extensive research activity in the quantum physics community \cite{goodman1989multigrid,edwards1992multi,edwards1991multi,mana1996dynamic,mendes1996multi}. The key observation in the seminal paper by Goodman and Sokal \cite{goodman1989multigrid} is the connection of random samplers, such as the Gibbs method, to iterative solvers for linear systems. In particular, the Gibbs sampler for generating Gaussian samples $x \sim\mathcal{N}(\overline{x},A^{-1})$ with mean $\overline{x}$ and covariance matrix $A^{-1}$ can be written as an iteration identical, except for an additive noise component, to the Gauss--Seidel iteration for solving the linear system $A\overline{x}=f$ to obtain $\overline{x}=A^{-1}f$; for more details see \cite{goodman1989multigrid}, as well as \cite{Sokal1997,Fox.C_Parker_2017_AcceleratedGibbsSampling}. Based on this observation, they proposed the so-called \emph{Multigrid Monte Carlo (MGMC) method}---a random analogue of the multigrid method for solving discretised partial differential equations (PDEs). A careful analysis of the MGMC method is the focus of this paper. We assume that we have a positive definite operator $\mathcal A: H \to H$ on some function space $H$ on $\mathbb{R}^d$, for example $\mathcal{A} = (-\Delta+\kappa^2 \id)^\alpha$ with positive integer $\alpha$ where $\Delta$ is the Laplace operator. A sufficiently accurate discretisation of $\mathcal A$ on some grid in $\mathbb{R}^d$ results in a possibly very large matrix $A \in \mathbb{R}^{n \times n}$. The goal of this paper is to show how to efficiently generate samples from $\mathcal{N}(\overline{x},A^{-1})$ for $n\rightarrow\infty$. \rev{It should be pointed out that a discussion of the (rate of) convergence of the finite dimensional distribution $\mathcal{N}(\overline{x},A^{-1})$ to the corresponding inifinite dimensional Gaussian Random Field is beyond the scope of this paper, and we refer the reader to \cite{LindgrenEtAl.F_2022_SPDEApproachGaussian} and references therein for further details.} We provide a rigorous theoretical justification for the optimal scaling of our method for large scale problems: the cost for generating an independent sample grows linearly with the problem size and the sampler is optimal in the continuum limit. Our analysis includes the important setting of a Gaussian random field conditioned on noisy data via a Bayesian approach; here, $\mathcal A$ is a finite-rank perturbation of some differential operator and the quantity of interest a functional of the Gaussian random field. Somewhat surprisingly, MGMC has not yet been considered in this context.\vspace{-1ex}
\paragraph{Multigrid Monte Carlo.}
Just like the deterministic multigrid linear solver, the MGMC method is a recursive algorithm built from (random) smoothing iterations and coarse-level updates. In the deterministic multigrid method, coarse-level updates accelerate the convergence of the iteration for solving $A \overline{x}=f$ by reducing the low-frequency components in the error; in the MGMC, coarse-level updates accelerate convergence of the moments, i.e., convergence of the mean and the covariance to $\overline{x}=A^{-1}f$ and $A^{-1}$, respectively,  again by targeting the low-frequency components in the distribution. The smoothing iteration -- in the form of a Gibbs sweep across the grid -- is realised by adding an additive noise component to the usual deterministic Gauss-Seidel iteration. This noise is zero-mean and has a covariance suitably chosen to achieve convergence and to leave the target distribution invariant. Thus, upon taking the expectation, the mean of the MGMC iterates is identical to the iterates in deterministic multigrid for solving linear systems.

\rev{While the focus of this paper is the application of MGMC in a geometric setting which naturally induces a hierarchy of nested function spaces, the method is more general and can also be applied in a purely algebraic sense. This extension to an algebraic multigrid Monte Carlo method is beyond the scope of this paper and the topic of a subsequent publication, based on preliminary work in \cite{friess2024}.}\vspace{-1ex}
\paragraph{Key achievements.}
The main objective of this paper is to provide theoretical support for the reported efficacy of the MGMC method, which has not been comprehensively addressed in the literature even though the method was invented 35 years ago. The key contributions are as follows:
\begin{enumerate}
	\item A grid-size-independent convergence theory for the MGMC \rev{sampler}. %
	The analysis shows that the first two moments \rev{of the samples}, which fully characterise the Gaussian distribution, converge exponentially at a uniform rate to the target moments.
	\item An extension of the MGMC algorithm, as well as its convergence theory,
	to the important situation of sampling Gaussian random fields conditioned on noisy data, i.e., to linear Bayesian inverse problems.
	\item A corollary on the exponential decay of the autocorrelations of the computed samples, again with grid-size independent rate.
	\item \rev{A corollary on the root mean squared convergence of a Monte Carlo estimator that uses the samples generated by the MGMC algorithm to predict some quantity of interest.}
	\item A corollary on the optimal complexity of MGMC \rev{updates to produce samples, whose mean and covariance converge to those} of the limiting infinite-dimensional Gaussian random field.
	\item A numerical investigation demonstrating the grid independent convergence and the efficiency of the MGMC sampler for a set of representative model problems.
\end{enumerate}
In contrast to stationary iterative methods on a single grid, such as the Gibbs sampler, the convergence rate of MGMC does not degenerate as the grid size tends to zero. The convergence analysis follows Fox and Parker \cite{Fox.C_Parker_2017_AcceleratedGibbsSampling}. In that paper, they demonstrate that the convergence of the standard component-sweep Gibbs sampler is (in a suitable sense) equivalent to the deterministic (lexicographical) Gauss--Seidel iterative method, and that any matrix splitting used to generate a deterministic relaxation scheme also induces a stochastic relaxation that is a generalised Gibbs sampler; see also \cite{norton2016tuning} where this analysis is extended also to non-Gaussian targets and to the popular Langevin and hybrid (or Hamiltonian) Monte Carlo samplers. Here, we extend the analysis in \cite{Fox.C_Parker_2017_AcceleratedGibbsSampling} to MGMC. In doing so, we need to go beyond the derivation in \cite{goodman1989multigrid} 
and beyond the standard multigrid theory (as presented for example in \cite{Hackbusch.W_2016_IterativeSolutionLarge}) which provides convergence of the mean. Extra work is necessary to obtain the convergence of the covariance and to extend the analysis to the conditional case in a Bayesian setting. In particular, for linear functionals of the random field that are bounded in $H$, the crucial approximation and smoothing properties for the conditional case can be reduced to the corresponding properties in the unconditional case.\vspace{-1ex}
\paragraph{Relationship to previous work.}
The link between stationary iterative methods from numercial linear algebra and generalised Gibbs samplers was first noted and exploited by Adler and Whitmer \cite{adler1981over,whitmer1984over} and then by Goodman and Sokal \cite{goodman1989multigrid} in the context of MGMC. 
\jg{In \cite{amit1991comparing} the convergence of random sampling schemes%
	\footnote{\jg{In \cite{amit1991comparing}, a ``deterministic'' update rule sweeps through the state vector in a fixed order and updates local values by drawing from a random distribution. In contrast, ``stochastic'' updates process the coordinates in random order.}}
	is analysed and bounds for the rate of convergence are derived for Gaussian probability distributions. The authors of \cite{amit1991comparing} point out that the key quantity in the analysis is the error-propagation matrix of the Gauss-Seidel iteration. The work in \cite{Amit1991,Amit1996} further extends the analysis presented in \cite{amit1991comparing},} 
      \jg{while \cite{geman1984stochastic,geman1993stochastic} discuss stochastic relaxation methods and the Gibbs sampler in the context of image processing. However, as far as we can tell these papers do not explicitly point out the connection between Gibbs samplers and  relaxation methods for the solution of linear systems. As \cite{geman1984stochastic} remarks, a random sampler that only allows transitions to states with lower energy will get stuck in local minima.
Of course, for a multivariate normal distribution this implies convergence to the most likely state, which is the solution of a linear system.}

\jg{The link between iterative methods and samplers was formalised in a mathematically abstract way in \cite{Fox.C_Parker_2017_AcceleratedGibbsSampling}. They generalised the convergence analysis to any sampler that corresponds to a general matrix splitting and applied the accelerated sampler to linear Bayesian inverse problems.}
A key observation in \cite{Fox.C_Parker_2017_AcceleratedGibbsSampling} is that stationary iterative methods, such as Gauss-Seidel, might have been investigated in the 1950s as linear solvers, but their convergence is too slow to be of any practial interest for solving discretised PDEs in current days. The often purported slow convergence of MCMC methods based on stationary iterative procedures such as the Gibbs sampler thus follows directly from said equivalence between stationary iterative solvers and generalised Gibbs samplers. Parker and Fox then go on to note that \emph{``the last fifty years has seen an explosion of theoretical results and  algorithmic development that have made linear solvers faster and more efficient, so that for large problems, stationary methods are used as preconditioners at best, while the method of preconditioned conjugate gradients, GMRES, multigrid, or fast-multipole methods are the current state-of-the-art for solving linear systems in a finite number of steps \cite{saad2000iterative}''}. In \cite{Fox.C_Parker_2017_AcceleratedGibbsSampling} and the earlier papers \cite{parker2012sampling,fox2014convergence}, they exploit this link and demonstrate how to substantially speed up the stationary iterations derived from symmetric splittings by polynomial acceleration, particularly Chebyshev acceleration or Krylov-type methods such as conjugate gradients. However, the authors of \cite{Fox.C_Parker_2017_AcceleratedGibbsSampling} do not go on to analyse the MGMC method by Goodman and Sokal or to extend it to the linear Bayesian setting that we consider here.

A range of other methods have been proposed for sampling from multivariate normal distributions that arise from the discretisation of Gaussian random fields; e.g. \cite{rue2001fast,rue2005gaussian} describe several optimisations of the Cholesky sampler for problems that are formulated on a graph. A suitable reordering of the precision matrix results in a reduced bandwidth and thus a much better computational complexity. As the authors show, the method can be further optimised by recursively sampling from conditional distributions that arise from partitioning the graph via a divide-and-conquer approach; this requires the solution of sparse linear systems for which standard methods can be employed. 
\jg{The work \cite{Atzberger2010} discusses the generation of Gaussian Random Fields for the simulation of noise in fluid-structure interactions on adaptive meshes for which sampling via the Fast Fourier Transformation is not applicable. They employ a variant of MGMC that is based on Fast Adaptive Composite Mesh Multigrid \cite{mccormick1986fast}. Related to this, the authors of \cite{Plunkett2014} apply MGMC to a finite element discretisation on an unstructured mesh for a very similar physical problem.}

The precision operator $\mathcal{A}$ and the dimension $d$ of the problem determine the roughness of the underlying Gaussian Random Field. The commonly used operator $\mathcal{A}=(-\Delta+\kappa^2\id)^\alpha$ corresponds to a class of Matern fields with $\nu=\alpha - d/2$ where $\nu$ determines the mean-square differentiability of Gaussian Random Field \cite{LindgrenEtAl.F_2011_ExplicitLinkGaussian}. Although the discretised problem is well-defined for a finite lattice spacing, for $\nu\le 0$ the field is so rough that the continuum limit of the probability distributions only exists in the weak sense, i.e. when evaluated against bounded linear operators. This includes the important case $\mathcal{A}=-\Delta+\kappa^2\id$ in $d>1$ dimensions. In contrast to other methods such as the SPDE approach \cite{LindgrenEtAl.F_2011_ExplicitLinkGaussian,LindgrenEtAl.F_2022_SPDEApproachGaussian,Croci.M_etal_2018_EfficientWhiteNoise}, this does not pose any problems for MGMC. To see this, observe that both MGMC and the SPDE approach work with the precision matrix $A$. This leads to good computational properties when $A$ is sparse, and this naturally follows if the matrix is given by a discretised differential operator of the above form. However, the SPDE approach inverts the square-root $A^{1/2}$ of the precision matrix, which corresponds to the operator $\mathcal{A}^{1/2}=(-\Delta+\kappa^2\id)^{\alpha/2}$. If $\alpha$ is an odd integer, the SPDE approach will require the inversion of a fractional power of a sparse matrix, which is not sparse itself. This problem does not arise for MGMC which only uses $A$ and which can therefore readily be used to sample fields for $\mathcal{A}=-\Delta+\kappa^2\id$. More generally, for arbitrary precision operators MGMC is therefore able to also cover the case where the square-root of the precision matrix is non-sparse even though the precision matrix itself is sparse. \rev{In other words, MGMC can sample from distributions with covariance operator $(-\Delta+\kappa^2 I)^{-1}$, while additional modifications are necessary to extend the SPDE approach to this case  (see e.g. \cite{bolin2018weak}). 
It is also not obvious how to extend the SPDE approach to the Bayesian setting, where the precision operator is a low-rank perturbation of a differential operator. As we discuss in detail below, MGMC can be readily adapted to this case. Finally, even when the SPDE approach is applicable, it requires the inversion of a large, ill-conditioned matrix. The most efficient, versatile and parallelisable class of solvers for such matrices that also work on general domains and for non-stationary distributions are multigrid methods. However, these will require several iterations to converge, each of which will incur roughly the same cost as a single MGMC update. Thus, while a detailed comparison is beyond the scope of this paper, MGMC is in general expected to be competitive with SPDE methods.}

In \cite{harbrecht2012low} a low-rank truncation of the Cholesky factorisation is used to approximately sample from Gaussian random fields. Although the method introduces an approximation, the resulting error can be controlled systematically and the method can be more efficient than the naive Cholesky approach since only a finite number of eigenmodes have to be included. However, the efficiency of the method depends on the spectrum of the covariance operator, and in particular the sufficiently rapid decay of the eigenvalues. We have found that for $\mathcal{A}=-\Delta+\kappa^2\id$ in $d>1$ dimensions the number of required eigenmodes grows too rapidly for the method to be efficient. Again, MGMC is able to cover this case without any problems.

Another advantage of the MGMC approach is that it can be readily parallelised on distributed memory machines by using well established domain decomposition techniques for the parallel implementation of multigrid solvers \cite{chow2006survey}. For example, the grid traversal in the Gibbs sampler that is used at every level of the grid hierarchy, needs to be performed in a red-black ordering and well-known strategies such as coarse grid aggregation \cite{blatt2012massively} can be used on the coarser levels to take into account the fact that the number of processing units can exceed the number of unknowns. Sampling from posterior distributions in the linear Bayesian setting that we consider here, requires parallel sparse matrix vector products and scatter operations of the form $y = a (a^\top x)$ for vectors $a,x,y$ and again these can be implemented using well established techniques in scientific computing. Although a parallel implementation of MGMC is beyond the scope of this paper, we have every reason to believe that it will show the same excellent parallel scalability on large distributed memory machines as multigrid solvers \cite{baker2012scaling,blatt2012massively,kohl2022textbook}.\vspace{-1ex}
\paragraph{Structure.}
The rest of the paper is organised as follows. In Section~\ref{sec:problem_setting} we outline the linear Bayesian inversion problem that we consider and discuss its discretisation. Our numerical methods, namely the Multigrid Monte Carlo algorithm and bespoke samplers based on matrix splittings are introduced in Section~\ref{sec:methods}. The main theoretical results regarding the invariance, convergence and computational complexity of MGMC are collected in Section~\ref{sec:theory}. Finally, numerical results for several model problems are presented in Section~\ref{sec:results}. We conclude and discuss ideas for future work in Section~\ref{sec:conclusion}.

\section{Problem setting}\label{sec:problem_setting}

We start by writing down the sampling problem in abstract form and discuss its discretisation. Although typically we are interested in priors that arise from the discretisation of a PDE, writing down the problem in general form ensures that our methods are more widely applicable, provided a set of assumptions (which are clearly specified below) are satisfied.
\subsection{Sampling Hilbert space-valued Gaussian random variables}\label{sec:sampling_hilbert_space}

Let $(\Omega,\mathcal{\mathscr{F}},\mathbb{P})$ be a probability space
and $(H,\langle\cdot,\cdot\rangle_{H})$ a separable Hilbert
space \rev{over the real numbers~$\mathbb{R}$}. Let $\mathcal{A}^{-1}\colon H\to H$ be a self-adjoint positive
definite operator.
Then we aim to generate samples following the Gaussian distribution
$\mathcal{N}(0,\mathcal{A}^{-1})$ with covariance operator $\mathcal{A}^{-1}$, where without loss of generality we assume that the mean is zero. An important example of $\mathcal{A}^{-1}$ is the Mat\'ern class of covariance operators, which is given by suitable inverse powers of shifted Laplace operators, such that  $\mathcal{A}^{-1}=(-\Delta+\kappa^2\id)^{-\alpha}$ for $\alpha>0$ \cite{whittle1954stationary,whittle1963stochastic,LindgrenEtAl.F_2011_ExplicitLinkGaussian,BolinEtAl.D_2020_NumericalSolutionFractional}.

Note that for $\mathcal{N}(0,\mathcal{A}^{-1})$ to be a distribution
of an $H$-valued random variable, $\mathcal{A}^{-1}$ must necessarily
be a trace class operator (see e.g., \cite[Proposition
2.16]{DaPrato.D_2014_book}). This implies compactness
of~$\mathcal{A}^{-1}$, which provides natural connections to abstract
partial differential equations (PDEs), i.e., unbounded operators
with compact inverse, and their discretisations. More precisely, we
consider the weak formulation of abstract PDEs on the Cameron--Martin
space and  its discrete counterparts, which gives rise to the linear
operator $\mathcal{A}$ and corresponding discretisation matrices. If
$\mathcal{A}^{-1}$ is compact but not trace-class, it is still
possible to define distributions of real-valued random variables that
describe \textit{observations} of an underlying Gaussian random field
in a bigger space than $H$; this is explained in more detail in
Remark~\ref{rem:trace_class} below \rev{and most of our numerical
experiments are in this setting}.

\subsection{\rev{Linear Gaussian Bayesian inverse problem}\label{subsec:Linear-Bayesian-inverse}}

For now, let us assume that $\mathcal{A}^{-1}$ is trace class. Generating Gaussian random variables which are conditioned on observations is important in linear Bayesian inverse problems, which we now briefly outline.
The problem is to find the distribution of a random variable $v\colon\Omega\to H$,
where the prior distribution is assumed to be Gaussian $v\sim\mathcal{N}(0,\mathcal{A}^{-1})$ for some given $\mathcal{A}$ that typically arises from the weak formulation of a PDE.
Suppose that an \emph{observation} $y\in\mathbb{R}^{\beta}$ is given
by
\begin{equation}
	y=\mathcal{B}v+\eta\quad\text{with}\quad\eta\sim\mathcal{N}(0,\Gamma),\label{eq:infinitdim-obs}
\end{equation}
where $\mathfrak{\mathcal{B}}\colon H\to\mathbb{R}^{\beta}$ is a
bounded linear operator and $\Gamma\in\mathbb{R}^{\beta\times \beta}$ is
the symmetric positive definite covariance matrix of the error $\eta$. The random variables $v$ and
$\eta$ are assumed to be independent. Then, the posterior distribution
is again Gaussian $\mathcal{N}(\mu,\widetilde{\mathcal{A}}^{-1})$
with the posterior mean
\begin{equation}
	\mu=\mathcal{A}^{-1}\mathcal{B}^{*}(\Gamma+\mathcal{B}\mathcal{A}^{-1}\mathcal{B}^{*})^{-1}y\label{eq:postm-infinite}
\end{equation}
and the posterior covariance operator
\begin{equation}
	\widetilde{\mathcal{A}}^{-1}=\mathcal{A}^{-1}-\mathcal{A}^{-1}\mathcal{B}^{*}(\Gamma+\mathcal{B}\mathcal{A}^{-1}\mathcal{B}^{*})^{-1}\mathcal{B}\mathcal{A}^{-1} = \left(\mathcal{A}+\mathcal{B}^{*}\Gamma^{-1}\mathcal{B}\right)^{-1},\label{eq:postcov-infinite}
\end{equation}
where $\mathcal{B}^{*}\colon\mathbb{R}^{\beta}\to H$ is the (Hilbert
space) adjoint operator of $\mathcal{B}$; see for example \cite[Lemma 4.3]{Hairer.M_etal_2005_AnalysisSPDEsArising}
together with \cite[Chapter
2]{Sullivan.TJ_2015_IntroductionUncertaintyQuantification}. \jg{In the
  finite-dimensional case, it is straightforward linear algebra.} Further, define $f\in H$ as
\begin{equation}
	f := \widetilde{\mathcal{A}}\mu = \mathcal{B}^* \Gamma^{-1} y.\label{eqn:f_definition}
\end{equation}
We are interested in generating samples from the posterior distribution
$\mathcal{N}(\mu,\widetilde{\mathcal{A}}^{-1})$ and this paper focuses on the multigrid Monte Carlo sampler, which was introduced in \cite{goodman1989multigrid} and is described in Section~\ref{sec:methods} below.
\jg{The generated samples can be used to sample from the probability distribution of a quantity of interest,}   
which is a functional $\mathcal{F}:H\rightarrow \mathbb{R}$.
In the following we assume that $\mathcal{F}$ is a bounded linear functional of the form
\begin{equation}
	\mathcal{F}(\phi) = \langle \phi, \chi\rangle_H
	\qquad\text{for all $\phi\in H$},
	\label{eqn:qoi_linear}
\end{equation}
where $\chi$ is an element in $H$. 
In a slight abuse of notation, we will sometimes use $\mathcal{F}$ and
its Riesz-representer $\chi$ interchangeably.

\jg{While for linear $\mathcal{F}$, considered here, the expectation
  and covariance of the Gaussian observable can be computed using linear
  algebra, this is no longer the case for non-linear functionals.} 

\subsection{Discretisation\label{sec:discretisation}}

In practice, the problem described above needs to be recast into a finite
dimensional setting which can then be implemented on a computer.

We consider the vector subspace $V:=D(\mathcal{A}^{1/2})\subset H$
with $D(\mathcal{A}^{1/2})$ denoting the domain of the operator $\mathcal{A}^{1/2}$
in $H$. Here, the square-root $\mathcal{A}^{1/2}$ of $\mathcal{A}$
is defined spectrally\rev{; see \cite[Section
  3.7]{Sell.G_You_2013_book} for details}.
Because $\mathcal{A}^{1/2}$ is positive
definite, the bilinear form $a(\cdot,\cdot)\colon V\times V\to\mathbb{R}$
defined via
\begin{equation}
	a(\zeta,\varphi):=
	\langle\mathcal{A}^{1/2}\zeta,\mathcal{A}^{1/2}\varphi\rangle_{H}\qquad\text{ for }\zeta,\varphi\in V,\label{eq:bilin-a}
\end{equation}
is an inner product on $V$ and thus, 
$(V,a(\cdot,\cdot))$ %
forms a Hilbert space \rev{over $\mathbb{R}$}; see e.g., \cite[Section 3.7]{Sell.G_You_2013_book}.
As usual, we let $\|v\|_{V} := a(v,v)$ and note that then $\|v\|_{H}\leq c\|v\|_{V}$ for $v\in V$ with some constant
$c>0$. The space $D(\mathcal{A}^{1/2})$ is identical to the image of $\mathcal{A}^{-1/2}$
on $H$, which is often referred to as the Cameron--Martin space
\cite{DaPrato.G_2006_IntroductionInfiniteDimensionalAnalysis,Lifshits.M.A_2012_Lectures_on_Gaussian}.

To discretise \eqref{eq:infinitdim-obs}--\eqref{eq:postcov-infinite} we introduce a hierarchy $V_{0}\subset\dotsb\subset V_{\ell}\subset\dotsb\subset V$ of nested finite dimensional subspaces with dimensions $n_{\ell}$ for $\ell \ge 0$. 
These subspaces are typically finite element \rev{(FE) spaces on grids with mesh size $h_\ell$} and for each $\ell$ we choose a basis $\{\phi_{j}^{\ell}\}_{j=1,\dots,n_{\ell}}$ of $V_{\ell}$. In each subspace $V_{\ell}$ we observe the quantity
\begin{equation}
	y_{\ell}=\mathcal{B}\tilde{v}_{\ell}+\eta,\label{eq:finitedim-obs}
\end{equation}
where $\tilde{v}_{\ell}\in V_{\ell}$ is an approximation of $v\in V$ and $y_{\ell} \in\mathbb{R}^\beta$ as in \eqref{eq:infinitdim-obs}.
Since the spaces $V_{\ell}$ are finite dimensional, we can expand each function $u_\ell \in V_\ell$ in terms of the basis functions $\phi_{j}^{\ell}$; the expansion coefficients form an $n_\ell$-dimensional vector which decribes the degrees of freedom. As a consequence, the sampling problem on level $\ell$ can be expressed in terms of finite dimensional prior- and posterior-distributions on $\mathbb{R}^{n_{\ell}}$ with corresponding covariance matrices. For this, let $P_{\ell}\colon\mathbb{R}^{n_{\ell}}\to V_{\ell}$ be the vector-space isomorphism defined by $P_{\ell}x=\sum_{j=1}^{n_{\ell}}x^{j}\phi_{j}^{\ell}$ for the degrees-of-freedom-vector $x=(x^{1},\dots,x^{n_{\ell}})\in\mathbb{R}^{n_\ell}$, where $P_{\ell}$ is
a bijection because $\{\phi_{j}^{\ell}\}_{j=1,\dots,n_{\ell}}$ is a basis of $V_\ell$. Moreover, it is natural to assume the following.
\begin{assumption}
	\label{assu:Pell}There is a decreasing function
        $\Phi\colon\mathbb{N}_{0}\to[0,\infty)$ such that %
        $P_{\ell}\colon\mathbb{R}^{n_{\ell}}\to V_{\ell}$ satisfies
	\begin{equation}
		c_{1}\|P_{\ell}x_{\ell}\|_{H}\leq\Phi(\ell)\|x_{\ell}\|_{2}\leq c_{2}\|P_{\ell}x_{\ell}\|_{H}\quad\text{for all }x_{\ell}\in\mathbb{R}^{n_{\ell}},\label{eq:Pl-property}
	\end{equation}
	with some constants $c_1,c_2>0$ independent of $\ell$.
\end{assumption}
For example, when $H=L^{2}(D)$ for a suitable domain $D\subset\mathbb{R}^{d}$ and $V_{\ell}$ is a FE space with shape-regular mesh, then Assumption~\ref{assu:Pell} is satisfied with $\Phi(\ell)=h_{\ell}^{d/2}$; see \cite[Theorem 8.76]{Hackbusch.W_2017_book_elliptic_2nd}. 
\jg{In the language of FE methods, Assumption~\ref{assu:Pell} requires that  the mass matrix for the basis functions $\{\phi_j^{\ell}\}$ is uniformly well-conditioned, which is a common assumption. 
	For more details on FE methods, such as the construction of FE spaces, we refer the reader to \cite{Ciarlet.P2002book}.}

The matrix representation $A_{\ell}$  of the bilinear form $a(\cdot,\cdot)$ restricted to $V_{\ell}\times V_{\ell}$ with respect to $\{\phi_{j}^{\ell}\}_{j=1,\dots,n_{\ell}}$ can be constructed with the help of $P_\ell$ as
\begin{equation}
	(A_{\ell})_{jk}:=a(\phi_j^\ell,\phi_k^\ell) = \langle\mathcal{A}^{1/2}\phi_{j}^{\ell},\mathcal{A}^{1/2}\phi_{k}^{\ell}\rangle_{H}=\langle\mathcal{A}^{1/2}P_{\ell}e_{j}^{\ell},\mathcal{A}^{1/2}P_{\ell}e_{k}^{\ell}\rangle_{H}\,,\label{eq:def-A_ell}
      \end{equation}
where $\{e_{j}^\ell\}_{j=1,2,\dots,n_\ell}$ is the canonical basis of $\mathbb{R}^{n_{\ell}}$. 
\jg{The matrix $A_\ell$ is commonly referred to as the stiffness
  matrix in the FE literature.}  
The (dual) vector-representation $f_\ell\in\mathbb{R}^{n_\ell}$ of $f$ in \eqref{eqn:f_definition} is given by
\begin{equation}
	(f_\ell)_j := \langle f,\phi_j^\ell\rangle_H = y^\top \Gamma^{-1}\mathcal{B}\phi_j^\ell.\label{eqn:f_ell_definition}
\end{equation}
Let further $v_{\ell}:=P_{\ell}^{-1}\tilde{v}_{\ell}$. Then the finite dimensional observation model in \eqref{eq:finitedim-obs} can be rewritten as
\[
	y_{\ell}=\mathcal{B}P_{\ell}v_{\ell}+\eta\,,
\]
while the prior on $\mathbb{R}^{n_{\ell}}$ is chosen to be
$v_{\ell}\sim\mathcal{N}(0,A_{\ell}^{-1})$. Finally, denote by $B_{\ell}$ 
the transpose of the matrix representation of the mapping $\mathcal{B}P_{\ell}\colon\mathbb{R}^{n_{\ell}}\to\mathbb{R}^{\beta}$
with respect to the canonical bases of $\mathbb{R}^{n_\ell}$ and $\mathbb{R}^{\beta}$, i.e.
\begin{equation}
	(B_{\ell})_{jk}:=(\mathcal{B}P_{\ell}e_{j}^\ell)_{k}=(\mathcal{B}\phi_{j}^{\ell})_{k},\qquad j=1,\dots,n_{\ell}\text{ and }k=1,\dots,\beta.\label{eqn:B_ell_definition}
\end{equation}
With this the posterior mean vector $\mu_{\ell}$ and covariance matrix $\widetilde{A}_{\ell}^{-1}$ are given by
\begin{equation}
	\mu_{\ell}=A_{\ell}^{-1}B_{\ell}(\Gamma+B_{\ell}^{\top}A_{\ell}^{-1}B_{\ell})^{-1}y_{\ell},\label{eq:post-mean}
\end{equation}
\begin{equation}
	\widetilde{A}_{\ell}^{-1}=(A_{\ell}+B_{\ell}\Gamma^{-1}B_{\ell}^{\top})^{-1}=A_{\ell}^{-1}-A_{\ell}^{-1}B_{\ell}(\Gamma+B_{\ell}^{\top}A_{\ell}^{-1}B_{\ell})^{-1}B_{\ell}^{\top}A_{\ell}^{-1},\label{eq:post-cov}
      \end{equation}
\noindent
which are the discrete versions of (\ref{eq:postm-infinite}) and (\ref{eq:postcov-infinite}) respectively. In \eqref{eq:post-cov} we used the Sherman--Morrison--Woodbury formula to express $\widetilde{A}_{\ell}^{-1}$ as a low-rank update of $A_\ell^{-1}$; see for example \cite[Theorem 6.20, (2.16)]{Stuart.A_2010_Acta_Inverse} together with \cite[Chapter 0]{Horn.R_Johnson_book_2013_2nd}.

Let $I_{\ell-1}^{\ell}\in \mathbb{R}^{n_{\ell}\times n_{\ell-1}}$ be the matrix representation of the mapping $P_{\ell}^{-1}P_{\ell-1}: \mathbb{R}^{n_{\ell-1}}\rightarrow\mathbb{R}^{n_{\ell}}$, i.e.,
\begin{equation}
	(I_{\ell-1}^{\ell})_{jk} = (e_j^\ell)^\top P_{\ell}^{-1}P_{\ell-1} e_k^{\ell-1}\qquad\text{for $j=1,\dots,n_{\ell}$ and $k=1,\dots,n_{\ell-1}$}.
	\label{eqn:definition_prolongation}
\end{equation}
The matrix $I_{\ell-1}^{\ell}$ defined in this way is referred to as the canonical prolongation in the multigrid literature, the corresponding restriction matrix is $I_{\ell}^{\ell-1}:=(I_{\ell-1}^{\ell})^{\top}\in \mathbb{R}^{n_{\ell-1}\times n_{\ell}}$. With this the matrices $A_{\ell}$ defined by \eqref{eq:def-A_ell}
and $B_{\ell}$ in \eqref{eqn:B_ell_definition} on consecutive levels
can be related as follows:
\begin{equation}
	A_{\ell-1}=I_{\ell}^{\ell-1}A_{\ell}I_{\ell-1}^{\ell} =
        (I_{\ell-1}^{\ell})^\top A_{\ell}I_{\ell-1}^{\ell}, \qquad
	B_{\ell-1}= I_{\ell}^{\ell-1}B_\ell.\label{eqn:A_ell_1_B_ell_1}
\end{equation}

The components of the vector representation $F_\ell\in \mathbb{R}^{n_{\ell}}$ of the mapping $\mathcal{F}P_\ell:\mathbb{R}^{n_\ell}\rightarrow \mathbb{R}$ related to  the linear functional $\mathcal{F}$ defined in \eqref{eqn:qoi_linear} (with respect to the canonical basis) are given by
\begin{equation}
	(F_\ell)_j := \mathcal{F}(P_\ell e_j^\ell) = \mathcal{F}(\phi_j^\ell),\qquad j=1,2,\dots,n_\ell.\label{eqn:F_matrix}
\end{equation}

Finally, we list a natural assumption on the operator $\mathcal{A}$ and its
discretisations. This assumption is satisfied in general if
$\mathcal{A}$ arises from a PDE problem, such as in Example
\ref{example-Laplace} below. To motivate this assumption, we note that the following problem
admits a unique solution: Find $u\in V$ such that
\begin{equation}
	a(u,\varphi)=\langle f,\varphi\rangle_{H}\text{ for all }\varphi\in V,\label{eq:eq-given-by-A}
\end{equation}
with $a(\cdot,\cdot)$ as in \eqref{eq:bilin-a}. This follows since $\|\zeta\|_{V}^{2}=a(\zeta,\zeta) > 0$ for all $0 \not=\zeta \in V$ and $|a(\zeta,\varphi)|\leq\|\zeta\|_{V}\|\varphi\|_{V}$
for all $\zeta,\varphi\in V$, and as a consequence the Lax--Milgram theorem guarantees existence and uniqueness of the soution $u$.
\begin{assumption}
	\label{assu:WtoH-best-Vell-approx}There exists a subspace 
	$W\subset V \subset H$ equipped with a norm $\|\cdot\|_W$ such that
        \begin{enumerate}
        \item[(a)] the solution $u$ of \eqref{eq:eq-given-by-A}
	belongs to $W$ and $\|u\|_{W}\leq C_{\mathcal{A}}\|f\|_{H}$,
	for some constant $C_{\mathcal{A}}>0$;
        \item[(b)] the best-approximation error in $V_\ell\subset V$ with respect to
        the $V$-norm satisfies
	\[
        \inf_{w_{\ell}\in  V_{\ell}}\|w-w_{\ell}\|_{V}\leq\sqrt{\Psi(\ell)}\|w\|_{W}\,,\qquad\text{for
                  any }w\in W 
        \]
        for a  decreasing function $\Psi\colon\mathbb{N}_{0}\to[0,\infty)$ \jg{with $\Psi(\ell)\rightarrow 0$ as $\ell\rightarrow \infty$}.
              \end{enumerate}
\end{assumption}
To see why this assumption is natural, consider the following example. We use the notation $\Psi(\ell)\asymp h_\ell^2$ to denote asymptotic growth in the sense that there exist constants $c_\pm>0$ and $\ell_0\in\mathbb{N}$ such that $c_- h_\ell^2 < \Psi(\ell) <c_+ h_\ell^2$ for all $\ell \ge \ell_0$.
\begin{example}
	\label{example-Laplace}Let $\mathcal{A}:=-\Delta+\kappa^{2}\id$, where
	$\Delta$ is the Laplace operator with zero-Dirichlet boundary condition
	and $\kappa^{2}>0$ is a constant. 
	Let $D\subset\mathbb{R}^{d}$ be a bounded and convex domain, and suppose that $(V_{\ell})_{\ell
        \ge 0}$ is a family of continuous, piecewise linear FE
        spaces defined on a regular family of triangulations with mesh
        sizes $h_{\ell}$. Then, it is well known that Assumption~\ref{assu:WtoH-best-Vell-approx} %
	is satisfied with $W=H^{2}(D)$ and $\Psi(\ell)\asymp
        h_{\ell}^{2}$; see, e.g., \cite[Theorem
        9.24]{Hackbusch.W_2017_book_elliptic_2nd} and \cite[Theorem
        3.4.2]{Quarteroni.A_Vali_book_1994_NAPDE}, respectively.
\end{example}
\noindent
\rev{While Assumption~\ref{assu:WtoH-best-Vell-approx} is natural for
  Mat\'{e}rn-like distributions on finite domains as in
  Example~\ref{example-Laplace}, it is more general in that it also
  covers non-stationary processes similar to the ones described in
  \cite{fuglstad2015exploring}. To see this consider, for example, a
  precision operator of the form \mbox{$\mathcal{A}=-\nabla\cdot
    (K(x)\nabla \cdot ) + \kappa(x)^2\id$} where $K(x)$ and
  $\kappa(x)$ are sufficiently smooth, spatially varying fields,
  respectively matrix- and real-valued with $K(x)$ symmetric positive
  definite at every point $x\in D$.}

Under Assumption~%
\ref{assu:WtoH-best-Vell-approx}, the $H$-norm error $\|u-u_{\ell}\|_{H}$
of approximating $u$ with $u_{\ell}$ decays at the rate $\Psi(\ell)$
as stated in the following proposition (for a proof see, e.g., \cite[Proof of Theorem 8.65]{Hackbusch.W_2017_book_elliptic_2nd}).
\begin{prop}[Aubin and Nitsche]
  \label{prop:Aubin-Nitsche}
  Let $u\in V$ satisfy \eqref{eq:eq-given-by-A} and let Assumption~%
  \ref{assu:WtoH-best-Vell-approx}
  hold. Suppose furthermore that
  $u_{\ell}\in V_{\ell}$ satisfies
  \[
    a(u_{\ell},\varphi_{\ell})=\langle
    f,\varphi_{\ell}\rangle_{H}\text{ for all }\varphi_{\ell}\in
    V_{\ell}\,.
  \]
  Then there exists a constant $\tilde{C}>0$ independent of $\ell$
  such that
	\[
		\|u-u_{\ell}\|_{H}\leq\tilde{C}\Psi(\ell)\|u\|_{W}\,.
	\]
\end{prop}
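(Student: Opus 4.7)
The plan is to execute the classical Aubin--Nitsche duality argument, using the symmetry of $a(\cdot,\cdot)$ together with Assumption~\ref{assu:WtoH-best-Vell-approx} as the regularity shift and best-approximation ingredients.

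First I would record Galerkin orthogonality. Subtracting the discrete variational equation from the continuous one tested against $\varphi_\ell \in V_\ell \subset V$ gives $a(u-u_\ell,\varphi_\ell)=0$ for every $\varphi_\ell \in V_\ell$. Combined with the $V$-continuity and $V$-coercivity of $a$ (in fact $\|\cdot\|_V^2 = a(\cdot,\cdot)$), Céa's lemma in its symmetric form yields
\[
    \|u-u_\ell\|_V \;=\; \inf_{w_\ell \in V_\ell}\|u-w_\ell\|_V \;\leq\; \sqrt{\Psi(\ell)}\,\|u\|_W,
\]
where the last inequality is Assumption~\ref{assu:WtoH-best-Vell-approx}(b) applied to $w=u\in W$ (valid since $u$ solves \eqref{eq:eq-given-by-A} with datum $f\in H$, so Assumption~\ref{assu:WtoH-best-Vell-approx}(a) places $u$ in $W$).

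Next I would set up the dual problem. Let $e:=u-u_\ell \in V \subset H$ and consider: find $z\in V$ with
\[
    a(z,\varphi) \;=\; \langle e,\varphi\rangle_H \qquad \text{for all } \varphi \in V.
\]
Existence and uniqueness follow from Lax--Milgram exactly as for \eqref{eq:eq-given-by-A}, now with right-hand side $e$. Applying Assumption~\ref{assu:WtoH-best-Vell-approx}(a) to this dual problem gives $z\in W$ with $\|z\|_W \leq C_{\mathcal{A}}\|e\|_H$. Choosing the test function $\varphi = e$ and using the symmetry of $a$, I obtain
\[
    \|e\|_H^2 \;=\; \langle e,e\rangle_H \;=\; a(z,e) \;=\; a(e,z).
\]
Invoking Galerkin orthogonality, I can subtract any $z_\ell \in V_\ell$:
\[
    \|e\|_H^2 \;=\; a(e,z-z_\ell) \;\leq\; \|e\|_V\,\|z-z_\ell\|_V
\]
by the Cauchy--Schwarz inequality in the $V$ inner product.

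Finally, taking the infimum over $z_\ell \in V_\ell$ and using Assumption~\ref{assu:WtoH-best-Vell-approx}(b) for $z\in W$ gives $\inf_{z_\ell}\|z-z_\ell\|_V \leq \sqrt{\Psi(\ell)}\|z\|_W \leq \sqrt{\Psi(\ell)}\,C_{\mathcal{A}}\,\|e\|_H$. Combining with the Céa bound on $\|e\|_V$ from the first paragraph yields
\[
    \|e\|_H^2 \;\leq\; \sqrt{\Psi(\ell)}\,\|u\|_W \cdot \sqrt{\Psi(\ell)}\,C_{\mathcal{A}}\,\|e\|_H \;=\; C_{\mathcal{A}}\,\Psi(\ell)\,\|u\|_W\,\|e\|_H,
\]
and dividing by $\|e\|_H$ (the case $e=0$ being trivial) produces the claim with $\tilde C = C_{\mathcal{A}}$. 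There is no serious obstacle here; the only subtlety worth flagging is making sure that Assumption~\ref{assu:WtoH-best-Vell-approx} is genuinely available for the dual problem, which is fine because (a) is formulated for arbitrary $H$-data, and that the symmetry of $a$ (inherited from the self-adjointness of $\mathcal{A}^{1/2}$) is what lets one exchange the roles of solution and test function to produce the two factors of $\sqrt{\Psi(\ell)}$.
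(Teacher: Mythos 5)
Your proof is correct and follows the same route as the paper's (the standard Aubin--Nitsche duality argument): Galerkin orthogonality plus C\'ea's lemma for the $V$-norm error, a dual problem with datum $e=u-u_\ell$ whose solution inherits the $W$-regularity from Assumption~\ref{assu:WtoH-best-Vell-approx}(a), and Cauchy--Schwarz in the $a$-inner product to pick up the second factor of $\sqrt{\Psi(\ell)}$. The only cosmetic difference is that the paper subtracts the discrete dual solution $w_\ell$ where you subtract an arbitrary $z_\ell$ and take the infimum, which is equivalent.
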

\begin{rem}\label{rem:trace_class}
	The operator in Example~\ref{example-Laplace} is important and
        it will be the main one considered in our
        numerics. However, in this case $\mathcal{A}^{-1}$ is not
        trace-class. This does not affect the discrete
        formulation of the problem in
        Section~\ref{sec:discretisation}, but the random fields $v$
        and $\widetilde v$
        described by the infinite-dimensional  probability
        distributions $\mathcal{N}(0,\mathcal{A}^{-1})$ and
        $\mathcal{N}(\mu,\widetilde{\mathcal{A}}^{-1})$ are not
        classical fields in $H$. However, provided $\mathcal{A}$ has a compact
        inverse, it is still possible to define a family of $\mathbb{R}$-valued
        random variables as follows: intuitively, we consider linear functionals
        $(v,\chi) \in \mathbb{R}$ of the random field $v$ with Riesz-representor $\chi \in H$,
        generalising the $H$-inner product in \eqref{eqn:qoi_linear}. More
        precisely we
        consider the collection $\{(v,\chi) : \chi \in H\}$
of $\mathbb{R}$-valued Gaussian random variables satisfying 
\begin{align}
\mathbb{E}[(v ,\chi)] = & \ 0 \quad &\text{for \ensuremath{\chi\in H}},\label{eq:generalised-mean}\\
\mathbb{E}\bigl[\bigl((v ,\chi)-\mathbb{E}[(v ,\chi)]\bigr)\bigl((v
  ,\psi)-\mathbb{E}[(v ,\psi)]\bigr)\bigr] = & \ \langle
                                                    \chi,\mathcal{A}^{-1}\psi\rangle_{H}\quad
                                                    &\text{for \ensuremath{\chi,\psi\in H}}\label{eq:generalised-cov}
\end{align}
such that any linear combination of elements from $\{(v,\chi) :\chi\in H\}$ is a Gaussian random variable. 

To realise such random variables, we follow \cite[Section
4.1.2]{DaPrato.D_2014_book} and let $(\varphi_{j})_{j\geq1}$
be a complete $H$-orthonormal system (ONS) consisting of eigenfunctions
of $\mathcal{A}^{-1}$ with corresponding eigenvalues $(1/\lambda_{j})_{j\geq1}$. Then, $(\varphi_{j}/\sqrt{\lambda_{j}})_{j\geq1}$ is
a complete ONS of  $V =D(\mathcal{A}^{1/2})\subset
H$. For each $\chi \in H$, we define 
$(v,\chi)$ as the $L^{2}(\Omega)$-limit of the series
\begin{equation}\label{eq:def_general_functional}
  (v,\chi):= %
  \sum_{j=1}^{\infty} \xi_{j} \frac{ \langle \varphi_{j} , \chi \rangle_{H}}{\sqrt{\lambda_{j}}} 
,
\end{equation}
where $\{\xi_{j}\}_{j\geq1}$ are independent, standard
Gaussian random variables. Since
\begin{align*}
\mathbb{E}\bigg[\Big| \sum_{j=1}^{J} \xi_{j} \frac{ \langle
  \varphi_{j} , \chi \rangle_{H}}{\sqrt{\lambda_{j}}} 
  \Big|^{2}\bigg] & =\mathbb{E}\bigg[\Big| \sum_{j=1}^{J} \xi_{j}\langle\mathcal{A}^{-1/2}\varphi_{j},\chi\rangle_{H}\Big|^{2}\bigg]
=\sum_{j=1}^{J}\big|\langle\mathcal{A}^{-1/2}
                    \varphi_{j},\chi\rangle_{H}\big|^{2} \leq \|\mathcal{A}^{-1/2}\chi\|_{H}^{2}<\infty,
\end{align*}
the series in \eqref{eq:def_general_functional} converges the
resulting random variable $(v,\chi)$
has the desired mean and covariance in (\ref{eq:generalised-mean})--(\ref{eq:generalised-cov}). 
Moreover, following again \cite{DaPrato.D_2014_book} the random field $v$ can be directly constructed on a suitably large Hilbert space without taking the coupling with $\chi\in H$. 

The above derivation can easily be extended to nonzero mean, to
vector-valued functionals or to
the Bayesian setting.
For the latter we generalise the
observation operator $\mathfrak{\mathcal{B}}\colon
H\to\mathbb{R}^{\beta}$ in a similar way to $\mathcal{F}$. And then as above,
we can consider the collection $\{(\widetilde v,\chi) : \chi \in H\}$
of $\mathbb{R}$-valued Gaussian random variables satisfying 
\begin{align}
\mathbb{E}[(\widetilde v ,\chi)] = & \ \langle \chi,\mu\rangle_H \quad &\text{for \ensuremath{\chi\in H}},\label{eq:generalised-mean-post}\\
\mathbb{E}\bigl[\bigl((\widetilde v ,\chi)-\mathbb{E}[(\widetilde v ,\chi)]\bigr)\bigl((\widetilde v
  ,\psi)-\mathbb{E}[(\widetilde v ,\psi)]\bigr)\bigr] = & \ \langle
                                                    \chi, \widetilde{\mathcal{A}}^{-1}\psi\rangle_{H}\quad
                                                    &\text{for \ensuremath{\chi,\psi\in H}}\label{eq:generalised-cov-post}
\end{align}
with $\mu$ in \eqref{eq:postm-infinite} and $\widetilde{\mathcal{A}}$ in \eqref{eq:postcov-infinite}.
\end{rem}
\noindent
For the remainder of this paper, in particular to show \rev{that convergence rates are independent of the mesh-size $h_\ell$}, we will only rely on the fact that $\mathcal{A}^{-1}$ is compact not that it is trace-class.

\section{Methods}\label{sec:methods}
Having described the general context for the sampling problem, we now discuss the numerical methods employed in this work.
\subsection{Multigrid Monte Carlo}
Given some sufficiently large value $L \in \mathbb{N}$, our goal is to sample from the multivariate normal distribution $\mathcal{N}(\mu_L,\widetilde{A}_L^{-1})$ with mean $\mu_L:=\widetilde{A}_L^{-1}f_L$ defined in \eqref{eq:post-mean} and covariance $\widetilde{A}_{L}^{-1}$ given in \eqref{eq:post-cov}. Recall that $f_L\in\mathbb{R}^{n_L}$ and $\widetilde{A}_L$ is a symmetric $n_L \times n_L$ matrix. Starting from some initial state $\theta_{L}^{(0)} \in \mathbb{R}^{n_L}$, the Multigrid Monte Carlo update in Alg.~\ref{alg:mgmc} below will generate a Markov chain $\theta_{L}^{(0)},\theta_{L}^{(1)},\theta_{L}^{(2)},\dots$ with $\theta_{L}^{(k+1)}=\mathrm{MGMC}_L(\widetilde{A}_L,f_L,\theta_{L}^{(k)},\jg{\xi^{(k)}})$ which converges to the target distribution $\mathcal{N}(\mu_L,\widetilde{A}_L^{-1})$ in a sense that will be made precise in Section \ref{sec:theory_convergence}. \jg{Here,  $\xi^{(k)}$ is a collection of random variables independent of  everything before iteration $k$. To simplify notation, we will omit the dependence of $\mathrm{MGMC}_L$ on the random variable $\xi^{(k)}$ in the following.}

Multigrid Monte Carlo introduces a hierarchy of coarser levels $L-1,L-2,\dots,0$, associated with vector spaces of dimension $n_L > n_{L-1} > \dotsb > n_1 > n_0$. This corresponds to the hierarchy $V_0\subset V_1\subset \dots\subset V_{L-1}\subset V_L$ of nested function spaces introduced in Section \ref{sec:discretisation} in the sense that the vector space $\mathbb{R}^{n_{\ell}}$ contains the degrees-of-freedom vectors of the function space $V_\ell$. In \cite{goodman1989multigrid}, the method was introduced for more general distributions. \rev{While the authors conclude that MGMC does not overcome critical slowing down for theories that undergo a second order phase transition, such as the Ising model in higher dimensions,  it is successful for some applications in quantum field theory. In particular, here we restrict ourselves to the Gaussian setting, which corresponds to the Gaussian Free Field model for which MGMC has been shown to be an efficient sampler in \cite{goodman1989multigrid}.}
\jg{On the other hand, the way our Alg.~\ref{alg:mgmc} is written down makes it explicit that the linear operators for transferring samples between levels of the hierarchy are not unique. This allows a generalisation of the MGMC algorithm to algebraic multigrid (AMG) settings or more sophisticated FE discretisations. While arbitrary intergrid operators are also implicitly included in the algorithms presented in \cite{goodman1989multigrid}, the authors of that paper focus on the case of piecewise linear interpolation for finite difference discretisations on regular meshes. }%
As in the standard multigrid algorithm for solving linear systems, we introduce a \textit{prolongation matrix} $I_{\ell-1}^{\ell}$ defined in \eqref{eqn:definition_prolongation}; this matrix has full column rank for all levels $\ell$. The corresponding \textit{restriction matrix} $I_{\ell}^{\ell-1} = (I_{\ell-1}^{\ell})^\top$ is the transpose of the prolongation matrix.  Coarse level matrices are then recursively constructed via the so-called Galerkin triple-product
\begin{equation}
    \widetilde{A}_{\ell-1}  =I_{\ell}^{\ell-1} \widetilde{A}_\ell I_{\ell-1}^{\ell}                                                           = A_{\ell-1} + B_{\ell-1} \Gamma^{-1} B_{\ell-1}^{\top}
    \label{eqn:galerkin_triple_product}
\end{equation}
with $A_{\ell-1}$ and $B_{\ell-1}$ defined in \eqref{eqn:A_ell_1_B_ell_1}. We will show in Proposition \ref{prop:gaussian_coarse_level_correction} below that the relationship between $\widetilde{A}_{\ell-1}$ and $\widetilde{A}_{\ell}$ written down in \eqref{eqn:galerkin_triple_product} is crucial to ensure that the MGMC coarse grid correction leaves the target distribution invariant. On the finest level the covariance matrix $A_\ell$ is symmetric; \eqref{eqn:galerkin_triple_product} then implies that all coarse level matrices are symmetric as well.

\subsubsection{\jg{Algorithm description}}
To understand the idea behind Multigrid Monte Carlo as written down in Alg.~\ref{alg:mgmc}, it is instructive to first consider the two-level case ($L=1$). In this case, the update $\theta_L\mapsto\theta_L'$ is given as follows:
\begin{enumerate}
    \item[(i)] Starting with a given sample $\theta_{L,0}=\theta_{L}\in\mathbb{R}^{n_L}$ on the fine level $L=1$, we apply a number $\nu_1$ of random smoothing steps to obtain $\theta_{L,\nu_1}$. All smoothers that we consider in this work can be written in the form of a splitting method shown in Alg.~\ref{alg:random_smoother}. For example a standard Gibbs-sweep over all unknowns would correspond to $\widetilde{M}_L=\widetilde{D}_L+\widetilde{L}_L$, where $\widetilde{D}_L$ is the diagonal and $\widetilde{L}_L$ is the lower triangular part of $\widetilde{A}_L$, see Section \ref{subsec:Gibbs} for a more detailed discussion.
    \item[(ii)] We then use the restriction matrix $I_L^{L-1}$ to construct the state dependent coarse level mean $f_{L-1} = I_L^{L-1}(f_L -\widetilde{A}_L \theta_{L,\nu_1})\in\mathbb{R}^{n_{L-1}}$. Next, we draw a coarse level sample $\psi_{L-1} \in\mathbb{R}^{n_{L-1}}$ from the multivariate normal distribution with mean $\widetilde{A}_{L-1}^{-1}f_{L-1}$ and covariance $\widetilde{A}_{L-1}^{-1}$, where $\widetilde{A}_{L-1} $ is computed via \eqref{eqn:galerkin_triple_product}. The prolongated coarse level sample is then added to construct an updated fine-level sample $\theta_{L,\nu_1+1} = \theta_{L,\nu_1}+I_{L-1}^{L} \psi_{L-1}$.
    \item[(iii)] Finally, we apply a number of $\nu_2$ of random smoothing steps to obtain a new state $\theta'_{L}:=\theta_{L,\nu_1+\nu_2+1}$.
\end{enumerate}

Extending this idea to more than two levels by applying the above procedure recursively on all levels $\ell=L,L-1,\dots,1,0$, leads to the Multigrid Monte Carlo method in Alg.~\ref{alg:mgmc}. To achieve this the update $\mathrm{MGMC}_{\ell-1}(\widetilde{A}_{\ell-1},f_{\ell-1},\cdot)$ itself is used on the coarser level to produce the sample $\psi_{\ell-1}$. As for classical multigrid, we use $\gamma_\ell\in\mathbb{N}$ recursive calls on the coarser levels where $\gamma_\ell$ might be larger than $1$. Setting $\gamma_\ell=1$ for all levels $\ell$ corresponds to a so-called V-cycle whereas $\gamma_\ell=2$ for $\ell<L$ and $\gamma_L=1$ leads to the W-cycle.
\begin{algorithm}[t!]
    \caption{Multigrid Monte Carlo update $\theta_\ell \mapsto\theta'_\ell$\\The random pre- and post-smoothers are defined by the splitting matrices $\widetilde{M}_\ell^{\mathrm{pre}}$, $\widetilde{M}_\ell^{\mathrm{post}}$ and the number of smoothing steps $\nu_1$, $\nu_2$; the cycle parameters $\gamma_\ell$ control the number of recursive calls}\label{alg:mgmc}
    \begin{algorithmic}[1]
        \Procedure{$\mathrm{MGMC}_{\ell}$}{$\widetilde{A}_{\ell},f_{\ell},\theta_{\ell}$}
        \If{$\ell=0$ }
        \State\Return $\theta'_{0}:=\mathrm{CoarseSampler}(\widetilde{A}_{0},f_{0},\theta_{0})$\Comment{Alg.~\ref{alg:coarse_sampler} or Cholesky sampler}
        \Else
        \State  Let $\theta_{\ell,0}:=\theta_{\ell}$
        \For{$j=0,\dots,\nu_{1}-1$}
        \State $\theta_{\ell,j+1}:=\mathrm{RandomSmoother}(\widetilde{A}_{\ell},\widetilde{M}_{\ell}^{\mathrm{pre}},f_{\ell},\theta_{\ell,j})$\Comment{Random pre-smoothing (Alg.~\ref{alg:random_smoother})}
        \EndFor
        \State Define $f_{\ell-1}:=I_{\ell}^{\ell-1}(f_{\ell}-\widetilde{A}_{\ell}\theta_{\ell,\nu_{1}})$
        \Comment {Restriction}
        \State Let $\psi_{\ell-1}^{(0)}:=0$
        \For{$m=0,1,\dots,\gamma_\ell-1$}
        \State $\psi_{\ell-1}^{(m+1)}:=\mathrm{MGMC}_{\ell-1}(\widetilde{A}_{\ell-1},f_{\ell-1},\psi_{\ell-1}^{(m)})$
        \Comment{Recursive call to $\mathrm{MGMC}_{\ell-1}$}
        \EndFor
        \State $\theta_{\ell,\nu_{1}+1}:=\theta_{\ell,\nu_{1}}+I_{\ell-1}^{\ell}\psi_{\ell-1}^{(\gamma_\ell)}$
        \Comment {Prolongation}
        \For{$j=0,\dots,\nu_{2}-1$}
        \State $\theta_{\ell,\nu_{1}+2+j}:=\mathrm{RandomSmoother}(\widetilde{A}_{\ell},\widetilde{M}_{\ell}^{\mathrm{post}},f_{\ell},\theta_{\ell,\nu_{1}+1+j})$
        \Comment{Random post-smoothing (Alg.~\ref{alg:random_smoother})}
        \EndFor
        \State \Return $\theta'_{\ell}:=\theta_{\ell,\nu_{1}+\nu_{1}+1}$
        \EndIf
        \EndProcedure
    \end{algorithmic}
\end{algorithm}
As we will show in Section \ref{sec:theory}, Multigrid Monte Carlo has several desirable properties:
\begin{enumerate}
    \item Under suitable conditions on the problem sizes $n_\ell$, the cost of one call to Alg.~\ref{alg:mgmc} grows no more than linearly in the number of unknowns $n_L$ on the finest level (see Section \ref{subsec:Cost-analysis}).
    \item $\textrm{MGMC}_L(\widetilde{A}_L,f_L,\cdot)$ leaves the target distribution invariant: if $\theta_L \sim \mathcal{N}(\mu_L,\widetilde{A}_{L}^{-1})$ for $\mu_L=\widetilde{A}_L^{-1}f_L$ then $\theta'_L \sim \mathcal{N}(\mu_L,\widetilde{A}_{L}^{-1})$ (see Section \ref{sec:theory_invariance}).
    \item If the initial state $\theta_{L}^{0}\sim \mathcal{N}(\mu_L^0,(\widetilde{A}_{L}^0)^{-1})$ is drawn from a multivariate normal distribution with mean $\mu_L^0 = (\widetilde{A}_L^{0})^{-1}f_L^0$ and covariance $(\widetilde{A}_{L}^0)^{-1}$, the Markov chain $\theta_{L}^{(0)},\theta_{L}^{(1)},\theta_{L}^{(2)},\dots$ generated by $\textrm{MGMC}_L(\widetilde{A}_L,f_L,\cdot)$ converges to the target distribution $\mathcal{N}(\mu_L,\widetilde{A}_L^{-1})$ in the sense that $\theta_{L}^{(k)}\sim \mathcal{N}(\mu_L^k,(\widetilde{A}_{L}^k)^{-1})$ with $\mu_L^k\rightarrow \mu_L$ and $\widetilde{A}_L^k\rightarrow \widetilde{A}_L$. The rate of convergence is grid-independent for both the mean and the covariance (see Section \ref{sec:theory_convergence}).
    \item The autocorrelation between samples in the Markov chain is small, and the integrated autocorrelation time for random variables that depend linearly on the sample state (as in \eqref{eqn:qoi_linear} and \eqref{eqn:F_matrix}) can be bounded by a grid independent constant (see Section \ref{sec:theory_convergence}).
\end{enumerate}
This implies that MGMC is an efficient sampler in the following sense:
\begin{enumerate}
    \setcounter{enumi}{4}
    \item The cost for generating an (approximately) independent sample in the Markov chain is optimal in the sense that it is proportional to the number of unknowns.
\end{enumerate}
The intuitive explanation for the success of MGMC is the same as for multigrid solvers: by using a hierarchy of levels, the samples are updated on all length scales simultaneously and this incurs only a small, grid independent overhead. This is in stark contrast to a Gibbs sampler, which only updates the samples \textit{locally}.
\rev{The same idea can also be understood spectrally: a sample $\theta_L^{(k)}$ in the Markov chain can be expanded as
\begin{equation}
    \theta_L^{(k)} = \mu_L + \sum_{j=1}^{n_L} \widetilde{v}_{L,j}\widetilde{\lambda}_{L,j}^{-1/2} \xi_{L,j}^{(k)}
\end{equation}
where $\widetilde{\lambda}_{L,j}$ and $\widetilde{v}_{L,j}$ are the eigenvalues and normalised eigenvectors of $\widetilde{A}_L$ respectively and for samples that are drawn from the target distribution $\xi_{L,j}^{(k)}\sim\mathcal{N}(0,1)$ are i.i.d. Gaussian random variables. Small eigenvalues correspond to smooth eigenvectors that vary slowly in space. While one step of the Gibbs-sampler only significantly changes the spectral expansion coefficients $\xi_{L,j}^{(k)}$ that correspond to high-frequency eigenvectors, MGMG will modify all expansion coefficients simultaneously for suitable operators $\mathcal{A}$. This should be compared to the action of a deterministic multigrid update which reduces the error on all length scales, i.e. for all eigenvectors.
}
\subsection{Random smoothers}\label{sec:random_smoothers}
A central ingredient of Alg.~\ref{alg:mgmc} is the $\mathrm{RandomSmoother}()$ procedure in lines 7 and 16, which can be seen as a generalisation of the Gibbs smoother.
All random smoothers considered here are based on splitting a general symmetric matrix $A$ into two parts as $A =: M-N$, where the matrix $M$ defines the splitting. The generic update procedure is written down in Alg.~\ref{alg:random_smoother} where here and in the following we assume that the input $\theta$ is drawn from a multivariate normal distribution. As the following lemma shows, it is always possible to symmetrise a given random smoother defined by the splitting $A=M-N$.
\begin{algorithm}[t!]
    \caption{Random Smoother for updating $\theta\mapsto \theta'$ based on the matrix splitting $A=M-N$}\label{alg:random_smoother}
    \begin{algorithmic}[1]
        \Procedure{$\mathrm{RandomSmoother}$}{$A$, $M$, $f$, $\theta$}
        \State {Set
            \begin{equation}
                \theta':=\theta+M^{-1}(f+\xi-A \theta)\qquad\text{with $\xi\sim\mathcal{N}(0,M+M^{\top}-A)$}
                \label{eqn:random_smoother_update}
            \end{equation}
        }
        \State\Return {$\theta'$}
        \EndProcedure
    \end{algorithmic}
\end{algorithm}
\begin{lem}[Symmetrised Random Smoother]\label{lem:symmetric_random_smoother}
    Let $A=A^\top$ be a symmetric matrix and let $M$ be an invertible square matrix of the same size as $A$ with $M+M^\top\ne A$.
    Then the smoother obtained by applying the update in Alg.~\ref{alg:random_smoother} with splitting matrix $M$ followed by the same algorithm with $M^\top$ is equivalent to one application of Alg.~\ref{alg:random_smoother} with the symmetric splitting matrix
    \begin{equation}
        M^{\mathrm{sym}} = M(M+M^\top-A)^{-1}M^\top,\label{eqn:symmetric_M}
    \end{equation}
    in the sense that these two alternative update procedures yield samples from the same distribution.
\end{lem}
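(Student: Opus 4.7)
My approach is to rewrite each call to Alg.~\ref{alg:random_smoother} as an affine--Gaussian update $\theta\mapsto T_M\theta + M^{-1}f + M^{-1}\xi$ with iteration matrix $T_M := I - M^{-1}A$ and noise $\xi\sim\mathcal{N}(0,S)$, where $S := M+M^\top - A$. Composing one such update with splitting $M$ followed by one with $M^\top$ (and independent noises $\xi_1,\xi_2\sim\mathcal{N}(0,S)$) yields
\begin{equation*}
\theta_{2} = T_{M^\top}T_{M}\,\theta + \bigl[T_{M^\top}M^{-1}+M^{-\top}\bigr]f + T_{M^\top}M^{-1}\xi_{1} + M^{-\top}\xi_{2}.
\end{equation*}
Since $\theta_2\mid\theta$ is Gaussian, it suffices to match its conditional mean and its conditional covariance with the ones produced by one step of Alg.~\ref{alg:random_smoother} with the single splitting matrix $M^{\mathrm{sym}}$.

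The conditional mean reduces to the algebraic identity
\begin{equation*}
T_{M^\top}M^{-1}+M^{-\top} = M^{-1}+M^{-\top}-M^{-\top}AM^{-1} = M^{-\top}SM^{-1} = (M^{\mathrm{sym}})^{-1},
\end{equation*}
which follows by direct expansion together with the definition $M^{\mathrm{sym}}=MS^{-1}M^\top$. The same identity immediately gives $T_{M^\top}T_M = I - (M^{\mathrm{sym}})^{-1}A = T_{M^{\mathrm{sym}}}$, so the two-step conditional mean is precisely $T_{M^{\mathrm{sym}}}\theta + (M^{\mathrm{sym}})^{-1}f$, matching the single step.

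For the conditional covariance the cleanest route is via stationarity. A direct expansion yields $A^{-1}-T_M A^{-1} T_M^\top = M^{-1}SM^{-\top}$, so a single application of Alg.~\ref{alg:random_smoother} with splitting $M$ preserves $\mathcal{N}(A^{-1}f, A^{-1})$; the same holds with $M^\top$, and therefore, by composition, for the two-step update. For the single step with $M^{\mathrm{sym}}$ one needs $S^{\mathrm{sym}} := 2M^{\mathrm{sym}}-A \succ 0$ (using that $M^{\mathrm{sym}}$ is symmetric because $S=S^\top$), which follows from the decomposition $2MS^{-1}M^\top - A = MS^{-1}M^\top + (M-S)S^{-1}(M-S)^\top$ together with invertibility of $M$ and $S\succ 0$. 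Since both updates then share the iteration matrix $T_{M^{\mathrm{sym}}}$ and both leave $\mathcal{N}(A^{-1}f,A^{-1})$ invariant, the Lyapunov-type relation $\Sigma = T\Sigma T^\top + C$ with $\Sigma = A^{-1}$ and $T = T_{M^{\mathrm{sym}}}$ uniquely determines $C$ and hence forces the two noise covariances to coincide. The main obstacle I anticipate is this covariance step: a brute-force expansion of $T_{M^\top}M^{-1}SM^{-\top}T_{M^\top}^\top + M^{-\top}SM^{-1}$ and its comparison with $(M^{\mathrm{sym}})^{-1}(2M^{\mathrm{sym}}-A)(M^{\mathrm{sym}})^{-1}$ is unpleasant, but the invariance shortcut bypasses it at the modest cost of verifying $S^{\mathrm{sym}} \succ 0$.
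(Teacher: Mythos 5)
Your proposal is correct. The mean part is handled exactly as in the paper's Appendix~\ref{sec:proof_lem:symmetric_random_smoother}: compose the two affine--Gaussian updates and verify $M^{-1}+M^{-\top}-M^{-\top}AM^{-1}=M^{-\top}(M+M^\top-A)M^{-1}=(M^{\mathrm{sym}})^{-1}$, which simultaneously gives the matching iteration matrix $T_{M^\top}T_M=T_{M^{\mathrm{sym}}}$ and forcing term. Where you genuinely diverge is the covariance step: the paper simply asserts that ``tedious but straightforward algebra'' shows the composite noise has covariance $2M^{\mathrm{sym}}-A$, whereas you bypass that expansion by noting that each factor (hence the composite) leaves $\mathcal{N}(A^{-1}f,A^{-1})$ invariant --- this is Proposition~\ref{prop:smoother_invariance}, via the identity $A^{-1}-T_MA^{-1}T_M^\top=M^{-1}(M+M^\top-A)M^{-\top}$ --- and that the Lyapunov relation $A^{-1}=TA^{-1}T^\top+C$ with the shared $T=T_{M^{\mathrm{sym}}}$ pins down the noise covariance uniquely. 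Your route buys two things the paper's does not: it avoids the brute-force quadratic expansion, and it supplies an explicit proof that $2M^{\mathrm{sym}}-A=MS^{-1}M^\top+(M-S)S^{-1}(M-S)^\top\succ0$ (with $S:=M+M^\top-A$), i.e.\ that the one-step symmetrised smoother is actually well defined --- a point the paper glosses over. The only caveat is that you assume $S\succ0$ rather than merely $M+M^\top\ne A$ as in the lemma statement; this is harmless, since positive (semi)definiteness of $S$ is already implicit in Alg.~\ref{alg:random_smoother} and invertibility of $S$ is needed for $M^{\mathrm{sym}}$ to exist, but it is worth stating explicitly.
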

\begin{proof}
    The result can be shown by computing the mean and covariance of the multivariate normal distribution that is obtained by combining the two multivariate normal distributions which define the individual updates with splitting matrices $M$ and $M^\top$ respectively; see Appendix \ref{sec:proof_lem:symmetric_random_smoother} for details.
\end{proof}
\subsection{Relationship to deterministic smoothers}\label{sec:random_vs_deterministic_smoothers}
Observe that replacing \eqref{eqn:random_smoother_update} in the \textit{random smoother} Alg.~\ref{alg:random_smoother} by the deterministic update
\begin{equation}
    u':=\theta+M^{-1}(f-A u)\label{eqn:smoother}
\end{equation}
would result in a \textit{deterministic smoother} defined by the splitting $A = M-N$ (see e.g. \cite[section 4.2.2]{saad2003iterative}). Under certain conditions on the splitting matrix $M$, repeated applications of the update in \eqref{eqn:smoother} can be used to iteratively solve the linear system $Au = f$. For example, when splitting the symmetric matrix $A$ into its diagonal $D$, strict lower triangular part $L$ and strict upper diagonal part $L^\top$
\begin{equation}
    A = D + L + L^\top,
\end{equation}
successive over-relaxation (SOR) would correspond to the splitting
\begin{xalignat}{2}
    M &= M^{\mathrm{SOR}} = \frac{1}{\omega} D + L, &
    N &= N^{\mathrm{SOR}} = \frac{\omega-1}{\omega} D - L^\top\rev{,}
    \label{eqn:SOR_smoother}
\end{xalignat}
\rev{where $\omega>0$ is referred to as \textit{relaxation parameter}  in the context of iterative solvers.} 
As discussed extensively in \cite{goodman1989multigrid,Fox.C_Parker_2017_AcceleratedGibbsSampling}, there is a close relationship between classical smoothers for the solution of the linear system $Au =f$ and what we call random smoothers for sampling from the \rev{multivariate} normal distribution $\mathcal{N}(\mu,A^{-1})$ with $\mu=A^{-1}f$ in Section \ref{sec:random_smoothers}. For example, the random pendant of the deterministic SOR smoother in \eqref{eqn:SOR_smoother} with $\omega=1$ is the Gibbs-sampler (see \cite[Table 2]{Fox.C_Parker_2017_AcceleratedGibbsSampling}), which we will refer to as the Gibbs-\textit{smoother} from now on to make the connection more explicit.
\subsection{Random smoother with low-rank update for Bayesian inference}\label{subsec:Gibbs}
Having discussed general smoothers based on matrix splittings, we now design bespoke splitting methods for sampling from the posterior distribution defined in Section\ref{subsec:Linear-Bayesian-inverse}. To achieve this, we use the relationship to deterministic smoothers outlined in the previous section and construct efficient random smoothers for the (symmetric) matrix
\begin{equation}
    A = \widetilde{A}_\ell = A_\ell + B_{\ell} \Gamma^{-1} B_{\ell}^{\top}
\end{equation}
defined in \eqref{eq:post-cov}. More specifically, splitting $A_\ell$ into its diagonal $D_\ell$, strict lower triangular part $L_\ell$ and strict upper triangular part $L_\ell^\top$ as
\begin{equation}
    A_\ell = D_\ell + L_\ell + L_\ell^\top,
\end{equation}
we set
\begin{equation}
    M = \widetilde{M}_\ell :=
    \frac{1}{\omega} D_\ell + B_{\ell} \Gamma^{-1}B_{\ell}^{\top} + L_\ell,\qquad \widetilde{N}_\ell = \widetilde{M}_\ell-\widetilde{A}_\ell\label{eqn:M_lowrank_gibbs}
\end{equation}
in the random smoother update in \eqref{eqn:random_smoother_update} to obtain what we call the ``(forward) Gibbs smoother with low rank correction''. As will be discussed below, the seamingly simpler approach of setting $M = \widetilde{M}_\ell :=\frac{1}{\omega} D_\ell + L_\ell$ does not work in general. The construction in \eqref{eqn:M_lowrank_gibbs} is motivated by the observation that a classical smoother is usually more effective if we include more terms in the matrix $M$ that defines the splitting in \eqref{eqn:smoother} while making sure that $M$ can still be inverted efficiently. Since $B_{\ell} \Gamma^{-1} B_{\ell}^{\top}$ is a low-rank correction to the lower triangular matrix $\omega^{-1}D_\ell+L_\ell$, we can still invert $\omega^{-1}D_\ell+L_\ell + B_{\ell} \Gamma^{-1} B_{\ell}^{\top}$ efficiently with the Woodbury matrix identity.

Our choice of splitting also needs to allow for fast sampling of the random variable $\xi$ in \eqref{eqn:random_smoother_update}. To draw a sample from the multivariate normal distribution with mean zero and covariance $M+M^\top-A$, note that for the choice of $M$ in \eqref{eqn:M_lowrank_gibbs} we have that
\begin{equation}
        M+M^\top-A = \widetilde{M}_\ell + \widetilde{M}_\ell^\top - \widetilde{A}_\ell  =\frac{2-\omega}{\omega} D_\ell + B_{\ell} \Gamma^{-1} B_{\ell}^{\top}.
        \label{eqn:low_rank_smoother_covariance}
\end{equation}
which is the sum of two positive definite matrices. Hence, the problem can be reduced to drawing a sample $\xi^{\mathrm{diag}}_\ell\sim \mathcal{N}(0,D_\ell)$ from the $n_\ell$ dimensional multivariate normal distribution with mean zero and diagonal covariance matrix $D_\ell$ and drawing a sample $\xi^\mathrm{LR}_\ell\sim \mathcal{N}(0,\Gamma^{-1})$ from the $\beta$-dimensional multivariate normal distribution with mean zero and covariance matrix $\Gamma^{-1}$. The linear combination
\begin{equation}
    \xi_\ell = \sqrt{\frac{2-\omega}{\omega}} \xi_\ell^{\mathrm{diag}} + B_{\ell} \xi_\ell^{\mathrm{LR}}
\end{equation}
will then be a sample from a multivariate normal distribution with mean zero and the desired covariance given in \eqref{eqn:low_rank_smoother_covariance}.

Furthermore, the update $\theta_\ell' =\theta_\ell + \widetilde{M}_\ell^{-1}(f_\ell+\xi_\ell-\widetilde{A}_\ell\theta_\ell)$ in \eqref{eqn:random_smoother_update} can be split into two steps as follows
\begin{equation}
    \begin{aligned}
        \theta_\ell^* & = \theta_\ell + (\omega^{-1}D_\ell+L_\ell)^{-1} (f_\ell+\xi_\ell-A_\ell \theta_\ell) \\
        \theta'_\ell  & = \theta_\ell^* - G_{\ell} (B_{\ell}^{\top} \theta_\ell^*)
    \end{aligned}\label{eqn:lowrank_gibbs_two_steps}
\end{equation}
with the $n_\ell\times\beta$ matrix
\begin{equation}
    G_{\ell} =  (\omega^{-1}D_\ell+L_\ell)^{-1}B_{\ell}\left(\Gamma + B_{\ell}^{\top} \left(\omega^{-1}D_\ell+L_\ell\right)^{-1}B_{\ell}\right)^{-1},
    \label{eqn:Bellstar_definition}
\end{equation}
which can be precomputed once at the beginning of the simulation. An equivalent \textit{backward} Gibbs sampler with low rank correction can be obtained by swapping $L_\ell\leftrightarrow L_\ell^\top$ in \eqref{eqn:lowrank_gibbs_two_steps} and \eqref{eqn:Bellstar_definition}.
Putting everything together we arrive at Alg.~\ref{alg:low_rank_gibbs}. Obviously, $C_\ell$ and $G_{\ell}$ can be precomputed once and used in all subsequent calls, so that the setup costs are amortised for large numbers of samples.
\begin{algorithm}[t!]
    \caption{Gibbs smoother with low rank correction for Bayesian inference.\\Computes the update $\theta_\ell\mapsto\theta'_\ell$ based on the matrix splitting $\widetilde{A}_\ell=\widetilde{M}_\ell-\widetilde{N}_\ell$ in \eqref{eqn:M_lowrank_gibbs}}\label{alg:low_rank_gibbs}
    \begin{algorithmic}[1]
        \Procedure{$\mathrm{GibbsSmoother}$}{$A_\ell$, $B_{\ell}$, $\Gamma$, $f_\ell$, $\omega$, direction, $\theta_\ell$}
        \If{$\text{direction} = \text{forward}$}
        \State{Set $M_\ell = \frac{1}{\omega}D_\ell + L_\ell$}
        \Else
        \State{Set $M_\ell = \frac{1}{\omega}D_\ell + L_\ell^\top$}
        \EndIf
        \State{Solve the $\beta$ triangular systems $ M_\ell C_\ell = B_{\ell}$ for $C_\ell$.}
        \State{Compute $G_{\ell}=C_\ell\left(\Gamma + B_{\ell}^{\top}C_{\ell} \right)^{-1}$}
        \State {Draw $\xi^{\mathrm{diag}}_\ell\sim \mathcal{N}(0,D_\ell)$, $\xi^\mathrm{LR}_\ell\sim \mathcal{N}(0,\Gamma^{-1})$}
        \State{Compute the residual $r_\ell = f_\ell + \sqrt{\frac{2-\omega}{\omega}} \xi_\ell^{\mathrm{diag}} + B_{\ell} \xi_\ell^{\mathrm{LR}} - A_\ell \theta_\ell$}
        \State{Solve the triangular system $M_\ell \theta^*_\ell = r_\ell$ for $\theta_\ell^*$}
        \State{Set
            \begin{equation}
                \theta'_\ell   = \theta_\ell^* - G_{\ell} (B_{\ell}^{\top} \theta_\ell^*)\label{eqn:low_rank_update}
            \end{equation}}
        \State\Return {$\theta'_\ell$}
        \EndProcedure
    \end{algorithmic}
\end{algorithm}
\begin{algorithm}[t!]
    \caption{Symmetric Gibbs smoother with low rank correction for Bayesian inference.\\
        Computes the update $\theta_\ell\mapsto\theta'_\ell$ by using a symmetric combination of two calls to Alg.~\ref{alg:low_rank_gibbs}.}\label{alg:SGS}
    \begin{algorithmic}[1]
        \Procedure{$\mathrm{SymmetricGibbsSmoother}$}{$A_\ell$, $B_{\ell}$, $\Gamma$, $f_\ell$, $\theta_\ell$}
        \State{Compute $\theta_\ell^* = \mathrm{GibbsSmoother}(A_\ell,B_{\ell},\Gamma,f_\ell, \mathrm{forward}, \theta_\ell)$}
        \State{Compute $\theta_\ell' = \mathrm{GibbsSmoother}(A_\ell,B_{\ell},\Gamma,f_\ell, \mathrm{backward}, \theta_\ell^*)$}
        \State \Return $\theta_\ell'$
        \EndProcedure
    \end{algorithmic}
\end{algorithm}

According to Lemma \ref{lem:symmetric_random_smoother} we can construct a \textit{symmetric} Gibbs smoother with low rank correction by combining a forward Gibbs sweep and a backward Gibbs sweep; for future reference this is written down in Alg.~\ref{alg:SGS}. For simplicity, we only consider the case $\omega=1$ in the following. In this case \eqref{eqn:symmetric_M} and some straightforward algebra that uses the definition of $\widetilde{M}_\ell$ in \eqref{eqn:M_lowrank_gibbs} shows that the matrix of the resulting symmetric splitting method is
\begin{equation}
    \begin{aligned}
        \widetilde{M}_\ell^{\text{(SGS)}} & = \widetilde{M}_\ell \left(\widetilde{M}_\ell+\widetilde{M}_\ell^\top-\widetilde{A}_\ell\right)^{-1} \widetilde{M}_\ell^\top \\
                                          & = \widetilde{M}_\ell \left(D_\ell + B_{\ell} \Gamma^{-1}B_{\ell}^{\top}\right)^{-1} \widetilde{M}_\ell^\top                        %
        = \widetilde{A}_\ell + L_\ell \left(D_\ell + B_{\ell} \Gamma^{-1}B_{\ell}^{\top}\right)^{-1}L_\ell^\top.\label{eqn:SGS_splitting}
    \end{aligned}
\end{equation}
We conclude this section by showing that the seemingly simpler approach of setting $M = \widetilde{M}_\ell :=\frac{1}{\omega} D_\ell + L_\ell$ (instead of \eqref{eqn:M_lowrank_gibbs}) does not work. While this splitting avoids the computation of the low-rank correction in \eqref{eqn:low_rank_update}, the problem is that the resulting matrix $M+M^\top-A = \widetilde{M}_\ell+\widetilde{M}_\ell^\top - \widetilde{A}_\ell = \frac{2-\omega}{\omega}D_\ell - B_{\ell} \Gamma^{-1}B_{\ell}^{\top}$ is not in general positive definite. As a result it is not possible to sample from $\mathcal{N}(0,M+M^\top-A)$ which is required in \eqref{eqn:random_smoother_update}.
\subsection{Coarse level sampler}\label{sec:coarse_level_sampler}
On the coarsest level we have to make a choice for the $\mathrm{CoarseSampler()}$ procedure that generates a new sample $\theta'_0$ in line 3 of Alg.~\ref{alg:mgmc}. Using Alg.~\ref{alg:coarse_sampler}, which consists of repeated applications of the random smoother in Alg.~\ref{alg:random_smoother}, corresponds to the approach in \cite{goodman1989multigrid}. It turns out that this choice is sufficient to guarantee that Alg.~\ref{alg:mgmc} leaves the target distribution invariant (see Section \ref{sec:theory_invariance}).
\begin{algorithm}[t!]
    \caption{Coarse level sampler $\theta_0 \mapsto\theta'_0$\\
    Apply $\nu_0$ smoothing steps of Alg.~\ref{alg:low_rank_gibbs} with the splitting matrix $\widetilde{M}_0^{\mathrm{coarse}}$.}\label{alg:coarse_sampler}
    \begin{algorithmic}[1]
        \Procedure{$\mathrm{RandomCoarseSmoother}$}{$\widetilde{A}_{0},f_{0},\theta_{0}$}
        \State  Let $\theta_{0,0}:=\theta_{0}$
        \For{$j=0,\dots,\nu_0-1$}
        \State $\theta_{0,j+1}:=\mathrm{RandomSmoother}(\widetilde{A}_{0},\widetilde{M}_{0}^{\mathrm{coarse}},f_{0},\theta_{0,j})$\Comment{Coarse-smoothing (Alg.~\ref{alg:random_smoother})}
        \EndFor
        \State\Return $\theta'_0:=\theta_{0,\nu_0}$
        \EndProcedure
    \end{algorithmic}
\end{algorithm}
Alternatively, we could sample $\theta'_0$ directly from the multivariate normal distribution $\mathcal{N}(\mu_0,\widetilde{A}_0^{-1})$ with $\mu_0=\widetilde{A}_0^{-1}$ by constructing the Cholesky factorisation of $\widetilde{A}_0$; this is explained in more detail in Section\ref{sec:samplers}. Since $n_0\ll n_\ell$ this is much cheaper than using the Cholesky sampler on the fine level $L$.

The convergence theory in Section\ref{sec:theory_convergence} assumes that the coarse sampler is exact, i.e. $\theta'_0\sim\mathcal{N}(\mu_0,\widetilde{A}_0^{-1})$, but in practice this is not necessary. As for the standard multigrid solver, the analysis could also be extended to an inexact coarse sampler.

\subsection{Cost analysis\label{subsec:Cost-analysis}}
In this section, we derive an upper bound on the cost of one MGMC update in Alg.~\ref{alg:mgmc} as a function of the problem size $n_L$ and the rank $\beta\ll n_L$ of the measurement operator $B_L$. \rev{We assume that this rank is fixed and independent of $L$. The value of $\beta$ depends on the application; in our numerical experiments in Section~\ref{sec:results} we set $\beta=8$ in $d=2$ dimensions and $\beta=32$ in $d=3$.} We analyse this cost under the assumption that the matrices $A_\ell$, $I_{\ell-1}^{\ell}$ and $I_{\ell}^{\ell-1}$ are sparse, with some upper bound on the number of entries per row, independent of the problem size. For the local basis functions in finite element spaces considered here this is naturally the case. Consequently, any matrix-vector products with $A_\ell$
or $I_{\ell-1}^{\ell}$ incur a cost of $\mathcal{O}(n_\ell)$, in the same way as any tridiagonal solve $(\omega^{-1}D_\ell+L_\ell)\theta_\ell=r_\ell$ or $(\omega^{-1}D_\ell+L_\ell^\top)\theta_\ell=r_\ell$ for a single right-hand side $r_\ell\in\mathbb{R}^{n_\ell}$ in $\mathcal{O}(n_\ell)$.
Under these assumptions the computational cost of one Multigrid Monte Carlo update in Alg.~\ref{alg:mgmc} grows in proportion to the problem size $n_L$, as we will see in the following.

We start by analysing the cost of the random smoother in Alg.~\ref{alg:low_rank_gibbs}.
Generating the random vector $\xi_\ell^{\mathrm{diag}}\in\mathbb{R}^{n_\ell}$ incurs a cost of $\mathcal{O}(n_\ell)$ and generating $\xi_\ell^{\mathrm{LR}}\in\mathbb{R}^{\beta}$ a cost of $\mathcal{O}(\beta^{p_\Gamma})$ where $p_\Gamma=1$ if $\Gamma$ is diagonal and $p_\Gamma=2$ otherwise. Computation of $B_{\ell}\xi_\ell^{\text{LR}}$ in line 10 is $\mathcal{O}(\beta n_\ell)$, while all other steps in the computation of $r_\ell$ and the tridiagonal solve for $\theta_\ell^*$ in line 11 are $\mathcal{O}(n_\ell)$. Since the matrices $B_{\ell}$ and $G_{\ell}$ are both of size $n_\ell\times\beta$, the cost of the low-rank update in \eqref{eqn:low_rank_update} is $\mathcal{O}(\beta n_\ell)$.

Thus, the cost of one application of Alg.~\ref{alg:low_rank_gibbs} can be bounded by
\begin{equation}
    \mathrm{Cost}_{\mathrm{Gibbs}}(n_\ell,\beta) \le C_{\mathrm{Gibbs}}^{(1)} \beta^{p_\Gamma} + C_{\mathrm{Gibbs}}^{(2)} (1+\beta) n_\ell\label{eqn:smoother_cost}
\end{equation}
for some constants $C_{\mathrm{Gibbs}}^{(1)}$ and $C_{\mathrm{Gibbs}}^{(2)}$ independent of $n_\ell$ and $\beta$.

Prior to restriction we also need to compute the residual $f_\ell - \widetilde{A}_\ell \theta_{\ell,\nu_1}$, which again incurs a cost of $\mathcal{O}((1+\beta) n_\ell)$. To see this note that $\widetilde{A}_\ell$ defined in \eqref{eq:post-cov} also contains the correction term $B_{\ell} (\Gamma^{-1}B_{\ell}^{\top})\theta_{\ell,\nu_1}$, multiplication with which has a cost of $\mathcal{O}(\beta n_\ell)$ since the $\beta\times n_\ell$ matrix $\Gamma^{-1}B_{\ell}^{\top}$ can be precomputed.
Overall, the additional costs from prolongation, restriction and residual calculation are
\begin{equation}
    \mathrm{Cost}_{\mathrm{other}}(\beta,n_\ell) \le C_{\mathrm{other}}(1+\rev{\beta})n_\ell\label{eqn:coarse_other}
\end{equation}
for some constant $C_{\mathrm{other}}$ again independent of $n_\ell$ and $\beta$.

Depending on whether Alg.~\ref{alg:coarse_sampler} or a direct Cholesky sampler is used in line 3 of Alg.~\ref{alg:mgmc}, drawing a sample on the coarsest level will incur a cost of
\begin{equation}
    \mathrm{Cost}_{\mathrm{coarse}}(\beta,\nu_0,n_0) \le
    \begin{cases}
        \Big(C_{\mathrm{Gibbs}}^{(1)}\beta^{p_\Gamma} + C_{\mathrm{Gibbs}}^{(2)}(1+\beta)n_0\Big)\nu_0 & \text{(Alg.~\ref{alg:coarse_sampler})} \\
        \mathrm{Cost}_{\mathrm{Cholesky}}(n_0)                & \text{(Cholesky sampler)}.
    \end{cases}
    \label{eqn:coarse_cost}
\end{equation}
The cost of generating a single sample with the Cholesky method depends on the implementation. Naively, if $\widetilde{A}_0$ is treated as a dense matrix, $\mathrm{Cost}_{\mathrm{Cholesky}}(n_0)=\mathcal{O}(n_0^2)$ since the Cholesky sampler requires the inversion of triangular $n_0\times n_0$ matrices. However, for a sparse matrix $\widetilde{A}_0$ reordering can reduce the computational complexity significantly.

Putting everything together we arrive at the following result:
\begin{thm}[Cost of Multigrid Monte Carlo update]
    \label{thm:mgmc_cost}
    Assume that the problem size $n_{\ell}$ decreases geometrically on the coarser levels, i.e. there is a constant $\rho_G$ such that
    \begin{equation}
        n_{\ell-1}\, \le\, \rho_G\, n_\ell\quad\text{and}\quad \rho_G\, \gamma \,<\, 1
        \qquad\text{for\;\;$\gamma:=\max_{\ell=0,\dots,L}\{\gamma_\ell\}$}
        \label{eqn:dim-decrease}.
    \end{equation}
    Then there exists a constant $C_{\mathrm{MG}}$, which only depends on $\nu_0$, $\nu_1$, $\nu_2$, $\gamma\rho_G$ and $n_0$, such that the cost of one application of Alg.~\ref{alg:mgmc} can be bounded by
    \begin{equation}
        \mathrm{Cost}_{\mathrm{MGMC}}(L) \le 
        C_{\mathrm{MG}}  n_L\cdot \begin{cases}
        1+\beta & \text{if $\Gamma$ is diagonal}\\
        \rev{\beta^{p_\Gamma}} & \text{otherwise}.
        \end{cases}
        \label{eqn:cost_thm}
    \end{equation}
\end{thm}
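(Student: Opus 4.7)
\medskip

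\noindent\textbf{Proof proposal.} The plan is to set up a standard multigrid cost recursion, unroll it, and sum a geometric series, exploiting the contraction $\gamma\rho_G<1$. Let $T(\ell):=\mathrm{Cost}_{\mathrm{MGMC}}(\ell)$ denote the cost of one call to $\mathrm{MGMC}_\ell$ in Alg.~\ref{alg:mgmc}. Inspecting the algorithm, for $\ell\ge 1$ a single call consists of $\nu_1+\nu_2$ invocations of the random smoother (Alg.~\ref{alg:low_rank_gibbs}), one residual computation together with the restriction and prolongation, and $\gamma_\ell\le\gamma$ recursive calls to $\mathrm{MGMC}_{\ell-1}$. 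Combining the per-level bounds \eqref{eqn:smoother_cost} and \eqref{eqn:coarse_other} yields, with a new constant $C_0$ depending only on $\nu_1,\nu_2$ and the constants $C_{\mathrm{Gibbs}}^{(1)},C_{\mathrm{Gibbs}}^{(2)},C_{\mathrm{other}}$,
\begin{equation*}
T(\ell)\;\le\; C_0\,(1+\beta^{p_\Gamma})\,n_\ell \;+\;\gamma\, T(\ell-1)\qquad\text{for }\ell\ge 1,
\end{equation*}
with base case $T(0)\le \mathrm{Cost}_{\mathrm{coarse}}(\beta,\nu_0,n_0)$ from \eqref{eqn:coarse_cost}; since $n_0$ is a fixed constant, $T(0)$ is bounded by a constant $C_{\mathrm{base}}$ depending only on $\nu_0,\beta,n_0$ (and, in the diagonal case, growing like $1+\beta^{p_\Gamma}$, which is what we want).

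\smallskip

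\noindent Unrolling the recursion gives
\begin{equation*}
T(L)\;\le\;C_0\,(1+\beta^{p_\Gamma})\sum_{\ell=1}^{L}\gamma^{\,L-\ell} n_\ell\;+\;\gamma^{L}\,C_{\mathrm{base}}.
\end{equation*}
Applying \eqref{eqn:dim-decrease} iteratively yields $n_\ell\le \rho_G^{\,L-\ell} n_L$, so the sum is dominated by the geometric series
\begin{equation*}
\sum_{\ell=1}^{L}\gamma^{\,L-\ell}n_\ell\;\le\;n_L\sum_{k=0}^{L-1}(\gamma\rho_G)^{k}\;\le\;\frac{n_L}{1-\gamma\rho_G},
\end{equation*}
which is finite precisely because $\gamma\rho_G<1$.

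\smallskip

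\noindent The step I expect to require the most care is handling the coarse-level term $\gamma^{L}C_{\mathrm{base}}$, which is not obviously proportional to $n_L$. The trick is again to use \eqref{eqn:dim-decrease}: the inequality $n_0\le \rho_G^{L}n_L$ gives $\rho_G^{-L}\le n_L/n_0$, whence
\begin{equation*}
\gamma^{L}\;=\;(\gamma\rho_G)^{L}\,\rho_G^{-L}\;\le\;(\gamma\rho_G)^{L}\,\frac{n_L}{n_0}\;\le\;\frac{n_L}{n_0}.
\end{equation*}
Thus $\gamma^{L}C_{\mathrm{base}}\le (C_{\mathrm{base}}/n_0)\,n_L$, again of the required form with the appropriate dependence on $\beta$ inherited from $C_{\mathrm{base}}$. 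Combining both contributions and absorbing all constants into a single $C_{\mathrm{MG}}$ depending only on $\nu_0,\nu_1,\nu_2,\gamma\rho_G$ and $n_0$ yields \eqref{eqn:cost_thm}, distinguishing the two cases $p_\Gamma=1$ (diagonal $\Gamma$) and $p_\Gamma=2$ (general $\Gamma$) exactly as stated.
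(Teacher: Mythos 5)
Your proposal is correct and follows essentially the same route as the paper's proof: the same cost recursion, the same unrolling into a geometric series controlled by $\gamma\rho_G<1$, and the same trick $n_0\le\rho_G^{L}n_L$ to absorb the coarse-level term into a multiple of $n_L$. The only (harmless) cosmetic difference is that you absorb the $n_\ell$-independent part of the smoother cost, $C_{\mathrm{Gibbs}}^{(1)}\beta^{p_\Gamma}$, directly into the term proportional to $n_\ell$ using $n_\ell\ge1$, whereas the paper carries it separately and bounds it via $1\le\rho_G^{L-\ell}n_L/n_0$.
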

\begin{proof}
    The cost of one MGMC update in Alg.~\ref{alg:mgmc} on level $\ell$ can be bounded recursively as
    \begin{equation*}
        \mathrm{Cost}_{\mathrm{MGMC}}(\ell) %
        \le \begin{cases} C_{\mathrm{other}} (1+\rev{\beta}) n_\ell + \nu \mathrm{Cost}_{\mathrm{Gibbs}}(n_\ell,\beta) +\gamma \mathrm{Cost}_{\mathrm{MGMC}}(\ell-1) & \text{for $\ell>0$} \\
              \mathrm{Cost}_{\mathrm{coarse}}(\beta,\nu_0,n_0)                                                                                            & \text{for $\ell=0$}\end{cases}
    \end{equation*}
    where $\nu:=\nu_1+\nu_2$ is the total number of random smoothing steps for  $\ell \ge 1$.
    Inserting the bounds from \eqref{eqn:smoother_cost}, \eqref{eqn:coarse_other} and \eqref{eqn:coarse_cost} this leads to
    \begin{equation}
        \mathrm{Cost}_{\mathrm{MGMC}}(L) %
        \le \sum_{\ell=1}^{L} \gamma^{L-\ell} \Big(  C_{\mathrm{Gibbs}}^{(1)} \nu \beta^{p_\Gamma} + 
        \rev{(1+\beta)C_{\mathrm{MG}}^{(1)}(\nu)} \cdot n_{\ell}\Big)
        + C_{\mathrm{MG}}^{(2)}(\beta,\nu_0,n_0)\gamma^L n_0\label{eqn:cost_sum_bound}
    \end{equation}
    where\vspace{-1ex}
    \begin{equation}
        \begin{aligned}
            C_{\mathrm{MG}}^{(1)} (\nu)       & := \rev{C_{\mathrm{other}}  + C_{\mathrm{Gibbs}}^{(2)}}\nu                                                                      \\
            C_{\mathrm{MG}}^{(2)} (\beta,\nu_0,n_0) & := \begin{cases}
                                                           \left(C_{\mathrm{Gibbs}}^{(1)}\beta^{p_\Gamma}/n_0 + C_{\mathrm{Gibbs}}^{(2)}(1+\beta)\right)\nu_0 & \text{(Alg.~\ref{alg:coarse_sampler})} \\
                                                           \mathrm{Cost}_{\mathrm{Cholesky}}(n_0)/n_0                                                                    & \text{(Cholesky)}
                                                       \end{cases}
        \end{aligned}
    \end{equation}
    From \eqref{eqn:dim-decrease} it follows that $n_{\ell}\le \rho_G^{L-\ell}n_L$ and $1\le \rho_G^{L-\ell}n_{L}/n_0$ for all $0\le\ell\le L$. Thus, we can bound the total cost in \eqref{eqn:cost_sum_bound} as
    \begin{equation}
        \mathrm{Cost}_{\mathrm{MGMC}}(L) %
        \ \le \ C'_{\mathrm{MG}}(\beta,\nu_0,\nu,n_0) \sum_{\ell=0}^{L-1} (\gamma \rho_G)^{\ell} \; n_L \ \le \
        \frac{C'_{\mathrm{MG}}(\beta,\nu_0,\nu,n_0)}{1 - \gamma \rho_G} \; n_L
        \label{eqn:mgmc_cost_bound}
    \end{equation}
    with
    \begin{equation}
        C'_{\mathrm{MG}}(\beta,\nu_0,\nu,n_0) :=  C_{\mathrm{Gibbs}}^{(1)} \nu \beta^{p_\Gamma} n_0^{-1}   + \rev{(1+\beta)C_{\mathrm{MG}}^{(1)} (\nu)} + C_{\mathrm{MG}}^{(2)} (\beta,\nu_0,n_0)  .
        \label{eqn:mgmc_cost_constant}
    \end{equation}
    The final bound in \eqref{eqn:cost_thm} then follows directly from \eqref{eqn:mgmc_cost_bound} and \eqref{eqn:mgmc_cost_constant}.
\end{proof}
\begin{rem}
    The only two practically relevant recursion schemes are the V-cycle ($\gamma=1$) and the W-cycle ($\gamma=2$). Thus, when $A_L$ arises from the discretisation of a differential operator, the assumption in \eqref{eqn:dim-decrease} is naturally satisfied. Consider for example a lowest order FE discretisation on a uniform rectangular grid for which $\rho_G = 2^{-d}\le \frac{1}{4}$ in $d\ge2$ dimensions. Then, $\gamma\rho_G\le\frac{1}{2}<1$ for $\gamma\le 2$.
\end{rem}
\begin{rem}
    \rev{%
    As will be discussed in Section~\ref{sec:theory}, the parameters $\nu_0$, $\nu_1$, $\nu_2$ and $\gamma$ are independent of the number of levels $L$. For fixed values of these parameters, the convergence rate of the MGMC sampler is independent of $L$. Hence, the constant $C_{\mathrm{MG}}$ in \eqref{eqn:cost_thm} is independent of $L$ as well.}
\end{rem}
\rev{\subsubsection{Setup costs}\label{sec:mgmc_setup_costs}
  In addition to the above online costs that are incurred in every MGMC iteration, there are also offline costs for precomputing required quantities such as the matrices $C_\ell$ and $G_\ell$ in Alg.~\ref{alg:low_rank_gibbs}, as well as the Cholesky factorisation for the coarsest level. These setup costs are in general amortised when generating a large number of samples, but we quantify them here for completeness. In particular, the following theorem shows that the setup costs of Alg.~\ref{alg:mgmc} grow linearly in the problem size $n_L$.}
\rev{\begin{thm}[MGMC setup cost]
    \label{thm:mgmc_setup_cost}
    Assume that (as in Theorem~\ref{thm:mgmc_cost}) the problem size $n_{\ell}$ decreases geometrically on the coarser levels, i.e. there is a constant $\rho_G<1$ such that
    \begin{equation}\label{eq:geometric}
        n_{\ell-1}\, \le\, \rho_G\, n_\ell \quad \text{for all $\ell=1,2,\dots,L$}.
    \end{equation}
    Then there exist constants $C_{\mathrm{MG}}^{(\mathrm{setup},1)}$ and $C_{\mathrm{MG}}^{(\mathrm{setup},2)}$, which only depend on $\rho_G$, such that the setup cost of Alg.~\ref{alg:mgmc} can be bounded by
    \begin{equation}
    \begin{aligned}
    \mathrm{Cost}^{(\mathrm{setup})}_{\mathrm{MGMC}}(L) &\le  C_{\mathrm{MG}}^{(\mathrm{setup},1)} \beta^2 n_L +C_{\mathrm{MG}}^{(\mathrm{setup},2)} \beta^3 L  + \mathrm{Cost}_{\mathrm{coarse}}^{(\mathrm{setup})}(\beta,n_0).
    \end{aligned}
    \end{equation}
    Depending on the choice in line 3 of Alg.~\ref{alg:mgmc}, the setup cost on the coarsest level is bounded by
\begin{equation}
    \mathrm{Cost}^{(\mathrm{setup})}_{\mathrm{coarse}}(\beta,n_0) \le
    \begin{cases}
        C_{\mathrm{Gibbs}}^{(\mathrm{setup},1)} \beta^{3} + C_{\mathrm{Gibbs}}^{(\mathrm{setup},2)} \beta^2 n_0 & \text{(Alg.~\ref{alg:coarse_sampler})} \\
        \mathrm{Cost}^{(\mathrm{setup})}_{\mathrm{Cholesky}}(n_0)                & \text{(Cholesky sampler)}.
    \end{cases}
    \label{eqn:coarse_setup_cost}
\end{equation}
for some constants $C_{\mathrm{Gibbs}}^{(\mathrm{setup},1)}$, $C_{\mathrm{Gibbs}}^{(\mathrm{setup},2)}$ and where $\mathrm{Cost}^{(\mathrm{setup})}_{\mathrm{Cholesky}}(n_0)$ is the cost for computing the Cholesky factorisation of the $n_0\times n_0$ matrix $\widetilde{A}_0$.
\end{thm}}
\noindent
\rev{Proof. See Appendix~\ref{sec:proof_mgmc_setup_costs}.}

\rev{Naively the setup cost of the coarse level Cholesky factorisation is $\mathcal{O}(n_0^3)$, but the complexity will be reduced if the sparsity of the $n_0\times n_0$ matrix $\widetilde{A}_0$ can be exploited. However, since $n_0 \ll n_L$, the setup cost on the coarsest level
  $\mathrm{Cost}_{\mathrm{coarse}}^{(\mathrm{setup})}(\beta,n_0)$ is in general neligible.}
\rev{\subsubsection{Memory requirements}\label{sec:mgmc_memory_requirements}
As the following theorem shows, the memory requirements of Alg.~\ref{alg:mgmc} also grow linearly in $n_L$.}
\rev{\begin{thm}[MCMC memory requirements]
    \label{thm:mgmc_memory}
    Assume that (as in Theorem~\ref{thm:mgmc_cost}) the problem sizes $n_{\ell}$ satisfy \eqref{eq:geometric}.
    Then there exists a constant $C_{\mathrm{MG}}^{(\mathrm{mem})}$, which only depends on $\rho_G$, such that the total memory requirement of Alg.~\ref{alg:mgmc} can be bounded by
    \begin{equation}
    \begin{aligned}
    \mathrm{Memory}_{\mathrm{MGMC}}(L) &\le C_{\mathrm{MG}}^{(\mathrm{mem})}(1+\beta)n_L + \beta L + \mathrm{Memory}_{\mathrm{coarse}}(\beta,n_0).
    \end{aligned}
    \end{equation}
    Depending on the choice in line 3 of Alg.~\ref{alg:mgmc}, the memory requirements on the coarsest level are bounded by
\begin{equation}
    \mathrm{Memory}_{\mathrm{coarse}}(\beta,n_0) \le
    \begin{cases}
        C_{\mathrm{MG}}^{(\mathrm{mem})} (1+\beta) n_0 + \beta & \text{(Alg.~\ref{alg:coarse_sampler})} \\
        \mathrm{Memory}_{\mathrm{Chol}}(n_0)               & \text{(Cholesky sampler)}.
    \end{cases}
    \label{eqn:coarse_storage}
\end{equation}
where $\mathrm{Memory}_{\mathrm{Chol}}(n_0)$ is the memory required to store the Cholesky factorisation of $\widetilde{A}_0$.
\end{thm}}
\noindent
\rev{Proof. See Appendix~\ref{sec:proof_mgmc_memory}.}

\rev{The memory footprint of the Cholesky factorisation is $\mathcal{O}(n_0^2)$.
It can be reduced by exploiting the sparsity of $\mathcal{A}_0$. Because of $n_0\ll n_L$ the storage requirements on the coarsest level are negligible.}
\section{Theory\label{sec:theory}}
This section presents our main theoretical results. We first put the relationship between random samplers and linear solvers already mentioned in Section~\ref{sec:random_vs_deterministic_smoothers} into a wider context: in Section~\ref{subsec:matrix-splitting} we argue that according to the pioneering work in \cite{goodman1989multigrid,Fox.C_Parker_2017_AcceleratedGibbsSampling}, the convergence of the Multigrid Monte Carlo sampler is closely related to the convergence of deterministic multigrid solvers for linear systems of equations. This is made more explicit in Section~\ref{sec:theory_invariance}, where we show that the Multigrid Monte Carlo update Alg. \ref{alg:mgmc} leaves the target distribution invariant, and Section~\ref{sec:theory_convergence}, where we prove grid-independent convergence of the algorithm. Throughout the discussion we highlight the connection with the theory for deterministic multigrid methods; to streamline the presentation we defer proofs to the appendices. Finally, in Section~\ref{subsec:Bayesian}, we apply and extend the theory to conditional distributions and Bayesian inverse problems introduced in Section~\ref{subsec:Linear-Bayesian-inverse}. We conclude with a result on the optimality of the MGMG sampler for drawing samples from the target distribution in Section~\ref{sec:optimality}.
\subsection{Random \rev{sampling based on matrix splittings}\label{subsec:matrix-splitting}}
Let $A=A^\top$ be a symmetric positive definite matrix. As pointed out in \cite{Fox.C_Parker_2017_AcceleratedGibbsSampling}, the problem of sampling from the multivariate normal distribution $\mathcal{N}(\mu,A^{-1})$ with mean $\mu=A^{-1}f$ and covariance $A^{-1}$ is closely related to the problem of finding the solution of a linear system with the precision matrix $A$. More specifically, assume that we want to generate samples $\theta\sim p(\theta)d\theta$ with the density
\begin{equation}
	p(\theta)=\frac{1}{Z}\exp\left[-H(\theta)\right]\qquad\text{where \ensuremath{H(\theta)=\frac{1}{2}\theta^{\top}A\theta-f^{\top}\theta}}\label{eqn:sampling_problem}
\end{equation}
The corresponding linear algebra problem is to find the solution $u$ of the linear equation
\begin{equation}
	Au=f.\label{eqn:theta_deterministic}
\end{equation}
Note that the solution $u=A^{-1}f$ in \eqref{eqn:theta_deterministic} can be interpreted as the most likely state in the sampling problem, i.e. $u=\text{argmax}_{\theta\in\mathbb{R}^{n}}\{p(\theta)\}=\text{argmin}_{\theta\in\mathbb{R}^{n}}\{H(\theta)\}$. As already pointed out in Section~\ref{sec:random_vs_deterministic_smoothers}, the solution of \eqref{eqn:theta_deterministic} can be obtained by splitting the matrix $A=:M-N$  and iterating
\begin{equation}
	u^{(k+1)}=M^{-1}Nu^{(k)}+M^{-1}f,\qquad k=0,1,\dots\label{eqn:splitting_iteration}
\end{equation}
This implies that the error $u^{(k+1)}-u$ satisfies
\begin{equation}
	u^{(k+1)}-u = X (u^{(k)}-u)\qquad\text{with $X:= M^{-1}N = \id - M^{-1}A$.}\label{eqn:deterministic_error_iteration}
\end{equation}
Under certain conditions on the splitting defined by the matrix $M$ (see e.g.\ \cite{saad2003iterative} for details) the iterates $u^{(k)}$ converge to the solution of \eqref{eqn:theta_deterministic}. The corresponding iteration for sampling from the distribution with density given by \eqref {eqn:sampling_problem} is
\begin{equation}
	\theta^{(k+1)}=M^{-1}N\theta^{(k)}+M^{-1}(f+\xi^{(k)}),\qquad k=0,1,\dots\label{eqn:theta_random}
\end{equation}
where $\{\xi^{(k)}\}_{k=0,1,\dots}$ with $\xi^{(k)}\sim\mathcal{N}(0,M^{\top}+N)$ is a collection of independent and identically distributed random variables. Note that, in analogy to \eqref{eqn:deterministic_error_iteration}, \eqref{eqn:theta_random} can be written as
\begin{equation}
	\theta^{(k+1)}-\mu=X(\theta^{(k)}-\mu)+M^{-1}\xi^{(k)}\qquad\text{with $\mu=A^{-1}f$}.
	\label{eqn:random_error_iteration}
\end{equation}
and the same matrix $X$ as in \eqref{eqn:deterministic_error_iteration}. As shown in \cite{Fox.C_Parker_2017_AcceleratedGibbsSampling}, the following two statements are equivalent:
\begin{enumerate}
	\item The sequence $(u^{(k)})_{k=0,1,\dots}$ defined by the deterministic iteration in \eqref{eqn:splitting_iteration} converges to the solution $u_{L}=A^{-1}f_{L}$ of \eqref{eqn:theta_deterministic}.
	\item The Markov chain $(\theta^{(k)})_{k=0,1,\dots}$ defined by the random update in \eqref{eqn:theta_random} converges to $\theta\sim\mathcal{N}(\mu,A^{-1})$ with $\mu=A^{-1}f$ in distribution; moreover the first and second moments converge to the targets $\mu$ and $A^{-1}$, respectively.
\end{enumerate}
Furthermore, the update in \eqref{eqn:theta_random} leaves the target distribution invariant:
\begin{enumerate}
	\setcounter{enumi}{2}
	\item If $\theta^{(k)}\sim \mathcal{N}(\mu,A^{-1})$ then $\theta^{(k+1)}\sim \mathcal{N}(\mu,A^{-1})$.
\end{enumerate}
The deterministic equivalent of the MGMG update in Alg.~\ref{alg:mgmc} is a standard multigrid cycle. Crucially, both algorithms can be expressed in the form in \eqref{eqn:splitting_iteration} and \eqref{eqn:theta_random} respectively with the \textit{same splitting matrix $M$}. Hence, as already pointed out in \cite{goodman1989multigrid}, the convergence analysis of the Multigrid Monte Carlo algorithm proceeds along the same lines as the well-established analysis of the corresponding multigrid method (see e.g. \cite{Hackbusch.W_1985_MultiGridMethodsApplications,Hackbusch.W_2016_IterativeSolutionLarge}). The crucial observation is that the same iteration matrix $X$ arises in the deterministic multigrid error iteration in \eqref{eqn:deterministic_error_iteration} and in the Multigrid Monte Carlo equivalent in \eqref{eqn:random_error_iteration}; the explicit form of this matrix will be discussed in Section~\ref{sec:theory_convergence}. Before doing this, it is instructive to first show that the MGMC update in Alg.~\ref{alg:mgmc} indeed leaves the target distribution invariant.
\subsection{Invariance of the target distribution under MGMC updates}\label{sec:theory_invariance}
The following theorem formalises the invariance argument that is already presented in \cite{goodman1989multigrid}.
\begin{thm}\label{thm:mgmc_invariance}
	Assume that $A_{\ell-1} = I_\ell^{\ell-1} A_\ell I_{\ell-1}^\ell$ and a random smoother of the form in Alg.~\ref{alg:random_smoother} is used in Alg.~\ref{alg:mgmc}. Let $\mu_\ell = A_\ell^{-1}f_\ell$. Then, the Multigrid Monte Carlo update in Alg.~\ref{alg:mgmc} leaves the target distribution invariant, i.e.
	\begin{equation}
		\theta_\ell \sim \mathcal{N}(\mu_\ell,A_\ell^{-1})  \quad \Rightarrow \quad
		\theta'_\ell = \mathrm{MGMC}_{\ell}(A_\ell,f_\ell,\theta_\ell) \sim \mathcal{N}(\mu_\ell,A_\ell^{-1}).
		\label{eqn:mgmc_invariance}
	\end{equation}
\end{thm}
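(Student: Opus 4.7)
The plan is to proceed by induction on the level $\ell$, showing that $\mathrm{MGMC}_\ell$ leaves $\mathcal{N}(A_\ell^{-1}f_\ell, A_\ell^{-1})$ invariant for every right-hand side $f_\ell$. Since $\mathrm{MGMC}_\ell$ is a composition of three Markov kernels --- the pre-smoothings, a single coarse-correction step, and the post-smoothings --- and a composition of kernels that each preserve a given distribution also preserves it, it suffices to treat each piece separately.

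First I would verify by direct computation that one application of the random smoother in Alg.~\ref{alg:random_smoother} preserves $\mathcal{N}(A^{-1}f, A^{-1})$. Writing \eqref{eqn:random_smoother_update} in error form $\theta' - \mu = (I - M^{-1}A)(\theta - \mu) + M^{-1}\xi$ and expanding the outgoing covariance $(I - M^{-1}A) A^{-1} (I - M^{-1}A)^\top + M^{-1}(M + M^\top - A) M^{-\top}$, the cross terms cancel back to $A^{-1}$, which is exactly why the noise covariance was chosen as $M + M^\top - A$. This settles both pre- and post-smoothing at level $\ell$, and also the base case $\ell = 0$: the $\mathrm{CoarseSampler}$ is either an exact Cholesky draw (trivially invariant) or $\nu_0$ compositions of random smoothers, which is invariant by the above.

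For the inductive step at level $\ell \ge 1$, the only nontrivial piece is the coarse correction. Since every primitive at lower levels is an affine-linear Gaussian operation on its state, $\mathrm{MGMC}_{\ell-1}$ itself has the form $\psi \mapsto X_{\ell-1}\psi + b_{\ell-1}(g) + \eta_{\ell-1}$ with a deterministic iteration matrix $X_{\ell-1}$, a linear forcing $b_{\ell-1}(\cdot)$, and Gaussian noise $\eta_{\ell-1} \sim \mathcal{N}(0,\Sigma_{\ell-1})$ independent of $\psi$ and $g$. The inductive hypothesis then pins these down: matching means forces $b_{\ell-1}(g) = (I - X_{\ell-1}) A_{\ell-1}^{-1} g$, and matching covariances gives the discrete Lyapunov-type relation $\Sigma_{\ell-1} = A_{\ell-1}^{-1} - X_{\ell-1} A_{\ell-1}^{-1} X_{\ell-1}^\top$. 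Iterating $\gamma_\ell$ times from $\psi_{\ell-1}^{(0)} = 0$, the resulting $\psi_{\ell-1}^{(\gamma_\ell)}$ has mean $(I - X_{\ell-1}^{\gamma_\ell}) A_{\ell-1}^{-1} g$ and, by a telescoping sum that invokes the Lyapunov identity at each step, covariance $A_{\ell-1}^{-1} - X_{\ell-1}^{\gamma_\ell} A_{\ell-1}^{-1} (X_{\ell-1}^{\gamma_\ell})^\top$.

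Substituting $g = I_\ell^{\ell-1}(f_\ell - A_\ell \theta_\ell)$ and $\theta'_\ell = \theta_\ell + I_{\ell-1}^\ell \psi_{\ell-1}^{(\gamma_\ell)}$ recasts the whole coarse-correction step as a single affine-linear Gaussian update with iteration matrix $I - QA_\ell$, where $Q := I_{\ell-1}^\ell (I - X_{\ell-1}^{\gamma_\ell}) A_{\ell-1}^{-1} I_\ell^{\ell-1}$. Invariance at level $\ell$ then follows from $A_\ell \mu_\ell = f_\ell$ for the mean, together with the covariance identity $(I - QA_\ell) A_\ell^{-1} (I - QA_\ell)^\top + I_{\ell-1}^\ell \bigl(A_{\ell-1}^{-1} - X_{\ell-1}^{\gamma_\ell} A_{\ell-1}^{-1} (X_{\ell-1}^{\gamma_\ell})^\top\bigr) I_\ell^{\ell-1} = A_\ell^{-1}$. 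I expect this last matrix bookkeeping to be the main obstacle, but it unwinds cleanly: the Galerkin condition $I_\ell^{\ell-1} A_\ell I_{\ell-1}^\ell = A_{\ell-1}$ collapses the middle $A_\ell$ in the quadratic term $QA_\ell Q^\top$, and with $Z := X_{\ell-1}^{\gamma_\ell}$ the remainder reduces to the elementary cancellation $(I - Z) A_{\ell-1}^{-1} + A_{\ell-1}^{-1} (I - Z^\top) - (I - Z) A_{\ell-1}^{-1} (I - Z^\top) = A_{\ell-1}^{-1} - Z A_{\ell-1}^{-1} Z^\top$, closing the induction.
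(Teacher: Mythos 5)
Your proof is correct, and it takes a genuinely different route from the paper's for the one nontrivial step, the coarse correction. The paper handles that step via the partial-resampling theorem of Liu and Sabatti (Proposition~\ref{prop:coarse_level_invariance}): it identifies the conditional coarse density as the Gaussian $\mathcal{N}(A_{\ell-1}^{-1}f_{\ell-1},A_{\ell-1}^{-1})$ (Proposition~\ref{prop:gaussian_coarse_level_correction}), argues via an equivalent reformulation of the recursion (Alg.~\ref{alg:coarse_alternative}) that $\psi_{\ell-1}^{(\gamma_\ell)}$ is drawn from that conditional law, and then invokes a general translation-invariance-of-Lebesgue-measure argument that needs no Gaussianity at all. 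You instead exploit that $\mathrm{MGMC}_{\ell-1}$ is an affine map with additive input-independent Gaussian noise, observe that the inductive hypothesis \emph{forces} $b_{\ell-1}(g)=(I-X_{\ell-1})A_{\ell-1}^{-1}g$ and the Lyapunov identity $\Sigma_{\ell-1}=A_{\ell-1}^{-1}-X_{\ell-1}A_{\ell-1}^{-1}X_{\ell-1}^\top$ (which is exactly \eqref{eq:noise-cov-id}, obtained in the paper only later and by explicit construction in Lemma~\ref{lem:MG-iteration-rep}), and then close the argument by second-moment bookkeeping with the Galerkin condition; I checked the telescoping and the final cancellation and they are right. What each approach buys: the paper's argument extends verbatim to non-Gaussian targets (the original Goodman--Sokal setting) and cleanly separates the "why" of the coarse correction from linear algebra, whereas yours is confined to the Gaussian case but is entirely elementary, avoids the measure-theoretic lemma and the conditional-density identification altogether, and sidesteps the somewhat delicate conditioning step in the paper's invocation of Proposition~\ref{prop:coarse_level_invariance}. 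The only thing you should make explicit is that the affine-Gaussian structure of $\mathrm{MGMC}_{\ell-1}$ (deterministic $X_{\ell-1}$, linear $b_{\ell-1}$, noise independent of the input state and of $g$) must itself be carried along as part of the induction hypothesis, since the recursive calls are among the "primitives" you appeal to; this is routine because smoothers, restriction and prolongation are affine with fresh independent noise, but as written it is asserted rather than proved.
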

To prove Theorem \ref{thm:mgmc_invariance} we first show that random smoothers based on matrix splittings leave the target distribution invariant.
\begin{prop}\label{prop:smoother_invariance}
	The random smoother in Alg.~\ref{alg:random_smoother} leaves the multivariate normal distribution with mean $\mu=A^{-1}f$ and covariance $A^{-1}$ invariant: $\theta \sim \mathcal{N}(\mu,A^{-1})$ implies that $\theta' \sim \mathcal{N}(\mu,A^{-1})$.
\end{prop}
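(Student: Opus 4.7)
The plan is to rewrite the update in \eqref{eqn:random_smoother_update} in error form, observe that the result is an affine Gaussian transformation of independent Gaussian inputs, and then verify directly that the resulting covariance is $A^{-1}$. Concretely, subtracting $\mu = A^{-1}f$ from both sides of \eqref{eqn:random_smoother_update} and using $A\mu = f$ gives
\begin{equation*}
\theta' - \mu \;=\; X(\theta-\mu) + M^{-1}\xi, \qquad X := I - M^{-1}A,
\end{equation*}
which is exactly the random error iteration \eqref{eqn:random_error_iteration} that was already singled out in Section~\ref{subsec:matrix-splitting}. Since $\theta \sim \mathcal{N}(\mu, A^{-1})$ and $\xi \sim \mathcal{N}(0, M+M^\top-A)$ are independent (by construction of Alg.~\ref{alg:random_smoother}), the right-hand side is an affine combination of independent Gaussian vectors, hence Gaussian, with mean $\mathbb{E}[\theta'-\mu] = 0$.

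It remains to verify that the covariance equals $A^{-1}$. By independence of $\theta$ and $\xi$,
\begin{equation*}
\operatorname{Cov}(\theta') \;=\; X A^{-1} X^\top \;+\; M^{-1}(M+M^\top-A)M^{-\top}.
\end{equation*}
The first step is to expand $X A^{-1} X^\top = (I-M^{-1}A)A^{-1}(I - AM^{-\top})$, which yields $A^{-1} - M^{-\top} - M^{-1} + M^{-1}AM^{-\top}$. The second step expands the noise contribution as $M^{-\top} + M^{-1} - M^{-1}AM^{-\top}$. Adding the two gives exactly $A^{-1}$, proving that $\theta'-\mu \sim \mathcal{N}(0,A^{-1})$, and hence $\theta' \sim \mathcal{N}(\mu,A^{-1})$.

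There is no real obstacle here; the argument is purely algebraic, and the whole point is that the covariance $M+M^\top-A$ prescribed for $\xi$ in \eqref{eqn:random_smoother_update} is precisely the one that cancels the cross-terms arising from $XA^{-1}X^\top$. The only subtlety worth flagging is that this cancellation only makes sense when $M+M^\top-A$ is positive semidefinite, so that $\xi$ is well-defined; for all splittings used in the paper (the low-rank Gibbs smoother of Section~\ref{subsec:Gibbs} and its symmetrised version from Lemma~\ref{lem:symmetric_random_smoother}) this is guaranteed by the explicit formula \eqref{eqn:low_rank_smoother_covariance}. With these two short computations the proposition follows.
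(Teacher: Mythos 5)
Your proof is correct and follows essentially the same route as the paper's own argument in Appendix~\ref{sec:proof_prop:smoother_invariance}: recast the update in error form $\theta'-\mu = (I-M^{-1}A)(\theta-\mu)+M^{-1}\xi$, use independence and Gaussianity, and verify by direct expansion that $XA^{-1}X^\top + M^{-1}(M+M^\top-A)M^{-\top}=A^{-1}$. The remark about positive semidefiniteness of $M+M^\top-A$ is a sensible extra observation but not needed beyond what the paper already assumes.
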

\begin{proof}
	If $\theta\sim \mathcal{N}(\mu,A^{-1})$ then $\theta'$ defined by \eqref{eqn:random_smoother_update} has a multivariate normal distribution because it is the linear combination of two multivariate normal random variables $\theta$ and $\xi$. Since a multivariate normal distribution is uniquely defined by its first two moments, it is therefore sufficient to show that $\mathbb{E}[\theta']=\mu$ and $\mathbb{E}[(\theta'-\mu)(\theta'-\mu)^\top]=A^{-1}$. These two identities follow with some straightforward algebra which exploits the linearity of expectation values and the independence of $\theta$ and $\xi$ in \eqref{eqn:random_smoother_update}; see Appendix \ref{sec:proof_prop:smoother_invariance} for details.
\end{proof}
Next, we consider the coarse level correction, i.e. lines 9--14 in Alg.~\ref{alg:mgmc}. We show that given a sample $\theta_{\ell}$ on $\mathbb{R}^{n_{\ell}}$ from the target distribution, partially resampling $\psi_{\ell-1}$ on $\mathbb{R}^{n_{\ell-1}}$ from the correct distribution and adding this to $\theta_{\ell}$ as $\theta_{\ell}+I_{\ell-1}^{\ell}\psi_{\ell-1}$ (line 14 of Alg.~\ref{alg:mgmc}) does not change the distribution. This result is essentially due to Liu and Sabatti \cite[Theorem 3]{Liu.J.S_Sabatti_2000_generalised}, where the result was presented more abstractly. Since the results and proof in \cite{Liu.J.S_Sabatti_2000_generalised} may seem somewhat opaque to a broader audience in numerical analysis, we recall the main points here and replicate the proof in Appendix \ref{sec:proof_MGMC_coarse_level_invariance} for completeness.

Consider a random field $\Theta_\ell$ on $\Lambda_{\ell}\subset\mathbb{R}^{n_{\ell}}$ distributed according to
\begin{equation}
	p_{\ell}(\theta_\ell)d\theta_\ell=\frac{1}{Z_\ell}h_{\ell}(\theta_{\ell})d\theta_\ell,
	\label{eqn:theta_distribution}
\end{equation}
for some non-negative function $h_{\ell}$ on $\mathbb{R}^{n_{\ell}}$ %
and with normalising constant $Z_\ell:=\int_{\mathbb{R}^{n_{\ell}}}h_{\ell}(\theta_\ell)d\theta_\ell$.
Define the conditional coarse level density
\begin{align}
	p_{\ell-1}(\psi_{\ell-1}|\theta_\ell) & :=\frac{1}{Z^*_{\ell-1}(\theta)}h_{\ell}(\theta_\ell+I_{\ell-1}^\ell\psi_{\ell-1}),\label{eq:coarse-density}
\end{align}
with normalisation constant
\begin{equation}
	Z^*_{\ell-1}(\theta_\ell):=\int_{\mathbb{R}^{n_{\ell-1}}}h_{\ell}(\theta_\ell+I_{\ell-1}^\ell \psi_{\ell-1})\,\mathrm{d}\psi_{\ell-1}.\label{eqn:Z_theta_normalisation}
\end{equation}
Let further $\Psi_{\ell-1}$ be a random field on $\Lambda_{\ell-1}\subset\mathbb{R}^{n_{\ell-1}}$ that has the density
\[
	p_{\ell-1}(\psi_{\ell-1})=\int_{\mathbb{R}^{n_{\ell}}}p(\psi_{\ell-1},\theta_{\ell})\mathrm{d}\theta_\ell:=\int_{\mathbb{R}^{n_{\ell}}}p_{\ell-1}(\psi_{\ell-1}|\theta_{\ell})p_{\ell}(\theta_\ell)\mathrm{d}\theta_\ell,
\]
where
\begin{equation}
	p(\psi_{\ell-1},\theta_{\ell}):=p_{\ell-1}(\psi_{\ell-1}|\theta_\ell)p_{\ell}(\theta_\ell)\label{eqn:joint_density}
\end{equation}
is the joint density of $\Theta_\ell$ and $\Psi_{\ell-1}$. The following proposition states that the distribution of the corrected random variable $\Theta_\ell^{*}=\Theta_\ell+I_{\ell-1}^\ell\Psi_{\ell-1}$ is the same as that of $\Theta_\ell$.
\begin{prop}[{\cite[Theorem 3]{Liu.J.S_Sabatti_2000_generalised}}]\label{prop:coarse_level_invariance}
	Let $\Theta_\ell$ and $\Psi_{\ell-1}$ be random variables with distributions $p_\ell$ and $p_{\ell-1}(\cdot|\theta_\ell)$, as defined in \eqref{eqn:theta_distribution} and \eqref{eq:coarse-density} respectively. Let $\theta_\ell^{*}:=\theta_\ell^{*}(\theta_\ell,\psi_{\ell-1}):=\theta_\ell+I_{\ell-1}^\ell \psi_{\ell-1}$ be a realisation of the random variable $\Theta_\ell^*:=\Theta_\ell +I_{\ell-1}^\ell\Psi_{\ell-1}$ with distribution $\mu_{\theta_\ell^*}$.

	For an arbitrary $A\in\mathscr{B}(\mathbb{R}^{n_{\ell}})$, where
	$\mathscr{B}(\mathbb{R}^{n_{\ell}})$ is the Borel $\sigma$-algebra
	on $\mathbb{R}^{n_{\ell}}$, let
	\[
		\mu_{\theta_\ell^{*}}(A)=\int_{\mathbb{R}^{n_{\ell}}}\int_{\mathbb{R}^{n_{\ell-1}}}1_{A}(\theta_\ell^{*}(\theta_\ell,\psi_{\ell-1}))p(\theta_\ell,\psi_{\ell-1})\mathrm{d}\psi_{\ell-1}\mathrm{d}\theta_\ell
	\]
	with the joint density $p_\ell$ defined in \eqref{eqn:joint_density}.
	Then,
	\[
		\mu_{\theta_\ell^{*}}(A)=\mu_{\theta_\ell}(A) =\frac{1}{Z_\ell}\int_{\mathbb{R}^{n_{\ell}}}1_{A}(\theta_\ell)h_{\ell}(\theta_\ell)\mathrm{d}\theta_\ell\,.
	\]
\end{prop}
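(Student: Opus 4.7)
The plan is to compute $\mu_{\theta_\ell^*}(A)$ directly from the defining double integral and reduce it to $\mu_{\theta_\ell}(A)$ by a single change of variables. Substituting the joint density \eqref{eqn:joint_density} together with \eqref{eq:coarse-density} and \eqref{eqn:theta_distribution} shows that the integrand equals $1_A(\theta_\ell + I_{\ell-1}^\ell\psi_{\ell-1})\, h_\ell(\theta_\ell + I_{\ell-1}^\ell\psi_{\ell-1})\, h_\ell(\theta_\ell)/\bigl(Z_\ell\, Z^*_{\ell-1}(\theta_\ell)\bigr)$. First I would apply Tonelli to swap the order of integration, and then for fixed $\psi_{\ell-1}$ substitute $\eta := \theta_\ell + I_{\ell-1}^\ell \psi_{\ell-1}$ in the $\theta_\ell$-integral; since this is a pure translation on $\mathbb{R}^{n_\ell}$ the Jacobian is $1$, and the integrand becomes $1_A(\eta)\, h_\ell(\eta)\, h_\ell(\eta - I_{\ell-1}^\ell\psi_{\ell-1})/\bigl(Z_\ell\, Z^*_{\ell-1}(\eta - I_{\ell-1}^\ell \psi_{\ell-1})\bigr)$.

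The crux is then the identity $Z^*_{\ell-1}(\eta - I_{\ell-1}^\ell \psi_{\ell-1}) = Z^*_{\ell-1}(\eta)$, i.e.\ translation invariance of the normalising constant $Z^*_{\ell-1}$ along the range of $I_{\ell-1}^\ell$. This will fall out of \eqref{eqn:Z_theta_normalisation} by a translation of the dummy integration variable and the translation invariance of Lebesgue measure on $\mathbb{R}^{n_{\ell-1}}$. Once $Z^*_{\ell-1}(\eta)$ no longer depends on $\psi_{\ell-1}$, I would pull it out of the inner integral and apply the sign flip $\psi_{\ell-1} \mapsto -\psi_{\ell-1}$; this identifies the remaining $\psi_{\ell-1}$-integral of $h_\ell(\eta - I_{\ell-1}^\ell \psi_{\ell-1})$ as exactly $Z^*_{\ell-1}(\eta)$ again by \eqref{eqn:Z_theta_normalisation}. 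The two copies of $Z^*_{\ell-1}(\eta)$ cancel, and what is left is $\frac{1}{Z_\ell}\int 1_A(\eta)\, h_\ell(\eta)\, d\eta = \mu_{\theta_\ell}(A)$, as desired.

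The hard part, and really the only non-bookkeeping step, will be isolating the invariance of $Z^*_{\ell-1}$ along $\operatorname{range}(I_{\ell-1}^\ell)$; conceptually this is why the additive, linear form $\theta_\ell + I_{\ell-1}^\ell\psi_{\ell-1}$ of the coarse-level proposal is essential for the update to leave $p_\ell$ invariant, and it is what distinguishes this step from a Metropolis--Hastings-style accept/reject argument. The plan never uses that $p_\ell$ is Gaussian or any structure of $h_\ell$ beyond measurability and positivity, which is consistent with the more abstract formulation in \cite{Liu.J.S_Sabatti_2000_generalised}; specialising $h_\ell$ to the density of $\mathcal{N}(\mu_\ell,A_\ell^{-1})$ therefore immediately yields the Gaussian invariance needed in Theorem~\ref{thm:mgmc_invariance}.
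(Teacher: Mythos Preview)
Your proposal is correct and follows essentially the same route as the paper's proof: both expand the double integral, swap the order via Tonelli, apply a translation in the $\theta_\ell$-variable, and then use translation invariance (plus a sign flip) of Lebesgue measure on $\mathbb{R}^{n_{\ell-1}}$ to show that $Z^*_{\ell-1}(\eta - I_{\ell-1}^\ell\psi_{\ell-1})$ does not depend on $\psi_{\ell-1}$, so that the inner integral cancels the normalisation. The only cosmetic difference is that the paper combines the translation and sign flip into a single computation \eqref{eqn:Z_ell}, whereas you factor the invariance of $Z^*_{\ell-1}$ and the identification of the remaining $\psi_{\ell-1}$-integral into two separate steps.
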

\noindent\textit{Proof.} See Appendix~\ref{sec:proof_MGMC_coarse_level_invariance}.\\[1ex]
Proposition \ref{prop:coarse_level_invariance} does not make any assumptions on the distribution $p_\ell$. If $p_\ell$ is a multivariate normal distribution with mean $\mu_\ell=A_\ell^{-1}f_\ell$ and covariance $A_\ell^{-1}$ then the coarse level distribution $p_{\ell-1}(\cdot|\theta_\ell)$ is also multivariate normal.  Moreover, if $A_{\ell-1}$ and $f_{\ell-1}$ are of the form defined in Alg.~\ref{alg:mgmc}, then the resulting multivariate normal distribution has mean $\mu_{\ell-1}=A_{\ell-1}^{-1}f_{\ell-1}$ and covariance $A_{\ell-1}^{-1}$. These results follow immediately from the following statement.
\begin{prop}\label{prop:gaussian_coarse_level_correction}
	Let $p_{\ell}(\theta_\ell)d\theta_\ell=Z_\ell^{-1}\exp(-H_{\ell}(\theta_\ell))d\theta_\ell$
	be the density on level $\ell$ with
	\begin{equation}
		H_{\ell}(\theta_\ell):=\frac{1}{2}\theta_\ell A_\ell \theta_\ell - f_\ell^\top \theta_\ell
	\end{equation}
	where $A_{\ell}=A_\ell^\top$ is a positive definite symmetric matrix. Then the conditional coarse level density $p_{\ell-1}$ defined in \eqref{eq:coarse-density} can be written as
	\begin{equation}
		p_{\ell-1}(\psi_{\ell-1}|\theta_\ell)d\psi_{\ell-1}=\frac{1}{Z_{\ell-1}(\theta_\ell)}\exp(-H_{\ell-1}(\psi_{\ell-1}|\theta_\ell))d\psi_{\ell-1}
		\label{eqn:coarse_density_gaussian}
	\end{equation}
	with
	\[
		H_{\ell-1}(\psi_{\ell-1}|\theta_\ell):= \frac{1}{2}\psi_{\ell-1}^\top A_{\ell-1} \psi_{\ell-1} - f_{\ell-1}^\top \psi_{\ell-1}
	\]
	where
	\begin{xalignat}{3}
		A_{\ell-1} &= I_\ell^{\ell-1} A_{\ell} I_{\ell-1}^\ell, &
		f_{\ell-1} &= I_\ell^{\ell-1}(f_\ell - A_\ell \theta_\ell),&
		I_\ell^{\ell-1} &= (I_{\ell-1}^\ell)^\top\label{eqn:triple_product}
	\end{xalignat}
	and $Z_{\ell-1}(\theta_\ell)$ is a normalisation constant which ensures that $\int p_{\ell-1}(\psi_{\ell-1}|\theta_\ell)\;d\psi_{\ell-1} = 1$.
\end{prop}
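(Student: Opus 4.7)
The argument is essentially a direct computation: substitute the quadratic $H_\ell$ into the Liu--Sabatti definition \eqref{eq:coarse-density} and reorganise terms according to their dependence on $\psi_{\ell-1}$. The plan is to start from
\[
h_\ell(\theta_\ell + I_{\ell-1}^\ell \psi_{\ell-1}) = \exp\bigl(-H_\ell(\theta_\ell + I_{\ell-1}^\ell \psi_{\ell-1})\bigr),
\]
expand the shifted quadratic form using symmetry of $A_\ell$, and show that the $\psi_{\ell-1}$-dependent part of the exponent is exactly $H_{\ell-1}(\psi_{\ell-1}|\theta_\ell)$ with $A_{\ell-1}$ and $f_{\ell-1}$ as claimed; the $\theta_\ell$-dependent part is then absorbed, together with $Z^*_{\ell-1}(\theta_\ell)$, into a single normaliser $Z_{\ell-1}(\theta_\ell)$.

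Concretely, expansion yields three groups of terms. The quadratic piece in $\psi_{\ell-1}$ reads
\[
\tfrac{1}{2}\psi_{\ell-1}^\top (I_{\ell-1}^\ell)^\top A_\ell\, I_{\ell-1}^\ell \psi_{\ell-1} = \tfrac{1}{2}\psi_{\ell-1}^\top A_{\ell-1}\psi_{\ell-1},
\]
where I use the Galerkin triple-product definition of $A_{\ell-1}$ together with the transpose identification $I_\ell^{\ell-1} = (I_{\ell-1}^\ell)^\top$. The linear piece in $\psi_{\ell-1}$ groups as
\[
\psi_{\ell-1}^\top (I_{\ell-1}^\ell)^\top A_\ell\, \theta_\ell \; - \; \psi_{\ell-1}^\top (I_{\ell-1}^\ell)^\top f_\ell \; = \; -\,\psi_{\ell-1}^\top I_\ell^{\ell-1}(f_\ell - A_\ell \theta_\ell) \; = \; -f_{\ell-1}^\top \psi_{\ell-1},
\]
which produces the claimed $f_{\ell-1}$. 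The remaining terms — the quadratic $\tfrac{1}{2}\theta_\ell^\top A_\ell \theta_\ell$ and the linear $-f_\ell^\top \theta_\ell$ — depend only on $\theta_\ell$ and factor out of the integral in \eqref{eqn:Z_theta_normalisation}, so they combine with $Z^*_{\ell-1}(\theta_\ell)$ into a single constant $Z_{\ell-1}(\theta_\ell)$ enforcing that $p_{\ell-1}(\cdot|\theta_\ell)$ integrates to one.

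There is no substantive obstacle; the proof is routine bookkeeping. The only small point to watch is to keep track of the transpose relationship $I_\ell^{\ell-1} = (I_{\ell-1}^\ell)^\top$ at both the quadratic and the linear level, so that the Galerkin triple product appears in the coarse precision and the restricted residual appears in the coarse linear shift, exactly matching \eqref{eqn:triple_product}. Once these identifications are made, the form \eqref{eqn:coarse_density_gaussian} follows immediately.
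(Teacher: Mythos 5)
Your proof is correct and follows exactly the same route as the paper's: substitute the quadratic $H_\ell$ into \eqref{eq:coarse-density}, expand $H_\ell(\theta_\ell+I_{\ell-1}^\ell\psi_{\ell-1})$ using the symmetry of $A_\ell$, identify the quadratic and linear pieces in $\psi_{\ell-1}$ with $A_{\ell-1}$ and $f_{\ell-1}$ via $I_\ell^{\ell-1}=(I_{\ell-1}^\ell)^\top$, and absorb the $\psi_{\ell-1}$-independent terms into the normaliser. Nothing is missing.
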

\begin{proof}
	According to \eqref{eqn:theta_distribution} and \eqref{eq:coarse-density} with $h_\ell(\theta_\ell)=e^{-H_\ell(\theta_\ell)}$ the conditional coarse density $p_{\ell-1}(\cdot|\theta_\ell)$ is given by
	\[
		p_{\ell-1}(\psi_{\ell-1}|\theta_\ell) = \frac{1}{Z^*_{\ell-1}(\theta_\ell)}\exp[-H_\ell(\theta_\ell+I_{\ell-1}^\ell\psi_{\ell-1})]
	\]
	with the prolongation matrix $I_{\ell-1}^\ell$ and the normalisation constant $Z^*_{\ell-1}(\theta_\ell)$ in \eqref{eqn:Z_theta_normalisation}. Defining $A_{\ell-1}$ and $f_{\ell-1}$ as in \eqref{eqn:triple_product} and using the symmetry of $A_\ell$, the exponent can be expanded out as
	\begin{align*}
		H_\ell(\theta_\ell+I_{\ell-1}^\ell\psi_{\ell-1}) & =
		\frac{1}{2}(\theta_\ell + I_{\ell-1}^\ell \psi_{\ell-1})^\top A_{\ell}(\theta_\ell + I_{\ell-1}^\ell\psi_{\ell-1}) - f_\ell^\top (\theta_\ell+I_{\ell-1}^\ell \psi_{\ell-1})                                                                                                                              \\
		                                                 & = \frac{1}{2}\psi_{\ell-1}^\top I_\ell^{\ell-1} A_\ell I_{\ell-1}^\ell \psi_{\ell-1} - (f_\ell^\top I_{\ell-1}^\ell- \theta_\ell^\top A_\ell I_{\ell-1}^\ell) \psi_{\ell-1} + \frac{1}{2}\theta_\ell^\top A_\ell \theta_\ell - f_\ell^\top \theta_\ell \\
		                                                 & = \frac{1}{2} \psi_{\ell-1}^\top A_{\ell-1} \psi_{\ell-1} - f_{\ell-1}^\top \psi_{\ell-1} + \frac{1}{2}\theta_\ell^\top A_{\ell}\theta_\ell - f_{\ell}^\top \theta_\ell.
	\end{align*}
	This gives the desired result in \eqref{eqn:coarse_density_gaussian} with $
		Z_{\ell-1}(\theta_\ell) = Z^*_{\ell-1}(\theta_\ell) h_\ell(\theta_\ell)$.
\end{proof}
\noindent
We are now ready to show that Alg.~\ref{alg:mgmc} leaves the multivariate normal
distribution $\mathcal{N}(\mu_\ell,A_{\ell}^{-1})$ invariant. The proof uses the fact that the computation of the coarse level correction $\psi_{\ell-1}^{(m_{\max}+1)}$ in lines 9--13 of Alg.~\ref{alg:mgmc} is, up to the law, equivalent to Alg.~\ref{alg:coarse_alternative}.
\begin{algorithm}[t!]
	\caption{Coarse level correction, alternative form. Given $f_\ell$ and $\theta_{\ell,\nu_1}$, compute $\psi_{\ell-1}^{(\gamma_\ell)}$.}\label{alg:coarse_alternative}
	\begin{algorithmic}[1]
		\State {Compute the modified right-hand side $\widehat{f}_{\ell-1}:= I_\ell^{\ell-1} f_\ell$}
		\State {Let $\chi_{\ell-1}^{(0)}:= A_{\ell-1}^{-1}I_{\ell}^{\ell-1}A_\ell \theta_{\ell,\nu_1}$}
		\For{$m=0,1,\dots,\gamma_\ell-1$}
		\State $\chi_{\ell-1}^{(m+1)}:=\mathrm{MGMC}_{\ell-1}(A_{\ell-1},\widehat{f}_{\ell-1},\chi_{\ell-1}^{(m)})$
		\Comment{Recursive call to $\mathrm{MGMC}_{\ell-1}$}
		\EndFor
		\State {Set $\psi_{\ell-1}^{(\gamma_\ell)} := \chi_{\ell-1}^{(\gamma_\ell)} - A_{\ell-1}^{-1}I_{\ell}^{\ell-1} A_\ell \theta_{\ell,\nu_1}$}
	\end{algorithmic}
\end{algorithm}
\begin{proof}[Proof of Theorem \ref{thm:mgmc_invariance}]
	The result is shown by induction over $\ell$. On level $\ell=0$ the statement in \eqref{eqn:mgmc_invariance} is true by definition if an exact coarse level sampler is used to draw $\theta'_0\sim \mathcal{N}(\mu_0,A_0^{-1})$ for $\mu_0=A_0^{-1}f_0$. Otherwise, if Alg.~\ref{alg:coarse_sampler} and thus multiple applications
	of Alg.~\ref{alg:random_smoother} are used, then
	Proposition~\ref{prop:smoother_invariance} guarantees that $\theta_0 \sim \mathcal{N}(\mu_0,A_0^{-1})$ implies $\theta'_0\sim \mathcal{N}(\mu_0,A_0^{-1})$.

	Next consider $\ell>0$ and assume that the statement in \eqref{eqn:mgmc_invariance} holds on level $\ell-1$. According to Proposition \ref{prop:smoother_invariance}, if $\theta_\ell \sim\mathcal{N}(\mu_\ell,A_\ell^{-1})$ with $\mu_\ell=A_\ell^{-1}f_\ell$ then we also have that $\theta_{\ell,\nu_1}\sim\mathcal{N}(\mu_\ell,A_\ell^{-1})$
	and thus $\chi_{\ell-1}^{(0)}\sim\mathcal{N}(A_{\ell-1}^{-1}\widehat{f}_{\ell-1},A_{\ell-1}^{-1})$ in Alg.~\ref{alg:mgmc} with the right-hand side $\widehat{f}_{\ell-1} = I_\ell^{\ell-1} f_\ell$ restricted to level $\ell-1$. Since we assumed that Alg.~\ref{alg:mgmc} leaves the distribution invariant on level $\ell-1$, we also have that $\chi_{\ell-1}^{(\gamma_\ell)}\sim\mathcal{N}(A_{\ell-1}^{-1}\widehat{f}_{\ell-1},A_{\ell-1}^{-1})$. The distributions of $\psi_{\ell-1}^{(\gamma_\ell)}$ and $\chi_{\ell-1}^{(\gamma_\ell)}$ have the same covariance but different means, more specifically $\psi_{\ell-1}^{(\gamma_\ell)}\sim\mathcal{N}(\mu_{\ell-1},A_{\ell-1}^{-1})$. According to Proposition \ref{prop:coarse_level_invariance} this then implies that $\theta_{\ell,\nu_1+1}=\theta_{\nu_1}+I_{\ell-1}^{\ell}\psi_{\ell-1}^{(\gamma_\ell)} \sim \mathcal{N}(\mu_\ell,A_\ell^{-1})$. Finally, another application of Proposition \ref{prop:smoother_invariance} shows that $\theta'_{\ell} =\theta_{\ell,\nu_1+\nu_2+1}\sim \mathcal{N}(\mu_\ell,A_\ell^{-1})$.
\end{proof}

\subsection{Convergence of MGMC}\label{sec:theory_convergence}
We now formally show that the convergence of MGMC and the standard multigrid solver are equivalent. As a consequence, we can use classical multigrid theory to analyse MGMC convergence later in this section. To achieve this, we will show that the Multigrid Monte Carlo update $\theta_\ell \mapsto \theta'_\ell$ defined by Alg.~\ref{alg:mgmc} can be written in the form in \eqref{eqn:theta_random} with a suitable choice of the splitting matrix $M^{\mathrm{MG}}_\ell$ such that $A_\ell=M^{\mathrm{MG}}_\ell-N^{\mathrm{MG}}_\ell$ on level $\ell$.
The key quantity which determines the convergence of the iteration in \eqref{eqn:theta_random} is the iteration matrix $X$ defined in \eqref{eqn:random_error_iteration}. For a single MGMC update on level $\ell$ we denote this matrix by $X_\ell=(M_\ell^{\mathrm{MG}})^{-1} N_\ell^{\mathrm{MG}}$, and we will argue below that it can be constructed recursively by introducing the quantities $T_{\ell}$, $Q_{\ell}$ on all levels $\ell\ge 1$ such that
\begin{subequations}
	\begin{align}
		T_{\ell} & :=\id-I_{\ell-1}^{\ell}A_{\ell-1}^{-1}I_{\ell}^{\ell-1}A_{\ell},\label{eq:def-Tell}                                                                                                                                                   \\
		X_0      & := (S_0^{\mathrm{coarse}})^{\nu_0}.\label{eq:def-X0}\\
		X_{\ell} & =(S_{\ell}^{\mathrm{post}})^{\nu_{2}}Q_{\ell}(S_{\ell}^{\mathrm{pre}})^{\nu_{1}}\qquad\text{with}\qquad Q_{\ell}:=T_{\ell}+I_{\ell-1}^{\ell}X_{\ell-1}^{\gamma_{\ell}}A_{\ell-1}^{-1}I_{\ell}^{\ell-1}A_{\ell},\label{eq:def-Xell-Qell}
	\end{align}
\end{subequations}
In these expressions
\begin{equation}
	\begin{aligned}
		S_\ell^{\rho}         & := (M^{\rho}_{\ell})^{-1}N_{\ell}^{\rho},\quad\text{with $\rho\in\{\text{pre},\text{post}\}$}\quad\text{for $\ell\ge 1$}, \\
		S_0^{\mathrm{coarse}} & := (M^{\mathrm{coarse}}_0)^{-1}N_0^{\mathrm{coarse}},
	\end{aligned}\label{eqn:S_matrices}
\end{equation}
where $M^{\mathrm{coarse}}_{0}$, $M^{\mathrm{pre}}_{\ell}$ and $M^{\mathrm{post}}_{\ell}$ are the (invertible) splitting matrices which define the random coarse-, pre- and post-smoother, resp., with $A_{\ell}=M^{\mathrm{pre}}_{\ell}-N^{\mathrm{pre}}_{\ell}=M^{\mathrm{post}}_{\ell}-N^{\mathrm{post}}_{\ell}$ and $A_{0}=M^{\mathrm{coarse}}_{0}-N^{\mathrm{coarse}}_{0}$.

The matrices $T_{\ell}$ and $X_{\ell}$ introduced in \eqref{eq:def-Tell} and \eqref{eq:def-Xell-Qell} correspond to the two-grid correction matrix and the iteration matrix in the standard multigrid theory, respectively. We reiterate that it is no coincidence that exactly the same matrix $X_\ell$ shows up in \eqref{eqn:deterministic_error_iteration} if the update in \eqref{eqn:splitting_iteration} corresponds to one application of a standard multigrid cycle with suitable smoothers and coarse grid solver.

Before discussing the convergence of the MGMC iteration recall standard multigrid theory. If $X_{L}$ is defined by \eqref{eq:def-Tell} - \eqref{eq:def-X0}, the error in the multigrid iteration $u_{L}^{(m)}$
for solving $A_{L}u_{L}=f_{L}$ can be written as (c.f. \eqref{eqn:deterministic_error_iteration})
\begin{equation}
	u_{L}^{(m+1)}-A_{L}^{-1}f_{L}=X_{L}(u_{L}^{(m)}-A_{L}^{-1}f_{L}).\label{eqn:multigrid_error_iteration}
\end{equation}
Hence, if $\|X_{L}\|=:\sigma<1$ holds for any consistent matrix norm $\|\cdot\|$, then the error converges exponentially, i.e. $\|u_{L}^{(m)}-A_{L}^{-1}f_{L}\|
	<
	\sigma^{m}\|u_{L}^{(0)}-A_{L}^{-1}f_{L}\|$.
Classical multigrid theory (see e.g. \cite{Hackbusch.W_1985_MultiGridMethodsApplications}) states that the uniform bound on $\|X_{L}\|$ can be proven if the following two properties hold:
\begin{definition}[Smoothing property]\label{def:smoothing_property}
	A symmetric splitting matrix $M_\ell$ which is used for pre- and post-smoothing on level $\ell>0$ with \eqref{eqn:splitting_iteration} (in standard multigid) or \eqref{eqn:theta_random} (in MGMC) is said to satisfy the smoothing property if
	\[
		0<A_{\ell}\leq M_{\ell}
	\]
	where $A<B$ (resp. $A\leq B$) if and only if $B-A$ is positive definite (resp. positive semidefinite).
\end{definition}

\begin{definition}[Approximation property]\label{def:approximation_property}
	The multigrid iteration defined by matrices $A_{\ell}$ with $A_{\ell-1}=I_{\ell}^{\ell-1}A_{\ell} I_{\ell-1}^{\ell}$ and splitting matrices $M_\ell$ satisfies the approximation property if
	\begin{equation}
		\|M_{\ell}^{1/2}(A_{\ell}^{-1}-I_{\ell-1}^{\ell}A_{\ell-1}^{-1}I_{\ell}^{\ell-1})M_{\ell}^{1/2}\|_{2}\leq C_{A},\label{eq:V-important-cond}
	\end{equation}
	is satisfied for all \rev{$\ell\in\mathbb{N}$} with some constant $C_{A}>0$ independent of the level $\ell$.
\end{definition}
If the two properties are satisfied, the multigrid convergence rate can then be bounded as follows.
\begin{prop}\label{prop:V-cycle-bound}
	Assume that the same symmetric splitting matrix $M_\ell := M_\ell^{\mathrm{pre}} = M_\ell^{\mathrm{post}} = M_\ell^\top$ is used for pre- and post- smoothing on level $\ell>0$ (either in the standard multigrid iteration or in MGMC) and that this matrix satisfies the Smoothing Property (Definition \ref{def:smoothing_property}). Assume further that the Approximation Property (Definition \ref{def:approximation_property}) holds on all levels.  Finally, suppose that the sampler on level $\ell=0$ in Alg.~\ref{alg:mgmc} is an exact sampler (or, resp., the coarse grid solver in the multigrid iteration is an exact solver). Consider the $V$-cycle ($\gamma=1$)
	with $\nu_{1}=\nu_{2}=\nu/2$ for some even integer $\nu>0$, and
	let
	\[
		X_{L}=X_{L}(\gamma,\nu_{1},\nu_{2})=X_{L}(1,\nu/2,\nu/2)
	\]
	be the corresponding iteration matrix in \eqref{eq:def-Xell-Qell}.
	Then
	\begin{equation}
		\|X_{L}\|_{A_{L}}:=\|A_{L}^{1/2}X_{L}A_{L}^{-1/2}\|_{2}\leq\frac{C_{A}}{C_{A}+\nu},
		\label{eqn:energy_norm}
	\end{equation}
	where the constant $C_{A}>0$ is independent of $L$ and the $V$-cycle converges monotonically with respect to the 'energy' norm $\|\cdot\|_{A_{L}}$.
\end{prop}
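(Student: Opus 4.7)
The plan is to adapt the classical V-cycle multigrid convergence theorem of Hackbusch \cite{Hackbusch.W_2016_IterativeSolutionLarge} to the present setting. The key observation from Section~\ref{subsec:matrix-splitting} is that the MGMC error iteration matrix $X_L$ coincides with the iteration matrix of the deterministic multigrid V-cycle with the same smoother and coarse solver, so the bound~\eqref{eqn:energy_norm} is a purely linear-algebraic statement about $X_L$.

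First I would pass to the similarity $\widetilde Z_\ell := A_\ell^{1/2} Z A_\ell^{-1/2}$ on each level, so that $\|Z\|_{A_\ell} = \|\widetilde Z_\ell\|_2$. Under the hypothesis that $M_\ell$ is symmetric and satisfies the smoothing property $A_\ell \leq M_\ell$, the symbol $\widetilde S_\ell = I - A_\ell^{1/2} M_\ell^{-1} A_\ell^{1/2}$ is symmetric positive semi-definite with spectrum in $[0,1)$. Using the Galerkin relation $A_{\ell-1} = I_\ell^{\ell-1} A_\ell I_{\ell-1}^\ell$, the operator $R_\ell := A_\ell^{1/2} I_{\ell-1}^\ell A_{\ell-1}^{-1/2}$ satisfies $R_\ell^\top R_\ell = I$, so $\widetilde T_\ell = I - R_\ell R_\ell^\top$ is an orthogonal projection. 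Conjugating the approximation property~\eqref{eq:V-important-cond} by $A_\ell^{1/2} M_\ell^{-1/2}$ then yields the Loewner-order inequality $\widetilde T_\ell \leq C_A (I - \widetilde S_\ell)$.

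Next I would prove by induction on $\ell$ that $0 \leq \widetilde X_\ell \leq \zeta I$ with $\zeta := C_A/(C_A+\nu)$. The base case $\widetilde X_0 = 0$ is immediate from the assumption that the coarse sampler is exact. For the inductive step, I would rewrite~\eqref{eq:def-Xell-Qell} with $\gamma_\ell = 1$ and $\nu_1 = \nu_2 = \nu/2$ as
\[
\widetilde X_\ell \;=\; \widetilde S_\ell^{\nu/2} \widetilde Q_\ell \widetilde S_\ell^{\nu/2}, \qquad
\widetilde Q_\ell \;=\; I - R_\ell (I - \widetilde X_{\ell-1}) R_\ell^\top,
\]
combine the induction hypothesis $\widetilde X_{\ell-1} \leq \zeta I$ with Step~1 to derive $\widetilde Q_\ell \leq \widetilde T_\ell + \zeta R_\ell R_\ell^\top$, and reduce the resulting operator inequality to a scalar optimisation over the eigenvalues $\sigma \in [0,1)$ of $\widetilde S_\ell$. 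Choosing $\zeta$ as the fixed point of this recursion closes the induction.

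The main obstacle is that $R_\ell R_\ell^\top$ and $\widetilde S_\ell$ do not commute, so the crude estimate $R_\ell R_\ell^\top \leq I$ throws away enough information to prevent the sharp fixed-point $\zeta = C_A/(C_A+\nu)$ from being attained. To close the induction at this rate, I would follow the refined Hackbusch-style bookkeeping that exploits the complementary-projection identity $\widetilde T_\ell + R_\ell R_\ell^\top = I$ to keep all Loewner estimates expressible through $\widetilde S_\ell$ alone (which commutes with itself). Once $\widetilde X_L \leq \zeta I$ with $\zeta<1$ is established, the bound~\eqref{eqn:energy_norm} is immediate, and monotonic energy-norm convergence follows from the fact that $\widetilde X_L \geq 0$ together with $\|\widetilde X_L\|_2 = \zeta < 1$.
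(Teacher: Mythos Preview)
The paper does not give its own proof of this proposition at all; it simply cites \cite[Theorem 11.59]{Hackbusch.W_2016_IterativeSolutionLarge}. Your sketch is precisely the classical Hackbusch argument behind that citation --- passing to the $A_\ell^{1/2}$-conjugated picture, using the Galerkin relation to make $\widetilde T_\ell$ an orthogonal projection, converting the approximation property into the Loewner bound $\widetilde T_\ell \le C_A(I-\widetilde S_\ell)$, and closing the induction via the scalar fixed-point $\zeta = C_A/(C_A+\nu)$ --- so it is correct and in fact supplies more detail than the paper itself.
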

\noindent\textit{Proof.} See \cite[Theorem 11.59]{Hackbusch.W_2016_IterativeSolutionLarge}.\\[1ex]
\noindent
We conclude that under the conditions in Proposition \ref{prop:V-cycle-bound} the multigrid iteration in \eqref{eqn:multigrid_error_iteration} converges. Crucially, as we will argue below, this also implies that states produced by the MGMC update in Alg.~\ref{alg:mgmc} converge to the target distribution.

Since $A_{\ell}$ is symmetric positive definite, the condition $0<A_{\ell}\leq M_{\ell}$ is always met for the symmetric Gauss--Seidel iteration; see \cite[Section 6.2.4.3]{Hackbusch.W_1985_MultiGridMethodsApplications}. We will see later that  this condition also holds when MGMC is applied to linear Bayesian inverse problems described in Section~\ref{subsec:Linear-Bayesian-inverse}, provided we use the low-rank smoothers introduced in Section~\ref{subsec:Gibbs}.

\jg{The usual (deterministic) multigrid theory directly implies the convergence of the mean; compare \eqref{eqn:multigrid_error_iteration} and \eqref{eqn:error_iteration} below.}  
To discuss convergence of higher moments in the sampling context, we need extra work. Since we consider Gaussian random variables, it suffices to consider the convergence of the covariance. We start by writing down an explicit expression for the MGMC update in Alg. \ref{alg:mgmc}.
\begin{lem}
	\label{lem:MG-iteration-rep}
	For $A_{L}\in\mathbb{R}^{n_{L}\times n_{L}}$, suppose the symmetric matrices $A_{\ell}\in\mathbb{R}^{n_{\ell}\times n_{\ell}}$,
	$\ell=0,\dots,L,$ recursively defined via $A_{\ell-1}=I_{\ell}^{\ell-1}A_{\ell}I_{\ell-1}^{\ell}$ are all invertible. Let $f_{L}\in\mathbb{R}^{n_{L}}$ be given and define $f_{\ell}$ recursively for $\ell=0,\dots,L-1$ by $f_{\ell-1}:=I_{\ell}^{\ell-1}(f_{\ell}-A_{\ell}\theta_{\ell,\nu_{1}})$ as in Alg.~\ref{alg:mgmc}.

	Then, for any level $\ell=0,\dots,L$, the MGMC update which computes a new state $\theta_{\ell}^{\mathrm{new}}$ from the current state $\theta_{\ell}^{\mathrm{init}}$ can be written as
	\begin{equation}
		\theta_{\ell}^{\mathrm{new}}=\mathrm{MGMC}_{\ell}(A_{\ell},f_{\ell},\theta_{\ell}^{\mathrm{init}}) = X_{\ell}\theta_{\ell}^{\mathrm{init}}+Y_{\ell}f_{\ell}+W_\ell,\label{eq:MG-iteration}
	\end{equation}
	with
	\begin{equation}
		Y_{\ell}:=(\mathrm{id}-X_{\ell})A_{\ell}^{-1}\label{eq:def-Y-ell},
	\end{equation}
and $X_{\ell}$ as given in \eqref{eq:def-Xell-Qell}. $W_\ell$ in \eqref{eq:MG-iteration} is a multivariate normal random variable with mean zero and the covariance matrix $\WCov_{\ell}=\mathbb{E}[W_{\ell}W_{\ell}^\top]$ of $W_\ell$ satisfies
	\begin{equation}
		A_{\ell}^{-1}-\WCov_{\ell}=X_{\ell}A_{\ell}^{-1}X_{\ell}^{\top},\quad\text{for all $\ell=0,\dots,L$}.\label{eq:noise-cov-id}
	\end{equation}
\end{lem}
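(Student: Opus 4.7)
The plan is to proceed by induction on $\ell$, unfolding Alg.~\ref{alg:mgmc} one building block at a time. Each of the three atomic operations (random pre-/post-smoothing, and --- by the inductive hypothesis --- the coarse-level correction) is already of the affine-plus-Gaussian-noise form $\theta \mapsto P\theta + Qf + Z$, and the composition of such maps is again of this form. The key observation that keeps the recursion clean is the telescoping identity $\sum_{j=0}^{\nu-1} S^j M^{-1} = (I-S^\nu)A^{-1}$, which follows from $I - S = M^{-1}A$; this is how the factor $(I-X_\ell)A_\ell^{-1}$ appears in $Y_\ell$.

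\emph{Base case $\ell=0$.} If an exact sampler is used, then trivially $X_0=0$, $Y_0=A_0^{-1}$, and $W_0\sim\mathcal{N}(0,A_0^{-1})$, so \eqref{eq:noise-cov-id} reads $A_0^{-1}-A_0^{-1}=0$. If instead Alg.~\ref{alg:coarse_sampler} is used, I unfold $\nu_0$ calls to Alg.~\ref{alg:random_smoother}: each step is $\theta\mapsto S_0^{\mathrm{coarse}}\theta + (M_0^{\mathrm{coarse}})^{-1}(f+\xi)$ with $\xi\sim\mathcal{N}(0,M_0^{\mathrm{coarse}}+(M_0^{\mathrm{coarse}})^\top-A_0)$. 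Composing and using the telescoping identity yields $X_0=(S_0^{\mathrm{coarse}})^{\nu_0}$, $Y_0=(I-X_0)A_0^{-1}$, and $W_0$ as a linear combination of independent Gaussians, hence Gaussian with mean zero.

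\emph{Inductive step $\ell\ge 1$.} Write the pre-smoother output as $\theta_{\ell,\nu_1}=(S_\ell^{\mathrm{pre}})^{\nu_1}\theta_\ell + (I-(S_\ell^{\mathrm{pre}})^{\nu_1})A_\ell^{-1}f_\ell + W_\ell^{\mathrm{pre}}$. By the inductive hypothesis applied $\gamma_\ell$ times with initial state $0$ and right-hand side $f_{\ell-1}=I_\ell^{\ell-1}(f_\ell-A_\ell\theta_{\ell,\nu_1})$, I obtain $\psi_{\ell-1}^{(\gamma_\ell)} = (I-X_{\ell-1}^{\gamma_\ell})A_{\ell-1}^{-1}f_{\ell-1} + \widetilde{W}_{\ell-1}$, again by telescoping. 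Substituting $f_{\ell-1}$ and prolongating, the coefficient of $\theta_{\ell,\nu_1}$ in $\theta_{\ell,\nu_1+1}$ is
\[
I - I_{\ell-1}^\ell(I-X_{\ell-1}^{\gamma_\ell})A_{\ell-1}^{-1}I_\ell^{\ell-1}A_\ell = T_\ell + I_{\ell-1}^\ell X_{\ell-1}^{\gamma_\ell} A_{\ell-1}^{-1} I_\ell^{\ell-1} A_\ell = Q_\ell,
\]
exactly matching \eqref{eq:def-Xell-Qell}. A short algebraic check shows that the coefficient of $f_\ell$ in $\theta_{\ell,\nu_1+1}$ equals $(I-Q_\ell)A_\ell^{-1}$. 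Composing with $\nu_2$ post-smoothing steps and telescoping once more gives $X_\ell=(S_\ell^{\mathrm{post}})^{\nu_2} Q_\ell (S_\ell^{\mathrm{pre}})^{\nu_1}$ and $Y_\ell=(I-X_\ell)A_\ell^{-1}$, with $W_\ell$ a linear combination of independent mean-zero Gaussians and therefore Gaussian with mean zero.

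\emph{The covariance identity \eqref{eq:noise-cov-id}.} The cleanest route is to invoke Theorem \ref{thm:mgmc_invariance}, which is proved independently of this lemma. Suppose $\theta_\ell^{\mathrm{init}}\sim\mathcal{N}(\mu_\ell,A_\ell^{-1})$ is drawn independently of all random variables used inside the MGMC call (so that $\theta_\ell^{\mathrm{init}}$ and $W_\ell$ are independent). By \eqref{eq:MG-iteration}, $\theta_\ell^{\mathrm{new}} = X_\ell\theta_\ell^{\mathrm{init}} + Y_\ell f_\ell + W_\ell$, so
\[
\mathrm{Cov}(\theta_\ell^{\mathrm{new}}) = X_\ell A_\ell^{-1} X_\ell^\top + \WCov_\ell.
\]
By Theorem \ref{thm:mgmc_invariance}, $\mathrm{Cov}(\theta_\ell^{\mathrm{new}})=A_\ell^{-1}$, and rearranging yields \eqref{eq:noise-cov-id}. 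I expect the main obstacle to be the bookkeeping in the inductive step --- specifically, organizing the algebra so that the coefficient $Q_\ell$ and the identity $Y_\ell=(I-X_\ell)A_\ell^{-1}$ fall out uniformly at each composition --- rather than any conceptual difficulty; the covariance identity, by contrast, is essentially a one-line consequence of invariance once the affine representation is in hand.
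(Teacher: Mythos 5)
Your derivation of the affine representation \eqref{eq:MG-iteration} is essentially the paper's own argument: induction over $\ell$, with each block (pre-smoothing, coarse correction via the inductive hypothesis, post-smoothing) written as an affine-plus-Gaussian map of the form $\theta\mapsto P\theta+(\id-P)A_\ell^{-1}f_\ell+Z$, composed using the telescoping identity $\sum_{j=0}^{\nu-1}S^jM^{-1}=(\id-S^\nu)A^{-1}$; the paper packages exactly this telescoping step as Proposition~\ref{prop:preconditioned_iteration}, and your identification of $Q_\ell$ and of the $f_\ell$-coefficient $(\id-Q_\ell)A_\ell^{-1}$ checks out. Where you genuinely depart from the paper is the covariance identity \eqref{eq:noise-cov-id}. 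The paper proves it by a second induction over levels, computing $\WCov_\ell$ explicitly as a sum of conjugated smoother covariances and repeatedly applying the matrix identity $M^{-1}(M^{\top}+N)M^{-\top}=A^{-1}-SA^{-1}S^{\top}$ (Proposition~\ref{lem:matrix-identity}) together with $Q_{\ell}A_{\ell}^{-1}Q_{\ell}^{\top}=A_{\ell}^{-1}-I_{\ell-1}^{\ell}\bigl(A_{\ell-1}^{-1}-X_{\ell-1}^{\gamma_{\ell}}A_{\ell-1}^{-1}(X_{\ell-1}^{\top})^{\gamma_{\ell}}\bigr)I_{\ell}^{\ell-1}$. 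You instead deduce it in one line from Theorem~\ref{thm:mgmc_invariance}: once \eqref{eq:MG-iteration} is in hand and $W_\ell$ is independent of $\theta_\ell^{\mathrm{init}}$, invariance forces $X_\ell A_\ell^{-1}X_\ell^{\top}+\WCov_\ell=A_\ell^{-1}$. This is legitimate — the invariance theorem is proved via the Liu--Sabatti argument and Proposition~\ref{prop:smoother_invariance}, with no dependence on Lemma~\ref{lem:MG-iteration-rep}, so there is no circularity — and it is considerably shorter. What you give up is the explicit constructive formula for $\WCov_\ell$ (the paper's \eqref{eq:def-W0}--\eqref{eq:def-W_ell} and the accompanying computation), which documents how $W_\ell$ is actually assembled from the i.i.d.\ smoother noise and is reused downstream (e.g.\ in the independence arguments of Lemma~\ref{lem:MGidentity-mean-cov}); your route also makes the lemma logically depend on the invariance theorem rather than standing alone.
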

\noindent\textit{Proof.} See Appendix~\ref{sec:proof_MGMC_representation}.\\[1ex]
Eqn.~\eqref{eq:MG-iteration} states that $\theta_{\ell}^{\mathrm{new}}$ can be represented as a sum of (i) the iteration matrix $X_\ell$ applied to the current sample $\theta_{\ell}^{\mathrm{init}}$, (ii) a matrix applied to the RHS $f_{\ell}$, and (iii) a multivariate normal random variable $W_\ell$ of given mean and covariance. Furthermore, as the proof of Lemma \ref{lem:MG-iteration-rep} in appendix \ref{sec:proof_MGMC_representation} shows, $W_\ell$ is a linear combination of simpler i.i.d. samples which are readily generated.

On the finest level the algorithm generates states $\theta_L^{(0)},\theta_L^{(1)},\ldots$ of a Markov chain and we write
\begin{equation}
	\theta_{L}^{(m+1)}=\mathrm{MGMC}_{L}(A_{L},f_{L},\theta_{L}^{(m)}) = X_{L}\theta_{L}^{(m)}+Y_{L}f_{L}+W^{(m)}_{L}.\label{eq:MG-iteration_finest_level}
\end{equation}
where the random variables $W_L^{(m)}$ for each step $m$ are i.i.d. and distributed as in \eqref{eq:MG-iteration} for $\ell=L$.
\begin{rem}\label{rem:exact_coarse_sampler}
	The proof for Lemma~\ref{lem:MG-iteration-rep} includes, up to the law, the cases where the sampler on the coarsest level is exact.
	An exact sampler on the coarsest level would correspond to $M_0^{\mathrm{coarse}} = A_0$,
	$N_0^{\mathrm{coarse}} = 0$, and
	$\nu_0=1$. In this case $S_0^{\mathrm{coarse}}=X_0=0$.
\end{rem}
We next study the rate of convergence of the mean vectors and covariance matrices under the MGMC update in Alg. \ref{alg:mgmc}. As usual, these convergence rates also determine the integrated autocorrelation time and the integration error, see Corollary~\ref{cor:conv-dist} and Theorem~\ref{thm:integration-error}, respectively. It is important to note that $\theta^{(m)}_L$ converges in distribution, if and only if the mean and covariance converge, as detailed in the proof of Theorem~\ref{thm:equivalence} below.

Consider the covariance matrix of $\theta_{L}^{(m)}$
\begin{equation}
	\mathrm{Cov}(\theta_{L}^{(m)}) := \mathbb{E}[(\theta_{L}^{(m)}-\mathbb{E}[\theta_{L}^{(m)}])(\theta_{L}^{(m)}-\mathbb{E}[\theta_{L}^{(m)}])^{\top}]\label{eqn:covariance_iteration}
\end{equation}
and the cross covariance matrix of $\theta_{L}^{(m)}$ and $\theta_{L}^{(m+s)}$ for $s=0,1,2,\dots$
\begin{equation}
	\mathrm{Cov}(\theta_{L}^{(m+s)},\theta_{L}^{(m)}):=\mathbb{E}[(\theta_{L}^{(m+s)}-\mathbb{E}[\theta_{L}^{(m+s)}])(\theta_{L}^{(m)}-\mathbb{E}[\theta_{L}^{(m)}])^{\top}].
	\label{eqn:cross_covariance_iteration}
\end{equation}
For the samples $(\theta_{L}^{(m)})_{m\in\mathbb{N}}$ generated by MGMC according to \eqref{eq:MG-iteration_finest_level} the following holds.
\begin{lem}
	\label{lem:MGidentity-mean-cov}Let $L\in\mathbb{N}$ and $f_{L}\in\mathbb{R}^{n_{L}}$ be given. Then the mean $\mathbb{E}[\theta_{L}^{(m)}]$, the covariance matrix in \eqref{eqn:covariance_iteration} and the cross-covariance matrix in \eqref{eqn:cross_covariance_iteration} satisfy
	\begin{subequations}
		\begin{align}
			\mathbb{E}[\theta_{L}^{(m+1)}]-A_{L}^{-1}f_{L} & =X_{L}\bigl(\mathbb{E}[\theta_{L}^{(m)}]-A_{L}^{-1}f_{L}\bigr)
			\label{eqn:error_iteration}                                                                                                        \\
			\mathrm{Cov}(\theta_{L}^{(m+1)})-A_{L}^{-1}             & =X_{L}\bigl(\mathrm{Cov}(\theta_{L}^{(m)})-A_{L}^{-1}\bigr)X_{L}^{\top}.
			\label{eqn:convergence_iteration}                                                                                                  \\
			\mathrm{Cov}(\theta_{L}^{(m+s)},\theta_{L}^{(m)})       & =X_{L}^{s}\mathrm{Cov}(\theta_{L}^{(m)})
			\qquad\text{for $m,s=0,1,2,\dots$}.
			\label{eq:MG-corr}
		\end{align}
	\end{subequations}
	where $X_{L}$ is defined in \eqref{eq:def-Tell} - \eqref{eq:def-X0}. Furthermore, if $X_{L}\colon(\mathbb{R}^{n_{L}},\|\cdot\|)\to(\mathbb{R}^{n_{L}},\|\cdot\|)$ defines a contraction with respect to $\|\cdot\|$, then $\mathbb{E}[\theta_{L}^{(m)}]$ converges to the solution $u_{L}$ of $A_{L}u_{L}=f_{L}$ and $\mathrm{Cov}(\theta_{L}^{(m)})$ converges to $A_{L}^{-1}$.
\end{lem}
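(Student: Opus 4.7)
The plan is to read off everything directly from the representation provided by Lemma~\ref{lem:MG-iteration-rep}, applied on the finest level:
\begin{equation*}
\theta_L^{(m+1)} = X_L\,\theta_L^{(m)} + Y_L f_L + W_L^{(m)},\qquad Y_L=(\mathrm{id}-X_L)A_L^{-1},
\end{equation*}
where $W_L^{(m)}$ are i.i.d., mean-zero, and $\WCov_L := \mathbb{E}[W_L^{(m)}(W_L^{(m)})^\top]$ satisfies $A_L^{-1}-\WCov_L = X_L A_L^{-1} X_L^\top$ by \eqref{eq:noise-cov-id}. The key probabilistic input I would use throughout is that $W_L^{(m)}$ is drawn independently at step $m$ and therefore independent of $\theta_L^{(m)}$, which is measurable with respect to $\theta_L^{(0)}$ and the previous noise $W_L^{(0)},\dots,W_L^{(m-1)}$.

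For \eqref{eqn:error_iteration}, I would take expectations, use $\mathbb{E}[W_L^{(m)}]=0$, substitute $Y_L f_L=(\mathrm{id}-X_L)A_L^{-1}f_L$, and subtract $A_L^{-1}f_L$ to obtain the stated mean recursion in one line. For \eqref{eqn:convergence_iteration}, I would subtract the mean recursion from the sample recursion to get
\begin{equation*}
\theta_L^{(m+1)}-\mathbb{E}[\theta_L^{(m+1)}] = X_L\bigl(\theta_L^{(m)}-\mathbb{E}[\theta_L^{(m)}]\bigr) + W_L^{(m)},
\end{equation*}
take the outer product with itself, and use the independence of $W_L^{(m)}$ from $\theta_L^{(m)}$ to kill the cross terms, yielding $\mathrm{Cov}(\theta_L^{(m+1)}) = X_L \mathrm{Cov}(\theta_L^{(m)}) X_L^\top + \WCov_L$. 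Subtracting $A_L^{-1}$ and plugging in $\WCov_L = A_L^{-1}-X_L A_L^{-1} X_L^\top$ from \eqref{eq:noise-cov-id} gives precisely \eqref{eqn:convergence_iteration}. The cross-covariance \eqref{eq:MG-corr} follows by induction on $s$: the case $s=0$ is trivial, and for $s\geq 1$, substituting the centred recursion for $\theta_L^{(m+s)}-\mathbb{E}[\theta_L^{(m+s)}]$ into the definition of $\mathrm{Cov}(\theta_L^{(m+s)},\theta_L^{(m)})$ and using that $W_L^{(m+s-1)}$ is independent of $\theta_L^{(m)}$ reduces the cross-covariance on level $s$ to $X_L$ times the one on level $s-1$.

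For the convergence statement, iterating \eqref{eqn:error_iteration} gives $\mathbb{E}[\theta_L^{(m)}]-u_L = X_L^m\bigl(\mathbb{E}[\theta_L^{(0)}]-u_L\bigr)$, which converges to zero since the contraction hypothesis means the operator norm of $X_L$ induced by $\|\cdot\|$ is strictly less than $1$. Iterating \eqref{eqn:convergence_iteration} yields $\mathrm{Cov}(\theta_L^{(m)})-A_L^{-1} = X_L^m\bigl(\mathrm{Cov}(\theta_L^{(0)})-A_L^{-1}\bigr)(X_L^\top)^m$, and the right-hand side tends to zero because the spectral radii of $X_L$ and $X_L^\top$ coincide and are both bounded above by the contraction constant, so by Gelfand's formula together with equivalence of norms in finite dimensions, both $X_L^m$ and $(X_L^\top)^m$ decay geometrically to zero. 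The only mildly subtle step is this last one: the hypothesis is phrased as a one-sided vector-norm contraction, whereas the covariance recursion is a two-sided matrix product, so I would make explicit the passage through spectral radius to guarantee decay of $(X_L^\top)^m$.
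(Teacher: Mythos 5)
Your argument is correct and follows essentially the same route as the paper's proof in Appendix~\ref{sec:proof_recursion}: take expectations of the representation \eqref{eq:MG-iteration_finest_level}, use the independence of $W_L^{(m)}$ from $\theta_L^{(m)}$ (which the paper justifies by observing that the two are linear combinations of disjoint subsets of the i.i.d.\ collection \eqref{eqn:w_samples}) to kill the cross terms, invoke \eqref{eq:noise-cov-id} for the covariance recursion, and induct on $s$ for the cross-covariance. Your explicit passage through the spectral radius and Gelfand's formula to handle the $(X_L^{\top})^m$ factor in the covariance iteration is a sound way to close the convergence claim; the paper treats this point only implicitly (via the spectral-radius argument in the proof of Theorem~\ref{thm:equivalence}), so no gap either way.
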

\noindent\textit{Proof.} See Appendix~\ref{sec:proof_recursion}.\\[1ex]
Various norm bounds for $X_{L}$ are available in the multigrid literature. In our Theorem~\ref{prop:coarse_level_invariance} we quote a result for the symmetric case from \cite[Theorem 11.59]{Hackbusch.W_2016_IterativeSolutionLarge}. See for example \cite{Hackbusch.W_1985_MultiGridMethodsApplications} for other varieties.

The following theorem formalises the equivalence of the convergence of multigrid and MGMC. It can be interpreted as a variant of \cite[Theorem 1]{Fox.C_Parker_2017_AcceleratedGibbsSampling} where the latter holds for general matrix splittings.
\begin{thm}
	\label{thm:equivalence}Let $f_{L}\in\mathbb{R}^{n_{L}}$
	be a given vector and let $u_{L}^{(m)}$ be defined by the multigrid iteration for solving $A_{L}u_{L}=f_{L}$. Let $\theta_{L}^{(m)}$ be defined by the multigrid Monte Carlo iteration $\mathrm{MGMC}_{L}(A_{L},f_{L},\cdot)$ in \eqref{eq:MG-iteration_finest_level}.
	The following statements are equivalent.
	\begin{itemize}
		\item[(i)] For any initial condition $u_{L}^{(0)}$ the sequence $u_{L}^{(0)},u_{L}^{(1)},\dots,u_{L}^{(m)}$ converges to $u_{L}\in\mathbb{R}^{n_{L}}$ with $A_L u_{L}=f_L$.
		\item[(ii)] For any initial
			state $\theta_{L}^{(0)}$ the sequence $\theta_{L}^{(0)},\theta_{L}^{(1)},\dots,\theta_{L}^{(m)}$
			converges to the random variable $\theta\sim\mathcal{N}(A_L^{-1}f_L,A_L^{-1})$ in distribution.
	\end{itemize}
\end{thm}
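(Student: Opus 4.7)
The plan is to reduce both (i) and (ii) to the single condition $X_L^m \to 0$ as $m\to\infty$, where $X_L$ is the iteration matrix from Lemma~\ref{lem:MG-iteration-rep}. Because exactly the same $X_L$ appears in the deterministic error recursion \eqref{eqn:multigrid_error_iteration} and in the MGMC moment recursions \eqref{eqn:error_iteration}--\eqref{eqn:convergence_iteration}, the equivalence of (i) and (ii) will then be immediate.

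For (i), I would write the multigrid iteration as $u_L^{(m+1)} = X_L u_L^{(m)} + Y_L f_L$ and iterate to get $u_L^{(m)} - u_L = X_L^m(u_L^{(0)} - u_L)$ with $u_L = A_L^{-1} f_L$. Convergence for every initial vector $u_L^{(0)}\in\mathbb{R}^{n_L}$ is therefore equivalent to $X_L^m \to 0$.

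For (ii), I would first observe, using the representation in Lemma~\ref{lem:MG-iteration-rep}, that each $\theta_L^{(m)}$ is an affine function of the (deterministic) initial state $\theta_L^{(0)}$ and of the independent centred Gaussian noises $W_L^{(0)},\dots,W_L^{(m-1)}$, so $\theta_L^{(m)}$ is itself multivariate Gaussian. Hence $\theta_L^{(m)}$ converges in distribution to $\mathcal{N}(A_L^{-1}f_L, A_L^{-1})$ if and only if its mean converges to $A_L^{-1}f_L$ and its covariance converges to $A_L^{-1}$; this is a consequence of L\'evy's continuity theorem applied to Gaussian characteristic functions $t\mapsto\exp(it^\top\mu_m - \tfrac12 t^\top\Sigma_m t)$, whose pointwise convergence is equivalent to convergence of the parameters $(\mu_m,\Sigma_m)$. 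Iterating Lemma~\ref{lem:MGidentity-mean-cov} with the deterministic initial condition $\mathrm{Cov}(\theta_L^{(0)}) = 0$ gives
\begin{align*}
\mathbb{E}[\theta_L^{(m)}] - A_L^{-1}f_L &= X_L^m\bigl(\theta_L^{(0)} - A_L^{-1}f_L\bigr), \\
\mathrm{Cov}(\theta_L^{(m)}) - A_L^{-1} &= -X_L^m A_L^{-1}(X_L^\top)^m = -\bigl(X_L^m A_L^{-1/2}\bigr)\bigl(X_L^m A_L^{-1/2}\bigr)^\top.
\end{align*}
The first identity vanishes for every $\theta_L^{(0)}$ iff $X_L^m\to 0$, while the second vanishes iff $X_L^m A_L^{-1/2}\to 0$, which, since $A_L^{-1/2}$ is invertible, is again equivalent to $X_L^m\to 0$. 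Together with the Gaussian reduction, this yields (ii) $\iff$ $X_L^m\to 0$, and combined with Step~1 gives (i) $\iff$ (ii).

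The main obstacle is the step ``convergence in distribution of Gaussians $\iff$ convergence of $(\mu,\Sigma)$'', which has to accommodate the genuinely degenerate situation of deterministic starts where $\Sigma_0 = 0$ and the early iterates have rank-deficient covariance. However, this is routine via the explicit Gaussian characteristic function above: pointwise convergence of $(\mu_m,\Sigma_m)$ immediately gives pointwise convergence of the characteristic functions, while conversely the Gaussian form of the limit and the uniqueness part of L\'evy's theorem force $(\mu_m,\Sigma_m)\to(A_L^{-1}f_L, A_L^{-1})$. Everything else is purely mechanical, relying only on the identities already furnished by Lemmas~\ref{lem:MG-iteration-rep} and~\ref{lem:MGidentity-mean-cov}.
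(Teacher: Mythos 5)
Your proposal is correct and follows essentially the same route as the paper: both arguments reduce (i) and (ii) to $X_L^m\to 0$ via the error/moment recursions of Lemma~\ref{lem:MGidentity-mean-cov} and pass between distributional convergence and convergence of the parameters $(\mu_m,\Sigma_m)$ through the Gaussian characteristic function. The one step you dismiss as routine --- deducing $\mu_m\to\mu$ from pointwise convergence of $t\mapsto\exp(\mathrm{i}t^\top\mu_m-\tfrac12 t^\top\Sigma_m t)$, which requires first extracting convergence of $\Sigma_m$ from the modulus and then ruling out the $2\pi$-ambiguity in the phase via a boundedness/tightness argument --- is exactly where the paper invests most of its effort, so it merits an explicit line of justification rather than an appeal to Lévy's theorem alone.
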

\noindent\textit{Proof.} See Appendix~\ref{sec:proof_equivalence}.\\[1ex]
We reiterate that Theorem~\ref{thm:equivalence} states in particular that any guarantees on the  convergence of the multigrid iteration provides a guarantee for the convergence of MGMC in distribution and vice versa. The bound $\|X_{L}\|_{A_{L}}<1$ in Proposition \ref{prop:V-cycle-bound}, together with Lemma~\ref{lem:MGidentity-mean-cov}, gives the following result.
\begin{thm}
	\label{thm:moments-conv}Let the assumptions of Proposition\ \ref{prop:V-cycle-bound} be satisfied and suppose that the sampler on the coarsest level $\ell=0$ is an exact sampler. For MGMC with $\gamma=1$, i.e. the $V$-cycle, we have the following convergence of the mean, covariance and
	auto-covariance:
	\begin{align*}
		\|\mathbb{E}[\theta_{L}^{(m+1)}]-A_{L}^{-1}f_{L}\|_{A_{L}} & \leq\frac{C_{A}}{C_{A}+\nu}\|\mathbb{E}[\theta_{L}^{(m)}]-A_{L}^{-1}f_{L}\|_{A_{L}},             \\
		\|A_L^{1/2}\mathrm{Cov}(\theta_L^{(m)})A_L^{1/2}-\id\|_2 &\leq \left(\frac{C_{A}}{C_{A}+\nu}\right)^2 \|A_L^{1/2}\mathrm{Cov}(\theta_L^{(0)})A_L^{1/2}-\id\|_2,\\
		\|\mathrm{Cov}(\theta_{L}^{(m+s)},\theta_{L}^{(m)})\|_{A_{L}}       & \leq\biggl(\frac{C_{A}}{C_{A}+\nu}\biggr)^{s}\|\mathrm{Cov}(\theta_{L}^{(m)},\theta_{L}^{(m)})\|_{A_{L}}.
	\end{align*}
	Moreover, the covariance matrix satisfies the following inequality:
	\[
		\lim_{m\to\infty}\left(\frac{\|\mathrm{Cov}(\theta_{L}^{(m)})-A_{L}^{-1}\|_{A_{L}}}{\|\mathrm{Cov}(\theta_{L}^{(0)})-A_{L}^{-1}\|_{A_{L}}}\right)^{1/m}\leq\biggl(\frac{C_{A}}{C_{A}+\nu}\biggr)^{2}.
	\]
	Here, the constant $C_{A}>0$ is again independent of $L$.
\end{thm}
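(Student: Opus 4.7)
The plan is to combine the recursive identities from Lemma~\ref{lem:MGidentity-mean-cov} with the contraction estimate $\|X_L\|_{A_L} \le C_A/(C_A+\nu)$ from Proposition~\ref{prop:V-cycle-bound}. The structural fact I would exploit throughout is that, because the same symmetric splitting matrix is used for pre- and post-smoothing at every level and the coarse solve is exact, a classical multigrid calculation shows that the $V$-cycle error propagator $X_L$ is self-adjoint with respect to the $A_L$-inner product. Equivalently, $Z := A_L^{1/2} X_L A_L^{-1/2}$ is a symmetric real matrix with $\|Z\|_2 = \|X_L\|_{A_L}$, and $A_L^{-1/2} X_L^\top A_L^{1/2} = Z$ as well.

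The mean bound is then immediate: apply $\|\cdot\|_{A_L}$ to~\eqref{eqn:error_iteration} and use the submultiplicativity of the induced norm together with Proposition~\ref{prop:V-cycle-bound}. The cross-covariance bound follows analogously from~\eqref{eq:MG-corr}, since $\|X_L^s M\|_{A_L} \le \|X_L\|_{A_L}^s \|M\|_{A_L}$ for any square matrix $M$ (by factoring and applying submultiplicativity of $\|\cdot\|_2$ to the conjugated product).

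For the covariance estimate I would iterate~\eqref{eqn:convergence_iteration} to obtain
\[
\mathrm{Cov}(\theta_L^{(m)}) - A_L^{-1} = X_L^m \bigl(\mathrm{Cov}(\theta_L^{(0)}) - A_L^{-1}\bigr) (X_L^\top)^m,
\]
then conjugate by $A_L^{1/2}$ and insert $A_L^{-1/2} A_L^{1/2}$ twice to rewrite the right-hand side as $Z^m E_0 (Z^m)^\top$, where $E_0 := A_L^{1/2}\mathrm{Cov}(\theta_L^{(0)}) A_L^{1/2} - \id$. Submultiplicativity of $\|\cdot\|_2$ together with $\|Z^m\|_2 \le \|Z\|_2^m = \|X_L\|_{A_L}^m$ yields
\[
\|A_L^{1/2}\mathrm{Cov}(\theta_L^{(m)}) A_L^{1/2} - \id\|_2 \;\le\; \|X_L\|_{A_L}^{2m}\,\|A_L^{1/2}\mathrm{Cov}(\theta_L^{(0)}) A_L^{1/2} - \id\|_2,
\]
which immediately implies both the displayed covariance bound and, after taking $m$-th roots and sending $m\to\infty$, the $\limsup$ estimate with constant $(C_A/(C_A+\nu))^2$.

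The main technical obstacle is the two-sided nature of the covariance recursion: \emph{a priori}, $\|X_L^\top\|_{A_L}$ need not equal $\|X_L\|_{A_L}$, so a blind appeal to submultiplicativity would only yield $\|X_L\|_{A_L}\|X_L^\top\|_{A_L}$ rather than $\|X_L\|_{A_L}^2$. The $A_L$-self-adjointness of $X_L$ afforded by the symmetric $V$-cycle is precisely what aligns these two quantities and allows both factors to be absorbed into a single squared bound; all remaining manipulations are routine norm bookkeeping.
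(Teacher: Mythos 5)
Your proposal is correct and follows the route the paper intends: the theorem is obtained by combining the recursions of Lemma~\ref{lem:MGidentity-mean-cov} with the bound $\|X_L\|_{A_L}\le C_A/(C_A+\nu)$ from Proposition~\ref{prop:V-cycle-bound}, exactly as you do. Your explicit observation that the symmetric $V$-cycle makes $Z=A_L^{1/2}X_LA_L^{-1/2}$ symmetric (so that $\|Z^m\|_2=\|Z\|_2^m$ and the transposed factor in the covariance recursion contributes the same rate) is precisely the fact the paper uses implicitly, e.g.\ in the proof of Theorem~\ref{thm:IACT}, and is the right way to close the only nontrivial step.
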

As a result, we have the following convergence of distributions.
\begin{cor}
	\label{cor:conv-dist}Let the assumptions of Theorem~\ref{thm:moments-conv}
	be satisfied. Then, the Kullback--Leibler divergence of the distribution
	of $(\theta_{L}^{(m)})_{m\in\mathbb{N}}$ and the target distribution
	$\mathcal{N}(A_{L}^{-1}f_{L},A_{L}^{-1})$ converges to~$0$.
	In particular, the sequence $(\theta_{L}^{(m)})_{m\in\mathbb{N}}$
	converges  in distribution to a Gaussian random variable with mean $A_{L}^{-1}f_{L}$
	and covariance $A_{L}^{-1}$.
\end{cor}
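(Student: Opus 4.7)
The plan is to reduce everything to the closed-form Kullback--Leibler divergence between two multivariate Gaussians and then drive each summand to zero using the two bounds already supplied by Theorem~\ref{thm:moments-conv}. Since the MGMC update \eqref{eq:MG-iteration_finest_level} is an affine transformation of $\theta_L^{(m)}$ driven by Gaussian noise $W_L^{(m)}$, assuming that $\theta_L^{(0)}$ is drawn from a Gaussian distribution (or is deterministic, which is a degenerate Gaussian), each iterate $\theta_L^{(m)}$ is itself multivariate normal, say $\mathcal{N}(\mu_m,\Sigma_m)$ with $\mu_m:=\mathbb{E}[\theta_L^{(m)}]$ and $\Sigma_m:=\mathrm{Cov}(\theta_L^{(m)})$. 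The KL divergence from $\mathcal{N}(\mu_m,\Sigma_m)$ to the target $\mathcal{N}(A_L^{-1}f_L,A_L^{-1})$ then admits the explicit expression
\[
D_{\mathrm{KL}}=\frac{1}{2}\Bigl[\mathrm{tr}(A_L\Sigma_m)-n_L+\|\mu_m-A_L^{-1}f_L\|_{A_L}^2-\ln\det(A_L\Sigma_m)\Bigr].
\]

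Next I would dispatch each of the three non-trivial terms. The middle term is precisely the quantity controlled by the first estimate in Theorem~\ref{thm:moments-conv} and hence tends to zero exponentially. For the two covariance-based terms I would introduce $B_m:=A_L^{1/2}\Sigma_m A_L^{1/2}$ and invoke the second estimate of Theorem~\ref{thm:moments-conv}, namely $\|B_m-\id\|_2\to 0$. Writing the eigenvalues of the symmetric matrix $B_m$ as $\lambda_1^{(m)},\dots,\lambda_{n_L}^{(m)}$, this operator-norm convergence forces $\max_i|\lambda_i^{(m)}-1|\to 0$. Consequently
\[
\mathrm{tr}(A_L\Sigma_m)-n_L=\sum_i(\lambda_i^{(m)}-1)\to 0,\qquad \ln\det(A_L\Sigma_m)=\sum_i\ln\lambda_i^{(m)}\to 0,
\]
the latter by continuity of $\ln$ at $1$, which is applicable once $m$ is large enough that each $\lambda_i^{(m)}\in(1/2,3/2)$; in particular $\Sigma_m$ is then automatically positive definite, so the KL formula is well defined.

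Summing the three contributions yields $D_{\mathrm{KL}}\to 0$, which is the first claim. The convergence in distribution is then a standard consequence: Pinsker's inequality converts KL convergence into total-variation convergence, which in turn implies weak convergence. A more direct route in the Gaussian setting is to observe that pointwise convergence of the characteristic functions $\xi\mapsto\exp\bigl(i\mu_m^\top\xi-\tfrac{1}{2}\xi^\top\Sigma_m\xi\bigr)$ to that of the target is immediate from the convergence of $\mu_m$ and $\Sigma_m$, so L\'evy's continuity theorem applies. The only mildly delicate step is the log-determinant term, since it demands positive-definiteness of $\Sigma_m$ together with a uniform bound on $|\ln\lambda_i^{(m)}|$; both are automatic once $\|B_m-\id\|_2$ is small enough, and since this holds for all sufficiently large $m$ the limiting statement is unaffected by discarding the initial indices.
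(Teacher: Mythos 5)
Your proposal is correct and follows essentially the same route as the paper's proof: both expand the closed-form Gaussian KL divergence, drive the mean, trace and log-determinant terms to zero via the moment convergence in Theorem~\ref{thm:moments-conv}, and then pass to convergence in distribution via Pinsker's inequality (or, equivalently, characteristic functions). Your explicit eigenvalue argument for the trace and log-determinant terms is a slightly more detailed version of the paper's appeal to continuity of the trace and determinant, including the observation that $\det(\Sigma_m)\neq 0$ for large $m$.
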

\noindent\textit{Proof.} See Appendix~\ref{sec:proof_distribution_convergence}.\\[1ex]
The results above allow us to analyse properties of the Monte Carlo estimator based on the sample generated by MGMC: As for any Markov chain, the states $\theta_L^{(0)}, \theta_L^{(1)}, \theta_L^{(2)},\dots,\theta_L^{(m)}$ generated by the MGMC update in Alg.~\ref{alg:mgmc} are not independent. To quantify this dependence we measure the autocorrelations of the chain for a particular quantity of interest $\mathcal{F}$ which maps each state to a real number. We limit our analysis to quantities of interest that depend linearly on the sample state, \rev{which includes many practically relevant quantities of interest, such as fluxes or integrals over parts of the domain}.
To this end, we consider the linear functional $\mathcal{F}$ defined in \eqref{eqn:qoi_linear}; discretisation of $\mathcal{F}$ leads to the matrix representation $F_L$ given in \eqref{eqn:F_matrix}.

We \rev{first} analyse the integrated autocorrelation  time (IACT) of the observed quantity $F_L^\top\theta_{L}^{(m)}$ which is defined as
\begin{equation}
	\tau_{\mathrm{int},F_L}^{(m)} :=1+2\sum_{s=1}^{\infty}\frac{\text{Cov}(F_L^\top\theta_{L}^{(m+s)},F_L^\top\theta_{L}^{(m)})}{\text{Cov}(F_L^\top\theta_{L}^{(m)},F_L^\top\theta_{L}^{(m)})}.
	\label{eqn:tau_int_definition}
\end{equation}
In the statistics literature the number of generated states divided by the IACT is also known as ``effective sample size'' (ESS) which can be interpreted as a measure for the number of statistically independent realisations of the quantity of interest in the chain. 
\rev{
The IACT in \eqref{eqn:tau_int_definition} converges to 
	\begin{equation}
	\rev{\tau^{(\infty)}_{\mathrm{int},F_L}}=1+2\sum_{s=1}^{\infty}\frac{F_L^\top X_{L}^{s}A_{L}^{-1}F_L}{F_L^\top A_{L}^{-1}F_L}.
	\label{eqn:tau_int}
\end{equation}
at a rate determined by $\|X_{L}\|_{A_{L}}$, as shown in the following theorem. 
Furthermore, if the initial sample is already drawn from the target normal distribution, the IACT simplifies directly to  \eqref{eqn:tau_int}.}
\begin{thm}
	\label{thm:IACT}
	\rev{Assume} 
	that the initial sample $\theta_L^{(0)}$ is drawn from a distribution with covariance $\mathrm{Cov}(\theta_L^{(0)})$ that is bounded in the sense that $||A_L^{1/2}\mathrm{Cov}(\theta_L^{(0)})A_L^{1/2}||_2<C_0$ for some constant $C_0$ that is independent of $L$. Then under the same assumption as in Theorem~\ref{thm:moments-conv}
	\begin{equation}
		\rev{\tau^{(\infty)}_{\mathrm{int},F_L}}\leq\frac{1+||X_{L}||_{A_{L}}}{1-||X_{L}||_{A_{L}}}\qquad\text{and} \qquad\left|\tau_{\mathrm{int},F_L}^{(m)}-\rev{\tau^{(\infty)}_{\mathrm{int},F_L}}\right|\le C\frac{||X_L||^{2m+1}_{A_L}}{(1-||X_L||_{A_L})(1-||X_L||_{A_L}^{2m})}
		\label{eqn:tau_int_bounds}
	\end{equation}
	for all $m\ge 1$ and some constant $C$ which is independent of $L$.
        
	\rev{Instead, if the initial sample $\theta_L^{(0)}$ is drawn from the target multivariate normal distribution, then the corresponding $\tau_{\mathrm{int},F_L}^{(m)}$, in \eqref{eqn:tau_int_definition}, can be expressed as \eqref{eqn:tau_int}, regardless of the value of $m$.}  
\end{thm}
\noindent\textit{Proof.} See Appendix~\ref{sec:proof_IACT_bounds}.\pagebreak

\noindent
Note that $||X_{L}||_{A_{L}}$ does not have to be exceptionally small to obtain IACTs of $\sim1-10$: For $||X_{L}||_{A_{L}}=0.5$ we get
$\rev{\tau^{(\infty)}_{\mathrm{int},F_L}}=2$ and for $||X_{L}||_{A_{L}}=0.8$ we have $\rev{\tau^{(\infty)}_{\mathrm{int},F_L}}=9$. Furthermore, the condition $||A_L^{1/2}\mathrm{Cov}(\theta_L^{(0)})A_L^{1/2}||_2<C_0$ is trivially satisfied if the initial sample is drawn from an infinitely narrow delta-distribution, i.e. if we set $\theta_L^{(0)}=\theta_{L,0}$ for some fixed vector $\theta_{L,0}$.

In practice, the sample $\theta_L^{(1)}, \theta_L^{(2)},\dots,\theta_L^{(M)}$ is used to construct a Monte Carlo estimator
\begin{equation}
	\tilde{I}_{M}(F_L)(\omega):=\frac{1}{M}\sum_{m=1}^{M}F_L^\top \theta_{L}^{(m)}(\omega)\label{eqn:MGMC_estimator}
\end{equation}
for the quantity of interest
$I(F_L):=\mathbb{E}[F_L^\top \theta_{L}]$ with $\theta_{L}\sim\mathcal{N}(A_{L}^{-1}f_{L},A_{L}^{-1})$.
The root-mean-squared-error for the sum in \eqref{eqn:MGMC_estimator} can be  bounded by the standard Monte Carlo rate $\mathcal{O}(M^{-1/2})$ and the implied constant can be made \emph{independent of $L$}. More precisely, we have the following results:
\begin{thm}
	\label{thm:integration-error} Let Assumption~\ref{assu:Pell} and
	assumptions of Theorems~\ref{thm:moments-conv} and \ref{thm:IACT}
	hold. Assume further that the initial sample $\theta_{L}^{(0)}$ is
	drawn from a distribution with mean $\mathbb{E}[\theta_{L}^{(0)}]$
	that is bounded in the sense that $\|\mathbb{E}[A_{L}^{1/2}\theta_{L}^{(0)}]\|_{2}<C_{0}$
	for some constant $C_{0}$ that is independent of $L$. Suppose that
	$f_{L}$ is given by $(f_L)_j = \langle f,\phi_{j}^{L}\rangle_{H}$ for ${j=1,\dots,n_{L}}$ for some $f\in H$. Then, the root-mean-squared-error of $\tilde{I}_{M}(F_L)$ in \eqref{eqn:MGMC_estimator} can be bounded as
	\begin{equation}
		\sqrt{\mathbb{E}[|I(F_{L})-\tilde{I}_{M}(F_{L})|^{2}]}\leq\frac{C}{\sqrt{M}}.\label{eqn:integration_error}
	\end{equation}
	The constant $C>0$ depends on $F_L$ (and thus on $\mathcal{F}$ through \eqref{eqn:F_matrix}), but is independent
	of $M$ and $L$.
\end{thm}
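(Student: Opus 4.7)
The plan is to decompose the mean-squared error in the usual bias--variance fashion
$$\mathbb{E}\bigl[|I(F_L)-\tilde{I}_M(F_L)|^2\bigr] = \bigl(\mathbb{E}[\tilde{I}_M(F_L)]-I(F_L)\bigr)^2 + \operatorname{Var}\bigl[\tilde{I}_M(F_L)\bigr]$$
and bound each contribution separately, using the mean convergence of Theorem~\ref{thm:moments-conv} for the bias and the IACT estimate of Theorem~\ref{thm:IACT} for the variance. The constant in the final bound is $L$-independent provided that $F_L^\top A_L^{-1}F_L$, $\|A_L^{-1}f_L\|_{A_L}$ and $\|\mathbb{E}[\theta_L^{(0)}]\|_{A_L}$ are uniformly bounded in $L$.

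For the bias, iterating the identity \eqref{eqn:error_iteration} from Lemma~\ref{lem:MGidentity-mean-cov} gives $\mathbb{E}[\theta_L^{(m)}]-A_L^{-1}f_L = X_L^m(\mathbb{E}[\theta_L^{(0)}]-A_L^{-1}f_L)$, and Cauchy--Schwarz in the $A_L$-inner product together with the contraction $\|X_L\|_{A_L}\le C_A/(C_A+\nu)<1$ from Proposition~\ref{prop:V-cycle-bound} yields
$$\bigl|\mathbb{E}[\tilde{I}_M(F_L)]-I(F_L)\bigr| \,\le\, \frac{\|F_L\|_{A_L^{-1}}\,\|\mathbb{E}[\theta_L^{(0)}]-A_L^{-1}f_L\|_{A_L}}{M}\cdot\frac{\|X_L\|_{A_L}}{1-\|X_L\|_{A_L}},$$
so that $(\text{bias})^2 = O(1/M^2)$. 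For the variance, I would use the identities \eqref{eqn:convergence_iteration} and \eqref{eq:MG-corr} to split $M^{-2}\sum_{m,s=1}^M F_L^\top\operatorname{Cov}(\theta_L^{(s)},\theta_L^{(m)})F_L$ into a stationary part, which equals $M^{-2}\sum_{m,s}F_L^\top X_L^{|s-m|}A_L^{-1}F_L$ and is bounded by $F_L^\top A_L^{-1}F_L\cdot\tau_{\mathrm{int},F_L}/M$ via Theorem~\ref{thm:IACT}, plus a transient correction involving $X_L^m(\operatorname{Cov}(\theta_L^{(0)})-A_L^{-1})(X_L^\top)^{m'}$ which decays geometrically in $m+m'$ and contributes only $O(1/M^2)$.

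The main technical ingredient, used repeatedly, is a uniform-in-$L$ bound on $F_L^\top A_L^{-1}F_L$ and $f_L^\top A_L^{-1}f_L$, and this is where the hypotheses $(F_L)_j = \mathcal{F}(\phi_j^L) = \langle\chi,\phi_j^L\rangle_H$ and $(f_L)_j = \langle f,\phi_j^L\rangle_H$ come in. Writing $v_L := A_L^{-1}F_L$, the identity \eqref{eq:def-A_ell} shows that $P_L v_L\in V_L$ is characterised by $a(P_L v_L,\varphi_L) = \langle\chi,\varphi_L\rangle_H$ for all $\varphi_L\in V_L$; testing with $\varphi_L = P_L v_L$ and using the continuous embedding $V\hookrightarrow H$ (see the discussion after \eqref{eq:bilin-a}) gives $\|P_L v_L\|_V \le c\|\chi\|_H$ uniformly in $L$, and since $F_L^\top A_L^{-1}F_L = \|P_L v_L\|_V^2$ this bounds $\|F_L\|_{A_L^{-1}}$. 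The analogous argument with $f$ in place of $\chi$ controls $\|A_L^{-1}f_L\|_{A_L}$, and the assumption $\|A_L^{1/2}\mathbb{E}[\theta_L^{(0)}]\|_2\le C_0$ then controls $\|\mathbb{E}[\theta_L^{(0)}]-A_L^{-1}f_L\|_{A_L}$ by the triangle inequality.

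I expect the main obstacle to be the careful bookkeeping required to ensure that every constant entering the final estimate depends only on $f$, $\mathcal{F}$, $C_0$ and the multigrid parameters. A further subtlety is the covariance of the initial state: the bound on the transient contribution to the variance implicitly requires a uniform bound on the $A_L$-operator norm of $\operatorname{Cov}(\theta_L^{(0)})-A_L^{-1}$, which is immediate if the initial state is deterministic (as in the remark following Theorem~\ref{thm:IACT}) but in general must be inherited from a hypothesis on the initial distribution. Combining the bias estimate $(\text{bias})^2\le C/M^2$ with the variance estimate $\operatorname{Var}\le C/M$ and taking the square root then delivers \eqref{eqn:integration_error} with a constant independent of both $M$ and $L$.
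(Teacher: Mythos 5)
Your proposal is correct and follows essentially the same route as the paper: the bias--variance decomposition of the mean-squared error, the geometric decay of the bias via \eqref{eqn:error_iteration} and $\|X_L\|_{A_L}<1$, the control of the variance through the integrated autocorrelation time from Theorem~\ref{thm:IACT} (your stationary/transient split is the same bookkeeping the paper performs via $\tau_{\mathrm{int},F_L}^{(m)}$ and its tail corrections), and, crucially, the $L$-uniform bounds on $F_L^\top A_L^{-1}F_L$ and $f_L^\top A_L^{-1}f_L$ obtained by identifying $P_LA_L^{-1}F_L$ as a Galerkin solution and using coercivity. The subtlety you flag about the covariance of the initial state is indeed handled by inheriting the hypothesis $\|A_L^{1/2}\mathrm{Cov}(\theta_L^{(0)})A_L^{1/2}\|_2<C_0$ from Theorem~\ref{thm:IACT}.
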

\noindent\textit{Proof.} See Appendix~\ref{sec:proof_RMS_error}.
\paragraph{Relationship to the literature.}
The original work in \cite{goodman1989multigrid},  which introduces the Multigrid Monte Carlo method, \jg{shows} the invariance of the distribution under MGMC updates and this is formalised in our Theorem \ref{thm:mgmc_invariance}. The results in our Theorem~\ref{thm:equivalence}, Lemma~\ref{lem:MGidentity-mean-cov}, Corollary~\ref{cor:conv-dist} and Theorem~\ref{thm:IACT} are also derived in~\cite{goodman1989multigrid}. \jg{Since the original work in \cite{goodman1989multigrid} is aimed at the theoretical physics community, we rewrite the proofs here in a language more familiar to mathematicians and statisticians.}

To make the relationship to the work in \cite{Fox.C_Parker_2017_AcceleratedGibbsSampling} explicit, we now show that the MGMC update can be written as a matrix splitting method of the form in \eqref{eqn:theta_random} if the multigrid method is convergent. For this first recall that according to Lemma \ref{lem:MG-iteration-rep} the MGMC update can be written as $\theta_{L}^{(m+1)}= X_{L}\theta_{L}^{(m)}+Y_{L}f_{L}+W^{(m)}_{L}$ (c.f. \eqref{eq:MG-iteration_finest_level}) with $Y_L = (\id-X_L)A_L^{-1}$ and $W_L^{(m)}\sim\mathcal{N}(0,A_L^{-1}-X_LA_L^{-1}X_L^\top)$. Eliminating $X_L = \id - Y_LA_L$, \eqref{eq:MG-iteration_finest_level} can be written as
\begin{equation}
	\theta_L^{(m+1)} = \theta_L^{(m)} + Y_L (f_L-A_L\theta_L^{(m)}) + W_L^{(m)}\qquad\text{with $W_L^{(m)}\sim\mathcal{N}(0,Y_L+Y_L^\top-Y_LA_LY_L^\top)$}.\label{eqn:theta_update_with_YL}
\end{equation}
Under the assumptions of Proposition \ref{prop:V-cycle-bound} we have  $||X_{L}||_{A_{L}}<1$ and therefore $Y_{L}$ is invertible as Lemma \ref{lem:Y-invertible} shows.

Hence, we can set $M_L^{\mathrm{MGMC}}:=Y_L^{-1}$, $N_L^{\mathrm{MGMC}}:=M_L^{\mathrm{MGMC}}-A_L$ and straightforward algebraic manipulations show that \eqref{eqn:theta_update_with_YL} can be written as
\begin{equation}
	\theta_L^{(m+1)} = (M^{\mathrm{MGMC}}_L)^{-1}\theta_L^{(m)} + (M^{\mathrm{MGMC}}_L)^{-1}(f_L+\xi_L^{(m)})\quad\text{with $\xi_L^{(m)}\sim\mathcal{N}(0,(M^{\mathrm{MGMG}})^\top+N^{\mathrm{MGMC}})$},
\end{equation}
which is exactly the form in \eqref{eqn:theta_random}. With this, we can interpret our Theorem \ref{thm:equivalence} as a special case of \cite[Theorem 1]{Fox.C_Parker_2017_AcceleratedGibbsSampling} for the splitting matrix $M_L^{\mathrm{MGMC}}$ defined by Alg.~\ref{alg:mgmc}. Furthermore, our Lemma \ref{lem:MGidentity-mean-cov} can be seen as a special case of \cite[Corollary 3]{Fox.C_Parker_2017_AcceleratedGibbsSampling}.

\subsection{Application to \rev{linear Gaussian Bayesian inverse problems}\label{subsec:Bayesian}}
We apply the abstract MGMC convergence results from Sections~\ref{sec:theory_invariance} and \ref{sec:theory_convergence} to the linear Bayesian inverse problem introduced in Section~\ref{subsec:Linear-Bayesian-inverse}.
To do this, we show that the assumptions of Proposition~\ref{prop:V-cycle-bound} also hold for the inverse problem, when
the sampler is the symmetric Gauss--Seidel sampler.

Recall that for the linear problem, the posterior distribution is
given by $\mathcal{N}(\mu_{L},\widetilde{A}_{L}^{-1})$, where $\mu_{L}$
is as in (\ref{eq:post-mean}) and $\widetilde{A}_{L}^{-1}$ is as
in (\ref{eq:post-cov}). Hence, to integrate with respect to the posterior,
we call $\mathrm{MGMC}(\widetilde{A}_{L},f_{L})$ with $f_{L}=f_{L}(y_{L})
	=B_L \Gamma^{-1}y_L$ as in \eqref{eqn:f_ell_definition}.
The symmetric Gauss--Seidel smoother for the precision matrix corresponding
to the posterior distribution is given by \eqref{eqn:SGS_splitting}:
\[
	\widetilde{M}_{\ell}^{\text{(SGS)}}:=(D_{\ell}+B_{\ell}\Gamma^{-1}B_{\ell}^{\top}+L_{\ell})(D_{\ell}+B_{\ell}\Gamma^{-1}B_{\ell}^{\top})^{-1}(D_{\ell}+B_{\ell}\Gamma^{-1}B_{\ell}^{\top}+L_{\ell}^{\top}),
\]
where we note that the matrices $D_{\ell}+B_{\ell}\Gamma^{-1}B_{\ell}^{\top}$
and $D_{\ell}+B_{\ell}\Gamma^{-1}B_{\ell}^{\top}+L_{\ell}$ can be
inverted by applying the Woodbury matrix identity since $D_{\ell}$
and $D_{\ell}+L_{\ell}$ are assumed to be invertible. According to \eqref{eqn:SGS_splitting},
$\widetilde{M}_{\ell}^{\text{(SGS)}}$ can be rewritten as
\begin{align}
	\widetilde{M}_{\ell}^{\text{(SGS)}} & =A_{\ell}+B_{\ell}\Gamma^{-1}B_{\ell}^{\top}+L_{\ell}\left(D_{\ell}+B_{\ell}\Gamma^{-1}B_{\ell}^{\top}\right)^{-1}L_{\ell}^{\top}.\label{eq:tildeM-SGS-formula}
\end{align}
We verify two conditions for all $\text{\ensuremath{\ell\geq1}}$ to apply the V-cycle convergence result (Proposition \ref{prop:V-cycle-bound}):
\[
  0<\widetilde{A}_{\ell}\leq\widetilde{M}_{\ell}^{\text{(SGS)}}
  \quad \text{and} %
  \quad \|(\widetilde{M}_{\ell}^{\text{(SGS)}})^{1/2}(\widetilde{A}_{\ell}^{-1}-I_{\ell-1}^{\ell}\widetilde{A}_{\ell-1}^{-1}I_{\ell}^{\ell-1}))(\widetilde{M}_{\ell}^{\text{(SGS)}})^{1/2}\|_{2}\leq C_{A}\,
  .
  \]
The first condition is easy to verify. Indeed, for $x\in\mathbb{R}^{n_{\ell}}$
we have
\begin{align*}
	0 \leq x^\top A_{\ell}x+x^\top B_{\ell}\Gamma^{-1}B_{\ell}^{\top}x = x^{\top}\widetilde{A}_{\ell}x \leq x^{\top}\widetilde{A}_{\ell}x+x^{\top}L_{\ell}\left(D_{\ell}+B_{\ell}\Gamma^{-1}B_{\ell}^{\top}\right)^{-1}L_{\ell}^{\top}x  = x^{\top}\widetilde{M}_{\ell}^{\text{(SGS)}}x\,,
\end{align*}
where the first inequality follows since the matrices $A_{\ell}$ and $\Gamma$ are assumed to be positive definite.

Showing the second condition, the approximation property, is more involved.
\subsubsection{Approximation property for the perturbed problem}
To show the approximation property, we link discretisations across all levels through the infinite-dimensional problem.
Let
\begin{equation}
	b(\zeta,\varphi):=\langle\mathcal{B}^{*}\Gamma^{-1}\mathcal{B}\zeta,\varphi\rangle_{H}= (\mathcal{B}\zeta)^\top\Gamma^{-1}(\mathcal{B}\varphi)\qquad\text{ for }\zeta,\varphi\in V.
	\label{eqn:b_definition}
\end{equation}
Let $f\in H$ be given. Consider the problems: Find $u\in V$ such
that
\begin{equation}
	a(u,\varphi)+b(u,\varphi)=\langle f,\varphi\rangle_{H}=\text{ for all }\varphi\in V;\label{eq:eq-perturbed-original}
\end{equation}
find $u_{\ell}\in V_{\ell}$ such that
\begin{equation}
	a(u_{\ell},\varphi_{\ell})+b(u_{\ell},\varphi_{\ell})=\langle f,\varphi_{\ell}\rangle_{H}\text{ for all }\varphi_{\ell}\in V_{\ell}.\label{eq:eq-perturbed-Vell}
\end{equation}
From $b(\zeta,\zeta)=\|\Gamma^{-1/2}\mathcal{B}\zeta\|_{2}^{2}\geq0$,
the coercivity of $a$ implies that of $a+b$. The bilinear form $a+b$
is also bounded on $V$ because
\begin{align*}
	|b(\zeta,\varphi)|\leq & \|\Gamma^{-1}\|_{2}\|\mathcal{B}\|_{H\to\mathbb{R}^{\beta}}^{2}\|\zeta\|_{H}\|\varphi\|_{H}\leq C\|\Gamma^{-1}\|_{2}\|\mathcal{B}\|_{H\to\mathbb{R}^{\beta}}^{2}\|\zeta\|_{V}\|\varphi\|_{V}
\end{align*}
holds for $\zeta,\varphi\in V$. Hence, by the Lax--Milgram theorem
the equation (\ref{eq:eq-perturbed-original}) admits a unique solution
$u\in V$ such that $\|u\|_{V}\leq c\|f\|_{H}$. Similarly, (\ref{eq:eq-perturbed-Vell})
admits a unique solution $u_{\ell}\in V_{\ell}$.

Assumption~\ref{assu:WtoH-best-Vell-approx} assumes a regularity of the solution of the unperturbed problem  \eqref{eq:eq-given-by-A}.
It turns out that the solution of the perturbed problem \eqref{eq:eq-perturbed-original} has the same regularity.
\begin{lem}\label{lem:approximation-perturbed}
	If $\mathcal{A}$ satisfies Assumption~\ref{assu:WtoH-best-Vell-approx},
	then so does the perturbed operator $\widetilde{\mathcal{A}}=\mathcal{A}+\mathcal{B}^{*}\Gamma^{-1}\mathcal{B}$ and
	\begin{equation}
		\|u\|_{W}  \leq C_{\widetilde{\mathcal{A}}}\,\|f\|_{H}\label{eq:perturbedu-Wnorm-bound}
	\end{equation}
	holds. Moreover, the solution $u\in V$ of (\ref{eq:eq-perturbed-original})
	can be approximated by the solution $u_{\ell}\in V_{\ell}$ of (\ref{eq:eq-perturbed-Vell})
	with an error
	\begin{equation}
		\|u-u_{\ell}\|_{H}\leq C\Psi(\ell)\|u\|_{W},\label{eq:perturbed-H-W-bound}
              \end{equation}
          where $\Psi(\ell)$ is defined in Assumption~\ref{assu:WtoH-best-Vell-approx}.
\end{lem}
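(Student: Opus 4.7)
The plan is to prove the two claims in sequence. Since part (b) of Assumption~\ref{assu:WtoH-best-Vell-approx} only involves the spaces $W \subset V$ and the approximation subspaces $V_\ell$, and not the operator itself, it is inherited by $\widetilde{\mathcal{A}}$ from $\mathcal{A}$ automatically. It therefore suffices to verify that the solution $u$ of \eqref{eq:eq-perturbed-original} belongs to $W$ and to establish \eqref{eq:perturbedu-Wnorm-bound}. To do so, I would rewrite \eqref{eq:eq-perturbed-original} as
\[
a(u,\varphi) = \langle f - \mathcal{B}^{*}\Gamma^{-1}\mathcal{B}u,\, \varphi\rangle_H \quad\text{for all $\varphi \in V$,}
\]
so that $u$ also solves the unperturbed weak problem \eqref{eq:eq-given-by-A} with the modified right-hand side $\tilde f := f - \mathcal{B}^{*}\Gamma^{-1}\mathcal{B}u \in H$, where $\tilde f \in H$ follows from the boundedness of $\mathcal{B}\colon H \to \mathbb{R}^\beta$ and $\mathcal{B}^{*}\colon \mathbb{R}^\beta \to H$. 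Applying Assumption~\ref{assu:WtoH-best-Vell-approx}(a) to $\mathcal{A}$ with right-hand side $\tilde f$ yields $u \in W$ and $\|u\|_W \leq C_\mathcal{A}\|\tilde f\|_H$. The triangle inequality, together with the boundedness of $\mathcal{B}^{*}\Gamma^{-1}\mathcal{B}$ on $H$ and the continuous embedding $\|u\|_H \leq c\|u\|_V$, gives $\|\tilde f\|_H \leq \|f\|_H + C\|u\|_V$. Combining this with the \emph{a priori} Lax--Milgram estimate $\|u\|_V \leq c'\|f\|_H$ (established in the paragraph preceding the lemma) yields \eqref{eq:perturbedu-Wnorm-bound} with an appropriate constant $C_{\widetilde{\mathcal{A}}}$.

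For the error bound \eqref{eq:perturbed-H-W-bound}, I would use a standard Aubin--Nitsche duality argument, exploiting the symmetry of both $a$ and $b$, which makes $\widetilde{\mathcal{A}}$ self-adjoint. First, C\'ea's lemma applied to the variational problem with bilinear form $a+b$ (which is continuous and coercive on $V$, as shown in the paragraph preceding the lemma) together with Assumption~\ref{assu:WtoH-best-Vell-approx}(b) and the first part of the lemma gives the energy-norm bound
\[
\|u-u_\ell\|_V \,\leq\, C\, \inf_{v_\ell \in V_\ell}\|u - v_\ell\|_V \,\leq\, C\sqrt{\Psi(\ell)}\,\|u\|_W.
\]
For the duality step, let $w \in V$ solve the self-adjoint dual problem
\[
a(v,w) + b(v,w) = \langle v,\, u - u_\ell\rangle_H \quad\text{for all $v \in V$.}
\]
Applying the first part of the lemma to this dual problem yields $w \in W$ with $\|w\|_W \leq C_{\widetilde{\mathcal{A}}}\|u - u_\ell\|_H$. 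Setting $v := u - u_\ell$ and using Galerkin orthogonality (which follows since both $u$ and $u_\ell$ satisfy \eqref{eq:eq-perturbed-Vell} for test functions in $V_\ell$), I obtain, for any $w_\ell \in V_\ell$,
\[
\|u-u_\ell\|_H^2 = a(u-u_\ell,\, w-w_\ell) + b(u-u_\ell,\, w-w_\ell).
\]
Bounding the right-hand side using the continuity of $a+b$ on $V \times V$, choosing $w_\ell$ to realise the best-approximation estimate of Assumption~\ref{assu:WtoH-best-Vell-approx}(b) applied to $w \in W$, and substituting the $V$-norm error estimate above, one reaches $\|u-u_\ell\|_H^2 \leq C\,\Psi(\ell)\,\|u\|_W\,\|u-u_\ell\|_H$, from which \eqref{eq:perturbed-H-W-bound} follows after dividing through.

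The main technical point to watch is that the low-order perturbation $\mathcal{B}^{*}\Gamma^{-1}\mathcal{B}$ must not introduce a level-dependent constant. This is not a serious obstacle: (i) $b$ is continuous in the $V$-norm with constant depending only on $\|\mathcal{B}\|$, $\|\Gamma^{-1}\|_2$, and the embedding constant $c$, so the continuity of $a+b$ is level-independent; and (ii) the self-adjointness of $\widetilde{\mathcal{A}}$ means the dual problem has exactly the same structure as the primal, so the regularity shift proved in the first step applies verbatim to the dual solution $w$. Together these ensure that all constants appearing in the duality estimate are independent of $\ell$.
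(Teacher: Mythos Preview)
Your proposal is correct and follows essentially the same approach as the paper: both rewrite the perturbed problem as the unperturbed one with right-hand side $\tilde f = f - \mathcal{B}^{*}\Gamma^{-1}\mathcal{B}u$ and invoke Assumption~\ref{assu:WtoH-best-Vell-approx}(a) together with the Lax--Milgram bound, and both obtain \eqref{eq:perturbed-H-W-bound} via Aubin--Nitsche duality. The only difference is that you spell out the duality argument explicitly, whereas the paper simply says ``adapting Proposition~\ref{prop:Aubin-Nitsche} to $\widetilde{\mathcal{A}}$''.
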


\begin{proof}
	The solution $u\in V$ of \eqref{eq:eq-perturbed-original} satisfies
	$a(u,\varphi)=\langle\tilde{f},\varphi\rangle_{H}$ for any $\varphi\in V$,
	where we let $\tilde{f}:=f-\mathcal{B}^{*}\Gamma^{-1}\mathcal{B}u$.
	Hence, Assumption~\ref{assu:WtoH-best-Vell-approx} for $\mathcal{A}$ together
	with $\|u\|_{H}\leq C\|u\|_{V}\leq C'\|f\|_{H}$ implies
	\begin{align*}
		\|u\|_{W} & \leq C_{\mathcal{A}}\|\tilde{f}\|_{H}                                                                                                              \leq C_{\mathcal{A}}(\|f\|_{H}+\|\mathcal{B}^{*}\Gamma^{-1}\mathcal{B}u\|_{H})                                                                    \leq C_{\mathcal{A}}(\|f\|_{H}+\|\mathcal{B}\|_{H\to\mathbb{R}^{\beta}}^{2}\|\Gamma^{-1}\|_{2}\|u\|_{H})\leq C_{\widetilde{\mathcal{A}}}\|f\|_{H}.
	\end{align*}
	The bound (\ref{eq:perturbed-H-W-bound}) is obtained by adapting Proposition~\ref{prop:Aubin-Nitsche}
	to $\widetilde{\mathcal{A}}$,  again under
	Assumption~\ref{assu:WtoH-best-Vell-approx}.
\end{proof}
To proceed further, we use a lower bound on $\|A_{\ell}\|_{2}^{-1}$ which follows under natural assumptions.

\begin{assumption}\label{ass:inverse_inequality}
	There exists a constant $C_{\Psi}>1$, such that the function
	$\Psi\colon\mathbb{N}_{0}\to[0,\infty)$ in Assumption~\ref{assu:WtoH-best-Vell-approx} satisfies
	\begin{equation}
		\Psi(\ell-1)\leq C_{\Psi}\Psi(\ell) \qquad\text{for all $\ell\geq1$}
		\label{eqn:inverse_inequality1}
	\end{equation}
	and the following inverse inequality holds:
        \begin{equation}\label{eqn:inverse_inequality2}
          \|\varphi_{\ell}\|_{V}\leq\|\varphi_{\ell}\|_{H}/\sqrt{\Psi(\ell)}, \quad \text{for all $\varphi_{\ell}\in V_{\ell}\,$}.
          \end{equation}
\end{assumption}

\begin{prop}\label{prop:assump}
	Let Assumptions~\ref{assu:Pell} and \ref{ass:inverse_inequality} hold. Then, there exists a constant $C>0$ such that
	\begin{equation}
		\frac{\Psi(\ell-1)}{(\Phi(\ell))^{2}}\leq\frac{C}{\|A_{\ell}\|_{2}}
		\qquad\text{for all $1\leq \ell\leq L$.}\label{eqn:lower_A_bound}
	\end{equation}
\end{prop}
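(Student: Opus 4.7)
The plan is to upper-bound $\|A_\ell\|_2$ by $C(\Phi(\ell))^2/\Psi(\ell)$, and then use the near-monotonicity assumption $\Psi(\ell-1)\le C_\Psi\Psi(\ell)$ to replace $\Psi(\ell)$ with $\Psi(\ell-1)$, losing only a constant factor. Rearranging the resulting inequality then yields the claim.

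To carry out step one, I would start from the variational characterisation of the spectral norm of $A_\ell$. Since $A_\ell$ is the Gram matrix of the inner product $a(\cdot,\cdot)$ in the basis $\{\phi_j^\ell\}$, we have for every $x_\ell\in\mathbb{R}^{n_\ell}$
\begin{equation*}
x_\ell^\top A_\ell x_\ell \;=\; \sum_{j,k}(x_\ell)_j(x_\ell)_k\,a(\phi_j^\ell,\phi_k^\ell) \;=\; a(P_\ell x_\ell,P_\ell x_\ell) \;=\; \|P_\ell x_\ell\|_V^2,
\end{equation*}
so $\|A_\ell\|_2 = \sup_{x_\ell\neq 0}\|P_\ell x_\ell\|_V^2/\|x_\ell\|_2^2$.

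The second step combines the inverse inequality \eqref{eqn:inverse_inequality2} applied to $\varphi_\ell = P_\ell x_\ell\in V_\ell$, giving $\|P_\ell x_\ell\|_V \le \|P_\ell x_\ell\|_H/\sqrt{\Psi(\ell)}$, with the upper bound $\|P_\ell x_\ell\|_H \le \Phi(\ell)\|x_\ell\|_2/c_1$ from Assumption~\ref{assu:Pell}. Chaining these yields
\begin{equation*}
\|P_\ell x_\ell\|_V^2 \;\le\; \frac{(\Phi(\ell))^2}{c_1^2\,\Psi(\ell)}\,\|x_\ell\|_2^2,
\end{equation*}
and taking the supremum over $x_\ell$ gives $\|A_\ell\|_2 \le (\Phi(\ell))^2/(c_1^2\,\Psi(\ell))$.

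Finally, invoking \eqref{eqn:inverse_inequality1} in the form $\Psi(\ell-1)/\Psi(\ell)\le C_\Psi$, multiplying the bound through by $\Psi(\ell-1)$, and rearranging gives
\begin{equation*}
\frac{\Psi(\ell-1)}{(\Phi(\ell))^2} \;\le\; \frac{C_\Psi/c_1^2}{\|A_\ell\|_2},
\end{equation*}
which is the claimed inequality with $C = C_\Psi/c_1^2$. There is no real obstacle here: both ingredients, the inverse inequality and the equivalence between discrete and $H$-norms via $P_\ell$, are precisely what the assumptions provide, and the mild shift from $\Psi(\ell)$ to $\Psi(\ell-1)$ is exactly what \eqref{eqn:inverse_inequality1} is designed to handle.
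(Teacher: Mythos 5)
Your proof is correct and takes essentially the same route as the paper: both bound $\|A_\ell\|_2$ above by $c_1^{-2}\Phi(\ell)^2/\Psi(\ell)$ via the identity $x^\top A_\ell x=\|P_\ell x\|_V^2$, the inverse inequality, and Assumption~\ref{assu:Pell}, then invoke $\Psi(\ell-1)\le C_\Psi\Psi(\ell)$. The only cosmetic difference is that you use the Rayleigh-quotient characterisation of $\|A_\ell\|_2$ (valid since $A_\ell$ is symmetric), whereas the paper bounds the bilinear form $x^\top A_\ell y$ and takes $x=A_\ell y$.
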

\begin{proof}
  Let $x,y\in\mathbb{R}^{n_{\ell}}$. Then it follows from  Assumptions~\ref{assu:Pell} and \ref{ass:inverse_inequality} that
\begin{align*}
	x^{\top}A_{\ell}y
	 & =
	(I_{L-1}^L I_{L-2}^{L-1}\dotsb I_{\ell}^{\ell+1} x)^{\top}A_{L}(I_{L-1}^L I_{L-2}^{L-1}\dotsb I_{\ell}^{\ell+1}y)                 \\
	 &
	=
	\langle\mathcal{A}^{1/2}P_{\ell}x,\mathcal{A}^{1/2}P_{\ell}y\rangle_{H}  \leq\|P_{\ell}x\|_{V}\|P_{\ell}y\|_{V}  \leq \frac{\|P_{\ell}x\|_{H}\|P_{\ell}y\|_{H}}{\Psi(\ell)}
	\leq\frac{1}{c_{1}^{2}}\|x\|_{2}\|y\|_{2}\frac{\Phi^{2}(\ell)}{\Psi(\ell)}.
\end{align*}
Choosing $x=A_{\ell}y$ yields $\|A_{\ell}y\|_{2}^{2}\leq\frac{1}{c_{1}^{2}}\|A_{\ell}y\|_{2}\|y\|_{2}\frac{\Phi^{2}(\ell)}{\Psi(\ell)}$
and thus together with \eqref{eqn:inverse_inequality1}
\[
  \|A_{\ell}\|_{2} \le c_1^{-2} \frac{\Phi^{2}(\ell)}{\Psi(\ell)} \le  c_1^{-2}C_{\Psi}\frac{\Phi^{2}(\ell)}{\Psi(\ell-1)},
  \]
which implies  \eqref{eqn:lower_A_bound}. %
\end{proof}
For example, piecewise polynomial
FE spaces $V_{\ell}$ on quasi-uniform triangulations, with $V=H^{1}(D)$ and $H=L^{2}(D)$,
satisfy the inverse estimate with $\sqrt{\Psi(\ell)}\asymp h_{\ell}$;
see e.g., \cite[Prop.~6.3.2]{Quarteroni.A_Vali_book_1994_NAPDE}.

The lower bound for $A_{\ell}$ in \eqref{eqn:lower_A_bound} implies an analogous lower bound for the perturbed precision matrix $\widetilde{A}_{\ell}=A_{\ell}+B_{\ell}\Gamma^{-1}B_{\ell}^{\top}$.
\begin{prop}
	\label{prop:1/tildeAll-lb} Let Assumptions~\ref{assu:Pell} and \ref{ass:inverse_inequality} be satisfied. Then, there exists a constant $\tilde{C}>0$ such that 
	\begin{equation}
          \frac{\Psi(\ell-1)}{(\Phi(\ell))^{2}}\leq\frac{\tilde{C}}{\|\widetilde{A}_{\ell}\|_{2}}
          \qquad\text{for all $1\leq \ell\leq L$.}\label{eq:inv-norm-bound}
	\end{equation}
\end{prop}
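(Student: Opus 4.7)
The plan is to exploit the low-rank structure of the perturbation $B_\ell \Gamma^{-1} B_\ell^\top$ together with the bound already established for $\|A_\ell\|_2$ in Proposition~\ref{prop:assump}. Specifically, by the triangle inequality for the operator norm,
\[
\|\widetilde{A}_\ell\|_2 \le \|A_\ell\|_2 + \|B_\ell \Gamma^{-1} B_\ell^\top\|_2,
\]
so it suffices to control the perturbation term by a quantity of the same order $\Phi(\ell)^2/\Psi(\ell-1)$ as the bound on $\|A_\ell\|_2$ derived in the proof of Proposition~\ref{prop:assump}.

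To bound $\|B_\ell\Gamma^{-1}B_\ell^\top\|_2$, I would take arbitrary $x\in\mathbb{R}^{n_\ell}$ and compute
\[
x^\top B_\ell \Gamma^{-1} B_\ell^\top x = (B_\ell^\top x)^\top \Gamma^{-1}(B_\ell^\top x) \le \|\Gamma^{-1}\|_2 \,\|B_\ell^\top x\|_2^2 .
\]
Using the definition of $B_\ell$ in \eqref{eqn:B_ell_definition}, one identifies $B_\ell^\top x$ with the image $\mathcal{B}(P_\ell x) \in \mathbb{R}^\beta$, so the boundedness of $\mathcal{B}\colon H\to\mathbb{R}^\beta$ gives $\|B_\ell^\top x\|_2 \le \|\mathcal{B}\|_{H\to\mathbb{R}^\beta}\|P_\ell x\|_H$. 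Applying Assumption~\ref{assu:Pell} then produces $\|P_\ell x\|_H \le \Phi(\ell)\|x\|_2/c_1$, and altogether
\[
\|B_\ell\Gamma^{-1}B_\ell^\top\|_2 \le c_1^{-2}\|\Gamma^{-1}\|_2 \|\mathcal{B}\|_{H\to\mathbb{R}^\beta}^2\, \Phi(\ell)^2 .
\]

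To combine the two contributions, I would use that $\Psi$ is decreasing by Assumption~\ref{assu:WtoH-best-Vell-approx}, so $\Psi(\ell-1)\le\Psi(0)$ for all $\ell\ge 1$, and therefore $\Phi(\ell)^2 \le \Psi(0)\,\Phi(\ell)^2/\Psi(\ell-1)$. Inserting this and the bound on $\|A_\ell\|_2$ from the proof of Proposition~\ref{prop:assump} into the triangle inequality yields
\[
\|\widetilde{A}_\ell\|_2 \,\le\, \bigl(c_1^{-2}C_\Psi + c_1^{-2}\|\Gamma^{-1}\|_2\|\mathcal{B}\|_{H\to\mathbb{R}^\beta}^2\,\Psi(0)\bigr)\,\frac{\Phi(\ell)^2}{\Psi(\ell-1)},
\]
which, after inversion, is precisely \eqref{eq:inv-norm-bound} with $\tilde C$ equal to the reciprocal of the bracketed constant.

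There is no real obstacle here—the result is essentially a stability statement saying that the finite-rank observation term cannot worsen the spectral bound by more than a constant factor, because $\mathcal{B}$ and $\Gamma^{-1}$ are fixed while $\Phi(\ell)^2/\Psi(\ell-1)$ grows (or at least does not decay) in $\ell$. The only care needed is to recognise that the operator norm of the rank-$\beta$ perturbation $B_\ell\Gamma^{-1}B_\ell^\top$ scales like $\Phi(\ell)^2$ via the $P_\ell$-isomorphism bound, and then to absorb this into the reference scale $\Phi(\ell)^2/\Psi(\ell-1)$ using monotonicity of $\Psi$.
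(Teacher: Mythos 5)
Your argument is correct and follows essentially the same route as the paper's proof: a triangle inequality splitting $\|\widetilde{A}_\ell\|_2\le\|A_\ell\|_2+\|B_\ell\Gamma^{-1}B_\ell^\top\|_2$, the bound $\|B_\ell\Gamma^{-1}B_\ell^\top\|_2\le C\,(\Phi(\ell))^2$ via Assumption~\ref{assu:Pell} and the boundedness of $\mathcal{B}$, and absorption of the perturbation using $\Psi(\ell-1)\le\Psi(0)$ together with Proposition~\ref{prop:assump}. The only (cosmetic) difference is that you track the constant $c_1^{-2}$ explicitly, which the paper absorbs into its generic constant.
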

\begin{proof}
	Assumption~\ref{assu:Pell} implies
	\begin{align*}
		\|B_{\ell}\Gamma^{-1}B_{\ell}^{\top}\|_{2} & \leq\|\Gamma^{-1}\|_{2}\|\mathcal{B}\|_{H\to\mathbb{R}^{\beta}}^{2}\|P_{\ell}\|_{\mathbb{R}^{n_{\ell}}\to H}^{2}\leq\|\Gamma^{-1}\|_{2}\|\mathcal{B}\|_{H\to\mathbb{R}^{\beta}}^{2}(\Phi(\ell))^{2},
	\end{align*}
	and thus $\|A_{\ell}+B_{\ell}\Gamma^{-1}B_{\ell}^{\top}\|_{2}\leq C_{\Gamma^{-1},\mathcal{B}}(\|A_{\ell}\|_{2}+(\Phi(\ell))^{2})$ for some positive constant $C_{\Gamma^{-1},\mathcal{B}}$.
	Then, from Proposition~\ref{prop:assump} we conclude
	\begin{align*}
		\frac{\Psi(\ell-1)}{(\Phi(\ell))^{2}}\|A_{\ell}+B_{\ell}\Gamma^{-1}B_{\ell}^{\top}\|_{2} & \leq C_{\Gamma^{-1},\mathcal{B}}\Bigg(\frac{\Psi(\ell-1)}{(\Phi(\ell))^{2}}\|A_{\ell}\|_{2}+\Psi(\ell-1)\Bigg) \leq C_{\Gamma^{-1},\mathcal{B}}(C+\Psi(0))=:\tilde{C}.
	\end{align*}
\end{proof}
\noindent
Now, we are ready to show the approximation property of the perturbed
precision matrices. This is done in two steps.
\begin{prop}
	\label{prop:approximation-property}Let Assumptions~\ref{assu:Pell}, \ref{assu:WtoH-best-Vell-approx} and \ref{ass:inverse_inequality} hold. Then, we have the approximation property for the perturbed matrix $\widetilde{A}_{\ell}$
	\begin{align*}
		\|\widetilde{A}_{\ell}^{-1}-I_{\ell-1}^{\ell}\widetilde{A}_{\ell-1}^{-1}I_{\ell}^{\ell-1}\|_{2}
		 & \leq\frac{C}{\|\widetilde{A}_{\ell}\|_{2}}.
	\end{align*}
\end{prop}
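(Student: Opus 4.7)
\medskip
\noindent\textbf{Proof proposal.} The plan is to combine three ingredients: (i) identify $\widetilde{A}_\ell^{-1}g$ and $\widetilde{A}_{\ell-1}^{-1}I_\ell^{\ell-1}g$ as coefficient vectors of Galerkin approximations of a common continuous perturbed problem on $V_\ell$ and $V_{\ell-1}$, (ii) apply the Aubin--Nitsche bound from Lemma~\ref{lem:approximation-perturbed} to each, and (iii) convert the $H$-norm estimate into an $\ell^2$-estimate via Assumption~\ref{assu:Pell} and then invoke Proposition~\ref{prop:1/tildeAll-lb} to recast $\Psi(\ell-1)/\Phi(\ell)^2$ as $1/\|\widetilde{A}_\ell\|_2$.

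First, for an arbitrary $g\in\mathbb{R}^{n_\ell}$ I would construct $f\in H$ satisfying $\langle f,v\rangle_H = g^\top P_\ell^{-1}v$ for all $v\in V_\ell$ (and hence for all $v\in V_{\ell-1}\subset V_\ell$). The natural choice is $f:=P_\ell M_\ell^{-1}g$, where $M_\ell$ is the mass matrix with entries $\langle\phi_j^\ell,\phi_k^\ell\rangle_H$; Assumption~\ref{assu:Pell} then yields the spectral bound $\|M_\ell^{-1}\|_2\le c_2^2/\Phi(\ell)^2$, so that $\|f\|_H\le c_2\|g\|_2/\Phi(\ell)$. I would then let $u\in V$ solve the continuous problem $\tilde a(u,v)=\langle f,v\rangle_H$ for all $v\in V$, where $\tilde a:=a+b$ with $b$ defined in \eqref{eqn:b_definition}, and let $u_\ell\in V_\ell$, $u_{\ell-1}\in V_{\ell-1}$ be the corresponding Galerkin solutions of \eqref{eq:eq-perturbed-Vell}. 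A direct check using $(\widetilde{A}_\ell)_{jk}=\tilde a(\phi_j^\ell,\phi_k^\ell)$ and the identity $P_\ell I_{\ell-1}^\ell=P_{\ell-1}$ shows that $u_\ell = P_\ell(\widetilde{A}_\ell^{-1}g)$ and $u_{\ell-1}=P_\ell(I_{\ell-1}^\ell\widetilde{A}_{\ell-1}^{-1}I_\ell^{\ell-1}g)$, so that
\[
  \widetilde{A}_\ell^{-1}g - I_{\ell-1}^\ell\widetilde{A}_{\ell-1}^{-1}I_\ell^{\ell-1}g = P_\ell^{-1}(u_\ell-u_{\ell-1}).
\]

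Next I would apply Lemma~\ref{lem:approximation-perturbed} to both $u_\ell$ and $u_{\ell-1}$, obtaining
$\|u-u_\ell\|_H+\|u-u_{\ell-1}\|_H\le C\big(\Psi(\ell)+\Psi(\ell-1)\big)\|u\|_W\le 2C\,\Psi(\ell-1)\,C_{\widetilde{\mathcal{A}}}\|f\|_H$,
since $\Psi$ is decreasing. The triangle inequality then gives $\|u_\ell-u_{\ell-1}\|_H\lesssim \Psi(\ell-1)\|f\|_H$. Translating this back to $\mathbb{R}^{n_\ell}$ using the lower half of Assumption~\ref{assu:Pell}, namely $\|x\|_2\le(c_2/\Phi(\ell))\|P_\ell x\|_H$, and substituting $\|f\|_H\le c_2\|g\|_2/\Phi(\ell)$, yields
\[
  \|\widetilde{A}_\ell^{-1}g - I_{\ell-1}^\ell\widetilde{A}_{\ell-1}^{-1}I_\ell^{\ell-1}g\|_2 \le C'\,\frac{\Psi(\ell-1)}{\Phi(\ell)^2}\,\|g\|_2.
\]
Taking the supremum over $g$ and applying Proposition~\ref{prop:1/tildeAll-lb} to absorb $\Psi(\ell-1)/\Phi(\ell)^2$ into $\tilde C/\|\widetilde{A}_\ell\|_2$ delivers the claim.

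The main obstacle I anticipate is the careful bookkeeping in step (i): checking that $g$ on the fine level corresponds, via the restriction $I_\ell^{\ell-1}$, to precisely the RHS functional induced by $f$ on the coarse level, so that $u_{\ell-1}$ really is the Galerkin projection into $V_{\ell-1}$ of the same continuous problem. Once this identification is made, the rest is a routine combination of Aubin--Nitsche with the norm equivalences from Assumption~\ref{assu:Pell}. Note that the proof only relies on the regularity of the perturbed problem inherited from $\mathcal A$ via Lemma~\ref{lem:approximation-perturbed}, so no new hypotheses beyond Assumptions~\ref{assu:Pell}, \ref{assu:WtoH-best-Vell-approx} and \ref{ass:inverse_inequality} are needed.
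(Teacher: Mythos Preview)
Your proposal is correct and follows essentially the same route as the paper's proof. The only cosmetic difference is that the paper represents the right-hand side functional via the abstract adjoint $(P_\ell^{-1})^{*}f_\ell$ (with $\|(P_\ell^{-1})^{*}\|=\|P_\ell^{-1}\|\le c_2/\Phi(\ell)$), whereas you make this explicit as $f=P_\ell M_\ell^{-1}g$ using the mass matrix; since $(P_\ell^{-1})^{*}=P_\ell M_\ell^{-1}$, the two are identical, and your bound $\|f\|_H^2=g^\top M_\ell^{-1}g\le (c_2/\Phi(\ell))^2\|g\|_2^2$ reproduces the paper's estimate exactly.
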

\noindent\textit{Proof.} See Appendix~\ref{sec:proof_perturbed_matrix}.\\[1ex]
The result above establishes $\|(\widetilde{A}_{\ell}^{-1}-I_{\ell-1}^{\ell}\widetilde{A}_{\ell}^{-1}I_{\ell}^{\ell-1})\|_{2}\leq C\|\widetilde{A}_{\ell}\|_{2}^{-1}$.
To invoke the $V$-cycle convergence result (Proposition~\ref{prop:V-cycle-bound}),
we still need to verify the equivalent of (\ref{eq:V-important-cond}) with $\widetilde{\mathcal{A}}$ instead of $\mathcal{A}$. For this we
will show $\|\widetilde{M}_{\ell}^{\text{(SGS)}}\|_{2}\leq C\|\widetilde{A}_{\ell}\|_{2}$
for $\ell=0,\dots,L$, with which we have (\ref{eq:V-important-cond}):
\begin{align*}
	\|\sqrt{\widetilde{M}_{\ell}^{\text{(SGS)}}}(\widetilde{A}_{\ell}^{-1}- & I_{\ell-1}^{\ell}\widetilde{A}_{\ell}^{-1}I_{\ell}^{\ell-1})\sqrt{\widetilde{M}_{\ell}^{\text{(SGS)}}}\|_{2}                                                                                                                                                                                                                                    \\
	=                                                                       & \sqrt{\lambda_{\mathrm{max}}\left(\sqrt{\widetilde{M}_{\ell}^{\text{(SGS)}}}(\widetilde{A}_{\ell}^{-1}-I_{\ell-1}^{\ell}\widetilde{A}_{\ell}^{-1}I_{\ell}^{\ell-1})\widetilde{M}_{\ell}^{\text{(SGS)}}(\widetilde{A}_{\ell}^{-1}-I_{\ell-1}^{\ell}\widetilde{A}_{\ell}^{-1}I_{\ell}^{\ell-1})\sqrt{\widetilde{M}_{\ell}^{\text{(SGS)}}}\right)} \\
	=                                                                       & \sqrt{\lambda_{\mathrm{max}}\left(\widetilde{M}_{\ell}^{\text{(SGS)}}(\widetilde{A}_{\ell}^{-1}-I_{\ell-1}^{\ell}\widetilde{A}_{\ell}^{-1}I_{\ell}^{\ell-1})\widetilde{M}_{\ell}^{\text{(SGS)}}(\widetilde{A}_{\ell}^{-1}-I_{\ell-1}^{\ell}\widetilde{A}_{\ell}^{-1}I_{\ell}^{\ell-1})\right)}                                                  \\
	\leq                                                                    & \|\widetilde{M}_{\ell}^{\text{(SGS)}}(\widetilde{A}_{\ell}^{-1}-I_{\ell-1}^{\ell}\widetilde{A}_{\ell}^{-1}I_{\ell}^{\ell-1})\|_{2}\leq C.
\end{align*}
To show $\|\widetilde{M}_{\ell}^{\text{(SGS)}}\|_{2}\leq C\|\widetilde{A}_{\ell}\|_{2}$
we use the following,  the proof of which we defer to Appendix~\ref{appendix:general}.
\begin{lem}
	\label{lem:Aell-lb}Let Assumption~\ref{assu:Pell} hold. Then, $A_{\ell}$
	as in (\ref{eq:def-A_ell}) satisfies
	\[C\Phi^{2}(\ell)\leq\|A_{\ell}\|_{2}.\]
\end{lem}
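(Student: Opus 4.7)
The plan is to exploit the defining identity $x^{\top} A_{\ell} x = \|P_{\ell} x\|_{V}^{2}$ that is implicit in \eqref{eq:def-A_ell} together with the continuous embedding $V \hookrightarrow H$ and Assumption~\ref{assu:Pell}. Since $A_{\ell}$ is symmetric positive definite, its spectral norm satisfies
\[
  \|A_{\ell}\|_{2} \;=\; \lambda_{\max}(A_{\ell}) \;\geq\; \lambda_{\min}(A_{\ell}) \;=\; \inf_{x\neq 0}\frac{x^{\top} A_{\ell} x}{\|x\|_{2}^{2}},
\]
so it suffices to produce a uniform lower bound on the Rayleigh quotient of $A_{\ell}$.

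First I would expand, for arbitrary $x \in \mathbb{R}^{n_{\ell}}\setminus\{0\}$,
\[
  x^{\top} A_{\ell} x \;=\; \sum_{j,k}x^{j}x^{k}\langle \mathcal{A}^{1/2}\phi_{j}^{\ell},\mathcal{A}^{1/2}\phi_{k}^{\ell}\rangle_{H} \;=\; \|\mathcal{A}^{1/2} P_{\ell} x\|_{H}^{2} \;=\; \|P_{\ell}x\|_{V}^{2},
\]
using the bilinearity of $a(\cdot,\cdot)$, the definition of $P_{\ell}$, and the definition of the $V$-norm in \eqref{eq:bilin-a}. Next, I would apply the continuity $\|v\|_{H}\leq c\|v\|_{V}$ (stated directly after \eqref{eq:bilin-a}) in the reversed form $\|P_{\ell} x\|_{V}\geq c^{-1}\|P_{\ell} x\|_{H}$, and then the lower bound $\|P_{\ell} x\|_{H}\geq c_{2}^{-1}\Phi(\ell)\|x\|_{2}$ coming from Assumption~\ref{assu:Pell}. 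Chaining these yields
\[
  x^{\top} A_{\ell} x \;\geq\; \frac{\Phi^{2}(\ell)}{c^{2}c_{2}^{2}}\,\|x\|_{2}^{2},
\]
so that $\|A_{\ell}\|_{2} \geq \lambda_{\min}(A_{\ell}) \geq C\Phi^{2}(\ell)$ with $C := (c c_{2})^{-2}$, which is the desired inequality.

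There is essentially no obstacle here: both ingredients (the embedding constant $c$ from the construction of the space $V$ and the constants $c_{1},c_{2}$ from Assumption~\ref{assu:Pell}) are independent of $\ell$, so the resulting constant $C$ is independent of $\ell$ as required. The only subtlety worth flagging is that we are producing a lower bound on the \emph{smallest} eigenvalue of $A_{\ell}$ and reading it off as a lower bound on the spectral norm; this is immediate from positive definiteness but is worth stating explicitly so that the short proof remains transparent.
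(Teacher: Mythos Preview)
Your proof is correct and follows essentially the same route as the paper: both compute $x^{\top}A_{\ell}x=\|P_{\ell}x\|_{V}^{2}$, use the embedding $\|P_{\ell}x\|_{V}\gtrsim\|P_{\ell}x\|_{H}$ and Assumption~\ref{assu:Pell} to get $x^{\top}A_{\ell}x\geq C\Phi^{2}(\ell)\|x\|_{2}^{2}$, and conclude the norm bound. The only cosmetic difference is that the paper takes the supremum over unit vectors directly, whereas you pass through $\lambda_{\min}(A_{\ell})\leq\lambda_{\max}(A_{\ell})=\|A_{\ell}\|_{2}$; both are equivalent.
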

\begin{proof}
	Let $x\in\mathbb{R}^{n_{\ell}}$. By the definition $\eqref{eq:def-A_ell}$
	of $A_{\ell}$ we have $x^{\top}A_{\ell}x=\langle\mathcal{A}^{1/2}P_{\ell}x,\mathcal{A}^{1/2}P_{\ell}x\rangle_{H}=\|P_{\ell}x\|_{V}^{2}$.
	Thus, Assumption~\ref{assu:Pell} implies
	\[
		C\|x\|_{2}^{2}\Phi^{2}(\ell)\leq x^{\top}A_{\ell}x,
	\]
	where we used $c\|P_{\ell}x\|_{H}\leq\|P_{\ell}x\|_{V}$.
	Hence, noting that $A_{\ell}$ is symmetric and taking the supremum over
	$\|x\|_{2}=1$ yields the result.
      \end{proof}
Finally, we obtain the following, which gives us (\ref{eq:V-important-cond})
and thus allows us to use Proposition~\ref{prop:V-cycle-bound} for
the perturbed precision matrix $\widetilde{A}_{\ell}$.
\begin{lem}
	Let Assumption~\ref{assu:Pell} hold. Suppose $\|M_{\ell}^{\text{(SGS)}}\|_{2}\leq C_{A}\|A_{\ell}\|_{2}$
	for all $\ell=0,\dots,L$, with $C_{A}>0$ independent of $\ell$.
	Then, the symmetric Gauss--Seidel iteration matrix $\widetilde{M}_{\ell}^{\text{(SGS)}}$
	as in (\ref{eq:tildeM-SGS-formula}) satisfies
	\[
		\|\widetilde{M}_{\ell}^{\text{(SGS)}}\|_{2}\leq C\|\widetilde{A}_{\ell}\|_{2}\qquad\text{for }\ell=0,\dots,L,
	\]
	where $C>0$ is independent of $\ell$.
\end{lem}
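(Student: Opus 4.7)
The plan is to compare the two symmetric Gauss--Seidel splittings term-by-term, exploiting the fact that $B_\ell\Gamma^{-1}B_\ell^\top$ is positive semi-definite so that the extra term in $\widetilde{A}_\ell$ only helps us in two distinct ways: it makes the middle inverse \emph{smaller}, and it makes the ambient matrix \emph{larger}.

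First I would rewrite the unperturbed symmetric Gauss--Seidel matrix as
\[
  M_\ell^{\text{(SGS)}} = (D_\ell+L_\ell)D_\ell^{-1}(D_\ell+L_\ell^\top) = A_\ell + L_\ell D_\ell^{-1} L_\ell^\top,
\]
and then rearrange \eqref{eq:tildeM-SGS-formula} as
\[
  \widetilde{M}_\ell^{\text{(SGS)}} \;=\; \widetilde{A}_\ell \;+\; L_\ell\bigl(D_\ell+B_\ell\Gamma^{-1}B_\ell^\top\bigr)^{-1} L_\ell^\top.
\]
Since $B_\ell\Gamma^{-1}B_\ell^\top\ge 0$, we have $D_\ell+B_\ell\Gamma^{-1}B_\ell^\top \ge D_\ell > 0$, and by operator monotonicity of the inverse on positive definite matrices,
\[
  \bigl(D_\ell+B_\ell\Gamma^{-1}B_\ell^\top\bigr)^{-1} \le D_\ell^{-1}.
\]
Conjugating by $L_\ell$ preserves this ordering, so the second summand above is bounded (in the Loewner order, hence in spectral norm) by $L_\ell D_\ell^{-1} L_\ell^\top = M_\ell^{\text{(SGS)}}-A_\ell$.

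Combining these observations,
\[
  \|\widetilde{M}_\ell^{\text{(SGS)}}\|_2 \le \|\widetilde{A}_\ell\|_2 + \|M_\ell^{\text{(SGS)}}-A_\ell\|_2 \le \|\widetilde{A}_\ell\|_2 + \|M_\ell^{\text{(SGS)}}\|_2 + \|A_\ell\|_2,
\]
and the hypothesis gives $\|M_\ell^{\text{(SGS)}}\|_2 \le C_A \|A_\ell\|_2$. Finally, $A_\ell \le \widetilde{A}_\ell$ implies $\|A_\ell\|_2 \le \|\widetilde{A}_\ell\|_2$ (both are symmetric positive definite, so the spectral norm equals the largest eigenvalue, which is monotone in the Loewner order). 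Chaining these yields
\[
  \|\widetilde{M}_\ell^{\text{(SGS)}}\|_2 \le (C_A+2)\,\|\widetilde{A}_\ell\|_2,
\]
with $C:=C_A+2$ independent of $\ell$. There is no real obstacle here; the only point requiring a moment of care is the operator monotonicity step and the fact that the Loewner inequality $0\le A\le B$ between symmetric matrices transfers to spectral norms, both of which are standard.
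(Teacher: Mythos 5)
Your proof is correct. The key step is the same as in the paper: operator monotonicity of the inverse gives $\bigl(D_{\ell}+B_{\ell}\Gamma^{-1}B_{\ell}^{\top}\bigr)^{-1}\leq D_{\ell}^{-1}$, which after conjugation by $L_{\ell}$ lets one compare $\widetilde{M}_{\ell}^{\text{(SGS)}}$ with the unperturbed symmetric Gauss--Seidel matrix. Where you diverge is in the final accounting. The paper writes $\widetilde{M}_{\ell}^{\text{(SGS)}}\leq B_{\ell}\Gamma^{-1}B_{\ell}^{\top}+M_{\ell}^{\text{(SGS)}}$ and then must control $\|B_{\ell}\Gamma^{-1}B_{\ell}^{\top}\|_{2}$ relative to $\|A_{\ell}\|_{2}$, which requires the upper bound $\|B_{\ell}\Gamma^{-1}B_{\ell}^{\top}\|_{2}\leq C(\Phi(\ell))^{2}$ from Assumption~\ref{assu:Pell} combined with the lower bound $C\Phi^{2}(\ell)\leq\|A_{\ell}\|_{2}$ of Lemma~\ref{lem:Aell-lb}. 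You instead absorb $B_{\ell}\Gamma^{-1}B_{\ell}^{\top}$ into $\widetilde{A}_{\ell}$, writing $\widetilde{M}_{\ell}^{\text{(SGS)}}\leq\widetilde{A}_{\ell}+L_{\ell}D_{\ell}^{-1}L_{\ell}^{\top}$, and then only need the trivial monotonicity $\|A_{\ell}\|_{2}\leq\|\widetilde{A}_{\ell}\|_{2}$. The net effect is that your argument does not use Assumption~\ref{assu:Pell} at all: the hypothesis $\|M_{\ell}^{\text{(SGS)}}\|_{2}\leq C_{A}\|A_{\ell}\|_{2}$ alone suffices, which makes your version both slightly more general and more economical (at the harmless cost of a constant $C_{A}+2$ versus the paper's $C+C_{A}$). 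All the auxiliary steps you flag — operator monotonicity of the inverse and the transfer of Loewner inequalities between positive semidefinite symmetric matrices to spectral norms — are standard and correctly applied.
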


\begin{proof}
	For $x\in\mathbb{R}^{n_{\ell}}$, noting that $B_{\ell}\Gamma^{-1}B_{\ell}^{\top}$
	is symmetric positive definite, we have
	\begin{align*}
		x^{\top}\widetilde{M}_{\ell}^{\text{(SGS)}}x & =x^{\top}(A_{\ell}+B_{\ell}\Gamma^{-1}B_{\ell}^{\top})x+x^{\top}L_{\ell}\left(D_{\ell}+B_{\ell}\Gamma^{-1}B_{\ell}^{\top}\right)^{-1}L_{\ell}^{\top}x \\
		                                                               & \leq x^{\top}(A_{\ell}+B_{\ell}\Gamma^{-1}B_{\ell}^{\top})x+x^{\top}L_{\ell}D_{\ell}^{-1}L_{\ell}^{\top}x                                              \\
		                                                               & =x^{\top}B_{\ell}\Gamma^{-1}B_{\ell}^{\top}x+x^{\top}(A_{\ell}+L_{\ell}D_{\ell}^{-1}L_{\ell}^{\top})x                                                 \\
		                                                               & =x^{\top}B_{\ell}\Gamma^{-1}B_{\ell}^{\top}x+x^{\top}M_{\ell}^{\text{(SGS)}}x.
	\end{align*}
	Noting that Assumption~\ref{assu:Pell} implies $\|B_{\ell}\Gamma^{-1}B_{\ell}^{\top}\|_{2}\leq\|\Gamma^{-1}\|_{2}\|\mathcal{B}\|_{H\to\mathbb{R}^{\beta}}^{2}(\Phi(\ell))^{2}$,
	we use the assumption $\|M_{\ell}^{\text{(SGS)}}\|_{2}\leq C_{A}\|A_{\ell}\|_{2}$,
	and Lemma~\ref{lem:Aell-lb} to obtain
	\begin{align*}
		\|\widetilde{M}_{\ell}^{\text{(SGS)}}\|_{2} & \leq\|B_{\ell}\Gamma^{-1}B_{\ell}^{\top}\|_{2}+C_{A}\|A_{\ell}\|_{2}             \leq(\|B_{\ell}\Gamma^{-1}B_{\ell}^{\top}\|_{2}/\|A\|_{2}+C_{A})\|A_{\ell}\|_{2} \\
		                                            & \leq(C+C_{A})\|A_{\ell}\|_{2}\leq(C+C_{A})\|\widetilde{A}_{\ell}\|_{2}.
	\end{align*}
\end{proof}
In the lemma above we assumed $\|M_{\ell}^{\text{(SGS)}}\|_{2}\leq C_{A}\|A_{\ell}\|_{2}$
for the unpurterbed precision matrix; this is a standard assumption (see e.g. \cite[Theorem 11.30]{Hackbusch.W_2016_IterativeSolutionLarge}).

From the discussion in this section, we obtain the following $L$-independent bounds for the IACT and the root-mean-square error for the perturbed problem.
\begin{thm}\label{thm:IACT_posterior_robustness}
  Let Assumptions~\ref{assu:Pell}, \ref{assu:WtoH-best-Vell-approx} and \ref{ass:inverse_inequality} hold and suppose %
  $y\in\mathbb{R}^{\beta}$. Consider the MGMC update
  for $\mathcal{N}(\widetilde{\mu}_{L},\widetilde{A}_{L}^{-1})$
	with mean and covariance given by $\widetilde{\mu}_{L}=A_{L}^{-1}B_{L}(\Gamma+B_{L}^{\top}A_{L}^{-1}B_{L})^{-1}y$
	and $\widetilde{A}_{L}^{-1}=(A_{L}+B_{L}\Gamma^{-1}B_{L}^{\top})^{-1}$.
        Assume further that the initial sample $\theta_{L}^{(0)}$ is drawn from
	a distribution with moments $\mathbb{E}[\theta_{L}^{(0)}]$ and $\mathrm{Cov}(\theta_{L}^{(0)})$
	that are bounded such that
        \[  \max\{||\widetilde{A}_{L}^{1/2}\mathbb{E}[\theta_{L}^{(0)}]||_{2},||\widetilde{A}_{L}^{1/2}\mathrm{Cov}(\theta_{L}^{(0)})\widetilde{A}_{L}^{1/2}||_{2}\}<C_{0}
          \]
          for some constant $C_{0}$ that is independent of $L$.
          
	Suppose that the sampler on the coarsest level $\ell=0$ is an exact
	sampler. Assume that the same symmetric splitting $\widetilde{M}_{\ell}^{\text{(SGS)}}$
	as in \eqref{eqn:SGS_splitting} is used for pre- and post- smoothing
	on level $\ell>0$. 

	Then, bounds on the IACT, analogous to \eqref{eqn:tau_int_bounds} in Theorem~\ref{thm:IACT}, hold for this MGMC update with implied constant again independent of $L$. Moreover,
	the root-mean-square error bound analogous to Theorem~\ref{thm:integration-error}
	holds with a constant $C>0$ that depends on $\|y\|_{2}$
	but is independent of $L$.
\end{thm}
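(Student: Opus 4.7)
The overall plan is to verify that the hypotheses of Proposition~\ref{prop:V-cycle-bound} hold with $A_\ell$ replaced by $\widetilde{A}_\ell$ and $M_\ell$ replaced by $\widetilde{M}_\ell^{\text{(SGS)}}$, all with constants independent of $L$, and then to invoke Theorems~\ref{thm:IACT} and \ref{thm:integration-error} verbatim with this perturbed data.

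First I would collect the two multigrid conditions for the perturbed problem. The smoothing property $0<\widetilde{A}_\ell\leq \widetilde{M}_\ell^{\text{(SGS)}}$ was already established by the direct computation $x^\top \widetilde{M}_\ell^{\text{(SGS)}}x = x^\top\widetilde{A}_\ell x + x^\top L_\ell(D_\ell+B_\ell\Gamma^{-1}B_\ell^\top)^{-1}L_\ell^\top x$ preceding the statement. For the approximation property, Proposition~\ref{prop:approximation-property} provides $\|\widetilde{A}_\ell^{-1}-I_{\ell-1}^\ell \widetilde{A}_{\ell-1}^{-1} I_\ell^{\ell-1}\|_2 \le C/\|\widetilde{A}_\ell\|_2$, while the final lemma of Section~\ref{subsec:Bayesian} gives $\|\widetilde{M}_\ell^{\text{(SGS)}}\|_2 \le C\|\widetilde{A}_\ell\|_2$. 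Inserting both into the chain of inequalities displayed just before Lemma~\ref{lem:Aell-lb} yields $\|(\widetilde{M}_\ell^{\text{(SGS)}})^{1/2}(\widetilde{A}_\ell^{-1}-I_{\ell-1}^\ell\widetilde{A}_{\ell-1}^{-1}I_\ell^{\ell-1})(\widetilde{M}_\ell^{\text{(SGS)}})^{1/2}\|_2 \le C_A$ with $C_A$ independent of $\ell$.

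Next I would apply Proposition~\ref{prop:V-cycle-bound} to obtain $\|X_L\|_{\widetilde{A}_L} \le C_A/(C_A+\nu) =: \sigma < 1$ uniformly in $L$. Plugging $\sigma$ into Theorem~\ref{thm:IACT}, the IACT bound $\tau_{\mathrm{int},F_L} \le (1+\sigma)/(1-\sigma)$ and the transient estimate both follow, with $L$-independent constants, as soon as the hypothesis $\|\widetilde{A}_L^{1/2}\mathrm{Cov}(\theta_L^{(0)})\widetilde{A}_L^{1/2}\|_2 < C_0$ holds, which is assumed in the statement. This gives the analog of \eqref{eqn:tau_int_bounds}.

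For the root-mean-square-error bound, I would invoke Theorem~\ref{thm:integration-error} with the perturbed precision matrix. The crucial observation is that the right-hand side required by MGMC is $f_L = B_L\Gamma^{-1}y$, which by the definition \eqref{eqn:f_ell_definition} satisfies $(f_L)_j = \langle f,\phi_j^L\rangle_H$ with $f = \mathcal{B}^*\Gamma^{-1}y \in H$; since $\mathcal{B}$ is bounded we have $\|f\|_H \le \|\mathcal{B}\|_{H\to\mathbb{R}^\beta}\|\Gamma^{-1}\|_2 \|y\|_2$. Hence the hypothesis of Theorem~\ref{thm:integration-error} that $f_L$ comes from some $f\in H$ is met, and the proof of that theorem (which only uses $L$-uniform contraction in the perturbed energy norm, together with Assumption~\ref{assu:Pell} and the boundedness of $F_L$ in the corresponding dual) applies without modification after replacing $A_\ell$ by $\widetilde{A}_\ell$ throughout. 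The resulting constant depends on $\|f\|_H$ and therefore on $\|y\|_2$, but not on $L$.

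The main obstacle is simply keeping track of the fact that every step of the abstract proofs in Section~\ref{sec:theory_convergence} carries through with $(A_\ell,M_\ell)$ replaced by $(\widetilde{A}_\ell,\widetilde{M}_\ell^{\text{(SGS)}})$; once the two properties of Definitions~\ref{def:smoothing_property}~and~\ref{def:approximation_property} are verified for the perturbed pair with $L$-independent constants, the rest is bookkeeping. No new multigrid analysis is required, because the estimates in Lemma~\ref{lem:approximation-perturbed}, Propositions~\ref{prop:assump}--\ref{prop:approximation-property} and Lemma~\ref{lem:Aell-lb} were assembled precisely to supply the missing approximation ingredient for the low-rank perturbation $B_\ell\Gamma^{-1}B_\ell^\top$.
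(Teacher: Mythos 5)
Your proposal is correct and follows essentially the same route as the paper: verify the smoothing and approximation properties for the pair $(\widetilde{A}_\ell,\widetilde{M}_\ell^{\text{(SGS)}})$ using the results assembled in Section~\ref{subsec:Bayesian}, deduce an $L$-uniform bound $\|X_L\|_{\widetilde{A}_L}\le C_A/(C_A+\nu)<1$ from Proposition~\ref{prop:V-cycle-bound}, and then rerun Theorems~\ref{thm:IACT} and~\ref{thm:integration-error}, which reduces to bounding $\|F_L^\top\widetilde{A}_L^{-1/2}\|_2$ and $\|\widetilde{A}_L^{-1/2}f_L\|_2$ independently of $L$. The only (harmless) deviation is that you bound $\|\widetilde{A}_L^{-1/2}f_L\|_2$ via the functional $f=\mathcal{B}^*\Gamma^{-1}y\in H$ and the coercivity of $a+b$ — the same device the paper uses in the proof of Corollary~\ref{cor:mgmc_optimality} — whereas the paper's proof of this theorem instead performs an explicit computation of $\widetilde{f}_L^\top\widetilde{A}_L^{-1}\widetilde{f}_L$ in terms of $v=(\Gamma+B_L^\top A_L^{-1}B_L)^{-1}y_L$; both yield a constant proportional to $\|y\|_2$.
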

\begin{proof}From the discussions in this section, Proposition~\ref{prop:V-cycle-bound}
	holds for the MGMC for $\mathcal{N}(\widetilde{\mu}_{L},\widetilde{A}_{L}^{-1})$
	and thus $\|\tilde{X}_{L}\|_{\tilde{A}_{L}}\leq q<1$ holds with an
	$L$-independent constant $q$. To prove the statement we follow the
	argument in the proofs of Theorems~\ref{thm:IACT} and~\ref{thm:integration-error}.
	Since the moments of the initial condition is assumed to be bounded,
	it suffices to bound $\|F_{L}^\top\widetilde{A}_{L}^{-1/2}\|_{2}$ and
	$\|\widetilde{A}_{L}^{-1/2}f_{L}\|_{2}$.

	To bound $\|F_{L}^\top \widetilde{A}_{L}^{-1/2}\|_{2}^{2}=F_{L}^\top\widetilde{A}_{L}^{-1}F_{L}=\mathcal{F}(P_{L}\widetilde{A}_{L}^{-1}F_{L})$,
	we notice that $\psi:=P_{L}\widetilde{A}_{L}^{-1}F_{L}$ satisfies
	\[
		a(\psi,v_{L})+b(\psi,v_{L})=\mathcal{F}(v_{L})\quad\text{for all }v_{L}\in V_{L},
	\]
	and thus $\|\psi\|_{V}\leq\|\mathcal{F}\|_{H\to\mathbb{R}}/\sqrt{\lambda_{\mathrm{min}}(\mathcal{A})}$.
	Hence, $\|F_{L}^\top\widetilde{A}_{L}^{-1/2}\|_{2}\leq\|\mathcal{F}\|_{H\to\mathbb{R}}/\sqrt{\lambda_{\mathrm{min}}(\mathcal{A})}$.
	To bound $\|\widetilde{A}_{L}^{-1/2}\widetilde{f}_{L}\|_{2}$,
	we consider
	\begin{align*}
		|\widetilde{f}_{L}^{\top}\widetilde{A}_{L}^{-1}\widetilde{f}_{L}| & =v^{\top}B_{L}^{\top}A_{L}^{-1}B_{L}v+v^{\top}B_{L}^{\top}A_{L}^{-1}B_{L}\Gamma^{-1}B_{L}^{\top}A_{L}^{-1}B_{L}v \\
		                                                                                    & =\|A_{L}^{-1/2}B_{L}v\|_{2}^{2}+\|\Gamma^{-1/2}B_{L}^{\top}A_{L}^{-1}B_{L}v\|_{2}^{2}
	\end{align*}
	with $v:=(\Gamma+B_{L}^{\top}A_{L}^{-1}B_{L})^{-1}y_{L}$.
	Assumption~\ref{assu:Pell} on $P_{L}$ implies
	\[
		\frac{\|A_{L}^{-1/2}(B_{L}v)\|_{2}^{2}}{\|B_{L}v\|_{2}^{2}}\leq\max_{y\neq0}\frac{\|y\|_{2}^{2}}{y^{\top}A_{L}y}=\max_{y\neq0}\frac{\|y\|_{2}^{2}}{\|P_{L}y\|_{V}^{2}}\leq\max_{y\neq0}\frac{\|y\|_{2}^{2}}{\lambda_{\mathrm{min}}(\mathcal{A})\|P_{L}y\|_{H}^{2}}\leq\frac{c_{2}^{2}}{\lambda_{\mathrm{min}}(\mathcal{A})(\Phi(L))^{2}},
	\]
	and thus
	$\|A_{L}^{-1/2}B_{L}v\|_{2}^{2}
		\leq
		\frac{c_{2}^{2}\|B_{L}(\Gamma+B_{L}^{\top}A_{L}^{-1}B_{L})^{-1}y_{L}\|_{2}^{2}}{\lambda_{\mathrm{min}}(\mathcal{A})(\Phi(L))^{2}}
		\leq
		\frac{c_{2}^{2}\|\mathcal{B}\|_{H\to\mathbb{R}^{\beta}}^{2}\|\Gamma^{-1}\|_{2}^{2}}{c_{1}^{2}\lambda_{\mathrm{min}}(\mathcal{A})}\|y_{L}\|_{2}^{2}$.
	Similarly, we have
	\[
		\|\Gamma^{-1/2}B_{L}^{\top}A_{L}^{-1}B_{L}v\|_{2}\leq\|\Gamma^{-1/2}\|_{2}\|\mathcal{B}\|_{H\to\mathbb{R}^{\beta}}^{2}\frac{(\Phi(L))^{2}}{c_{1}^{2}}\frac{c_{2}^{2}}{\lambda_{\mathrm{min}}(\mathcal{A})(\Phi(L))^{2}}\|\Gamma^{-1}\|_{2}\|y_{L}\|_{2}.
	\]

	Hence, we conclude
	\begin{align*}
		\|\widetilde{A}_{L}^{-1/2}\widetilde{f}_{L}\|_{2} & \leq\sqrt{\frac{c_{2}^{2}\|\mathcal{B}\|_{H\to\mathbb{R}^{\beta}}^{2}\|\Gamma^{-1}\|_{2}^{2}}{c_{1}^{2}\lambda_{\mathrm{min}}(\mathcal{A})}\|y_{L}\|_{2}^{2}+\|\Gamma^{-1/2}\|_{2}^{2}\|\mathcal{B}\|_{H\to\mathbb{R}^{\beta}}^{4}\frac{c_{2}^{4}}{c_{1}^{4}}\frac{1}{\lambda_{\mathrm{min}}(\mathcal{A})^{2}}\|\Gamma^{-1}\|_{2}^{2}\|y_{L}\|_{2}^{2}} \\
		                                                           & \leq C\|y_{L}\|_{2},
	\end{align*}
	where $C>0$ depends on the norm $\|\mathcal{B}\|_{H\to\mathbb{R}^{\beta}}$,
	the constants $c_{1}$ and $c_{2}$ in Assumption~\ref{assu:Pell},
	and the smallest eigenvalues of $\mathcal{A}$ and $\Gamma^{-1}$
	but independent of $L$.
\end{proof}

\subsection{Optimality of MGMC}\label{sec:optimality}
The MGMC update in Alg. \ref{alg:mgmc} is efficient in the sense that is specified in Corollary~\ref{cor:mgmc_optimality} below. For this, we consider the Bayesian setting in Section~\ref{subsec:Bayesian} with a symmetric positive definite operator $\widetilde{\mathcal{A}}$ that has a compact inverse (which is not necessarily of trace class) and vectors $f_L$ and $F_L$ that are defined as in \eqref{eqn:f_ell_definition} and \eqref{eqn:F_matrix} respectively; we remind the reader that these two vectors are constructed with the help of the bounded linear operators $\mathcal{B}$ and $\mathcal{F}$ introduced in Section~\ref{subsec:Linear-Bayesian-inverse}.
\begin{cor}\label{cor:mgmc_optimality}
  Assume that the assumptions of Proposition~\ref{prop:V-cycle-bound} are satisfied and that the $n_L\times n_L$ matrix $\widetilde{A}_L$ and vectors $f_L, F_L\in \mathbb{R}^{n_L}$ arise from a Bayesian problem as discussed in Sections~\ref{subsec:Linear-Bayesian-inverse} and \ref{sec:discretisation}, where $n_L$ is the number of unknowns. Assume further that the initial sample $\theta_L^{(0)}$ is drawn from a multivariate normal distribution with $\max\{\|\widetilde{A}_L^{-1/2}\mathbb{E}[\theta_L^{(0)}]\|_2,\|\widetilde{A}_L^{1/2} \mathrm{Cov}(\theta_L^{(0)})\widetilde{A}_L^{1/2}\|_2\} \le C_0$ for a constant $C_0$ that is independent of $n_L$.

  Then, the Multigrid Monte Carlo algorithm is algorithmically optimal in the limit $n_L \rightarrow \infty$ in the sense that generating an (approximately) independent sample incurs a cost which grows no more than linearly in the number of unknowns. The generated samples are drawn from a distribution which is exponentially close to the target distribution, independent of the problem size. More specifically:
	\begin{enumerate}
        \item The cost of generating a new sample in the Markov chain with Alg.~\ref{alg:mgmc} is proportional to the number of unknowns $n_L$, for $n_L \to \infty$.
		\item Subsequent samples in the Markov chain are approximately independent in the sense that the integrated autocorrelation time of $F_L^\top \theta_L^{(m)}$ is bounded by a constant that is independent of $n_L$.
		\item The mean and variance of the quantity $F_L^\top\theta_L^{(m)}\in\mathbb{R}$ differ from the mean and variance of the target distribution by a constant that is independent of $n_L$. Moreover, the convergence is exponential, i.e. for the $m$-th state in the Markov chain we have that
			\begin{align*}
				\left|\mathbb{E}[F_L^\top\theta_{L}^{(m)}]-F_L^\top\widetilde{A}_{L}^{-1}f_{L}\right| & \leq C_{1,1} \exp[-C_2 m];             \\
				\left|\mathrm{Var}(F_L^\top\theta_{L}^{(m)})-F_L^\top\widetilde{A}_{L}^{-1}F_L\right|       & \leq C_{1,2} \exp[-2C_2 m],
			\end{align*}
			where $C_{1,1}$, $C_{1,2}$ and $C_2=\log(C_A+\nu)-\log(C_A)$ are positive constants independent of $n_L$. 
                      \end{enumerate}
            Under Assumption~\ref{assu:WtoH-best-Vell-approx}, the sequence $(F_{L}^{\top}\theta_{L}^{*})_{L\in\mathbb{N}}$
            with $\theta_{L}^{*}\sim\mathcal{N}(\widetilde{A}_{L}^{-1}f_{L},\widetilde{A}_{L}^{-1})$
            converges in distribution to the $\mathbb{R}$-valued Gaussian random variable $(\widetilde{v},\mathcal{F})$ with mean and covariance given in Eqs.~\eqref{eq:generalised-mean-post}--\eqref{eq:generalised-cov-post}
            (but with $\chi=\psi=\mathcal{F}$).

\end{cor}
\noindent\textit{Proof.} See Appendix~\ref{sec:proof_optimality}.\\[1ex]
Note that optimality of the computational cost can not be guaranteed for other samplers. For example a (sparse) Cholesky sampler will incur a cost that typically grows faster than linearly in the number of unknowns, in particular in higher dimensions $d$. The numerical experiments in the next section will demonstrate the superior performance of MGMC for large problems. 

Perhaps not very surprisingly, the notion of optimality of MGMC in Corollary~\ref{cor:mgmc_optimality} is closely related to that of multigrid solvers. For this, consider for example \cite[Theorem 11.16]{Hackbusch.W_2016_IterativeSolutionLarge} and \cite[Remark 11.17]{Hackbusch.W_2016_IterativeSolutionLarge} which shows that under certain conditions on the cycle parameter $\gamma$ the cost per multigrid cycle is bounded by a constant times the problem size $n_L$. Further, in \cite[Section 11.5.5]{Hackbusch.W_2016_IterativeSolutionLarge} the application of a nested multigrid iteration to PDE based problems is considered. It is shown there that this approach can reduce the error to the order of the discretisation error at a cost that can be bounded by a constant times $n_L$.

We conclude by demonstrating that the assumptions of Corollary \ref{cor:mgmc_optimality} can be easily satisfied. For this consider the shifted Laplace operator $\mathcal{A}=-\Delta+\kappa^2\id$ as in Example~\ref{example-Laplace} and pick the bounded functional in \eqref{eqn:qoi_linear} for some square-integrable function $\chi$. The operator $\mathcal{A}$ has a compact inverse (but is not trace class for $d>1$). If we pick $\theta_L^{(0)}=0$ (this is what we do in our numerical experiments in Section~\ref{sec:results_convergence}) then $C_0=0$.

\section{Numerical results}\label{sec:results}
We now present numerical results which confirm the theory in Section \ref{sec:theory}. The focus is demonstrating the grid-independent convergence of the Multigrid MC algorithm. We also compare its performance to two other widely used samplers.
\subsection{Setup}\label{sec:setup}
In all cases we sample from the posterior distribution which arises from conditioning a multivariate normal prior by a number of measurements as discussed in Sections \ref{subsec:Linear-Bayesian-inverse} and \ref{sec:discretisation}. More explicitly, this target distribution is
\begin{equation}
    \mathcal{N}(\widetilde{A}_L^{-1}f_L,\widetilde{A}_L^{-1}) \propto \exp\big[-\frac{1}{2}\theta_L^\top \widetilde{A}_L\theta_L + f_L^\top \theta_L \big]
    \qquad\text{with $\widetilde{A}_L = A_L + B_L\Gamma^{-1} B_L^\top$}\label{eqn:target_distribution}
\end{equation}
where $\widetilde{A}_L$ and $B_L$ are specified in the next two sections.
\subsubsection{Prior distribution}\label{sec:prior}
As the prior, we consider $d$-dimensional random Gaussian processes on the unit cube \rev{$D=[0,1]^d$} with mean zero and the following two covariance operators as concrete examples of the operator $\mathcal{A}$ in Section~\ref{sec:sampling_hilbert_space}:
\begin{equation}
    \mathcal{A}^{\mathrm{(SL)}} = -\Delta + \kappa^2\id\qquad \text{(Shifted Laplace)}
    \label{eqn:shifted_laplace}
\end{equation}
with the homogeneous Dirichlet boundary condition $u(x)=0$ for $x\in\partial \rev{D}$ and
\begin{equation}
    \mathcal{A}^{\mathrm{(SSL)}} = (-\Delta + \kappa^2\id)^2=\Delta^2-2\kappa^2\Delta + \kappa^4\id\qquad \text{(Squared Shifted Laplace)}
    \label{eqn:squared_shifted_laplace}
\end{equation}
with $u(x)=\partial{u}/\partial n(x)=0$ for $x\in\partial \rev{D}$. In the numerical experiments we fix the correlation length $\kappa^{-1}=0.1$ in two dimensions and set $\kappa^{-1}=1.0$ in three dimensions unless stated otherwise. Observe that if the operators were defined in the domain $D=\mathbb{R}^d$ instead of the unit cube, then the covariance function of the shifted Laplace operator $\mathcal{A}^{\mathrm{(SL)}}$ in \eqref{eqn:shifted_laplace} would be of Mat\'ern class $\nu=-\frac{1}{2}$ in $d=3$ dimensions and $\nu=0$ in $d=2$ dimensions; for the squared shifted Laplace operator $\mathcal{A}^{(\mathrm{SSL})}$ in \eqref{eqn:squared_shifted_laplace} the Mat\'ern class would be $\nu=1$. For short distances $||x-y||_2\ll \kappa^{-1}$ the covariance function of the problem on $\mathbb{R}^d$ satisfies
\begin{equation}
    \mathrm{Cov}[\phi(x),\phi(y)] \propto \begin{cases}
        z^\nu K_\nu(\kappa r) \\
        K_0(z)                \\
        z^{-\frac{1}{2}}K_{\frac{1}{2}}(z)
    \end{cases}
    =
    \begin{cases}
        \mathcal{O}(1)        & \text{for $\nu>0$}            \\
        \mathcal{O}(-\log(z)) & \text{for $\nu=0$}            \\
        \mathcal{O}(z^{-1})   & \text{for $\nu=-\frac{1}{2}$}
    \end{cases}
    \qquad\text{with $z:=\kappa||x-y||_2\ll 1$.}
\end{equation}
Here $K_\alpha(z)$ is the modified Bessel function of the second kind. It should be stressed that for $\nu=0$ and $\nu=-\frac{1}{2}$ the fields are very rough and sampling them is considered to be numerically challenging.

Since sampling from an infinite dimensional prior is impossible, by dividing the domain  into $n_L^d$ quadrilateral cells of size $h_L^d$ a grid is constructed and the second order operator in \eqref{eqn:shifted_laplace} is discretised with both a simple finite difference discretisation (FD) and with a lowest order conforming piecewise (multi-)linear FE discretisation on this grid in $d=2,3$ dimensions. \rev{While the theory in Section \ref{sec:theory} was derived in the FE framework, we include the FD discretisation since this is often used in practice and to demonstrate that our results also apply in this setting. For the fourth order operator $\mathcal{A}^{\mathrm{(SSL)}}$ in \eqref{eqn:squared_shifted_laplace} we only consider the case $d=2$ and use the 13-point FD discretisation that is written down for example in \cite{bjorstad1983fast}. Note that the FE discretisation of the biharmonic operator is significantly more involved and requires bespoke multigrid patch-smoothers (see e.g. \cite{witte2024tensor}), which is why we do not consider it here.} The homogeneous Dirichlet boundary condition $u(x)=0$ is enforced by implicitly setting the solution on the boundary to zero and only storing the $(n_L-1)^d$ unknowns associated with the interior vertices. For the operator $\mathcal{A}^{\mathrm{(SSL)}}$ we enforce the second boundary condition $\partial u/\partial n(x)=0$ as described in \cite{bjorstad1983fast}.

\rev{The convergence theory in Section~\ref{sec:theory} can in fact be easily adapted to include general boundary conditions, provided the corresponding multigrid solver converges for these boundary conditions.}
\subsubsection{Posterior}\label{sec:posterior}
To construct a posterior, we assume that the observations are obtained by averaging the field $\phi$ over small balls $B_R(\mathring{x}_j)$ of radius $R$ centred at the locations $\mathring{x}_j$ for $j=1,2,\dots,\beta$; we set $\beta=8$ in $d=2$ dimensions and $\beta=32$ in $d=3$ dimensions. The observation operator $\mathcal{B}$ in \eqref{eq:finitedim-obs} can then be written as
\begin{equation}
    \left(\mathcal{B}(\phi)\right)_j = \frac{1}{|B_R(\mathring{x}_j)|} \int_{B_R(\mathring{x}_j)} \phi(x)\;d^dx.
    \label{eqn:observations}
\end{equation}
The individual observations are assumed to be uncorrelated with variance \rev{$\widehat{\sigma}\le \sigma_j^2\le 2\widehat{\sigma}$ with $\widehat{\sigma}=10^{-6}$}, i.e. $\Gamma=\text{diag}(\sigma_1^2,\sigma_2^2,\dots,\sigma_\beta^2)$ is a diagonal matrix. \rev{Results for other values of $\widehat{\sigma}$ are shown in Appendix~\ref{subsec:additional_results}}. In our numerical experiments we condition the Gaussian process on some fixed observed values $1\le \mathring{y}_j\le 4$ of $\left(\mathcal{B}(\phi)\right)_j$  for $j=1,2,\dots,\beta$.
Note that for observations of the form written down in \eqref{eqn:observations} the number of non-zero entries in the corresponding low rank update matrix $B_L\Gamma^{-1} B_L^\top$ is much smaller than the total number of matrix entries $n_L^2$. As a result, the posterior precision matrix $\widetilde{A}_L$ will be sparse.
\subsubsection{Samplers}\label{sec:samplers}
We consider the following three samplers for drawing from the target distribution in \eqref{eqn:target_distribution}.
\paragraph{MGMC sampler.}
To generate a new state $\theta_L^{(m+1)}$ from the current state $\theta_L^{(m)}$ we use the multigrid Monte Carlo update in Alg. \ref{alg:mgmc} with $\nu_1=1$ forward Gibbs-sweeps and $\nu_2=1$ backward Gibbs sweeps on each level, where each sweep consists of an iteration over the entire lattice. When sampling from the posterior we always use the Gibbs-sampler with low-rank updates as written down in Alg. \ref{alg:low_rank_gibbs}. The number of levels is chosen such that the coarsest lattice consists of $n_0^d$ cells where $n_0$ is either an odd number (for example $n_0=3$ if $n_L=48=2^4\cdot 3$) or $n_0=2$ and hence there is only a very small number of interior grid points on this level. On the very coarsest level a few iterations of the symmetric Gibbs sampler with low-rank update are applied.
\jg{In FE methods, the nesting of the function spaces $V_{\ell-1}\subset V_{\ell}$ induces the prolongation, which is equivalent to a multilinear interpolation between the nodal degrees of freedom. We use the same multilinear intergrid operator for the finite difference discretisation.}
For the prior covariance in \eqref{eqn:shifted_laplace} we employ a V-cycle but a W-cycle is used for the prior defined by \eqref{eqn:squared_shifted_laplace}.
\paragraph{Gibbs sampler.}
For comparison, we also consider a standard Gibbs-sampler. Given the current state $\theta_L^{(m)}$ in the Markov chain, a new state $\theta_L^{(m+1)}$ is obtained by $\nu_G=1$ symmetric Gibbs-sweeps, where each symmetric sweep consists of a forward iteration over the entire lattice followed by an analogous backward iteration as defined in Alg. \ref{alg:SGS}. The number of symmetric sweeps $\nu_G=\frac{1}{2}(\nu_1+\nu_2)$ is identical to half the total number of Gibbs sweeps on the finest level of the MGMC sampler. In other words, if the cost for residual calculation and prolongation/restriction are ignored, the MGMC sampler spends approximately the same time on the finest level as the standalone Gibbs sampler.
\paragraph{Cholesky sampler.}
Since this is a widely used method to create i.i.d. samples, we also compare to a sampler based on the Cholesky factorisation. Having computed the factorisation
\begin{equation}
    \widetilde{A}_L = \mathfrak{P}_L^\top U^\top_L U_L \mathfrak{P}_L
\end{equation}
where $U_L$ is an upper triangular matrix and $\mathfrak{P}_L$ is a suitable permutation, we can draw i.i.d. samples $\theta_L^{(m)}$ from the posterior by drawing
an $n_L$ dimensional sample $\xi \sim N(0,\text{Id})$ and solving
\begin{equation}
    U_L \mathfrak{P}_L \theta_L^{(m)} = \xi + g_L\label{eqn:Cholesky_solve}
\end{equation}
for $\theta_L^{(m)}$ where $g_L$ is the solution of the triangular system $U_L^\top g_L = \mathfrak{P}_L f_L$. If $\widetilde{A}_L$ is sparse, the permutation $\mathfrak{P}_L$ can be chosen such as to minimise the number of non-zero entries in $U_L$ which is crucial to make the triangular solve in \eqref{eqn:Cholesky_solve} efficient. In our implemenation we use the Simplicial Cholesky factorisation from the widely used Eigen library \cite{eigenweb}. We find that for the problems considered here this gives slightly better performance than the implementation in the CholMod package \cite{chen2008algorithm}.
\subsubsection{Hardware}
All numerical results were obtained with a sequential C++ implementation developed by the authors, which is freely available at \url{https://github.com/eikehmueller/MultigridMC}. The runs were carried out on an Intel Xeon Platinum 8168 (Skylake) CPU with a clock-speed of 2.70GHz.
\subsection{Performance}\label{sec:results_performance}
We start by empirically confirming the cost analysis in Section \ref{subsec:Cost-analysis} and investigating the grid-independence of the IACT stated in Theorem~\ref{thm:IACT_posterior_robustness}. For this, we measure the time for producing a single Monte Carlo update (with the Gibbs- and Multigrid MC algorithm) and for drawing an independent sample with the Cholesky sampler. Tab.~\ref{tab:performance} shows these results in $d=2$ and $d=3$ dimensions for different priors, we consider both the shifted Laplace operator $\mathcal{A}^{(\mathrm{SL})} = -\Delta+\kappa^2\id$ in \eqref{eqn:shifted_laplace} and its square $\mathcal{A}^{(\mathrm{SSL})} = (-\Delta+\kappa^2\id)^2$ in \eqref{eqn:squared_shifted_laplace}. For the Gibbs and MGMC samplers we also list the IACT (which is 1 for Cholesky). As expected from \eqref{eqn:smoother_cost} and Theorem \ref{thm:mgmc_cost}, the time per sample grows approximately in proportion to the problem size for Multigrid MC and the Gibbs sampler. For the Cholesky sampler the growth in runtime is more rapid, in particular in $d=3$ dimensions: going from the $48\times48\times48$ to the $64\times64\times64$ lattice, the number of unknowns increases by a factor $(63/47)^3\approx 2.4$ but the cost of the Cholesky sampler is $3.8\times$ larger. To account for the fact that the samples in the Markov chain are correlated while the Cholesky sampler produces independent samples, we multiply the time per sample with the IACT for the Gibbs- and Multigrid MC samplers. The resulting \textit{time per independent sample} is shown in the final two columns of Tab.~\ref{tab:performance}, and we use this number for a fair comparison with the Cholesky sampler. One application of the Gibbs sampler is slightly cheaper than a Multigrid MC update for the shifted Laplace operator $\mathcal{A}^{(\mathrm{SL})}$, whereas for $\mathcal{A}^{(\mathrm{SSL})}$ the difference if more pronounced and MGMC is more than twice as expensive as Gibbs. As predicted by Theorem~\ref{thm:IACT_posterior_robustness}, the IACT for MGMC is roughly independent of the resolution. For the shifted Laplace operator $\mathcal{A}^{(\mathrm{SL})}$ it lies 1.1 and 1.4, whereas it is between 2.2 and 4.0 for $\mathcal{A}^{(\mathrm{SSL})}$. This is in stark contrast to the Gibbs sampler, where the IACT grows rapidly and can in fact not be reliably estimated on the finer lattices. Despite being cheaper if a single update is considered, the large IACT means that the Gibbs sampler is not competitive overall: on the finest lattices we considered, producing an independent sample is several orders of magnitude more expensive than with the other two methods. Comparing Cholesky and MGMC, the latter is roughly a factor two slower in $d=2$ dimensions for the shifted Laplace operator $\mathcal{A}^{(\mathrm{SL})}$, for $\mathcal{A}^{(\mathrm{SSL})}$ the difference is even more pronounced with Cholesky being about five times as fast. However, MGMC is significantly faster for the shifted Laplace operator $\mathcal{A}^{(\mathrm{SL})}$ in $d=3$ dimensions, in particular for finer resolutions. This can be attributed to the fact that -- in contrast to Cholesky -- one Multigrid MC update incurs a cost that can be bounded linearly in the number of unknowns and to the grid-independent IACT of Multigrid MC (see also Fig. \ref{fig:autocorrelation}, right).
\renewcommand{\topfraction}{.8}
\renewcommand{\floatpagefraction}{.8}
\begin{table}
    \begin{center}
    \begin{tabular}{|c|rr|rrrrr|}\hline
        \multicolumn{8}{|c|}{\Gape[1ex][1ex]{shifted Laplace $\mathcal{A}^{(\mathrm{SL})}=-\Delta+\kappa^2 I$ in $d=2$ dimensions, FEM discretisation}}\\\hline
         \multirow{2}{*}{grid size}  & \multicolumn{2}{c|}{IACT} & \multicolumn{2}{c}{time / sample } & & \multicolumn{2}{c|}{time / indep. sample}\\
         & Gibbs & MGMC & Gibbs & MGMC & Chol.  & Gibbs & MGMC \\\hline\hline
         $ 32\times 32$  & $3.1 \pm 0.4$ & $1.12 \pm 0.12$ & $0.15$ & $0.22$ & $0.08$ & $0.46$ & $0.25$\\
         $ 64\times 64$  & $10.5 \pm 2.1$ & $1.13 \pm 0.12$ & $0.62$ & $0.91$ & $0.39$ & $6.47$ & $1.04$\\
         $128\times128$  & $47.3 \pm 16.4$ & $1.15 \pm 0.13$ & $2.53$ & $3.71$ & $1.79$ & $119.93$ & $4.28$\\
         $256\times256$  & $95.1 \pm 42.2$ & $1.18 \pm 0.14$ & $10.18$ & $15.17$ & $9.14$ & $968.49$ & $17.96$\\
         $512\times512$  & $264.2 \pm 169.1$ & $1.21 \pm 0.15$ & $43.03$ & $66.69$ & $47.23$ & $11370.84$ & $80.53$\\
        \hline\multicolumn{8}{c}{}\\[-1.5ex]\hline
        \multicolumn{8}{|c|}{\Gape[1ex][1ex]{shifted Laplace $\mathcal{A}^{(\mathrm{SL})}=-\Delta+\kappa^2 I$ in $d=3$ dimensions, FD discretisation}}\\\hline
         \multirow{2}{*}{grid size}  & \multicolumn{2}{c|}{IACT} & \multicolumn{2}{c}{time / sample } & & \multicolumn{2}{c|}{time / indep. sample}\\
         & Gibbs & MGMC & Gibbs & MGMC & Chol.  & Gibbs & MGMC \\\hline\hline
         $ 16\times 16\times 16$  & $2.6 \pm 0.3$ & $1.32 \pm 0.19$ & $0.57$ & $0.74$ & $0.48$ & $1.47$ & $0.98$\\
         $ 32\times 32\times 32$  & $4.7 \pm 0.7$ & $1.20 \pm 0.14$ & $5.20$ & $7.62$ & $12.58$ & $24.60$ & $9.17$\\
         $ 48\times 48\times 48$  & $10.3 \pm 2.0$ & $1.26 \pm 0.17$ & $24.87$ & $30.97$ & $82.81$ & $257.20$ & $39.17$\\
         $ 64\times 64\times 64$  & $20.6 \pm 5.3$ & $1.28 \pm 0.17$ & $50.11$ & $67.89$ & $317.23$ & $1030.82$ & $87.00$\\
        \hline\multicolumn{8}{c}{}\\[-1.5ex]\hline
        \multicolumn{8}{|c|}{\Gape[1ex][1ex]{squared shifted Laplace $\mathcal{A}^{(\mathrm{SSL})}=(-\Delta+\kappa^2 I)^2$ in $d=2$ dimensions, FD discretisation}}\\\hline
         \multirow{2}{*}{grid size}  & \multicolumn{2}{c|}{IACT} & \multicolumn{2}{c}{time / sample } & & \multicolumn{2}{c|}{time / indep. sample}\\
         & Gibbs & MGMC & Gibbs & MGMC & Chol.  & Gibbs & MGMC \\\hline\hline
         $ 32\times 32$  & $22.4 \pm 5.9$ & $2.22 \pm 0.26$ & $0.15$ & $0.36$ & $0.10$ & $3.45$ & $0.81$\\
         $ 64\times 64$  & $3401.7 \pm 4757.4$ & $3.35 \pm 0.43$ & $0.64$ & $1.50$ & $0.52$ & $2167.88$ & $5.02$\\
         $128\times128$  & $1976.3 \pm 2448.2$ & $2.69 \pm 0.35$ & $2.61$ & $6.15$ & $2.49$ & $5148.96$ & $16.55$\\
         $256\times256$  & $2573.9 \pm 3415.3$ & $3.23 \pm 0.40$ & $10.59$ & $25.43$ & $16.13$ & $27265.49$ & $82.01$\\
         $512\times512$  & $1682.9 \pm 1991.1$ & $3.94 \pm 0.57$ & $45.09$ & $110.39$ & $81.68$ & $75871.91$ & $435.17$\\
        \hline
        \end{tabular}

   \caption{IACT, time per sample and time per independent sample for different problem sizes. Results are shown for the posterior with sparse measurements and different priors in $d=2,3$ dimensions. All times are reported in milliseconds.}
        \label{tab:performance}
\end{center}
\end{table}
\subsection{Grid independent convergence}
As shown in Section \ref{sec:theory}, one of the key advantages of the MGMC algorithm is that it shows grid-independent convergence as $h_L\rightarrow 0$. This is in contrast to a naive Gibbs sampler, for which the integrated autocorrelation time and the root mean squared error at fixed sample size grow if the resolution increases. We now demonstrate this by computing several performance indicators of these two samplers. For all results in this section the prior is the FE discretisation of the shifted Laplace operator $\mathcal{A}^{\mathrm{(SL)}}=-\Delta+\kappa^2\id$ defined in \eqref{eqn:shifted_laplace} in $d=2$ dimensions.
\subsubsection{Convergence to the target distribution}\label{sec:results_convergence}
To demonstrate the convergence of the distribution of samples in the Markov chain to the target distribution as described by Theorem \ref{thm:moments-conv}, we consider a Markov chain $\theta_L^{(0)},\theta_L^{(1)},\theta_L^{(2)},\dots$ where $\theta_L^{(0)}\sim \pi_0$ is drawn from some given initial distribution $\pi_0$ which we assume to be multivariate normal. For each $\theta_L^{(m)}$ we define the measurement
\begin{equation}
    z_L^{(m)} = F(\theta_L^{(m)}) := \frac{1}{|B_R(x_{\text{centre}})|} \int_{B_R(x_{\text{centre}})} (P_L\theta_L^{(m)})(x)\;d^dx\label{eqn:measurement_definition}
\end{equation}
where the vector-space isomorphism $P_L:\mathbb{R}^{n_L}\rightarrow V_L$ is defined in Section \ref{sec:discretisation} and $x_{\text{centre}}$ is the centre of the domain $\rev{D}$; the radius $R=0.025$ is the same as for the observations that define the posterior distribution (see \eqref{eqn:observations}). Note that $F$ is a linear operator of the form given in \eqref{eqn:F_matrix} and as a consequence for each step $k$ in the Markov chain $z_L^{(m)}$ is a normal random variable with some mean $\mu_L^{(m)}$ and variance $(\sigma_L^{(m)})^2$. Under suitable conditions we have that $\mu_L^{(m)}\rightarrow \mu_{L}$ and $\sigma_L^{(m)}\rightarrow \sigma_L$ as $k\rightarrow\infty$ (see Theorem \ref{thm:equivalence}). To quantify the rate of convergence we consider the ratios
\begin{xalignat}{2}
    R^{(m)}_L &:= \left|\frac{\mu_L^{(m)}-\mu_L}{\mu_L^{(0)}-\mu_L}\right|, &
    Z^{(m)}_L &:= \left|\frac{(\sigma_L^{(m)})^2-\sigma_L^2}{(\sigma_L^{(0)})^2-\sigma_L^2}\right|.\label{eqn:convergence_rate_definition}
\end{xalignat}
In these expressions $\mu_L^{(m)}$ and $(\sigma_L^{(m)})^2$ are estimated by creating $n_{\text{samples}}$ independent Markov chains $\{\theta_{L;j}^{(m)}\}_{j=1}^{n_{\text{samples}}}$ all starting from $\theta_{L;j}^{(0)}=0$ and computing the sample mean and variance at step $k$ in the chain:
\begin{equation}
    \begin{aligned}
        \widehat{\mu}_L^{(m)}        & := \frac{1}{n_{\text{samples}}}\sum_{j=1}^{n_{\text{samples}}} z_{L;j}^{(m)}\approx \mathbb{E}[z_L^{(m)}] = \mu_L^{(m)}
        \qquad\text{with $z_{L;j}^{(m)}=F(\theta_{L;j}^{(m)})$}                                                                                                                                      \\
        (\widehat{\sigma}_L^{(m)})^2 & := \frac{1}{n_{\text{samples}}-1}\sum_{j=1}^{n_{\text{samples}}} (z_{L;j}^{(m)}-\widehat{\mu}_L^{(m)})^2 \approx \mathrm{Var}[z_L^{(m)}] = (\sigma_L^{(m)})^2 \\
    \end{aligned}
\end{equation}
Replacing $\mu_L^{(m)}\mapsto\widehat{\mu}_L^{(m)}$, $\sigma_L^{(m)}\mapsto\widehat{\sigma}_L^{(m)}$ in \eqref{eqn:convergence_rate_definition}, we can compute the estimators $\widehat{R}_L^{(m)}$, $\widehat{Z}_L^{(m)}$ with associated statistical errors that arise from the finite sample size. Fig. \ref{fig:convergence_mean_variance} shows a plot of these estimators for the first 16 steps in the Markov chain. Note that for both samplers the variance converges faster than the mean, which is consistent with Theorem \ref{thm:moments-conv}. For the Gibbs sampler, convergence is extremely slow. In contrast, for MGMC the estimated mean $\mu_L^{(m)}$ and covariance $(\sigma_L^{(m)})^2$ can not be distinguished from the mean $\mu_L$ and variance $\sigma_L^2$ of the target distribution within statistical errors after a small number of steps.
\begin{figure}
    \begin{center}
        \includegraphics[width=1.0\linewidth]{\figdir/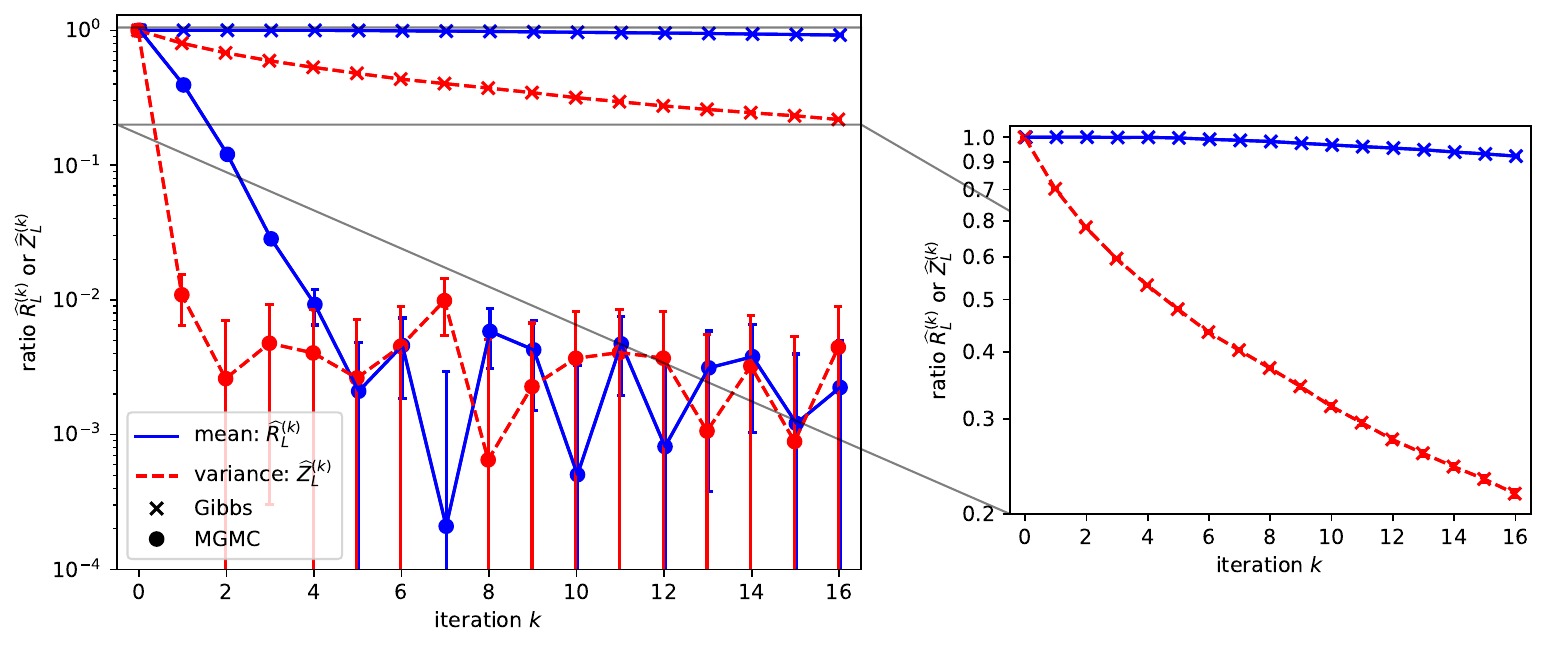}
    \end{center}
    \caption{Convergence of the mean $\mu_L^{(m)}$ and variance $(\sigma_L^{(m)})^2$ for the Gibbs sampler \rev{(crosses)} and for the MGMC sampler \rev{(filled circles)}. The plot shows the estimators $\widehat{R}_L^{(m)}$ \rev{(blue solid lines)} and $\widehat{Z}_L^{(m)}$ \rev{(red dashed lines)} for the quantities defined in \eqref{eqn:convergence_rate_definition}. The grid is of size $128\times 128$ and the number of independent Markov chains is $n_{\text{samples}}=100,000$.}
    \label{fig:convergence_mean_variance}
\end{figure}
To demonstrate grid-independent convergence we define the convergence rates
\begin{xalignat}{2}
    \widehat{\rho}_L &:= \left(\widehat{R}_L^{(m_*)}\right)^{1/m_*}, &
    \widehat{\zeta}_L &:= \left(\widehat{Z}_L^{(m_*)}\right)^{1/m_*}\label{eqn:convergence_rate_definitions}
\end{xalignat}
for some step $m_*$ which we set to be as large as possible but such that the statistical error on $\widehat{R}_L^{(m_*)}$ and $\widehat{Z}_L^{(m_*)}$ does not exceed $10\%$. If this is not possible (for example, because the convergence is extremely rapid) we set $m_*=1$.
\begin{figure}
    \begin{center}
        \includegraphics[width=0.5\linewidth]{\figdir/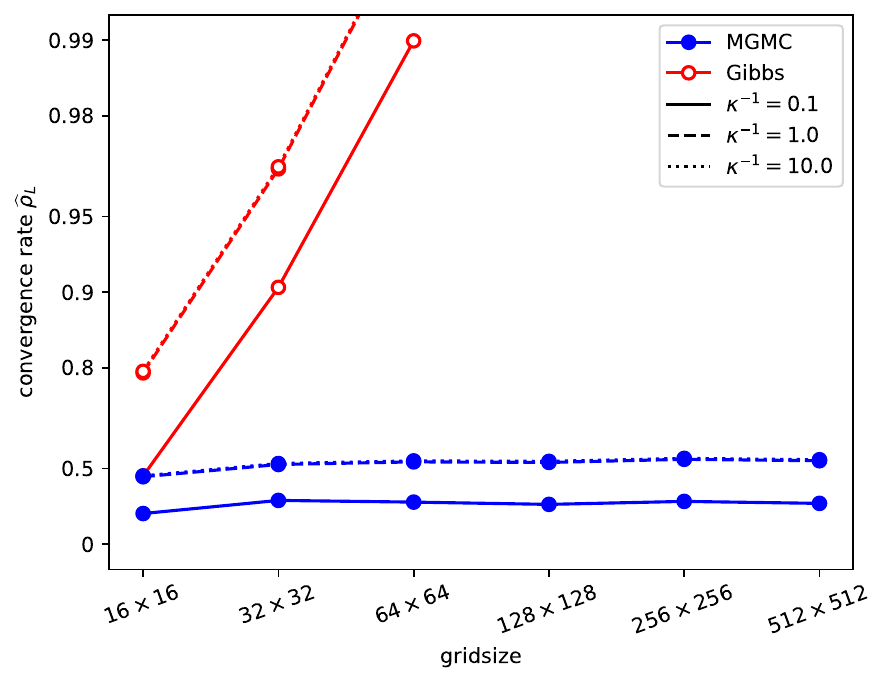}\hfill
        \includegraphics[width=0.5\linewidth]{\figdir/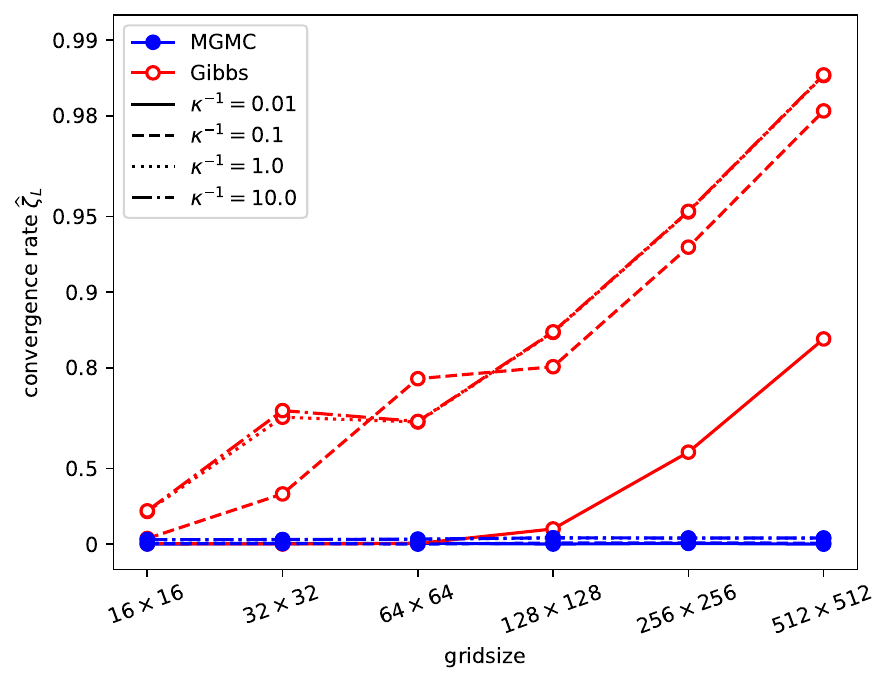}
    \end{center}
    \caption{Dependence of the convergence rates $\widehat{\rho}_L$ (left) and $\widehat{\zeta}_L$ (right) defined in \eqref{eqn:convergence_rate_definitions} on the resolution for different correlation lengths $\kappa^{-1}$. For $\kappa^{-1}=0.01$ the mean $\mu_L^{(m)}$ converged so rapidly for both samplers that the convergence rate $\widehat{\rho}_L$ could not be measured reliably with the given statistics. \rev{While one would expect to see $2\times 3$ different curves for $\widehat{\rho}_L$ and $2\times 4$ curves for $\widehat{\zeta}_L$, the results for $\kappa=1.0$ and $\kappa=10.0$ lie on top of each other in the left figure and they almost overlap in the right figure, in particular for the MGMC sampler.}}
    \label{fig:robustness_convergence_gridindependence}
\end{figure}
Fig. \ref{fig:robustness_convergence_gridindependence} shows how the convergence rates $\widehat{\rho}_L$ and $\widehat{\zeta}_L$ depend on the resolution. Results are shown for different correlation lengths $\kappa^{-1}$. The plot confirms that MGMC shows grid-independent convergence and is robust as the correlation length increases. This should be compared to the Gibbs sampler, for which the convergence rate approaches 1 as the resolution increases. As expected, the Gibbs sampler is also performing worse for larger correlation lengths, which can be explained by the fact that it only carries out local updates. Looking at the results for $\kappa^{-1}=0.01$, the convergence of the Gibbs deteriorates as soon as the correlation length exceeds the grid spacing, i.e. for $\kappa^{-1}\gtrsim h_L$.

\rev{We conclude this section by observing that although in exact arithmetic the Cholesky sampler produces states from the target distribution, in finite precision floating point arithmetic the ill-conditioning of the precision matrix leads to the accumulation of rounding errors. This implies that the generated samples are drawn from the wrong distribution for very large problems. However, for the problem sizes considered in our numerical experiments we did not find this to be an issue.}
\subsubsection{Autocorrelations}\label{sec:results_autocorrelation}
While the Cholesky sampler produces inherently independent samples, the states $\theta_L^{(m)}$ in the Markov chains generated by the Gibbs sampler and the MGMC update in Alg. \ref{alg:mgmc} are inherently correlated. To explore this, we discard the first $n_{\text{\jg{burn-in}}}=1000$ samples from the Markov chain to account for burn-in of the chain so that to a good approximation $\theta_L^{(0)}\sim \mathcal{N}(\widetilde{A}_L^{-1}f_L,\widetilde{A}_L^{-1})$. This will result in a time series $z_L^{(0)},z_L^{(1)},z_L^{(2)},\dots$ with the observable $z_L^{(m)}$ defined as in \eqref{eqn:measurement_definition}. We compute the lagged autocorrelation function $\widehat{\Gamma}_z(t)/\widehat{\Gamma}_z(0)$ defined by
\begin{equation}
    \begin{aligned}
        \quad \widehat{\Gamma}_z(t) & := \frac{1}{n_{\text{steps}}-t}\sum_{m=0}^{n_{\text{steps}-1-t}} (z_L^{(m)}-\widehat{z}_L) (z_L^{(m+t)}-\widehat{z}_L)                           \\
                                    & \approx \mathbb{E}[(z_L^{(m^*)}-\mathbb{E}[z_L]) (z_L^{(m^*+t)}-\mathbb{E}[z_L])]=:\Gamma_z(t)\qquad\text{for some arbitrary $k^*\in\mathbb{N}$}
    \end{aligned}
\end{equation}
where we used the empirical sample mean
\begin{equation}
    \widehat{z}_L := \frac{1}{n_{\text{steps}}}\sum_{m=0}^{n_{\text{steps}}-1} z_L^{(m)} \approx \mathbb{E}[z_L].\label{eqn:sample_mean}
\end{equation}
In all numerical experiments in this section we used $n_{\text{steps}}=10,000$.  Fig. \ref{fig:autocorrelation} (left) shows the lagged autocorrelation function $\widehat{\Gamma}_z(t)/\widehat{\Gamma}_z(0)$ for different lattice sizes. Visually, it is already evident from this figure that subsequent samples generated by the Gibbs samplers are highly correlated, and this correlation grows as the grid resolution increases.
\begin{figure}
    \begin{center}
        \includegraphics[width=0.48\linewidth]{\figdir/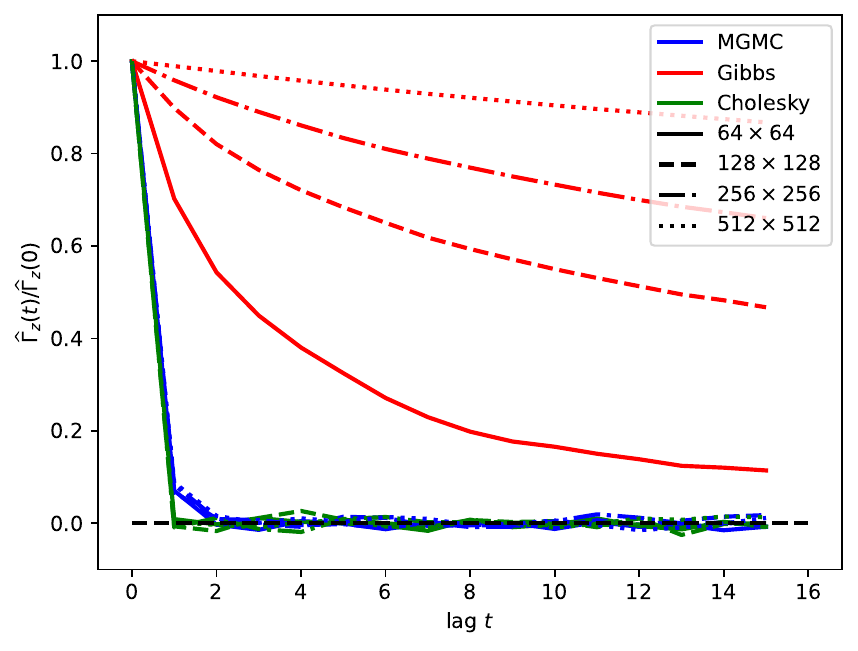}
        \includegraphics[width=0.48\linewidth]{\figdir/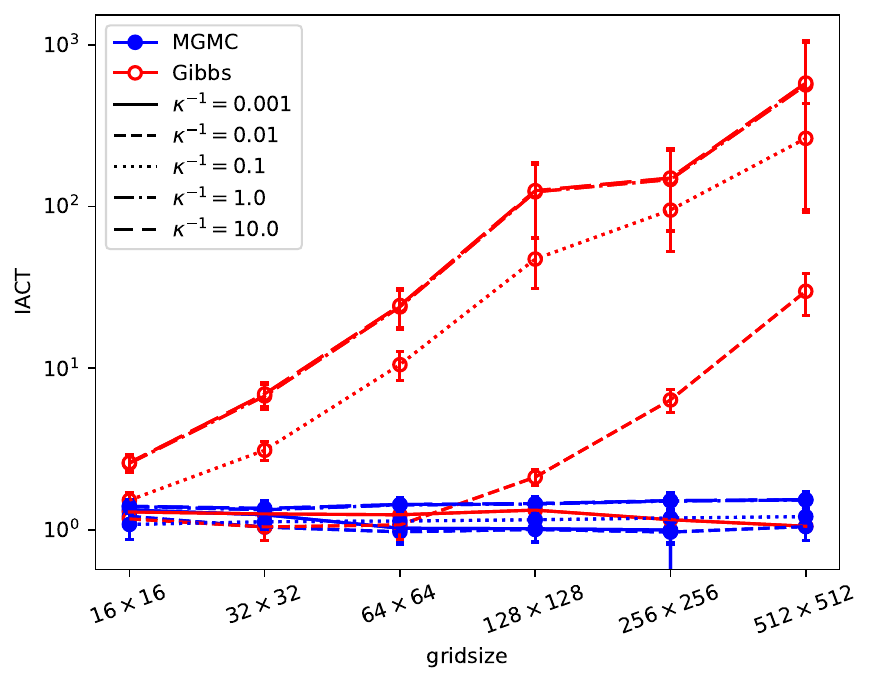}
        \caption{Lagged autocorrelation function $\widehat{\Gamma}_z(t)/\widehat{\Gamma}_z(0)$ for fixed correlation length $\kappa^{-1}=0.1$ (left) and IACT for different $\kappa^{-1}$ (right).}
        \label{fig:autocorrelation}
    \end{center}
\end{figure}
To further quantify autocorrelations in the Markov chain we estimate the integrated autocorrelation time (IACT) defined in \eqref{eqn:tau_int}. Computing the IACT is notoriously difficult if the autocorrelations are strong. Here we use the method in \cite{wolff2004monte} to estimate the IACT, i.e. we compute the estimator
\begin{equation}
    \widehat{\tau}_{\text{int},F_L} := \tau_{\text{int},F_L}(W) := 1+2\sum_{t=1}^{W} \frac{\widehat{\Gamma}_z(t)}{\widehat{\Gamma}_z(0)}
\end{equation}
where the optimal choice of window size $W$ is described in \cite[Section 3.3]{wolff2004monte}.
As stated in Theorem \ref{thm:IACT}, the IACT for MGMC is grid-independent. As in Fig. \ref{fig:robustness_convergence_gridindependence} we also explore the dependence of IACT on the correlation length $\kappa^{-1}$. Fig. \ref{fig:autocorrelation} (right) shows the IACT for both the MGMC sampler and the standard Gibbs sampler as the resolution increases and for a range of different $\kappa^{-1}$.

The plot in Fig. \ref{fig:autocorrelation} (right) demonstrates that the MGMC sampler is robust with respect to both the resolution and the correlation length, as predicted by the theory. This is not the case for the Gibbs sampler, for which the IACT grows strongly as the resolution increases and as the correlation length grows. For the Gibbs sampler the IACT is only small as long as the correlation length does not exceed the grid spacing, i.e. $\kappa^{-1}\lesssim h_L$; consider the curves for $\kappa^{-1}=0.001$ and $\kappa^{-1}=0.01$ in Fig. \ref{fig:autocorrelation} (right).
\subsubsection{Root mean squared error}
To numerically verify the bound in Theorem \ref{thm:integration-error} we compute the root mean squared error (RMSE) of the estimator
\begin{equation}
    \widehat{z}_{L}^{(M)} := \frac{1}{M}\sum_{m=0}^{M-1} z_{L}^{(m)}         \label{eqn:z_L_estimator}
\end{equation}
for different values of $M$ where $z_L^{(m)}$ is obtained by evaluating the function in \eqref{eqn:measurement_definition} on the states of the Markov chain $\theta_L^{(0)}, \theta_L^{(1)},\theta_L^{(2)},\dots$. We proceed as in Section \ref{sec:results_convergence} and create $n_{\mathrm{samples}}=100$ independent Markov chains $\{\theta_{L;j}^{(m)}\}_{j=1}^{n_{\text{samples}}}$ all starting from $\theta_{L;j}^{(0)}=0$ to obtain the following estimator for the quantity on the left-hand side of \eqref{eqn:integration_error}:
\begin{equation}
    \widehat{\Delta}_L^{(M)}          :=\sqrt{\frac{1}{n_{\mathrm{samples}}} \sum_{j=1}^{n_{\mathrm{samples}}} \left(\mu_L - \widehat{z}_{L;j}^{(M)}\right)^2}
    \approx \sqrt{\mathbb{E}\left(\mu_L- \widehat{z}_L^{(M)}\right)^2}
    .\label{eqn:RMSE_estimator}
\end{equation}
Here $\widehat{z}_{L;j}^{(M)}$ is the realisation of \eqref{eqn:z_L_estimator} for the $j$-th Markov chain $\theta_{L;j}^{(0)},\theta_{L;j}^{(1)},\theta_{L;j}^{(2)},\dots$.

Fig. \ref{fig:RMSE} shows $\widehat{\Delta}_L^{(M)}$ for a range of grid spacings. For all samplers the expected asymptotic bound $\widehat{\Delta}_L^{(M)} < C_L M^{-1/2}$ (compare to \eqref{eqn:integration_error}) can be observed empirically. However, the constant $C_L$ is only grid independent for the Cholesky- sampler and the Multigrid MC algorithm, both of which have a comparable RMSE. Furthermore, for all considered resolutions the RMSE is much larger for the Gibbs sampler, and this effect becomes more pronounced on the larger grids.
\begin{figure}[t]
    \begin{center}
        \includegraphics[width=0.78\linewidth]{\figdir/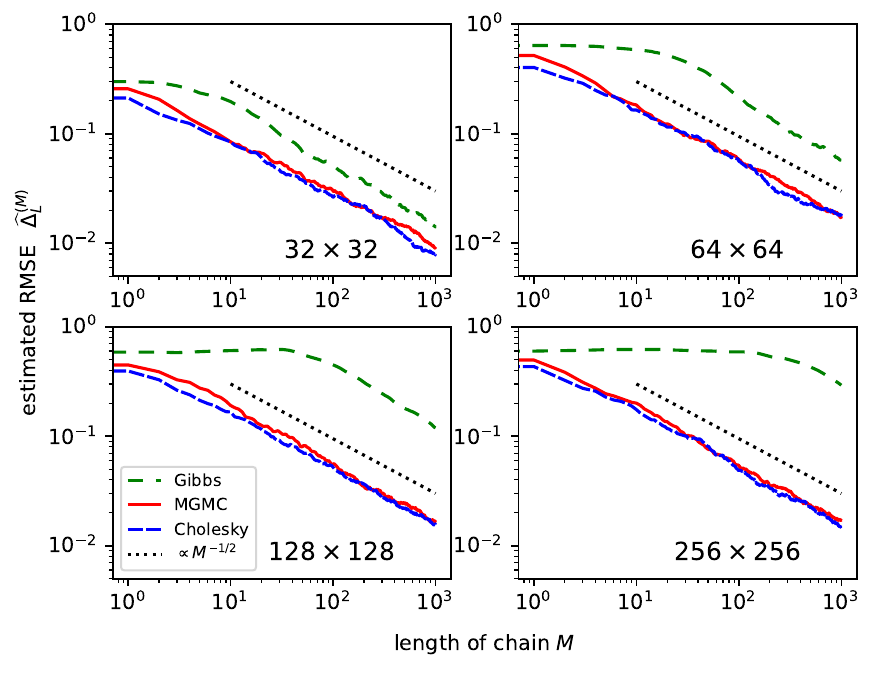}
    \end{center}
    \caption{Estimated root mean squared error $\widehat{\Delta}_L^{(M)}$ as defined in \eqref{eqn:RMSE_estimator} as a function of the length of the Markov chain $M$ for different grid sizes.}
    \label{fig:RMSE}
\end{figure}

\section{Conclusion}\label{sec:conclusion}
In this paper we have presented a rigorous analysis of the Multigrid Monte Carlo approach originally proposed by Goodman and Sokal in \cite{goodman1989multigrid}. We have extended the analysis to the important case of conditioned Gaussian random fields in a linear Bayesian setting. We have shown that the rate of convergence to the targest distribution and the integrated autocorrelation time is grid-independent. To achieve this, we relied on the inherent connection between solvers for sparse linear systems and samplers for multivariate normal distributions discussed in \cite{Fox.C_Parker_2017_AcceleratedGibbsSampling}. This allowed us to bound convergence rates with tools from standard multigrid theory which, however, had to be extended to the Bayesian setup considered here. Our cost-analysis confirms that the cost for one MGMG update grows linearly in the number of unknowns. As a consequence, the algorithm is optimal in the sense that the cost for generating a single independent sample is also proportional to the number of unknowns.

The theoretical results are confirmed by a set of numerical experiments, which demonstrate that MGMC is robust and efficient as the resolution increases. On finer grids, MGMC is always significantly faster than a standard Gibbs sampler. In three dimensions and on larger lattices, it beats even the Cholesky method, which demonstrates that our approach is particularly promising in higher dimensions. We are able to sample very rough fields with precision operators $-\Delta+\kappa^2\id$ and $(-\Delta+\kappa^2\id)^2$ without problems, but find that MGMC is most efficient in the former case.

\rev{As stated above, the MGMC method is applicable in more general setups such as irregular domains or non-stationary distributions where other, potentially more efficient samplers based on spectral methods are not available.}
\paragraph{Future work.}
There are several ways of extending the work in this paper. While we only considered relatively small problems here, significantly larger problems can be simulated with the parallel implementation which will be described in a subsequent publication. For simplicity, we have also limited ourselves to a linear setting where in particular the quantity of interest and the observation operators are bounded linear functionals. It would be interesting to consider more general cases with likelihoods given by $\propto\exp\left[-\frac{1}{2}(\mathcal{B}_\text{NL}(\theta)-\mu)^\top\Gamma^{-1}(\mathcal{B}_\text{NL}(\theta)-\mu)\right]$ for some non-linear $\mathcal{B}_\text{NL}$. Similarly, the generalisation to non-Gaussian priors could be considered. While the original MGMC algorithm in \cite{goodman1989multigrid} is written down for the general non-Gaussian case and these cases should be covered in principle, in practice it will likely have to be adapted to be efficient.

\rev{The numerical results presented here were obtained for covariances of the form $(-\Delta+\kappa^2 I)^{-\alpha}$ with $\alpha=1,2$. It would be interesting to also consider the case of fractional $\alpha$ since this corresponds to a more general class of Mat\'{e}rn covariance functions. However, it should be stressed here that further work is required to understand how MGMC can be applied to this setup.}

\rev{While in this work we assumed that the number of observations $\beta$ is fixed and much smaller than the problem size $n_L$, it will be interesting to also consider the case where $\beta$ is large and grows with some power $n_L$. In this case the performance of the different samplers will likely depend on the distribution of the observations over the domain, and we expect MGMC to be particularly efficient if parts of the domain are not informed by the data. Note that if $\beta\propto n_L^p$ with $p>1$ the implementation needs to be adjusted to achieve optimal performance: for example, in this setup it might be advantageous to pre-compute the $n_\ell\times n_\ell$ matrix product $G_\ell B_\ell^\top$ in \eqref{eqn:low_rank_update} instead of applying the $n_\ell\times \beta$ and $\beta\times n_\ell$ matrices $B_\ell^\top$ and $G_\ell$ in sequence. This will change the bounds on the computational cost and storage requirements in Section \ref{subsec:Cost-analysis}.}

\rev{While here we considered operators that arise from the discretisation of a differential operator on a grid, statistical distributions can also be formulated on graphs. In this case, the precision operator has to be treated as a purely algebraic object with no underlying mesh hierarchy. However, the MGMC philosophy is still applicable in this case: as in algebraic multigrid (AMG) methods \cite{ruge1987algebraic,stuben2001review}, a hierarchy of coarse level spaces can be constructed in a purely algebraic way. The resulting sampler is obtained by replacing the AMG smoother by a suitable sampler. By following the same steps as in Section~\ref{sec:theory}, the convergence theory of the sampler can be reduced to the well-established convergence theory of AMG. This also allows the easy application of MGMC on more complex domains and unstructured grids. A preliminary exploration of this idea, including some numerical experiments and parallel scalability studies, can be found in \cite{friess2024}.}
\subsection*{Acknowledgements}
This work was funded as Exploratory Project 4.6 by the Deutsche Forschungsgemeinschaft (German Research Foundation) under Germany's Excellence Strategy EXC 2181/1 - 390900948 (the Heidelberg STRUCTURES Excellence Cluster). We are grateful to Colin Fox for useful discussions, and thank Nils Friess for his careful reading of the manuscript and detailed comments. \jg{We would like to thank Jonathan Goodman for sending us many extremely helpful comments on the original submission, which have helped to improve this paper.} This work made use of the nimbus cloud computer, and the authors gratefully acknowledge the University of Bath's Research Computing Group (doi.org/10.15125/b6cd-s854) for their support in this work.
\printbibliography
\pagebreak
\appendix

\section{Proofs of results for MGMC invariance and convergence\label{subsec:Proofs}}
This section presents proofs for the main theoretical results given in Sections~\ref{sec:theory_invariance} and \ref{sec:theory_convergence}.
\subsection{Invariance of the coarse level update}\label{sec:proof_MGMC_coarse_level_invariance}
\begin{proof}[Proof of Proposition \ref{prop:coarse_level_invariance}]
	We have
	\begin{align*}
		\mu_{\theta_\ell^{*}}(A) & =\int_{\mathbb{R}^{n_{\ell}}}\int_{\mathbb{R}^{n_{\ell-1}}}1_{A}(\theta_\ell^{*}(\theta_\ell,\Psi_{\ell-1}))p(\theta_\ell,\psi_{\ell-1})\mathrm{d}\psi_{\ell-1}\mathrm{d}\theta_\ell                                                                                                              \\
		                         & =\int_{\mathbb{R}^{n_{\ell}}}\int_{\mathbb{R}^{n_{\ell-1}}}1_{A}(\theta_\ell^{*}(\theta_\ell,\psi_{\ell-1}))p_{\ell-1}(\psi_{\ell-1}|\theta_\ell)p_{\ell}(\theta_\ell)\mathrm{d}\psi_{\ell-1}\mathrm{d}\theta_\ell                                                                                \\
		                         & =\int_{\mathbb{R}^{n_{\ell-1}}}\Biggl(\int_{\mathbb{R}^{n_{\ell}}}1_{A}(\theta_\ell^{*}(\theta_\ell,\psi_{\ell-1}))\frac{h_{\ell}(\theta_\ell+I_{\ell-1}^\ell\psi_{\ell-1})}{Z^*_{\ell-1}(\theta_\ell)}\frac{h_{\ell}(\theta_\ell)}{Z_{\ell}}\mathrm{d}\theta_\ell\Biggr)\mathrm{d}\psi_{\ell-1}.
	\end{align*}
	Since the Lebesgue measure on $\mathbb{R}^{n_{\ell}}$ is invariant
	under the translation $\mathbb{R}^{n_{\ell}}\ni\theta_\ell\mapsto\theta_\ell-I_{\ell-1}^\ell\psi_{\ell-1}$ with $\psi_{\ell-1}\in\mathbb{R}^{n_{\ell-1}}$,
	the inner integral can be rewritten as
	\[
		\int_{\mathbb{R}^{n_{\ell}}}1_{A}(\theta_\ell)\frac{h_{\ell}(\theta_\ell)}{Z^*_{\ell-1}(\theta_\ell-I_{\ell-1}^\ell\psi_{\ell-1})}\frac{h_{\ell}(\theta_\ell-I_{\ell-1}^\ell\psi_{\ell-1})}{Z_\ell}\mathrm{d}\theta_\ell,
	\]
	and thus swapping the order of the integrals again leads to
	\begin{equation}
		\mu_{\theta_\ell^{*}}(A)=\int_{\mathbb{R}^{n_{\ell}}}1_{A}(\theta_\ell)\frac{h_{\ell}(\theta_\ell)}{Z_\ell}\biggl(\int_{\mathbb{R}^{n_{\ell-1}}}\frac{h_{\ell}(\theta_\ell-I_{\ell-1}^\ell\psi_{\ell-1})}{Z^*_{\ell-1}(\theta_\ell-I_{\ell-1}^\ell\psi_{\ell-1})}\mathrm{d}\psi\biggr)\mathrm{d}\theta_\ell.\label{eqn:integral_invariance}
	\end{equation}
	Now, with $g_{\ell-1}$ where $g_{\ell-1}(y_{\ell-1}):=h_{\ell}(\theta_\ell-I_{\ell-1}^\ell\psi_{\ell-1}+I_{\ell-1}^\ell y_{\ell-1})$,
	the positive homogeneity and the translation invariance of the Lebesgue
	integral on $\mathbb{R}^{n_{\ell-1}}$ implies
	\begin{equation}
		\begin{aligned}
			Z^*_{\ell-1} (\theta_\ell-I_{\ell-1}^\ell\psi_{\ell-1}) & =\int_{\mathbb{R}^{n_{\ell-1}}}h_{\ell}(\theta_\ell-I_{\ell-1}^\ell\psi_{\ell-1}+I_{\ell-1}^\ell y_{\ell-1})\,\mathrm{d}y_{\ell-1},                                                               \\
			                                                        & =\int_{\mathbb{R}^{n_{\ell-1}}}g_{\ell-1}(y_{\ell-1})\,\mathrm{d}y=\int_{\mathbb{R}^{n_{\ell-1}}}g_{\ell-1}(-y_{\ell-1})\,\mathrm{d}y_{\ell-1}                                                    \\
			                                                        & =\int_{\mathbb{R}^{n_{\ell-1}}}g_{\ell-1}(\psi_{\ell-1}-y_{\ell-1})\,\mathrm{d}y_{\ell-1}=\int_{\mathbb{R}^{n_{\ell-1}}}h_{\ell}(\theta_{\ell}-I_{\ell-1}^\ell y_{\ell-1})\,\mathrm{d}y_{\ell-1}.
			\label{eqn:Z_ell}
		\end{aligned}
	\end{equation}
	In the last line the linearity of $I_{\ell-1}^{\ell}$ is used. Inserting \eqref{eqn:Z_ell} into \eqref{eqn:integral_invariance} gives the desired result.
\end{proof}
\subsection{Representation of the MGMC iteration}\label{sec:proof_MGMC_representation}
To prove Lemma \ref{lem:MG-iteration-rep} and derive the explicit expression for the MGMC iteration in \eqref{eq:MG-iteration} we make repeated use of the following two propositions:
\begin{prop}\label{prop:preconditioned_iteration}
	Let $A$ and $B$ be $n\times n$ matrices with $A$ positive definite. Given some initial $\theta^{(0)}\in\mathbb{R}^n$, fixed right-hand side $f\in\mathbb{R}^n$ and $w^{(j)}\in\mathbb{R}^n$ for $j=0,1,\dots,\nu-1$, define the iteration
	\begin{equation}
		\theta^{(j+1)} = \theta^{(j)} + B (f-A\theta^{(j)}) + w^{(j)}\quad\text{for $j=0,1,\dots,\nu-1$}.
	\end{equation}
	Then
	\begin{equation}
		\theta^{(\nu)} = S^\nu \theta^{(0)} + (\id-S^\nu)A^{-1}f + W\label{eqn:theta_nu}
	\end{equation}
	where
	\begin{xalignat}{2}
		S &:=\id-BA, &
		W & := \sum_{j=1}^{\nu} S^{j-1} w^{(\nu-j)}\label{eqn:def_XYWS}
	\end{xalignat}
\end{prop}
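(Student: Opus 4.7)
The plan is to rewrite the recursion in linearised form and unroll it explicitly. First, I would observe that
\[
\theta^{(j+1)} = (\id - BA)\theta^{(j)} + Bf + w^{(j)} = S\theta^{(j)} + Bf + w^{(j)},
\]
so each step applies the amplification matrix $S$ and adds the deterministic increment $Bf$ together with the noise term $w^{(j)}$. A straightforward induction on $j$ (or equivalently, direct unrolling of the recursion) then yields
\[
\theta^{(\nu)} = S^\nu \theta^{(0)} + \sum_{k=0}^{\nu-1} S^k B f + \sum_{k=0}^{\nu-1} S^k w^{(\nu-1-k)}.
\]

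Next, I would match the two resulting sums with the quantities appearing in \eqref{eqn:theta_nu}. The noise contribution is immediate: reindexing with $j = k+1$ converts $\sum_{k=0}^{\nu-1} S^k w^{(\nu-1-k)}$ directly into $W = \sum_{j=1}^{\nu} S^{j-1} w^{(\nu-j)}$ as defined in \eqref{eqn:def_XYWS}. For the forcing term, the key step is a telescoping identity. Since $A$ is positive definite and therefore invertible, the definition $S = \id - BA$ gives $B = (\id - S)A^{-1}$, and hence
\[
\sum_{k=0}^{\nu-1} S^k B \;=\; \left(\sum_{k=0}^{\nu-1} (S^k - S^{k+1})\right) A^{-1} \;=\; (\id - S^\nu) A^{-1}.
\]
Multiplying by $f$ and combining this with the previous expression for $\theta^{(\nu)}$ produces exactly \eqref{eqn:theta_nu}.

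The proof is essentially a routine unrolling of a linear recurrence, and I do not anticipate any genuine obstacle. The only point that requires mild attention is the need for $A$ to be invertible in order to rewrite $B = (\id - S)A^{-1}$ and invoke the telescoping; this is ensured by the positive-definiteness hypothesis. Note that no probabilistic structure on the increments $w^{(j)}$ is used at this stage — the identity is a purely deterministic consequence of the recursion, which is why Lemma~\ref{lem:MG-iteration-rep} can later apply it both to the deterministic coarse corrections and to the Gaussian noise contributions inside a single recursive MGMC call.
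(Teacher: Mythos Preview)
Your proof is correct and follows essentially the same approach as the paper: unroll the linear recursion by induction to obtain $\theta^{(\nu)} = S^\nu\theta^{(0)} + \sum_{k=0}^{\nu-1}S^k Bf + W$, then use the telescoping identity $\sum_{k=0}^{\nu-1}S^k B = (\id - S^\nu)A^{-1}$, which is precisely what the paper does (it writes $BA = \id - S$ and multiplies through by $A$, but the manipulation is the same).
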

\begin{proof}
	Using induction and the definition of $S$ in \eqref{eqn:def_XYWS}, it is easy to write down $\theta^{(\nu)}$ in closed form as
	\begin{equation}
		\theta^{(\nu)} = S^\nu\theta^{(0)}+ \sum_{j=0}^{\nu-1} S^j B f + \sum_{j=1}^{\nu} S^{j-1} w^{(\nu-j)}\label{eqn:theta_nu_alt}
	\end{equation}
	Multiplying the first sum in \eqref{eqn:theta_nu_alt} from the left by $A$, using $BA=\id-S$ and evaluating the telescoping sum we find
	\begin{equation}
		\sum_{j=0}^{\nu-1} S^j BA
		= \sum_{j=0}^{\nu-1} S^j (\id-S)= \sum_{j=0}^{\nu-1} \left(S^j -S^{j+1}\right) = \id - S^\nu.
	\end{equation}
	Together with the definitions in \eqref{eqn:def_XYWS} this gives the result in \eqref{eqn:theta_nu}.
\end{proof}
\begin{prop}
	\label{lem:matrix-identity}Let $A\in\mathbb{R}^{n\times n}$ be symmetric
	and invertible. Let the matrices $M,N\in\mathbb{R}^{n\times n}$ be
	such that $A=M-N$ and that $M$ is invertible. Then, with $S:=M^{-1}N$,
	\[
		M^{-1}(M^{\top}+N)M^{-\top}=A^{-1}-SA^{-1}S^{\top}
	\]
	holds.
\end{prop}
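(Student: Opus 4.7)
The plan is to prove the identity by a direct algebraic computation, using the defining relation $A = M - N$ (equivalently $N = M - A$) to express both sides purely in terms of $M$, $A$, and their inverses, and then checking equality by expanding.

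First I would rewrite the left-hand side. Substituting $N = M - A$ gives $M^\top + N = M^\top + M - A$, so
\[
M^{-1}(M^\top + N)M^{-\top} \;=\; M^{-1}M^\top M^{-\top} + M^{-1}M\, M^{-\top} - M^{-1}A\,M^{-\top} \;=\; M^{-1} + M^{-\top} - M^{-1}A\,M^{-\top}.
\]

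Next I would rewrite the right-hand side. From $S = M^{-1}N = M^{-1}(M-A) = I - M^{-1}A$ and the symmetry of $A$ we have $S^\top = I - A M^{-\top}$. Then
\[
S A^{-1} S^\top \;=\; (I - M^{-1}A)\,A^{-1}\,(I - A M^{-\top}) \;=\; (A^{-1} - M^{-1})(I - A M^{-\top}),
\]
which expands to $A^{-1} - M^{-\top} - M^{-1} + M^{-1}A\, M^{-\top}$. Subtracting this from $A^{-1}$ yields $M^{-1} + M^{-\top} - M^{-1}A\, M^{-\top}$, matching the expression obtained for the left-hand side.

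There is no real obstacle here; the statement is a straightforward bookkeeping identity once one substitutes $N = M - A$ and uses the symmetry of $A$ to identify $S^\top$ explicitly. The only point to be careful about is keeping track of transposes (writing $M^{-\top} := (M^{\top})^{-1} = (M^{-1})^\top$) since $M$ itself is not assumed symmetric.
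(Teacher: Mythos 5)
Your proof is correct and is essentially the same direct algebraic verification as the paper's: both use the symmetry of $A$ and the splitting $A=M-N$ to reduce the claim to the identity $M^{-1}+M^{-\top}-M^{-1}AM^{-\top}=A^{-1}-SA^{-1}S^{\top}$. The only cosmetic difference is that you eliminate $N$ via $N=M-A$ and expand both sides to a common normal form, whereas the paper works with $M^{\top}+N$ directly and factors out $M$ and $M^{\top}$; the content is identical.
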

\begin{proof}
	As $A$ is symmetric, we have $\id=A^{-1}(M^{\top}-N^{\top})$,
	and thus together with $\id+NA^{-1}=MA^{-1}$,
	\begin{align*}
		M^{\top}+N & =M^{\top}+NA^{-1}(M^{\top}-N^{\top})=(\id+NA^{-1})M^{\top}-NA^{-1}N^{\top} \\
		           & =MA^{-1}M^{\top}-NA^{-1}N^{\top}       =M(A^{-1}-M^{-1}NA^{-1}N^{\top}M^{-\top})M^{\top},
	\end{align*}
	holds, which leads to the statement.
\end{proof}
\begin{proof}[Proof of Lemma \ref{lem:MG-iteration-rep}]
	We will show \eqref{eq:MG-iteration} by mathematical induction. For this, we introduce the following collections of independent multivariate normal random variables for $1\le\ell\le L$
	\begin{equation}
		\begin{aligned}
			 & \{w_{\ell}^{\mathrm{pre}(n)}(m_{\ell},\dots,m_{L-1},m_{L})\mid0\leq n\leq\nu_{1}-1,(m_{\ell},\dots,m_{L-1})\in\mathscr{M}_{\ell}, m_{L}\in \mathbb{N}\}             \\
			 & \quad{\cup}\{w_{\ell}^{\mathrm{post}(n)}(m_{\ell},\dots,m_{L-1},m_{L})\mid0\leq n\leq\nu_{2}-1,(m_{\ell},\dots,m_{L-1})\in\mathscr{M}_{\ell}, m_{L}\in \mathbb{N}\} \\
			 & \qquad{\cup}\{w_{0}^{\mathrm{coarse}(n)}(m_0,\dots,m_{L-1},m_L)\mid0\leq n\leq\nu_{0}-1,(m_0,\dots,m_{L-1})\in\mathscr{M}_0, m_{L}\in \mathbb{N}\}
		\end{aligned}
		\label{eqn:w_samples}
	\end{equation}
	where
	\begin{equation}
		\mathscr{M}_\ell := \{ (m_{\ell},m_{\ell+1},\dots,m_{L-1})\mid 0\leq m_{\ell'}\leq\gamma_{\ell'+1}-1\;\text{for all $\ell'=\ell,\dots,L$}\}
	\end{equation}
	such that for each $n$ and each multiindex $(m_{\ell},\dots,m_{L-1},m_L)\in\mathcal{M}_{\ell+1}\times \mathbb{N}$ the variables are distributed as
	\begin{equation}
		\begin{aligned}
			w_{\ell}^{\mathrm{pre}(n)}(m_{\ell},\dots,m_{L})  & \sim\mathcal{N}(0,(M^{\mathrm{pre}}_{\ell})^{\top}+N^{\mathrm{pre}}_{\ell})   \\
			w_{\ell}^{\mathrm{post}(n)}(m_{\ell},\dots,m_{L}) & \sim\mathcal{N}(0,(M^{\mathrm{post}}_{\ell})^{\top}+N^{\mathrm{post}}_{\ell}) \\
			w_{0}^{\mathrm{coarse}(n)}(m_0,\dots,m_{L})       & \sim\mathcal{N}(0,(M^{\mathrm{coarse}}_{0})^{\top}+N^{\mathrm{coarse}}_{0}).
		\end{aligned}\label{eqn:w_sample_distributions}
	\end{equation} Let
	\begin{equation}
		W_{0}(m_{1},\dots,m_{L}):=\sum_{j=1}^{\nu_{0}}(S_{0}^{\mathrm{coarse}})^{j-1}(M^{\mathrm{coarse}}_{0})^{-1}w_{0}^{\mathrm{coarse}(\nu_0-j)}(m_{1},\dots,m_{L});\label{eq:def-W0}
	\end{equation}
	\begin{equation}
		\begin{aligned}
			W_{\ell}(m_{\ell},\dots,m_{L}) & :=(S^{\mathrm{post}}_{\ell})^{\nu_{2}}\Bigl(I_{\ell-1}^{\ell}\sum_{m=1}^{\gamma_{\ell}}X_{\ell-1}^{m-1}W_{\ell-1}(\gamma_\ell-m,m_{\ell},\dots,m_{L})                                                            \\
			                               & \qquad\qquad+Q_{\ell}\sum_{j=1}^{\nu_{1}}(S^{\mathrm{pre}}_{\ell})^{j-1}(M^{\mathrm{pre}}_{\ell})^{-1}w_{\ell}^{\mathrm{pre}(\nu_1-j)}(m_{\ell},\dots,m_{L})\Bigr)                                               \\
			                               & \qquad\qquad+\sum_{j=1}^{\nu_{2}}(S_{\ell}^{\mathrm{post}})^{j-1}(M_{\ell}^{\mathrm{post}})^{-1}w_{\ell}^{\mathrm{post}(\nu_2-j)}(m_{\ell},\dots,m_{L}),\qquad\text{\ensuremath{\ell\geq1}}.\label{eq:def-W_ell}
		\end{aligned}
	\end{equation}

	We
	start with the coarsest level, i.e. $\ell=0$. For a given set of multiindices $(m_0,\dots,m_L)\in\mathscr{M}_0\times \mathbb{N}$, we can use Proposition \ref{prop:preconditioned_iteration} with $A=A_0$, $B=(M_0^{\mathrm{coarse}})^{-1}$, $f=f_0$, $w^{(j)}=w_0^{\mathrm{coarse}(j)}(m_0,\dots,m_L)$ and $\nu=\nu_0$ to show that the update $\theta_0(m_0,\dots,m_L)=:\theta_0^{\mathrm{init}}\mapsto \theta_0^{\mathrm{new}}:=\theta_0(m_0+1,\dots,m_L)$ can be written as
	\begin{equation}
		\begin{aligned}
			\theta_0^{\mathrm{new}} & = (S_0^{\mathrm{coarse}})^{\nu_0}\theta_0^{\mathrm{init}} + (\id-(S_0^{\mathrm{coarse}})^{\nu_0})A_0^{-1} f_0 + W_0(m_0,\dots,m_L) \\
			                        & = X_0\theta_0^{\mathrm{init}} + Y_0 f_0 + W_0(m_0,\dots,m_L).\label{eqn:theta_new_0}
		\end{aligned}
	\end{equation}
	where we used the definitions of $X_0$, $Y_0$ and $W_0(m_0,\dots,m_L)$ in \eqref{eq:def-X0}, \eqref{eq:def-Y-ell} and \eqref{eq:def-W0}. We conclude that \eqref{eq:MG-iteration} holds on level $\ell=0$.

	Note that, as stated in to Remark \ref{rem:exact_coarse_sampler}, the coarse sampler can be exact. In this case $M_0^{\mathrm{coarse}} = A_0$,	$N_0^{\mathrm{coarse}}=0$ (which implies $X_0=0$, $Y_0=A_0^{-1}$) and \eqref{eqn:theta_new_0} reduces to
	\begin{equation}
		\theta_0^{\mathrm{new}} = A_0^{-1} f_0 +
		A_{0}^{-1}w_{0}^{\mathrm{coarse}}(m_{0},\dots,m_{L}),
	\end{equation}
	with $w_{0}^{\mathrm{coarse}}(m_0,\dots,m_{L}) \sim\mathcal{N}(0,A_0)$. Hence, up to the law 	$\theta_0^{\mathrm{new}}$ is equal to the exact sampler
	\begin{equation}
		\theta_0^{\mathrm{new}} = A_0^{-1} f_0 +
		w_0^{\mathrm{coarse}(0)}(m_0,\dots,m_L).
	\end{equation}
	Next, assume that $\ell\ge 1$ and that the statement in \eqref{eq:MG-iteration} is true on level $\ell-1$. Proposition \ref{prop:preconditioned_iteration} with $A=A_\ell$, $B=(M_\ell^{\mathrm{pre}})^{-1}$, $f=f_\ell$, $w^{(j)}=(M_\ell^{\mathrm{pre}})^{-1}w_\ell^{\mathrm{pre}(j)}(m_{\ell},\dots,m_L)$ and $\nu=\nu_1$ shows that pre-smoothing leads to the update $\theta_{\ell}(m_\ell,\dots,m_L)=: \theta_{\ell}^{\mathrm{init}}\mapsto \theta_{\ell,\nu_1}$ with
	\begin{equation}
		\theta_{\ell,\nu_1} = (S_\ell^{\mathrm{pre}})^{\nu_1} \theta_\ell^{\mathrm{init}} + (\id-(S_{\ell}^{\mathrm{pre}})^{\nu_1}) A_{\ell}^{-1}f_{\ell} + \sum_{j=0}^{\nu_1-1} (S_\ell^{\mathrm{pre}})^j (M_\ell^{\mathrm{pre}})^{-1} w_\ell^{\mathrm{pre}(j)}(m_{\ell},\dots,m_L).
		\label{eqn:pre_update}
	\end{equation}
	To compute the coarse grid correction $\theta_{\ell,\nu_1} \mapsto \theta_{\ell,\nu_1+1}$ observe that on level $\ell-1$ we compute $\psi^{(\gamma_\ell)}_{\ell-1}$ recursively in lines 10--13 of Alg.~\ref{alg:mgmc} as
	\begin{equation}
		\psi^{(m+1)}_{\ell-1} = \mathrm{MGMC}_{\ell-1}(A_{\ell-1},f_{\ell-1},\psi^{(m)}_{\ell-1}),\qquad \psi^{(0)}_{\ell-1}=0\qquad\text{for $m=0,1,\dots,\gamma_\ell-1$}.
	\end{equation}
	According to the inductive assumption the update $\psi_{\ell-1}^{(m)}\mapsto \psi_{\ell-1}^{(m+1)}$ can be written in the form
	\begin{equation}
		\begin{aligned}
			\psi_{\ell-1}^{(m+1)} & =X_{\ell-1} \psi_{\ell-1}^{(m)}+(\id-X_{\ell-1})A_{\ell-1}^{-1} f_{\ell-1} + W_{\ell-1}(m,m_{\ell},\dots,m_L)                          \\
			                      & = \psi_{\ell-1}^{(m)} + (\id-X_{\ell-1})A_{\ell-1}^{-1}(f_{\ell-1}-A_{\ell-1}\psi_{\ell-1}^{(m-1)}) + W_{\ell-1}(m,m_{\ell},\dots,m_L)
		\end{aligned}.
	\end{equation}
	This allows us to apply Proposition \ref{prop:preconditioned_iteration} with $A=A_{\ell-1}$, $B=(\id-X_{\ell-1})A_{\ell-1}^{-1}$, $f=f_{\ell-1}$, $w^{(j)}=W_{\ell-1}(j,m_{\ell},\dots,m_L)$ and $\nu=\gamma_\ell$ to obtain
	\begin{equation}
		\begin{aligned}
			\psi^{(\gamma_\ell)}_{\ell-1} & = (\id-X_\ell^{\gamma_\ell}) A_{\ell-1}^{-1}f_{\ell-1}		+\sum_{m=1}^{\gamma_\ell} X_{\ell-1}^{m-1} W_{\ell-1}(\gamma_\ell-m,m_{\ell},\dots,m_L).
		\end{aligned}
	\end{equation}
	Using the definitions of $f_{\ell-1} = I_\ell^{\ell-1} (f_\ell-A_\ell \theta_{\ell,\nu_1})$ and $\theta_{\ell,\nu_1+1}=\theta_{\ell,\nu_1} + I_{\ell-1}^{\ell} \psi_{\ell-1}^{(\gamma_\ell)}$ in lines 9 and 14 of Alg.~\ref{alg:mgmc}, some straightforward algebra shows that this leads to the update $\theta_{\ell,\nu_1}\mapsto \theta_{\ell,\nu_1+1}$ with
	\begin{equation}
		\theta_{\ell,\nu_1+1} = Q_\ell \theta_{\ell,\nu_1} + I_{\ell-1}^{\ell}(\id - X_{\ell-1}^{\gamma_\ell})A_{\ell-1}^{-1} I_{\ell}^{\ell-1} f_\ell
		+ I_{\ell-1}^{\ell} \sum_{m=1}^{\gamma_\ell} X_{\ell-1}^{m-1} W_{\ell-1}(\gamma_\ell-m,m_{\ell},\dots,m_L).
		\label{eqn:cgc_update}
	\end{equation}
	Finally, another application of Proposition \ref{prop:preconditioned_iteration} with $A=A_\ell$, $B=(M_\ell^{\mathrm{post}})^{-1}$, $f=f_\ell$, $w^{(j)}=(M_\ell^{\mathrm{post}})^{-1}w_\ell^{\mathrm{post}(j)}(m_{\ell},\dots,m_L)$ and $\nu=\nu_2$ shows that post-smoothing results in the update $\theta_{\ell,\nu_1+1}\mapsto \theta_{\ell}^{\mathrm{new}}:=\theta_{\ell}(m_\ell+1,\dots,m_L)$ with
	\begin{equation}
		\theta_{\ell}^{\mathrm{new}} = (S_\ell^{\mathrm{post}})^{\nu_2} \theta_{\ell,\nu_1+1} + (\id-(S_{\ell}^{\mathrm{post}})^{\nu_2}) A_{\ell}^{-1}f_{\ell} + \sum_{j=1}^{\nu_2} (S_\ell^{\mathrm{post}})^{j-1} (M_\ell^{\mathrm{post}})^{-1} w_\ell^{\mathrm{post}(\nu_2-j)}(m_{\ell},\dots,m_L)
		\label{eqn:post_update}
	\end{equation}
	Combining \eqref{eqn:pre_update}, \eqref{eqn:cgc_update} and \eqref{eqn:post_update} and using the definitions of $X_\ell$ in \eqref{eq:def-Xell-Qell} and of $W_\ell(m_{\ell},\dots,m_L)$ in \eqref{eq:def-W_ell} results in
	\begin{equation}
		\theta_\ell^{\mathrm{new}} = X_\ell \theta_\ell^{\mathrm{init}}+ \overline{Y}_\ell f_\ell + W_\ell(m_{\ell},\dots,m_L)
	\end{equation}
	where
	\begin{equation}
		\overline{Y}_\ell := (\id-(S_\ell^{\mathrm{post}})^{\nu_2})A_\ell^{-1} + (S_\ell^{\mathrm{post}})^{\nu_2}\left(I_{\ell-1}^{\ell}(\id-X_{\ell-1}^{\gamma_\ell})A_{\ell-1}^{-1}I_{\ell}^{\ell-1} + Q_\ell (\id-(S_\ell^{\mathrm{pre}})^{\nu_1})A_{\ell}^{-1}\right).
	\end{equation}
	It is easy to see that $\overline{Y}_\ell A_\ell = \id -X_\ell$ and thus $\overline{Y}_\ell=Y_\ell$, which shows \eqref{eq:MG-iteration}.

	To derive an expression for the covariance $\WCov_{\ell}=\mathbb{E}[W_\ell W_\ell^\top]$ of $W_\ell$, the statement in \eqref{eq:noise-cov-id} is shown by induction over the levels $\ell$. First consider the case $\ell=0$. Using
	the independence of $\{w_{0}^{\mathrm{coarse}(\nu_{0}-j)}(m_{0},\dots,m_{L})\}_{1\leq j\leq\nu_{0}}$
	Proposition\ref{lem:matrix-identity}, we have
	\begin{align}
		\WCov_{0} & =\sum_{j=1}^{\nu_{0}}S_{0}^{j-1}M_{0}^{-1}(M_{0}^{\top}+N_{0})M_{0}^{-\top}(S_{0}^{\top})^{j-1}=\sum_{j=1}^{\nu_{0}}S_{0}^{j-1}\bigl(A_{0}^{-1}-S_{0}A_{0}^{-1}S_{0}^{\top}\bigr)(S_{0}^{\top})^{j-1} \\
		          & =S_{0}^{1-1}A_{0}^{-1}(S_{0}^{\top})^{1-1}-S_{0}^{\nu_{0}}A_{0}^{-1}(S_{0}^{\top})^{\nu_{0}}=A_{0}^{-1}-X_{0}A_{0}^{-1}X_{0}.
		\label{eqn:Gamma_0}
	\end{align}
	Note that for the exact sampler we have $X_{0}=0$ and \eqref{eqn:Gamma_0} reduces to $\WCov_{0}=A_{0}^{-1}$.

	For $\ell\geq1$, suppose that \eqref{eq:noise-cov-id} holds
	on level $\ell-1$. Noting the independence of
	\[
		\{w_{0}^{\mathrm{coarse}(n_0)}(m_0,\dots,m_L),w_{\ell}^{\mathrm{pre}(n_1)}(m_\ell,\dots,m_L),w_{\ell}^{\mathrm{post}(n_2)}(m_\ell,\dots,m_L)\},
	\]
	from Proposition\ref{lem:matrix-identity}, calculations analogous to above
	lead to
	\begin{align*}
		\WCov_{\ell} & =\hat{S}_{1}^{\nu_{2}}\biggl(I_{\ell-1}^{\ell}\sum_{k=1}^{\gamma_{\ell}}X_{\ell-1}^{k-1}\WCov_{\ell-1}(X_{\ell-1}^{\top})^{k-1}I_{\ell}^{\ell-1}+Q_{\ell}(A_{\ell}^{-1}-S_{\ell}^{\nu_{1}}A_{\ell}^{-1}(S_{\ell}^{\top})^{\nu_{1}})Q_{\ell}^{\top}\biggr)(\hat{S}_{1}^{\top})^{\nu_{2}} \\
		             & \qquad\qquad+A_{\ell}^{-1}-\hat{S}_{\ell}^{\nu_{2}}A_{\ell}^{-1}(\hat{S}_{\ell}^{\top})^{\nu_{2}}
	\end{align*}
	Now, the identity in \eqref{eq:noise-cov-id} for $\ell-1$ implies
	\[
		\sum_{m=1}^{\gamma_{\ell}}X_{\ell-1}^{m-1}\WCov_{\ell-1}(X_{\ell-1}^{\top})^{m-1}=A_{\ell-1}^{-1}-X_{\ell-1}^{\gamma_{\ell}}A_{\ell-1}^{-1}(X_{\ell-1}^{\top})^{\gamma_{\ell}}.
	\]
	Moreover, the symmetry of $A_{\ell}$ and $I_{\ell}^{\ell-1}A_{\ell}I_{\ell-1}^{\ell}=A_{\ell-1}$
	imply
	\begin{align*}
		Q_{\ell}A_{\ell}^{-1}Q_{\ell}^{\top}
		 & =A_{\ell}^{-1}-I_{\ell-1}^{\ell}\Bigl(A_{\ell-1}^{-1}-X_{\ell-1}^{\gamma_{\ell}}A_{\ell-1}^{-1}(X_{\ell-1}^{\top})^{\gamma_{\ell}}\Bigr)I_{\ell}^{\ell-1}.
	\end{align*}
	Hence, $\WCov_{\ell}$ above can be rewritten as
	\begin{align*}
		\WCov_{\ell} & =\hat{S}_{1}^{\nu_{2}}\Bigl(A_{\ell}^{-1}-Q_{\ell}S_{\ell}^{\nu_{1}}A_{\ell}^{-1}(S_{\ell}^{\top})^{\nu_{1}}Q_{\ell}^{\top}\Bigr)(\hat{S}_{1}^{\top})^{\nu_{2}}+A_{\ell}^{-1}-\hat{S}_{\ell}^{\nu_{2}}A_{\ell}^{-1}(\hat{S}_{\ell}^{\top})^{\nu_{2}} =A_{\ell}^{-1}-X_{\ell}A_{\ell}^{-1}X_{\ell}^{\top},
	\end{align*}
	which concludes the inductive proof of \eqref{eq:noise-cov-id} for all
	$\ell=0,\dots,L$.
\end{proof}
\subsection{Recursion formulae for mean and covariance}\label{sec:proof_recursion}
\begin{proof}[Proof of Lemma~\ref{lem:MGidentity-mean-cov}]
	Each new state $\theta_L^{(m+1)}$
	is obtained from $\theta_L^{(m)}$ according to the update rule in \eqref{eq:MG-iteration_finest_level} with some $W_L^{(m)}$ as constructed in Lemma \ref{lem:MG-iteration-rep}.
	Applying \eqref{eq:MG-iteration_finest_level} repeatedly we find that the state $\theta_L^{(m)}$ can be expressed as a linear combination of random variables
	\begin{equation}\label{eq:theta-itr-simple}
		\theta_L^{(m)} = b_0 + \sum_{m'=0}^{m-1} B_m W_L^{(m')}
	\end{equation}
	for some random variable $b_0\in\mathbb{R}^{n_L}$ independent of the random variables in \eqref{eqn:w_samples}, and some matrices $B_m\in\mathbb{R}^{n_L\times n_L}$.
	From the definitions in \eqref{eqn:w_samples}, \eqref{eqn:w_sample_distributions}, \eqref{eq:def-W0} and \eqref{eq:def-W_ell},
	each $W_L^{(m)}$ is a zero-mean multivariate normal random variable because it is a linear combination of zero-mean multivariate normal random variables from \eqref{eqn:w_samples}.
	Taking the expectation value of \eqref{eq:MG-iteration_finest_level} implies that
	\begin{equation}
		\mathbb{E}[\theta_{L}^{(m+1)}]=X_{L}\mathbb{E}[\theta_{L}^{(m)}]+Y_{L}f_{L}.
		\label{eqn:theta_L_m_mean_update}
	\end{equation}
	Let $u_{L}\in\mathbb{R}^{n_{L}}$ be the solution of $A_{L}u_{L}=f_{L}.$
	Then, from $Y_Lf_{L}=(\id -X_L)A_L^{-1}f_{L}$, we see that
	\begin{equation}
		u_{L}=X_{L}u_{L}+Y_{L}f_{L}\label{eqn:u_L_update}
	\end{equation}
	holds. Taking the difference of \eqref{eqn:theta_L_m_mean_update} and \eqref{eqn:u_L_update} yields \eqref{eqn:error_iteration}.

	To prove the identity for the evolution of the covariance in \eqref{eqn:convergence_iteration},
	we note that
	$\theta_L^{(m)}$ and $W_L^{(m)}$ are independent; indeed these are two linear combinations of disjoint subsets of the collection  \eqref{eqn:w_samples} of independent random variables.
	Using the independence of $\theta_L^{(m)}$ and $W_L^{(m)}$,
	and the fact that $f_L$ is deterministic,
	with \eqref{eq:MG-iteration_finest_level} we can compute the covariance of $\theta_{L}^{(m+1)}$ as
	\[
		\mathrm{Cov}(\theta_{L}^{(m+1)})=X_{L}\mathrm{Cov}(\theta_{L}^{(m)})X_{L}^{\top}+Y_{L}\mathrm{Cov}(f_{L})Y_{L}^{\top}+\WCov_{L}=X_{L}\mathrm{Cov}(\theta_{L}^{(m)})X_{L}^{\top}+\WCov_{L}.
	\]
	In view of \eqref{eq:noise-cov-id} in Lemma~\ref{lem:MG-iteration-rep}, subtracting $A_{L}^{-1}$
	from both sides yields
	\[
		\mathrm{Cov}(\theta_{L}^{(m+1)})-A_{L}^{-1}=X_{L}\bigl(\mathrm{Cov}(\theta_{L}^{(m)})-A_{L}^{-1}\bigr)X_{L}^{\top}.
	\]
	Using the same arguments as above, the update rule in \eqref{eq:MG-iteration_finest_level} and the definition of $W_L^{(m)}$ imply that $\mathbb{E}[\theta_{L}^{(m)}W_{L}^{(m+s-1)}]=\mathbb{E}[W_{L}^{(m+s-1)}]=0$ for $s\ge1$. Since $f_L$ is fixed we get the recursion
	\begin{align*}
		\mathrm{Cov}(\theta_{L}^{(m+s)},\theta_{L}^{(m)}) & =\mathrm{Cov}(X_{L}\theta_{L}^{(m+s-1)},\theta_{L}^{(m)})+\mathrm{Cov}(Y_{L}f_{L},\theta_{L}^{(m)})+\mathrm{Cov}(W_{L}^{(m+s-1)},\theta_{L}^{(m)}) \\
		                                                  & =X_{L}\mathrm{Cov}(\theta_{L}^{(m+s-1)},\theta_{L}^{(m)}).
	\end{align*}
	From this the result in \eqref{eqn:cross_covariance_iteration} follows by induction over $s$.
\end{proof}
With the iteration formula in Lemma~\ref{lem:MG-iteration-rep} and the recursion relation for the mean and covariance in \eqref{eqn:error_iteration} and \eqref{eqn:convergence_iteration} in Lemma \ref{lem:MGidentity-mean-cov}, we are now ready to show the central equivalence result.
\subsection{Equivalence of Multigrid and MGMC}\label{sec:proof_equivalence}
\begin{proof}[Proof of Theorem~\ref{thm:equivalence}]
	From Lemma~\ref{lem:MG-iteration-rep} and its proof, we see that  $u_{L}^{(m)}=u_{L}^{(m)}(u_{L}^{(0)})$
	and $\theta_{L}^{(m)}=\theta_{L}^{(m)}(\theta_{L}^{(0)})$ are given
	by the following iterations:
	\begin{equation}
		u_{L}^{(m+1)}=X_{L}u_{L}^{(m)}+Y_{L}f_{L};\label{eq:det-iter}
	\end{equation}
	\begin{equation}
		\theta_{L}^{(m+1)}=X_{L}\theta_{L}^{(m)}+Y_{L}f_{L}+W_{L}^{(m)}.\label{eq:sto-iter}
	\end{equation}

	Suppose (i) holds. Then, from \cite[Theorem 3.5.1]{Young.D.M_book_1971},
	the spectral radius of $X_{L}$ is less than $1$, so that $\lim_{m\to\infty}X_{L}^{m}=0$.
	But from
	\eqref{eqn:error_iteration} and \eqref{eqn:convergence_iteration} in Lemma~\ref{lem:MGidentity-mean-cov}, we have
	\[
		\lim_{m\to\infty}\mathbb{E}[\theta_{L}^{(m)}]=A_{L}^{-1}f_{L}\quad\text{and}\quad\lim_{m\to\infty}\mathrm{Cov}(\theta_{L}^{(m)})=A_{L}^{-1},
	\]
	whatever $\theta_{L}^{(0)}$ is. Hence, the characteristic function
	of $\theta_{L}^{(m)}$ converges to that of $\theta\sim\mathcal{N}(A_{L}^{-1}f_{L},A_{L}^{-1})$.
	This shows (ii).

	To show ((ii)$\implies$(i)) we show the contraposition. 
	To show (ii) does not hold,
	expecting a contradiction suppose it does.
	Then, $\theta_{L}^{(m)}$
	converges in distribution to
	$\mathcal{N}(A_L^{-1}f_L,A_L^{-1})$
	whatever
	the initial state is.
	Then, we must have
	$\lim_{m\to\infty}\mathbb{E}[\theta_{L}^{(m)}]=A_L^{-1}f_L$,
	as we will show below. But since (i) does not hold, 
	we can choose $\theta_{L}^{(0)}$ such that
	$\lim_{m\to\infty}\mathbb{E}[\theta_{L}^{(m)}]\neq A_L^{-1}f_L$
	because upon taking the expectation the iteration \eqref{eq:sto-iter} is identical
	to \eqref{eq:det-iter}, a contradiction.

	It remains to show $\lim_{m\to\infty}\mathbb{E}[\theta_{L}^{(m)}]=A_L^{-1}f_L$.
	For this, we prove the following: if $(\theta_{L}^{(m)})_{m\in\mathbb{N}}$ converges to a random variable $\theta^{*}$
	in distribution, then $\theta^{*}$ is integrable
	and $\lim_{m\to\infty}\mathbb{E}[\theta_{L}^{(m)}]=\mathbb{E}[\theta^{*}]$.
	To see this, first we use the pointwise convergence of the characteristic
	function
	\[
		\lim_{m\to\infty}\left|\exp\left(\mathrm{i}t^{\top}\mathbb{E}[\theta_{L}^{(m)}]-\frac{1}{2}t^{\top}\mathrm{Cov}(\theta_{L}^{(m)})t\right)\right|=\lim_{m\to\infty}\exp\left(-\frac{1}{2}t^{\top}\mathrm{Cov}(\theta_{L}^{(m)})t\right)=|\varphi(t)|\quad\text{for all }t\in\mathbb{R}^{n_{L}},
	\]
	where $\varphi$ is the characteristic function of the limiting law. From this we deduce that $\mathrm{Cov}(\theta_{L}^{(m)})$ is convergent.
	Next, we deduce that $\mathbb{E}[\theta_{L}^{(m)}]$ is convergent from the following convergence:
	\begin{equation}
		\lim_{m\to\infty}\exp\bigl(\mathrm{i}t^{\top}\mathbb{E}[\theta_{L}^{(m)}]\bigr)=\varphi(t)\lim_{m\to\infty}\exp\left(\frac{1}{2}t^{\top}\mathrm{Cov}(\theta_{L}^{(m)})t\right)=\varphi(t)\ln|\varphi(t)|^{-2}\quad\text{for all }t\in B_{\epsilon}(0),
	\end{equation}
	where 	$B_{\epsilon}(0)\subset\mathbb{R}^{n_{L}}$ is an $\epsilon$-ball on which $\varphi\neq 0$, which exists because $\varphi$ is continuous at $\boldsymbol{0}$
	and $\varphi(\boldsymbol{0})=1$.
	Hence, we conclude that $\theta_{L}^{(m)}$
	converging in distribution implies  $\mathbb{E}[\theta_{L}^{(m)}]$
	is convergent. Since $\mathrm{Cov}(\theta_{L}^{(m)})$ is also convergent,
	the limiting distribution needs to be Gaussian with mean $\lim_{m\to\infty}\mathbb{E}[\theta_{L}^{(m)}]$
	and covariance $\lim_{m\to\infty}\mathrm{Cov}(\theta_{L}^{(m)})$.
	Hence, $\lim_{m\to\infty}\mathbb{E}[\theta_{L}^{(m)}]=\mathbb{E}[\theta^{*}]$.

\end{proof}
\subsection{Convergence of distributions}\label{sec:proof_distribution_convergence}
\begin{proof}[Proof of Corollary~\ref{cor:conv-dist}]
	Let $p_{m}$ be the density function of $\theta_{L}^{(m)}$ and $p$
	be that of $\mathcal{N}(A_{L}^{-1}f_{L},A_{L}^{-1})$. We
	have
	\begin{align*}
		2{\displaystyle D_{\text{KL}}\bigl(({\mu_{m}},\ensuremath{\Sigma_{m}}})\parallel(A_{L}^{-1}f_{L},A_{L}^{-1})\bigr) & :=2\int_{\mathbb{R}^{n_{L}}}(\ln p_{m}(x)-\ln p(x))p_{m}(x)\mathrm{d}x                                                                                                                     \\
		=\operatorname{tr}\left(A_{L}\Sigma_{m}\right)                                                                              & -n_{L}+\left(A_{L}^{-1}f_{L}-\ensuremath{\mu_{m}}\right)^{\top}A_{L}(A_{L}^{-1}f_{L}-\ensuremath{\mu_{m}})+\ln\frac{\mathrm{det}(A_{L}^{-1})}{\mathrm{det}(\Sigma_{m})};
	\end{align*}
	see \cite[(6.32)]{murphy2022probabilistic}. Here, we note that for
	sufficiently large $m$, the continuity of the determinant together
	with Theorem~\ref{thm:moments-conv} implies $\mathrm{det}(\Sigma_{m})\neq0$.
	The continuity of the trace function and determinant implies $\lim_{m\to\infty}\operatorname{tr}(A_{L}\Sigma_{m})=n_{L}$
	and $\lim_{m\to\infty}\frac{\mathrm{det}(A_{L}^{-1})}{\mathrm{det}(\Sigma_{m})}=1$.
	Hence,
	\[
		\lim_{m\to\infty}{\displaystyle D_{\text{KL}}\bigl((\text{\ensuremath{\mu_{m}}, \ensuremath{\Sigma_{m}}})\parallel(A_{L}^{-1}f_{L},A_{L}^{-1})\bigr)=0.}
	\]
	Similarly, $\lim_{m\to\infty}{ D_{\text{KL}}\bigl((A_{L}^{-1}f_{L},A_{L}^{-1})\parallel(\text{\ensuremath{\mu_{m}}, \ensuremath{\Sigma_{m}}})\bigr)=0}$
	holds, where we again note $\mathrm{det}(\Sigma_{m})\neq0$ for sufficiently
	large $m$. In view of the Pinsker's inequality, the total variation
	distance converges to $0$ as well, which implies the weak convergence.
	The weak convergence can be checked directly with the convergence
	of characteristic function.
\end{proof}
\subsection{Bounds on the IACT}\label{sec:proof_IACT_bounds}
\begin{proof}[Proof of Theorem~\ref{thm:IACT}]
	\rev{To show the last statement,}  
	note that if $\theta_L^{(0)}\sim \mathcal{N}(\mu_L,A_L^{-1})$ then according to Theorem~\ref{thm:mgmc_invariance} $\theta_L^{(m)}\sim \mathcal{N}(\mu_L,A_L^{-1})$ for all $m\ge 0$. 
	With \eqref{eq:MG-corr} in Lemma~\ref{lem:MGidentity-mean-cov} this implies that
	\[
		\begin{aligned}
			\text{Cov}(F_{L}^{\top}\theta_L^{(m+s)},F_{L}^{\top}\theta_L^{(m)}) & =
			F_{L}^{\top} \text{Cov}(\theta_L^{(m+s)},\theta_L^{(m)})F_{L}                           =
			F_{L}^{\top} X_L^s \text{Cov}(\theta_L^{(m)},\theta_L^{(m)})F_{L}                    = F_{L}^{\top} X_L^s A_L^{-1}F_{L}.
		\end{aligned}
	\]
	Inserting this into \eqref{eqn:tau_int_definition} gives the desired result.
	To derive a bound on $\rev{\tau^{(\infty)}_{\mathrm{int},F_L}}$, i.e. to show the first inequality in \eqref{eqn:tau_int_bounds}, observe that the individual terms in the sum in \eqref{eqn:tau_int} can be bound as follows if we set $w:=A_{L}^{-1/2}F_{L}$
	\begin{equation}
		\begin{aligned}\frac{F_{L}^{\top}X_{L}^{s}A_{L}^{-1}F_{L}}{F_{L}^{\top}A_{L}^{-1}} & \le\left|\frac{F_{L}^{\top}X_{L}^{s}A_{L}^{-1}F_{L}}{F_{L}^{\top}A_{L}^{-1}F_{L}^{\top}}\right|  =\left|\frac{w^{\top}A_{L}^{1/2}X_{L}^{s}A_{L}^{-1/2}w}{w^{\top}w}\right|  \le||A_{L}^{1/2}X_{L}^{s}A_{L}^{-1/2}||_{2}\le||X_{L}||_{A_{L}}^{s} ,
		\end{aligned}
		\label{eqn:X_bound}
	\end{equation}
	where the energy norm $||\cdot||_{A_L}$ is defined in \eqref{eqn:energy_norm}. Using \eqref{eqn:X_bound} and summing the geometric series we get the desired bound on $\rev{\tau^{(\infty)}_{\mathrm{int},F_L}}$:
	\begin{equation}
		\begin{aligned}\rev{\tau^{(\infty)}_{\mathrm{int},F_L}} & =1+2\sum_{s=1}^{\infty}\frac{F_{L}^{\top}X_{L}^{s}A_{L}^{-1}F_{L}}{F_{L}^{\top}A_{L}^{-1}F_{L}}\le1+2\sum_{s=1}^{\infty}||X_{L}||_{A_{L}}^{s}=\frac{1+||X_{L}||_{A_{L}}}{1-||X_{L}||_{A_{L}}}.
		\end{aligned}
	\end{equation}
	To show the second inequality in \eqref{eqn:tau_int_bounds} first observe that according to 
	Lemma~\ref{lem:MGidentity-mean-cov}
	the IACT $\tau_{\mathrm{int},F_{L}}^{(m)}$ defined in \eqref{eqn:tau_int_definition} can be written as
	\begin{equation}
		\label{eqn:tau_m_formula}
		\tau_{\mathrm{int},F_{L}}^{(m)}=1+2\sum_{s=1}^{\infty}\frac{F_{L}^{\top}X_{L}^{s}\text{Cov}(\theta_{L}^{(m)},\theta_{L}^{(m)})F_{L}}{F_{L}^{\top}\text{Cov}(\theta_{L}^{(m)},\theta_{L}^{(m)})F_{L}}
	\end{equation}
	Introducing the shorthand
	\begin{equation}
		C_{L}^{(m)}:=\text{Cov}(\theta_{L}^{(m)},\theta_{L}^{(m)}),
	\end{equation}
	and defining the following (scalar) quantities
	\begin{xalignat*}{2}
		\alpha_{s} & :=F_{L}^{\top}X_{L}^{s}A_{L}^{-1}F_{L}, & \alpha'_{m,s} & :=F_{L}^{\top}X_{L}^{s}C_{L}^{(m)}F_{L},\\
		\beta & :=F_{L}^{\top}A_{L}^{-1}F_{L}, & \beta'_{m} & :=F_{L}^{\top}C_{L}^{(m)}F_{L},
	\end{xalignat*}
	we can write the sums in \eqref{eqn:tau_int_definition} and \eqref{eqn:tau_int} in compact form as
	\begin{xalignat*}{2}
		\rev{\tau^{(\infty)}_{\mathrm{int},F_L}}       & =1+2\sum_{s=1}^{\infty}\frac{\alpha_{s}}{\beta},                                                                                                                                                         &
		\tau_{\mathrm{int},F_{L}^{\top}}^{(m)} & =1+2\sum_{s=1}^{\infty}\frac{\alpha'_{m,s}}{\beta'_{m}}                            =\rev{\tau^{(\infty)}_{\mathrm{int},F_L}}+2\sum_{s=1}^{\infty}\left(\frac{\alpha'_{m,s}}{\beta'_{m}}-\frac{\alpha_{s}}{\beta}\right).
	\end{xalignat*}
	We now bound the terms in the sum. Each individual term can be re-written
	as
	\begin{equation}
		\begin{aligned}\frac{\alpha'_{m,s}}{\beta'_{m}}-\frac{\alpha_{s}}{\beta} & =\frac{\beta}{\beta_m'}\left(\frac{\alpha'_{m,s}-\alpha_{s}}{\beta}-\frac{\alpha_{s}}{\beta}\cdot\frac{\beta'_{m}-\beta}{\beta}\right)\end{aligned}
		\label{eqn:difference_bound}
	\end{equation}
	Hence, we need to look at $(\beta'_{m}-\beta)/\beta$ and $(\alpha'_{m,s}-\alpha_{s})/\beta$.
	We know from Lemma~\ref{lem:MGidentity-mean-cov} that
	\begin{equation}
		C_{L}^{(m)}-A_{L}^{-1}=X_{L}^{m}(C_{L}^{(0)}-A_{L}^{-1})(X_{L}^\top)^{m}
		\label{eqn:C_Lm}
	\end{equation}
	holds. Hence, with $\ensuremath{w:=A_{L}^{-1/2}F_{L}}$ and since $\left|\left|A_L^{1/2}X_L^m A_L^{-1/2}\right|\right|_2=\left|\left|A_L^{-1/2}(X_L^\top)^m A_L^{1/2}\right|\right|_2=||X_L||^m_{A_L}$
	\begin{equation}
		\begin{aligned}\left|\frac{\beta'_{m}-\beta}{\beta}\right| & =\left|\frac{F_{L}^{\top}(C_{L}^{(m)}-A_{L}^{-1})F_{L}}{F_{L}^{\top}A_{L}^{-1}F_{L}}\right|                                             =\left|\frac{w^{\top}A_{L}^{1/2}(C_{L}^{(m)}-A_{L}^{-1})A_{L}^{1/2}w}{w^{\top}w}\right|  \le\left|\left|A_{L}^{1/2}(C_{L}^{(m)}-A_{L}^{-1})A_{L}^{1/2}\right|\right|_{2}                                                                                                                                         \\ &= \left|\left|
               \left(A_L^{1/2} X_L^m A_L^{-1/2}\right) \left(A_L^{1/2} (C_L^{(0)}-A_L^{-1})A_L^{1/2}\right) \left(A_L^{-1/2}(X_L^\top)^m A_L^{1/2}\right)
               \right|\right|_2
               \\                                             &  \le \left|\left|\id - A_L^{1/2}C_{L}^{(0)}A_L^{1/2}\right|\right|_2 ||X_{L}||_{A_{L}}^{2m}\le (1+C_0) ||X_{L}||_{A_{L}}^{2m}
		\end{aligned}
		\label{eqn:beta_bound}
	\end{equation}
	\begin{equation}
		\begin{aligned}\left|\frac{\alpha'_{m,s}-\alpha_{s}}{\beta}\right| & =\left|\frac{F_{L}^{\top} X_{L}^{s}(C_{L}^{(m)}-A_{L}^{-1})F_{L}}{F_{L}^{\top}A_{L}^{-1}F_{L}}\right|   =\left|\frac{w^{\top}A_{L}^{1/2}X_{L}^{s}(C_{L}^{(m)}-A_{L}^{-1})A_{L}^{-1/2}A_{L}w^{\top}}{w^{\top}w}\right|                                   \\
                                                                   & \le\left|\left|A_{L}^{1/2}X_{L}^{s}A_{L}^{-1/2}A_{L}^{1/2}(C_{L}^{(m)}-A_{L}^{-1})A_{L}^{-1/2}A_{L}\right|\right|_{2}                                                                                                                  \\
                                                                   & =        \left|\left|
               \left(A_L^{1/2} X_L^s A_L^{-1/2}\right)\left(A_L^{1/2} X_L^m A_L^{-1/2}\right) \left(A_L^{1/2} (C_L^{(0)}-A_L^{-1})A_L^{1/2}\right) \left(A_L^{-1/2}(X_L^\top)^m A_L^{1/2}\right)
               \right|\right|_2                                                                                                                                                                                                                                                                             \\
                                                                   & \le\left|\left|\id - A_L^{1/2}C_{L}^{(0)}A_L^{1/2}\right|\right|_2||X_{L}||_{A_{L}}^{2m+s}                                                                                                         \le(1+C_0)||X_{L}||_{A_{L}}^{2m+s}.
		\end{aligned}
		\label{eqn:alpha_bound}
	\end{equation}
	Similar arguments show that $|\alpha_s/\beta| \le ||X_L||^s_{A_L}$.

	To bound $\beta/\beta_m'$ from above, consider the inverse which can be bounded from below as follows since $C_L^{(0)}$ is positive semi-definite, and we can rewrite $C_L^{(m)}$ with the help of \eqref{eqn:C_Lm}:
	\begin{equation}
		\begin{aligned}
			\frac{\beta_m'}{\beta} & = \frac{F_{L}^{\top} C_L^{(m)}F_{L}}{F_{L}^{\top}A_L^{-1}F_{L}}
			= \frac{F_{L}^{\top} \left(A_L^{-1}+X_L^m C_L^{(0)}(X_L^\top)^m - X_L^m A_L^{-1}(X_L^\top)^m\right)F_{L}}{F_{L}^{\top}A_L^{-1}F_{L}}                                \\
			                       & = 1 - \frac{F_{L}^{\top}X_L^m A_L^{-1} (X_L^\top)^m F_{L}}{F_{L}^{\top}A_L^{-1}F_{L}} +
			\underbrace{\frac{F_{L}^{\top}X_L^m C_L^{(0)} (X_L^\top)^m F_{L}}{F_{L}^{\top}A_L^{-1}F_{L}}}_{\ge 0}                                                               \\
			                       & \ge 1 - \frac{w^\top A_L^{1/2} X_L^m A_L^{-1}(X_L^\top)^m A_L^{1/2}w}{w^\top w}\qquad\text{with $w:= A_L^{-1/2}F_{L}$}      \\
			                       & = 1 - \frac{w^\top \left(A_L^{1/2} X_L A_L^{-1/2}\right)^m \left((A_L^{1/2}X_L A_L^{-1/2})^\top\right)^m A_L^{1/2}w}{w^\top w}  \ge 1-||X_L||_{A_L}^{2m}.
		\end{aligned}\label{eqn:beta_m_beta_lower_bound}
	\end{equation}
	We can now use the bounds in \eqref{eqn:X_bound}, \eqref{eqn:beta_bound}, \eqref{eqn:alpha_bound} and \eqref{eqn:beta_m_beta_lower_bound} to bound the left-hand side of
	(\ref{eqn:difference_bound}) from above:
	\begin{equation}
		\begin{aligned}\left|\frac{\alpha'_{m,s}}{\beta'_{m}}-\frac{\alpha_{s}}{\beta}\right| & \le \frac{\beta}{\beta'_{m}} \left(\left|\frac{\alpha'_{m,s}-\alpha_{s}}{\beta}\right|+\left|\frac{\alpha_{s}}{\beta}\right|\cdot\left|\frac{\beta'_{m}-\beta}{\beta}\right|\right)  \le2(1+C_0) \frac{||X_{L}||_{A_{L}}^{2m+s}}{1-||X_L||_{A_L}^{2m}}\qquad\text{for $m\ge 1$}.
		\end{aligned}
	\end{equation}
	With this we get the desired result:
	\begin{equation}
		\begin{aligned}|\tau_{\mathrm{int},F_{L}}^{(m)}-\rev{\tau^{(\infty)}_{\mathrm{int},F_L}}| & \le2\sum_{s=1}^{\infty}\left|\frac{\alpha'_{m,s}}{\beta'_{m}}-\frac{\alpha_{s}}{\beta}\right|                                           \le4(1+C_0)\frac{||X_{L}||_{A_{L}}^{2m}}{1-||X_L||_{A_L}^{2m}}\sum_{s=1}^{\infty}||X_{L}||_{A_{L}}^{s} \\
                                                                       & =C\frac{||X_{L}||_{A_{L}}^{2m+1}}{(1-||X_{L}||_{A_{L}})(1-||X_L||_{A_L}^{2m})}  \qquad\text{for $m\ge 1$},
		\end{aligned}\label{eqn:tau_difference_bound}
	\end{equation}
	with the grid-independent constant $C:= 4(1+C_0)$. We conclude that $\tau_{\mathrm{int},F_{L}}^{(m)}$ converges to $\rev{\tau^{(\infty)}_{\mathrm{int},F_L}}$ for $m\rightarrow\infty$.
\end{proof}
\subsection{Bounds on the Root-mean-squared error}\label{sec:proof_RMS_error}
To show Theorem~\ref{thm:integration-error}, we note that the mean-squared-error
is decomposed into
\begin{equation}
	\mathbb{E}[|I(F_{L})-\tilde{I}_{M}(F_{L})|^{2}]=[\mathrm{Bias}(\tilde{I}_{M}(F_{L}))]^{2}+\mathrm{Var}(\tilde{I}_{M}(F_{L})),\label{eq:MSE-decomp}
\end{equation}
with
$\mathrm{Bias}(\tilde{I}_{M}(F_{L}))  =I(F_{L})-\mathbb{E}\bigl[\tilde{I}_{M}(F_{L})\bigr]$ and $\mathrm{Var}(\tilde{I}_{M}(F_{L})) =\mathbb{E}\bigl[\bigl(\tilde{I}_{M}(F_{L})-\mathbb{E}[\tilde{I}_{M}(F_{L})]\bigr)^{2}\bigr]$.

We bound each term. To bound the bias we first show the following
strong convergence result.
\begin{lem}
	\label{lem:L2-limit}Suppose $\|X_{L}\|_{A_{L}}<1$. Then, the sequence
	$(\theta_{L}^{(m)})_{m\in\mathbb{N}}$ admits an  $L^{2}(\Omega;\mathbb{R}^{n_{L}})$-limit
	$\theta_{L}^{\infty}$. This limit is a Gaussian random variable with
	mean $A_{L}^{-1}f_{L}$ and covariance $A_{L}^{-1}$.
\end{lem}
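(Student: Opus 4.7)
The plan is to combine the moment-recursion identities from Lemma~\ref{lem:MGidentity-mean-cov} with the contraction $\|X_L\|_{A_L}<1$ to (i) establish exponential convergence of the first two moments, and (ii) realise the claimed limit as an honest element of $L^2(\Omega;\mathbb{R}^{n_L})$ on a suitably enlarged probability space. Iterating \eqref{eqn:error_iteration} and \eqref{eqn:convergence_iteration} immediately yields
$\|\mathbb{E}[\theta_L^{(m)}]-A_L^{-1}f_L\|_{A_L}\le \|X_L\|_{A_L}^{m}\,\|\mathbb{E}[\theta_L^{(0)}]-A_L^{-1}f_L\|_{A_L}$
and
$\|A_L^{1/2}\mathrm{Cov}(\theta_L^{(m)})A_L^{1/2}-\id\|_2\le \|X_L\|_{A_L}^{2m}\,\|A_L^{1/2}\mathrm{Cov}(\theta_L^{(0)})A_L^{1/2}-\id\|_2$, both vanishing exponentially under the hypothesis, which identifies the candidate limit law as $\mathcal N(A_L^{-1}f_L,A_L^{-1})$.

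Next, I enlarge the probability space to carry, in addition to the original forward noise $(W_L^{(m)})_{m\ge 0}$ and the initial state $\theta_L^{(0)}$, an i.i.d.\ sequence $(\widetilde W_L^{(k)})_{k\ge 0}$ distributed as $W_L^{(0)}$ and independent of everything else, and set
\[
\theta_L^\infty \;:=\; A_L^{-1}f_L+\sum_{k=0}^\infty X_L^{k}\,\widetilde W_L^{(k)}.
\]
The energy-norm bound $\mathbb{E}[\|X_L^k\widetilde W_L^{(k)}\|_{A_L}^2]\le n_L\|X_L\|_{A_L}^{2k}$, obtained from $\|X_L^k\|_{A_L}\le\|X_L\|_{A_L}^k$ together with $\WCov_L\preceq A_L^{-1}$ (a consequence of \eqref{eq:noise-cov-id}), combined with Minkowski's inequality, shows that the partial sums are Cauchy in $L^2(\Omega;\mathbb{R}^{n_L})$, so $\theta_L^\infty$ is well defined; Gaussianity is inherited from the Gaussian partial sums. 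A telescoping computation using $\WCov_L=A_L^{-1}-X_L A_L^{-1}X_L^\top$ from \eqref{eq:noise-cov-id} then gives
$\mathrm{Cov}(\theta_L^\infty)=\sum_{k\ge 0}\bigl(X_L^{k}A_L^{-1}(X_L^\top)^{k} - X_L^{k+1}A_L^{-1}(X_L^\top)^{k+1}\bigr)=A_L^{-1}$, while the mean is $A_L^{-1}f_L$ by linearity.

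Finally, to identify $\theta_L^\infty$ with the $L^2$-limit of the chain, I couple $(\theta_L^{(m)})$ with the stationary chain defined by $\hat\theta_L^{(0)}:=\theta_L^\infty$ and $\hat\theta_L^{(m+1)}:=X_L\hat\theta_L^{(m)}+Y_L f_L + W_L^{(m)}$, driven by the \emph{same} forward noise $(W_L^{(m)})_{m\ge0}$. By Theorem~\ref{thm:mgmc_invariance} we have $\hat\theta_L^{(m)}\sim\mathcal N(A_L^{-1}f_L,A_L^{-1})$ for every $m$, and subtracting the two recursions gives the deterministic-coefficient identity
\[
\theta_L^{(m)}-\hat\theta_L^{(m)}=X_L^{m}\bigl(\theta_L^{(0)}-\theta_L^\infty\bigr),
\]
whose $L^2(\Omega)$-norm is bounded by $\|X_L\|_{A_L}^m\,\|\theta_L^{(0)}-\theta_L^\infty\|_{L^2}\to 0$. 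The principal subtlety is that the bare iterates are \emph{not} Cauchy in $L^2(\Omega;\mathbb{R}^{n_L})$ — consecutive iterates $\theta_L^{(m+1)}-\theta_L^{(m)}$ differ by the fresh noise $W_L^{(m)}$, whose covariance $\WCov_L$ does not vanish in the limit — so the coupling to the stationary copy of $\theta_L^\infty$ is essential; once it is in place, the statement reduces to the geometric decay $\|X_L^m\|_{A_L}\le\|X_L\|_{A_L}^m$, which is immediate from $\|X_L\|_{A_L}<1$.
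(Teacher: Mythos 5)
Your first two steps are sound: the moment recursions give exponential convergence of the mean and covariance, and the series $\theta_L^\infty=A_L^{-1}f_L+\sum_{k\ge0}X_L^k\widetilde W_L^{(k)}$ is a well-defined element of $L^2(\Omega;\mathbb{R}^{n_L})$ with the correct Gaussian law (the telescoping computation of its covariance via $\WCov_L=A_L^{-1}-X_LA_L^{-1}X_L^\top$ is exactly right). The gap is in the final identification. The coupling identity $\theta_L^{(m)}-\hat\theta_L^{(m)}=X_L^m(\theta_L^{(0)}-\theta_L^\infty)$ shows that $\theta_L^{(m)}$ approaches the \emph{moving} target $\hat\theta_L^{(m)}$ in $L^2$; but $\hat\theta_L^{(m)}$ is the stationary chain at time $m$, not the fixed random variable $\theta_L^\infty$, and it does not itself converge in $L^2$. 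So you have proved geometric coupling with a stationary version, which is strictly weaker than $\theta_L^{(m)}\to\theta_L^\infty$ in $L^2(\Omega;\mathbb{R}^{n_L})$. Indeed your own (correct) observation that $\mathrm{Cov}(\theta_L^{(m+1)}-\theta_L^{(m)})=(X_L-\id)\mathrm{Cov}(\theta_L^{(m)})(X_L-\id)^\top+\WCov_L\succeq\WCov_L\succ0$ shows the forward iterates are not Cauchy in $L^2$, and a non-Cauchy sequence in a complete space has no $L^2$-limit; no coupling can repair that.

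The paper's proof resolves this (tacitly) by time reversal: writing $\theta_L^{(m)}=X_L^m\theta_L^{(0)}+\sum_{j=0}^{m-1}X_L^j(Y_Lf_L+W_L^{(m-1-j)})$, its Cauchy estimate keeps only the tail $\sum_{j=m}^{m'-1}$, which amounts to re-indexing the noise so that the $j$-th summand is $X_L^jW_L^{(j)}$. The resulting backward iterates $\check\theta_L^{(m)}:=X_L^m\theta_L^{(0)}+\sum_{j=0}^{m-1}X_L^j(Y_Lf_L+W_L^{(j)})$ are equal in law to $\theta_L^{(m)}$ for each fixed $m$ and \emph{are} Cauchy in $L^2$ by the geometric bound; their limit is precisely your $\theta_L^\infty$. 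This is the standard AR(1) device, and it is all that is used downstream, since Theorem~\ref{thm:integration-error} only needs the law of $\theta_L^\infty$ to write $I(F_L)=\mathbb{E}[F_L^\top\theta_L^\infty]$. So: replace your coupling step by this backward re-indexing (equivalently, prove the lemma for the equal-in-law backward sequence) and your argument closes; as written, the last step does not deliver the claimed $L^2$-limit of the forward chain.
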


\begin{proof}
	From $\theta_{L}^{(m)}=X_{L}^{m}\theta_{L}^{(0)}+\sum_{j=0}^{m-1}X_{L}^{j}(Y_{L}f_{L}+W_{L}^{(m-1-j)})$,
	for all $m'>m$ we have
	\begin{align*}
		\| & A_{L}^{1/2}(\theta_{L}^{(m')}-\theta_{L}^{(m)})\|_{L^{2}(\Omega;\mathbb{R}^{n_{L}})}                                                                                                                                                                     \\
		   & \leq\|X_{L}^{m'}-X_{L}^{m}\|_{A_{L}}\|A_{L}^{1/2}\theta_{L}^{(0)}\|_{L^{2}(\Omega;\mathbb{R}^{n_{L}})}+\sum_{j=m}^{m'-1}\|X_{L}\|_{A_{L}}^{j}(\|A_{L}^{1/2}Y_{L}f_{L}\|_{2}+\|A_{L}^{1/2}W_{L}^{(m'-1-j)}\|_{L^{2}(\Omega;\mathbb{R}^{n_{L}})}) \\
		   & =\|X_{L}^{m'}-X_{L}^{m}\|_{A_{L}}\|A_{L}^{1/2}\theta_{L}^{(0)}\|_{L^{2}(\Omega;\mathbb{R}^{n_{L}})}+(\|A_{L}^{1/2}Y_{L}f_{L}\|_{2}+\sqrt{\mathrm{tr}(A_{L}^{1/2}\WCov_{L}A_{L}^{1/2})})\sum_{j=m}^{m'-1}\|X_{L}\|_{A_{L}}^{j}.
	\end{align*}
	Provided $\|X_{L}\|_{A_{L}}<1$, the sequences $(X_{L}^{m})_{m\in\mathbb{N}}$
	and $\sum_{j=m}^{m'-1}\|X_{L}\|_{A_{L}}^{j}$ are convergent,
	and thus
	$(\theta_{L}^{(m)})_{m\in\mathbb{N}}$ is a Cauchy sequence in $L^{2}(\Omega;\mathbb{R}^{n_{L}})$.
	Thus, there is n $L^{2}(\Omega;\mathbb{R}^{n_{L}})$-limit $\theta_{L}^{\infty}$.
	The sequence $(\theta_{L}^{(m)})_{m\in\mathbb{N}}$ must be convergent
	to $\theta_{L}^{\infty}$ in distribution as well, but since $(\theta_{L}^{(m)})_{m\in\mathbb{N}}$
	is also convergent to $\mathcal{N}(A_{L}^{-1}f_{L},A_{L}^{-1})$
	as in Corollary~\ref{cor:conv-dist}, we conclude $\theta_{L}^{\infty}\sim\mathcal{N}(A_{L}^{-1}f_{L},A_{L}^{-1})$.
\end{proof}
Now we are ready to show Theorem~\ref{thm:integration-error}.
\begin{proof}[Proof of Theorem~\ref{thm:integration-error}]
	First, we bound the bias in \eqref{eq:MSE-decomp}. With $\theta_{L}^{\infty}$
	obtained in Lemma~\ref{lem:L2-limit}, note that $I(F_{L})=\int_{\Omega}F_{L}^{\top}\theta_{L}^{\infty}(\omega)\mathrm{d}\mathbb{P}(\omega)$
	holds. Hence, noting $I(F_{L})=F_{L}^{\top}A_{L}^{-1}f_{L}$ and
	$\mathbb{E}[\tilde{I}_{M}(F_{L})]=\frac{1}{M}\sum_{m=1}^{M}F_{L}^{\top}\mathbb{E}[\theta_{L}^{(m)}]$,
	Lemma~\ref{lem:MGidentity-mean-cov} yields the bias bound
	\begin{align*}
		|I(F_{L})-\mathbb{E}[\tilde{I}_{M}(F_{L})]| & \leq\frac{1}{M}\sum_{m=1}^{M}\|F_{L}^{\top}A_{L}^{-1/2}\|_{2}\|A_{L}^{1/2}(A_{L}^{-1}f_{L}-\mathbb{E}[\theta_{L}^{(m)}])\|_{2}                       \\
		                                            & \leq\frac{1}{M}\|F_{L}^{\top}A_{L}^{-1/2}\|_{2}\|A_{L}^{1/2}(A_{L}^{-1}f_{L}-\mathbb{E}[\theta_{L}^{(0)}])\|_{2}\sum_{m=1}^{M}\|X_{L}\|_{A_{L}}^{m}.
	\end{align*}

	To bound $\|F_{L}^{\top}A_{L}^{-1/2}\|_{2}$, we note that from \eqref{eqn:F_matrix}
	we have $F_{L}^{\top}A_{L}^{-1}F_{L}=\mathcal{F}(P_{L}A_{L}^{-1}F_{L})$, which implies $\|F_{L}^{\top}A_{L}^{-1/2}\|_{2}^{2}=|F_{L}^{\top}A_{L}^{-1}F_{L}|\leq\|\mathcal{F}\|_{H\to\mathbb{R}}\|\psi\|_{H}$
	with $\psi:=P_{L}A_{L}^{-1}F_{L}$. But $\psi$ is the solution to
	the following problem: find $\psi\in V_{L}$ such that
	\[
		a(\psi,v_{L})=\mathcal{F}(v_{L})\text{ for any }v_{L}\in V_{L},
	\]
	and thus $\left\Vert \psi\right\Vert _{V}\leq\frac{\|\mathcal{F}\|_{H\rightarrow\mathbb{R}}}{\sqrt{\lambda_{\min}(\mathcal{A})}}$.
	Hence, $\|F_{L}^{\top}A_{L}^{-1/2}\|_{2}^{2}\leq\frac{\|\mathcal{F}\|_{H\rightarrow\mathbb{R}}^{2}}{\lambda_{\min}(\mathcal{A})}$.
	An analogous argument can be used to bound $\|A_{L}^{1/2}(A_{L}^{-1}f_{L}-\mathbb{E}[\theta_{L}^{(0)}])\|_{2}$;
	indeed, noting $f_{L}^{\top}A_{L}^{-1}f_{L}=\langle f,P_{L}A_{L}^{-1}f_{L}\rangle_{H}$,
	we have $\|A_{L}^{-1/2}f_{L}\|_{2}^{2}\leq\frac{\|f\|_{H}^{2}}{\lambda_{\min}(\mathcal{A})}$,
	so that
	$
		\|A_{L}^{1/2}(A_{L}^{-1}f_{L}-\mathbb{E}[\theta_{L}^{(0)}])\|_{2}\leq\|A_{L}^{-1/2}f_{L}\|_{2}+C_{0}\leq\frac{\|f\|_{H}}{\sqrt{\lambda_{\min}(\mathcal{A})}}+C_{0}
	$.
	Together with $\|X_{L}\|_{A_{L}}\leq q<1$ with an $L$-independent
	constant $q$, we have
	\[
		|I(F_{L})-\mathbb{E}[\tilde{I}_{M}(F_{L})]|\leq\frac{C}{M},
	\]
	with $C:=\frac{\|\mathcal{F}\|_{H\rightarrow\mathbb{R}}}{\sqrt{\lambda_{\min}(\mathcal{A})}}\bigg(\frac{\|f\|_{H}}{\sqrt{\lambda_{\min}(\mathcal{A})}}+C_{0}\bigg)\sum_{m=1}^{\infty}q^{m}$.

	For the variance, since $F_{L}$ is a vector we have
	\begin{align}
		\mathrm{Var}(\tilde{I}_{M}(F_{L})) &                                                                                                                                                                                        
		=\frac{1}{M^{2}}\sum_{m=1}^{M}F_{L}^{\top}\mathrm{Cov}(\theta^{(m)})F_{L}+\frac{2}{M^{2}}\sum_{m=1}^{M}\sum_{k>m}^{M}F_{L}^{\top}\mathrm{Cov}(\theta^{(k)},\theta^{(m)})F_{L}\nonumber                                      \\
		                                   & =\frac{F_{L}^{\top}C_{L}^{(m)}F_{L}}{M^{2}}\sum_{m=1}^{M}\Bigl(\tau_{\mathrm{int},F_{L}}^{(m)}-2\sum_{s=M+1}^{\infty}\frac{\alpha_{m,s}^{\prime}}{\beta'_{m}}\Bigr),\label{eq:var-mid}
	\end{align}
	where we used \eqref{eqn:tau_int_definition} and notations $\alpha_{m,s}^{\prime}=F_{L}^{\top}X_{L}^{s}\mathrm{Cov}(\theta^{(m)})F_{L}$
	and $\beta'_{m}=F_{L}^{\top}\mathrm{Cov}(\theta^{(m)})F_{L}$. Here,
	we note $\beta'_{m}>0$ for all $m\in\mathbb{N}$ under $\left\Vert X_{L}\right\Vert _{A_{L}}<1$,
	which can be checked using $C_{L}^{(m)}=A_{L}^{-1}+X_{L}^{m}C_{L}^{(0)}(X_{L}^{\top})^{m}-X_{L}^{m}A_{L}^{-1}(X_{L}^{\top})^{m}$;
	see the proof of Lemma~\ref{lem:MGidentity-mean-cov}.

	Now, following the proof of Theorem~\ref{thm:IACT},
	using $|F_{L}^{\top}A_{L}^{-1}F_{L}|\leq\frac{\|\mathcal{F}\|_{H\rightarrow\mathbb{R}}^{2}}{\lambda_{\min}(\mathcal{A})}$ we see
	\[
		|F_{L}^{\top}C_{L}^{(m)}F_{L}|\leq\biggl(\left(1+C_{0}\right)\left\Vert X_{L}\right\Vert _{A_{L}}^{2m}+1\biggr)\frac{\|\mathcal{F}\|_{H\rightarrow\mathbb{R}}^{2}}{\lambda_{\min}(\mathcal{A})}.
	\]
	For $\tau_{\mathrm{int},F_{L}}^{(m)}-2\sum_{s=M+1}^{\infty}\frac{\alpha_{m,s}^{\prime}}{\beta'_{m}}$
	in (\ref{eq:var-mid}), from Theorem~\ref{thm:IACT} we have
	\begin{align*}
		\tau_{\mathrm{int},F_{L}}^{(m)} & \leq\frac{1+\|X_{L}\|_{A_{L}}}{1-\|X_{L}\|_{A_{L}}}+C\frac{\|X_{L}\|_{A_{L}}^{2m+1}}{(1-\|X_{L}\|_{A_{L}})(1-\|X_{L}\|_{A_{L}}^{2m})}  \leq\frac{1+q}{1-q}+C\frac{q^{3}}{(1-q)(1-q^{2})}
	\end{align*}
	and $\left|\frac{\alpha_{m,s}^{\prime}}{\beta'_{m}}\right|\leq q^{s}+2(1+C_{0})\frac{q^{2+s}}{(1-q)(1-q^{2})}.$
	Altogether, $\mathrm{Var}(\tilde{I}_{M}(F_{L}))$ can be bounded as
	$\mathrm{Var}(\tilde{I}_{M}(F_{L}))\leq\frac{C\|\mathcal{F}\|_{H\rightarrow\mathbb{R}}^{2}}{M}$,
	where $C$ depends on $q<1$, $\lambda_{\min}(\mathcal{A})$, and
	$C_{0}$ but is independent of $L$.

	From \eqref{eq:MSE-decomp} the statement follows.
\end{proof}
\subsection{Approximation property for perturbed matrix}\label{sec:proof_perturbed_matrix}
\begin{proof}[Proof of Proposition~\ref{prop:approximation-property}]
	Let $f_{\ell}\in\mathbb{R}^{n_{\ell}}$ be given. For
	$\widetilde{A}_{\ell}=A_{\ell}+B_{\ell}\Gamma^{-1}B_{\ell}^{\top}$ and
	$\widetilde{A}_{\ell-1}=A_{\ell-1}+B_{\ell-1}\Gamma^{-1}B_{\ell-1}^{\top}$, let
	\begin{align*}
		u_{\ell} & :=\widetilde{A}_{\ell}^{-1}f_{\ell}\in\mathbb{R}^{n_{\ell}}
		\quad\text{and}\quad
		u_{\ell-1}:=\widetilde{A}_{\ell-1}^{-1}I_{\ell}^{\ell-1}f_{\ell}\in\mathbb{R}^{n_{\ell-1}}.
	\end{align*}
	With $u_{\ell}$ and $u_{\ell-1}$, let $\tilde{u}_{\ell}:=P_{\ell}u_{\ell}$
	and $\tilde{u}_{\ell-1}:=P_{\ell-1}u_{\ell-1}$, so that
	\begin{align}
		\|(\widetilde{A}_{\ell}^{-1}- & I_{\ell-1}^{\ell}\widetilde{A}_{\ell-1}^{-1}I_{\ell}^{\ell-1})f_{\ell}\|_{2}  =\|u_{\ell}-I_{\ell-1}^{\ell}u_{\ell-1}\|_{2} =\|P_{\ell}^{-1}\tilde{u}_{\ell}-I_{\ell-1}^{\ell}P_{\ell-1}^{-1}\tilde{u}_{\ell-1}\|_{2} \\
		                              & =\|P_{\ell}^{-1}(\tilde{u}_{\ell}-\tilde{u}_{\ell-1})\|_{2}
		\leq c(\|\tilde{u}_{\ell}-u\|_{H}+\|u-\tilde{u}_{\ell-1}\|_{H})/\Phi(\ell),\label{eq:diff-in-uell-uellminus1}
	\end{align}
	where we used Assumption~\ref{assu:Pell}.

	To use (\ref{eq:perturbed-H-W-bound}) we rewrite the equation $\widetilde{A}_{\ell}u_{\ell}=f_{\ell}$
	in $\mathbb{R}^{n_{\ell}}$ and $\widetilde{A}_{\ell-1}u_{\ell-1}=I_{\ell}^{\ell-1}f_{\ell}$
	in $\mathbb{R}^{n_{\ell-1}}$ in the variational problems in function
	spaces $V_{\ell}$ and $V_{\ell-1}$.
	Let $(P_{\ell}^{-1})^{*}$ be
	the adjoint of $P_{\ell}^{-1}\colon(V_{\ell},\langle\cdot,\cdot\rangle_{H})\to\mathbb{R}^{n_{\ell}}$,
	i.e., $\langle(P_{\ell}^{-1})^{*}x_{\ell},y_{\ell}\rangle_{H}=x_{\ell}^{\top}P_{\ell}^{-1}y_{\ell}$
	for all $x_{\ell}\in\mathbb{R}^{n_{\ell}}$, $y_{\ell}\in V_{\ell}$,
	and define $F_{\ell}(\varphi):=\langle(P_{\ell}^{-1})^{*}f_{\ell},\varphi\rangle_{H}$
	for $\varphi\in V$. Then, the function $\tilde{u}_{\ell}=P_{\ell}u_{\ell}$
	satisfies
	\begin{align*}
		a(z_{\ell},\tilde{u}_{\ell})+b(z_{\ell},\tilde{u}_{\ell}) & =f_{\ell}^{\top}P_{\ell}^{-1}z_{\ell}=\langle(P_{\ell}^{-1})^{*}f_{\ell},z_{\ell}\rangle_{H}  =F_{\ell}(z_{\ell})\qquad\text{ for all }z_{\ell}\in V_{\ell},
	\end{align*}
	and $\tilde{u}_{\ell-1}=P_{\ell-1}u_{\ell-1}$ satisfies
	\begin{align*}
		a(z_{\ell-1},\tilde{u}_{\ell-1})+b(z_{\ell-1},\tilde{u}_{\ell-1}) & =(I_{\ell}^{\ell-1}f_{\ell})^{\top}P_{\ell-1}^{-1}z_{\ell-1}=(f_{\ell})^{\top}I_{\ell-1}^{\ell}P_{\ell-1}^{-1}z_{\ell-1} \\
		                                                  & =\langle(P_{\ell}^{-1})^{*}f_{\ell},z_{\ell-1}\rangle_{H}                                                                          =F_{\ell}(z_{\ell-1})\text{ for all }z_{\ell-1}\in V_{\ell-1}.
	\end{align*}
	Hence, $\tilde{u}_{\ell}$ and $\tilde{u}_{\ell-1}$ are approximations of the solution
	$u$ to the problem
	\[
		a(u,\varphi)+b(u,\varphi)=F_{\ell}(\varphi)\quad\text{for all}\quad\varphi\in V.
	\]
	We now use \eqref{eq:perturbedu-Wnorm-bound} and \eqref{eq:perturbed-H-W-bound} in Lemma~\ref{lem:approximation-perturbed} to
	\eqref{eq:diff-in-uell-uellminus1} to obtain
	\begin{align*}
		\|(\widetilde{A}_{\ell}^{-1}-I_{\ell-1}^{\ell}\widetilde{A}_{\ell-1}^{-1}I_{\ell}^{\ell-1})f_{\ell}\|_{2} & \leq c(\Psi(\ell)+\Psi(\ell-1))\|u\|_{W}/\Phi(\ell)                        \leq c\Psi(\ell-1)\|(P_{\ell}^{-1})^{*}f_{\ell}\|_{H}/\Phi(\ell).
	\end{align*}
	Finally, Assumption~\ref{assu:Pell} implies $\|P_{\ell}^{-1}\varphi_{\ell}\|_{2}\leq\frac{c_{2}}{\Phi(\ell)}\|\varphi_{\ell}\|_{H}$
	for $\varphi_{\ell}\in V_{\ell}$ and thus
	\[
		\|(P_{\ell}^{-1})^{*}\|_{\mathbb{R}^{n_{\ell}}\to V_{\ell}}=\|P_{\ell}^{-1}\|_{V_{\ell}\to\mathbb{R}^{n_{\ell}}}\leq\frac{c_{2}}{\Phi(\ell)},
	\]
	where $\|P_{\ell}^{-1}\|_{V_{\ell}\to\mathbb{R}^{n_{\ell}}}$ denotes
	the operator norm from $(V_{\ell},\langle\cdot,\cdot\rangle_{H})$
	to $\mathbb{R}^{n_{\ell}}$. We now invoke Proposition~\ref{prop:1/tildeAll-lb}
	to conclude
	\[
		\|(\widetilde{A}_{\ell}^{-1}-I_{\ell-1}^{\ell}\widetilde{A}_{\ell-1}^{-1}I_{\ell}^{\ell-1})\|_{2}\leq c\Psi(\ell-1)/(\Phi(\ell))^{2}\leq\frac{C}{\|\widetilde{A_{\ell}}\|_{2}}.
	\]
\end{proof}
\subsection{Optimality of MGMC}\label{sec:proof_optimality}
\begin{proof}[Proof of Corollary~\ref{cor:mgmc_optimality}]
	Statement 1 follows immediately from the cost analysis in Theorem \ref{thm:mgmc_cost} and Statement 2 is a consequence of Theorem \ref{thm:IACT}. Theorem \ref{thm:moments-conv} implies Statement 3. To see this, first observe that $\|F_L^\top \widetilde{A}_L^{-1/2}\|_2$ can be bounded by a constant that is independent of $L$. This can be shown with the same techniques as in the proof of Theorem~\ref{thm:integration-error}, which gives
\begin{equation}
	\|F_L^\top\widetilde{A}_L^{-1/2}\|_2 \le (\lambda_{\min}(\widetilde{\mathcal{A}}))^{-1/2}\|\mathcal{F}\|_{H\rightarrow \mathbb{R}}=:C_F
	\label{eqn:target_variance_bound}
\end{equation}
with finite $C_F$ independent of $n_L$ since $\mathcal{F}$ is bounded. Similarly, it can be shown that $\|\widetilde{A}_L^{-1/2}f_L\|_2$ is bounded independent of $n_L$. To see this, define the functional $\mathcal{G}:H\rightarrow \mathbb{R}$ with $\mathcal{G}(\phi) = \langle f,\phi\rangle_H = y^\top \Gamma^{-1}\mathcal{B}\phi$ for all $\phi\in H$. We can then derive the following bound (again using the techniques from the proof of Theorem~\ref{thm:integration-error}):
\begin{equation}
	\|\widetilde{A}_L^{-1/2}f_L\|_2 \le (\lambda_{\min}(\widetilde{\mathcal{A}}))^{-1/2}\|\mathcal{G}\|_{H\rightarrow \mathbb{R}} \le 
	(\lambda_{\min}(\widetilde{\mathcal{A}}))^{-1/2}\left(\lambda_{\min}(\Gamma)\right)^{-1}\|y\|_2\cdot \|\mathcal{B}\|_{H\rightarrow\mathbb{R}}=:C_f
	\label{eqn:target_mean_bound}
\end{equation}
with finite $C_f$ independent of $n_L$ since $\mathcal{B}$ is bounded. To show the exponential convergence of the expectation value observe that
\begin{equation}
	\begin{aligned}
		\left|\mathbb{E}[F_L^\top\theta_L^{(m)}]-F_L^\top\widetilde{A}_L^{-1}f_L\right| &= \left|(F_L^\top\widetilde{A}_L^{-1/2}) \widetilde{A}_L^{1/2}\left(\mathbb{E}[\theta_L^{(m)}]-\widetilde{A}_L^{-1}f_L\right)\right|\\
		&\leq \| F_L^\top\widetilde{A}_L^{-1/2}\|_2\cdot\|\mathbb{E}[\theta_L^{(m)}]-\widetilde{A}_L^{-1}f_L\|_{\widetilde{A}_L}\\
		&\leq C_F \left(\frac{C_A}{C_A+\nu}\right)^{m}\|\mathbb{E}[\theta_L^{(0)}]-\widetilde{A}_L^{-1}f_L\|_{\widetilde{A}_L}\le C_F (C_0+C_f)\left(\frac{C_A}{C_A+\nu}\right)^{m}.\label{eqn:optimality_convergence_mean}
	\end{aligned}
\end{equation}
	The exponential convergence of the variance can be shown similarly:
	\begin{equation}
	\begin{aligned}
		\left|\mathrm{Var}(F_L^\top\theta_{L}^{(m)})-F_L^\top\widetilde{A}_{L}^{-1}F_L\right| &= F_L^\top\widetilde{A}_L^{-1}F_L \frac{\left|F_L^\top\widetilde{A}_L^{-1/2}\left(\widetilde{A}_L^{1/2}\mathrm{Cov}(\theta_L^{(m)})\widetilde{A}_L^{1/2}-\id\right)\widetilde{A}_L^{-1/2}F_L\right|}{F_L^\top\widetilde{A}_L^{-1}F_L}\\
		&\leq \|F_L^\top \widetilde{A}_L^{-1/2}\|_2\cdot \|\widetilde{A}_L^{1/2}\mathrm{Cov}(\theta_L^{(m)})\widetilde{A}_L-\id\|_2\\
		&\leq C_F \left(\frac{C_A}{C_A+\nu}\right)^{2m}\|\widetilde{A}_L^{1/2}\mathrm{Cov}(\theta_L^{(0)})\widetilde{A}_L-\id\|_2\\
		&\le C_F(1+C_0) \left(\frac{C_A}{C_A+\nu}\right)^{2m}.\label{eqn:optimality_convergence_variance}
	\end{aligned}
\end{equation}
	From \eqref{eqn:optimality_convergence_mean} and \eqref{eqn:optimality_convergence_variance} we read off the constants $C_{1,1}:= C_F(C_0+C_f)$, $C_{1,2} = C_F(1+C_0)$ and $C_2 = \log(C_A+\nu)-\log(C_A)>0$. 
	
	Finally, noting that $u:=\widetilde{A}^{-1}f$ satisfies the problem $a(u,v)+b(u,v)=\langle f,v\rangle_{H}$
	for all $v\in H$, Assumption~\ref{assu:WtoH-best-Vell-approx} implies
	that $P_{L}u_{L}:=P_{L}\widetilde{A}_{L}^{-1}f_{L}$ with $f_{L}=(\langle f,\phi_{j}^{L}\rangle_{H})_{j=1,\dots,n_{L}}$
	converges to $u$ in $V$ as $L\to\infty$. Hence, 
	\[
	\lim_{L\to\infty}\mathbb{E}[F_{L}^{\top}\theta_{L}]=\lim_{L\to\infty}\mathcal{F}P_{L}\widetilde{A}_{L}^{-1}f_{L}=\mathcal{F}\widetilde{A}^{-1}f
	\]
	Similarly, $\lim_{L\to\infty}\mathrm{Cov}(F_{L}^{\top}\theta_{L})=\lim_{L\to\infty}\mathcal{F}P_{L}\widetilde{A}_{L}^{-1}F_{L}=\lim_{L\to\infty}\mathcal{F}\widetilde{A}^{-1}\mathcal{F}$, where we used $F_{L}=(\langle\mathcal{F},\phi_{j}^{L}\rangle_{H})_{j=1,\dots,n_{L}}$. 
	Thus, the characteristic function of $(F_{L}^{\top}\theta_{L}^{*})_{L}$
	converges to that of $(\widetilde{v},\mathcal{F})$ pointwise. Now
	the proof is complete.
\end{proof}
\section{Additional proofs and theorems}\label{appendix:general}
In the following we expand on a couple of condensed proofs in the main
text. 
\subsection{Symmetrised Random Smoother}\label{sec:proof_lem:symmetric_random_smoother}
\begin{proof}[Detailed proof of Lemma~\ref{lem:symmetric_random_smoother}]
	The combined update of the two individual smoothers calculates $\theta''$ from $\theta$ as follows:
	\begin{subequations}
		\begin{align}
			\theta'  & =\theta+M^{-1}(f+\xi_1-A \theta)\qquad\text{with $\xi_1\sim\mathcal{N}(0,M+M^{\top}-A)$}       \label{eqn:two_step_update_I}  \\
			\theta'' & =\theta^*+M^{-\top}(f+\xi_2-A \theta')\qquad\text{with $\xi_2\sim\mathcal{N}(0,M+M^{\top}-A)$} \label{eqn:two_step_update_II} \\
			         & = \theta +\left(M^{-1}+M^{-\top}-M^{-\top}AM^{-1}\right) (f-A\theta)
			+ \left(M^{-1}-M^{-\top}AM^{-1}\right)\xi_1 + M^{-\top} \xi_2.\nonumber
		\end{align}
	\end{subequations}
	Some simple algebra shows that $M^{-1}+M^{-\top}-M^{-\top}AM^{-1} = (M^{\mathrm{sym}})^{-1}$ with $M^{\mathrm{sym}}$ given in \eqref{eqn:symmetric_M}. Define the random variable $\xi$ as
	\begin{equation}
		(M^{\mathrm{sym}})^{-1} \xi := \left(M^{-1}-M^{-\top}AM^{-1}\right)\xi_1 + M^{-\top} \xi_2.
	\end{equation}
	$\xi$ depends linearly on $\xi_1,\xi_2\sim \mathcal{N}(0,M+M^{\top}-A)$ and obviously $\mathbb{E}[\xi]=0$. Since $\xi_1$, $\xi_2$ are independently drawn from normal distributions with known covariance matrices, some further tedious but straightforward algebra shows that $\mathbb{E}[\xi \xi^T] = 2M^{\mathrm{sym}}-A$ and therefore $\xi\sim \mathcal{N}(0,2M^{\mathrm{sym}}-A)$. We conclude that the two-step update in \eqref{eqn:two_step_update_I} and \eqref{eqn:two_step_update_II} is equivalent to the one-step update $\theta\mapsto \theta''$ with
	\begin{equation}
		\theta''   =\theta+(M^{\mathrm{sym}})^{-1}(f+\xi-A \theta)\qquad\text{with $\xi\sim\mathcal{N}(0,2M^{\mathrm{sym}}-A)$}.
	\end{equation}
\end{proof}
\subsection{Invariance of normal distribution under random smoothing}\label{sec:proof_prop:smoother_invariance}
\begin{proof}[Detailed proof of Proposition~\ref{prop:smoother_invariance}]
	Define
	\begin{xalignat}{2}
		z&:=\theta-\mu,&
		z'&:=\theta'-\mu=(I-M^{-1}A)z+M^{-1}\xi
	\end{xalignat}
	with $\mathbb{E}[z]=0$, $\mathbb{E}[zz^\top]=A^{-1}$. Then $\mathbb{E}[\theta']=\mu$ and $\mathbb{E}[(\theta'-\mu)(\theta'-\mu)^\top]=A^{-1}$
	is equivalent to $\mathbb{E}[z']=0$ and $\mathbb{E}[z'z'^\top]=A^{-1}$. To show the latter two identities we use the linearity of the expectation value and obtain
	\begin{equation}
		\mathbb{E}[z'] = (I-M^{-1}A)\mathbb{E}[z] + M^{-1}\mathbb{E}[\xi] = 0.
	\end{equation}
	Further, since $z$ and $\xi$ are independent random variables:
	\begin{equation}
		\begin{aligned}
			\mathbb{E}[z'z'^\top] & =(I-M^{-1}A)\mathbb{E}[zz^\top](I-AM^{-\top}) + M^{-1}\mathbb{E}[\xi\xi^\top]M^{-\top} \\
			                      & = (I-M^{-1}A)A^{-1}(I-AM^{-\top}) + M^{-1}(M+M^T-A)M^{-\top}                            = A^{-1}.
		\end{aligned}
	\end{equation}
\end{proof}
\subsection{Invertibility condition}\label{sec:Y-invertible}
\begin{lem}\label{lem:Y-invertible}
	Let $A$ be an SPD matrix and define the $||\cdot||_{A}$ norm $||X||_{A}:=||A^{1/2}XA^{-1/2}||_{2}$. Then for an arbitrary matrix $Y$ the inequality $||\id-YA||_{A}<1$ implies that $Y$ is invertible.
\end{lem}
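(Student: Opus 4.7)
The plan is to reduce the statement to the standard fact that a matrix $B$ with $\|\id-B\|_2<1$ is invertible (via the Neumann series), applied to the matrix $B=A^{1/2}YA^{1/2}$.

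First I would unfold the definition of the energy norm to rewrite the hypothesis in a form that removes the awkward asymmetry of $YA$. Explicitly,
\begin{equation*}
\|\id-YA\|_{A} \;=\; \|A^{1/2}(\id-YA)A^{-1/2}\|_{2} \;=\; \|\id - A^{1/2}YA^{1/2}\|_{2},
\end{equation*}
where in the last equality I use $A^{1/2}YAA^{-1/2}=A^{1/2}YA^{1/2}\cdot A^{1/2}A^{-1/2} \cdot$ \dots wait, more carefully: $A^{1/2}(YA)A^{-1/2} = (A^{1/2}YA^{1/2})(A^{1/2}A^{-1/2}) = A^{1/2}YA^{1/2}$, using that $A^{1/2}$ commutes with itself. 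So the hypothesis becomes $\|\id - A^{1/2}YA^{1/2}\|_{2}<1$.

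Next I would invoke the Neumann series: since $\|\id-A^{1/2}YA^{1/2}\|_{2}<1$, the matrix $A^{1/2}YA^{1/2}$ is invertible with inverse
\begin{equation*}
(A^{1/2}YA^{1/2})^{-1} \;=\; \sum_{k=0}^{\infty} (\id - A^{1/2}YA^{1/2})^{k},
\end{equation*}
the series converging in the spectral norm. Finally, since $A$ is SPD, $A^{1/2}$ is itself invertible, so $Y = A^{-1/2}(A^{1/2}YA^{1/2})A^{-1/2}$ is a product of three invertible matrices and hence invertible, with $Y^{-1}=A^{1/2}(A^{1/2}YA^{1/2})^{-1}A^{1/2}$.

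There is no real obstacle here; the only thing to check carefully is the algebraic identity $\|\id-YA\|_{A}=\|\id-A^{1/2}YA^{1/2}\|_{2}$, which is immediate from the definition of $\|\cdot\|_A$ and the fact that $A^{1/2}$ commutes with itself. Everything else is the standard Neumann-series argument.
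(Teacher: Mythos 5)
Your proof is correct. Both you and the paper hinge on the same similarity identity $\|\id-YA\|_{A}=\|\id-A^{1/2}YA^{1/2}\|_{2}$, but you finish differently: you invoke the Neumann series to produce an explicit inverse of $A^{1/2}YA^{1/2}$ and then conjugate back by $A^{-1/2}$, whereas the paper argues by contradiction, observing that a nonzero vector $v$ in the kernel of $Y$ yields $u=A^{-1/2}v\neq 0$ with $(\id-A^{1/2}YA^{1/2})u=u$, forcing the operator norm to be at least $1$. Your route is constructive (it even hands you the formula $Y^{-1}=A^{1/2}\sum_{k\ge 0}(\id-A^{1/2}YA^{1/2})^{k}A^{1/2}$), while the paper's is marginally more elementary in that it avoids any convergence argument; both are complete. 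One stylistic note: the mid-derivation self-correction (``wait, more carefully...'') should be cleaned up in a final write-up, but the algebra you settle on, $A^{1/2}(YA)A^{-1/2}=A^{1/2}YA^{1/2}$, is exactly right since $AA^{-1/2}=A^{1/2}$.
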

\begin{proof}
	This can be shown by contradiction. Observe that
	\begin{equation}
		||\id-YA||_{A}=\left|\left|\id-A^{1/2}YA^{1/2}\right|\right|_{2}=\max_{u\ne0}\frac{\left|\left|(\id-A^{1/2}YA^{1/2})u\right|\right|_{2}}{||u||_{2}}\label{eqn:X_norm_bound}
	\end{equation}
	Assume that $Y$ is not invertible, then there is a $v\ne0$ such
	that $Yv=0$. Setting $u:=A^{-1/2}v$ we have that $(\id-A^{1/2}YA^{1/2})u=u$,
	so there exists a $u\ne0$ such that
	\begin{equation}
		\frac{\left|\left|(\id-A^{1/2}YA^{1/2})u\right|\right|_{2}}{||u||_{2}}=1,
	\end{equation}
	which contradicts the assumption $||\id-YA||_{A}<1$.
\end{proof}
\rev{\section{Derivation of additional complexity estimates}}
\rev{In this appendix we present detailed derivations of the MGMC setup costs in Section~\ref{sec:mgmc_setup_costs} and storage requirements in Section~\ref{sec:mgmc_memory_requirements}.}
\rev{\subsection{Derivation of MGMC setup costs}\label{sec:proof_mgmc_setup_costs}}
\rev{\begin{proof}[Proof of Theorem~\ref{thm:mgmc_setup_cost}]
    In the setup phase, the Gibbs-sampler in Alg. \ref{alg:low_rank_gibbs} requires $\mathcal{O}(\beta n_\ell)$ operations to solve the $\beta$ triangular systems in line 7. The computation of the dense $\beta\times \beta$ matrix $\Gamma+B_\ell^\top C_\ell$, its factorisation and the construction of the $n_\ell\times \beta$ matrix $G_\ell$ in line 8 incur costs of $\mathcal{O}(\beta^2n_\ell)$, $\mathcal{O}(\beta^3)$ and $\mathcal{O}(\beta^2n_\ell)$ respectivly. This leads to the following bound of the setup costs for Alg. \ref{alg:low_rank_gibbs}:
\begin{equation}
    \mathrm{Cost}^{\mathrm{(setup)}}_{\mathrm{Gibbs}}(n_\ell,\beta) \le C_{\mathrm{Gibbs}}^{(\mathrm{setup},1)} \beta^{3} + C_{\mathrm{Gibbs}}^{(\mathrm{setup},2)} \beta^2 n_\ell.\label{eqn:smoother_setup_cost}
\end{equation}
As explained in Section~\ref{subsec:Cost-analysis}, when applying the low-rank correction in the multiplication by $\widetilde{A}_\ell$, which is required in the residual calculation, it is advantageous to precompute the $\beta\times n_\ell$ matrix $\Gamma^{-1}B_\ell^\top$ at a cost of $\mathcal{O}(\beta^{p_\Gamma}n_\ell)$, which incurs additional costs of
\begin{equation}
    \mathrm{Cost}^{(\mathrm{setup})}_{\mathrm{other}}(\beta,n_\ell) \le C^{\mathrm{(setup)}}_{\mathrm{other}}\beta^{p_\Gamma}n_\ell\label{eqn:coarse_setup_other}
\end{equation}
on each level. The setup cost on the coarsest level can be bounded as
\begin{equation}
    \mathrm{Cost}^{(\mathrm{setup})}_{\mathrm{coarse}}(\beta,n_0) \le
    \begin{cases}
        C_{\mathrm{Gibbs}}^{(\mathrm{setup},1)} \beta^{3} + C_{\mathrm{Gibbs}}^{(\mathrm{setup},2)} \beta^2 n_0 & \text{(Alg.~\ref{alg:coarse_sampler})} \\
        \mathrm{Cost}^{(\mathrm{setup})}_{\mathrm{Cholesky}}(n_0)                & \text{(Cholesky sampler)}.
    \end{cases}
    \label{eqn:coarse_setup_cost}
\end{equation}
As in the proof of Theorem~\ref{thm:mgmc_cost}, we can bound the setup costs on each level $\ell$ of the hierarchy recursively
\begin{equation}
        \mathrm{Cost}^{(\mathrm{setup})}_{\mathrm{MGMC}}(\ell) %
        \le \begin{cases} C^{(\mathrm{setup})}_{\mathrm{other}} \beta^{p_\Gamma} n_\ell + \mathrm{Cost}^{(\mathrm{setup})}_{\mathrm{Gibbs}}(n_\ell,\beta) +\mathrm{Cost}^{(\mathrm{setup})}_{\mathrm{MGMC}}(\ell-1) & \text{for $\ell>0$} \\
              \mathrm{Cost}^{(\mathrm{setup})}_{\mathrm{coarse}}(\beta,n_0)                                                                                            & \text{for $\ell=0$}\end{cases}
    \end{equation}
Inserting the bounds in \eqref{eqn:smoother_setup_cost}, \eqref{eqn:coarse_setup_other} and \eqref{eqn:coarse_setup_cost} into this expression leads to 
\begin{equation}
    \begin{aligned}
    \mathrm{Cost}^{(\mathrm{setup})}_{\mathrm{MGMC}}(\ell) &\le C_{\mathrm{Gibbs}}^{(\mathrm{setup},1)} \beta^3L + \left(C_{\mathrm{other}}^{(\mathrm{setup})}\beta^{p_\Gamma}+C_{\mathrm{Gibbs}}^{(\mathrm{setup},2)}\beta^2\right) \sum_{\ell = 1}^{L} n_\ell + \mathrm{Cost}_{\mathrm{coarse}}^{(\mathrm{setup})}(\beta,n_0) \\
    &\le C_{\mathrm{MG}}^{(\mathrm{setup},1)} \beta^3 L+ C_{\mathrm{MG}}^{(\mathrm{setup},2)} \beta^2 n_L + \mathrm{Cost}_{\mathrm{coarse}}^{(\mathrm{setup})}(\beta,n_0)
    \end{aligned}
\end{equation}
where, since $p_\Gamma\le 2$ and $\rho_G<1$,
\begin{xalignat}{2}
	C_{\mathrm{MG}}^{(\mathrm{setup},1)} &= C_{\mathrm{Gibbs}}^{(\mathrm{setup},1)}, &
    C_{\mathrm{MG}}^{(\mathrm{setup},2)} &= \frac{C_{\mathrm{oher}}^{(\mathrm{setup})}+C_{\mathrm{Gibbs}}^{(\mathrm{setup},2)}}{1-\rho_G}.
\end{xalignat}
\end{proof}}
\rev{\subsection{Derivation of MGMC memory requirements}\label{sec:proof_mgmc_memory}}
\rev{\begin{proof}[Proof of Theorem~\ref{thm:mgmc_memory}]
On each level $\ell$ we need to store no more than $K$ vectors of size $n_\ell$, such as $\theta_\ell$, $f_\ell$, $\psi_\ell$ and $\xi_\ell^{\mathrm{diag}}$; the exact value of $K$ (which is independent of $\ell$) depends on details of the implementation. To apply Alg.~\ref{alg:low_rank_gibbs} and to compute the residual, we need to store the sparse matrix $A_\ell$, which also has $\mathcal{O}(n_\ell)$ entries. In addition, we require the storage of the vector $\xi_\ell^{\mathrm{LR}}$ of length $\beta$, as well as the $n_\ell\times \beta$ matrices $B_\ell$, $G_\ell$ and the $\beta\times n_\ell$ matrices $B_\ell^\top$, $\Gamma^{-1}B_\ell^\top$. This results in the following bound
\begin{equation}
	\mathrm{Memory}_{\mathrm{Gibbs}}(\ell) \le C_{\mathrm{Gibbs}}^{(\mathrm{mem})} (1+\beta)n_\ell + \beta
\end{equation}
for some constant $C_{\mathrm{Gibbs}}^{(\mathrm{mem})}$. On the coarsest level, we either need to store the quantities that are required for Alg.~\ref{alg:low_rank_gibbs} with $\ell=0$ or the Cholesky factorisation of the $n_0\times n_0$ matrix $\widetilde{A}_0$. This implies that
\begin{equation}
    \mathrm{Memory}_{\mathrm{coarse}}(\beta,n_0) \le
    \begin{cases}
        C_{\mathrm{MG}}^{(\mathrm{mem})} (1+\beta)n_0 + \beta & \text{(Alg.~\ref{alg:coarse_sampler})} \\
        \mathrm{Memory}_{\mathrm{Cholesky}}(n_0)                & \text{(Cholesky sampler)}.
    \end{cases}
\end{equation}
Putting everything together, the total storage cost for Alg.~\ref{alg:mgmc} can be bounded as follows:
\begin{equation}
	\begin{aligned}
		\mathrm{Memory}_{\mathrm{MGMC}}(L) &\le C_{\mathrm{Gibbs}}^{(\mathrm{mem})} (1+\beta) \sum_{\ell=1}^{L}n_\ell + \beta L + \mathrm{Memory}_{\mathrm{coarse}}(\beta,n_0)\\
		&\le  C_{\mathrm{Gibbs}}^{(\mathrm{mem})} (1+\beta) n_L \sum_{\ell=1}^L\rho_G^{L-\ell} + \beta L + \mathrm{Memory}_{\mathrm{coarse}}(\beta,n_0)\\
		&\le C_{\mathrm{MG}}^{(\mathrm{mem})} (1+\beta) n_L + \beta L+ \mathrm{Memory}_{\mathrm{coarse}}(\beta,n_0)
	\end{aligned}
\end{equation}
with
\begin{equation}
C_{\mathrm{MG}}^{(\mathrm{mem})} = \frac{C_{\mathrm{Gibbs}}^{(\mathrm{mem})}}{1-\rho_G}.
\end{equation}
\end{proof}}
\rev{\section{Additional numerical results\label{subsec:additional_results}}
While in Sec.~\ref{sec:results} we assumed that the entries of the covariance matrix $\Gamma$ lie in the interval $[\widehat{\sigma},2\widehat{\sigma}]$ with $\widehat{\sigma}=10^{-6}$, we also measured the integrated autocorrelation time (IACT) of the Gibbs- and MGMC sampler for a wide range of other values of $\widehat{\sigma}$ and for three different correlation lengths $\kappa^{-1}$. The results in Tab.~\ref{tab:vary_sigma_2d} were obtained for the FEM discretisation of the shifted Laplace operator $\mathcal{A}^{(\mathrm{SL})}=-\Delta + \kappa^2 I$ on a two-dimensional grid of size $128\times 128$. All other parameters are the same as described in Sec.~\ref{sec:setup}. As the table shows, the IACT is approximately independent of value of $\widehat{\sigma}$. While for the Gibbs sampler it increases as the inverse correlation length grows (this is consistent with the deteriorating convergence rates in Fig.~\ref{fig:robustness_convergence_gridindependence}), for the MGMC sampler the IACT is smaller than $1.6$ for all values of $\kappa^{-1}$ and $\widehat{\sigma}$ considered here.}
\begin{table}
    \begin{center}
        \rev{
\begin{tabular}{|c|rrr|rrr|}
    \hline
& \multicolumn{3}{c|}{Gibbs} & \multicolumn{3}{c|}{MGMC}\\
$\widehat{\sigma}$
 & $\kappa^{-1}=0.10$ & $\kappa^{-1}=0.25$ & $\kappa^{-1}=0.50$ & $\kappa^{-1}=0.10$ & $\kappa^{-1}=0.25$ & $\kappa^{-1}=0.50$\\\hline\hline
 $10^{-10}$ & $41.4 \pm 10.6$ & $59.6 \pm 17.4$ & $95.8 \pm 33.3$ & $1.20 \pm 0.13$ & $1.41 \pm 0.12$ & $1.42 \pm 0.13$ \\
 $10^{-8}$ & $38.0 \pm 9.4$ & $88.8 \pm 30.0$ & $68.9 \pm 21.2$ & $1.23 \pm 0.14$ & $1.36 \pm 0.11$ & $1.47 \pm 0.14$ \\
 $10^{-6}$ & $34.6 \pm 8.3$ & $58.5 \pm 17.0$ & $118.1 \pm 44.2$ & $1.24 \pm 0.14$ & $1.42 \pm 0.13$ & $1.59 \pm 0.12$ \\
 $10^{-4}$ & $47.0 \pm 12.6$ & $89.1 \pm 30.1$ & $69.9 \pm 21.6$ & $1.18 \pm 0.12$ & $1.39 \pm 0.12$ & $1.55 \pm 0.16$ \\
 $10^{-2}$ & $35.8 \pm 8.7$ & $74.7 \pm 23.6$ & $88.0 \pm 29.6$ & $1.23 \pm 0.14$ & $1.40 \pm 0.12$ & $1.42 \pm 0.13$ \\
 $1$ & $46.5 \pm 12.5$ & $102.8 \pm 36.6$ & $110.9 \pm 40.6$ & $1.13 \pm 0.11$ & $1.14 \pm 0.11$ & $1.15 \pm 0.12$ \\
 $10^{2}$ & $48.2 \pm 13.1$ & $107.8 \pm 39.1$ & $150.2 \pm 61.3$ & $1.13 \pm 0.11$ & $1.13 \pm 0.11$ & $1.15 \pm 0.11$ \\
 $10^{4}$ & $48.2 \pm 13.1$ & $107.8 \pm 39.1$ & $150.2 \pm 61.4$ & $1.13 \pm 0.11$ & $1.13 \pm 0.11$ & $1.15 \pm 0.11$ \\
\hline\end{tabular}
}
\caption{\rev{IACT for different values of the correlation length $\kappa^{-1}$ and magnitude $\widehat{\sigma}$ of the covariance matrix $\Gamma$. All results were obtained with the FEM discretisation of the shifted Laplace operator $\mathcal{A}^{(\mathrm{SL})}=-\Delta + \kappa^2 I$ on a grid of size $128\times 128$.}}
\label{tab:vary_sigma_2d}
\end{center}
\end{table}
\end{document}